\newtheorem{thm}{Theorem}[section]
\newtheorem{cor}[thm]{Corollary}
\newtheorem{lem}[thm]{Lemma}
\newtheorem{prop}[thm]{Proposition}
\newtheorem{conj}[thm]{Conjecture}
\theoremstyle{definition}
\newtheorem{defn}[thm]{Definition}
\newtheorem{qn}[thm]{Question}
\theoremstyle{remark}
\newtheorem{rem}[thm]{Remark}
\newtheorem{ex}[thm]{Example}
\numberwithin{equation}{section}
\newcommand{\ol}{\overline}
\newcommand{\Bl}{\text{Bl}}
\newcommand{\SFH}{\mathit{SFH}}
\newcommand{\HFh}{\widehat{\mathit{HF}}}
\newcommand{\HFLh}{\widehat{\mathit{HFL}}}
\newcommand{\HFK}{\mathit{HFK}}
\newcommand{\EH}{\mathit{EH}}
\def\a{\alpha}
\def\b{\beta}
\def\g{\gamma}
\def\d{\delta}
\def\S{\Sigma}
\def\z{\zeta}
\def\t{\Theta}
\def\s{\mathfrak{s}}
\def\su{\underline{\s}}
\def\bolda{\boldsymbol{\alpha}}
\def\boldb{\boldsymbol{\beta}}
\def\bolde{\boldsymbol{\eta}}
\def\boldd{\boldsymbol{\delta}}
\def\W{\mathcal{W}}
\def\X{\mathcal{X}}
\def\Int{\text{Int}}
\def\Z{\mathbb{Z}}
\def\R{\mathbb{R}}
\def\C{\mathbb{C}}
\def\T{\mathbb{T}}
\def\F{\mathbb{F}}
\def\M{\mathcal{M}}
\def\D{\mathcal{D}}
\def\P{\mathcal{P}}
\def\V{\mathcal{V}}
\def\cT{\mathcal{T}}
\def\I{\mathcal{I}}
\def\L{\mathbb{L}}
\def\H{\mathcal{H}}
\def\ft{\mathfrak{t}}
\def\Sut{\mathbf{Sut}}
\def\BSut{\mathbf{BSut}}
\def\Vect{\mathbf{Vect}_{\mathbb{Z}_2}}
\def\link{\mathbf{DLink}}
\def\sym{\text{Sym}}
\def\x{\mathbf{x}}
\def\y{\mathbf{y}}
\def\spinc{\text{Spin}^c}
\def\spincu{\underline{\text{Spin}}^c}
\def\j{\mathfrak{j}}
\def\bP{\mathbb{P}}
\def\Diff{\text{Diff}}
\def\SS{\mathbb{S}}
\def\Id{\text{Id}}
\begin{document}

\title[Cobordisms of sutured manifolds]{Cobordisms of sutured manifolds and the functoriality of link Floer homology}%
\author{Andr\'as Juh\'asz}%
\address{Mathematical Institute, University of Oxford, Andrew Wiles Building,
Radcliffe Observatory Quarter, Woodstock Road, Oxford, OX2 6GG, UK}%
\email{juhasza@maths.ox.ac.uk}%

\thanks{Research supported by a Royal Society Research Fellowship.}
\subjclass[2010]{57M27; 57R58}%
\keywords{Sutured manifold; Heegaard Floer homology; Cobordism}

\begin{abstract}
It has been a central open problem in Heegaard Floer theory
whether cobordisms of links induce homomorphisms on the associated link Floer homology groups.
We provide an affirmative answer by introducing a natural notion of cobordism between sutured manifolds,
and showing that such a cobordism induces a map on sutured Floer homology.
This map is a common generalization
of the hat version of the closed 3-manifold cobordism map in Heegaard Floer theory,
and the contact gluing map defined by
Honda, Kazez, and Mati\'c. We show that sutured Floer homology,
together with the above cobordism maps, forms a type of TQFT in the
sense of Atiyah. Applied to the sutured manifold cobordism complementary to a decorated link cobordism,
our theory gives rise to the desired map on link Floer homology.
Hence, link Floer homology is a categorification of the multi-variable Alexander polynomial.
We outline an alternative definition of the contact gluing map using only the contact
element and handle maps.
Finally, we show that a Weinstein sutured manifold cobordism preserves the contact element.
\end{abstract}

\maketitle
\section{Introduction}

Knot Floer homology, introduced independently by Ozsv\'ath and Szab\'o~\cite{OSz3}, and Rasmussen~\cite{Ras},
and later generalized to links by Ozsv\'ath and Szab\'o~\cite{OSz2}, has proven to be a very sensitive
invariant of knots and links. The graded Euler characteristic of link Floer homology is the  multi-variable
Alexander polynomial, and it completely determines the Thurston norm of the link complement~\cite{OSz7}.
Furthermore, it determines whether a knot is fibered according to the work of Ghiggini~\cite{Ghiggini}, Ni~\cite{fibred, corrigendum},
and the author~\cite{sutured, decomposition}. The simplest proofs of these
results uses sutured Floer homology, an invariant of sutured manifolds defined by the author.

Since the introduction of knot Floer homology, it has been a natural question whether
knot cobordisms induce maps on knot Floer homology, exhibiting it as
a categorification of the Alexander polynomial. Compare this with the
results of Jacobsson~\cite{Jacobsson} and Khovanov~\cite{Khovanov} that Khovanov
homology is a categorification of the Jones polynomial, also see~\cite{CMW}.
As link Floer homology turns out to be an invariant of based links~\cite{naturality},
and in many cases moving a basepoint around the link induces a non-trivial
automorphism of link Floer homology due to the work of Sarkar~\cite{basepoint},
it is necessary to endow the link cobordisms with some decorations.
One of the main results of this paper is that decorated link cobordisms
induce functorial maps on link Floer homology. We are going to study the properties
of these maps in a forthcoming paper.

Note that, via grid diagrams and grid movies, Sarkar~\cite{tau} has presented a candidate for link cobordism maps
induced on $\HFK^-$ by surfaces in~$S^3 \times I$.
However, to date, we only know that this map is invariant under eight of the fifteen
marked movie moves by the work of Graham~\cite{Graham}.
So, while it is relatively simple to define link cobordism maps via a Morse-theoretic approach,
showing independence of the Morse function seems to be impractical.
Instead, we approach the problem via sutured manifold theory,
as this is much more general and also provides maps induced by cobordisms
of 3-manifolds with boundary, not just link complements.

Hence, we show that cobordisms of sutured manifolds induce maps on sutured Floer homology,
a Heegaard Floer type invariant of $3$-manifolds with boundary introduced by the author~\cite{sutured}.
Cobordism maps in Heegaard Floer homology were first outlined by Ozsv\'ath and Szab\'o~\cite{OSz10}
for cobordisms between closed $3$-manifolds, but their work did not address two fundamental questions.
The first was the issue of assigning a well-defined Heegaard Floer group -- not just an isomorphism class --
to a $3$-manifold, and the functoriality of the construction under diffeomorphisms. We addressed this
with Dylan Thurston~\cite{naturality}.
The second issue was exhibiting the independence of their cobordism maps of the surgery description of the cobordism. They did
check invariance under Kirby moves, but did not address how this gives rise to a well-defined map without
running into naturality issues. I gave a general framework for constructing cobordism maps -- and TQFTs in particular --
via surgery in~\cite{surgery}, and the present work is the first application of that framework in the Heegaard Floer setting.
The first version of this paper was posted online in 2009. Shortly  thereafter, I discovered
the above-mentioned naturality issues that we fixed in~\cite{naturality} and~\cite{surgery}.
Hence, the completion of this work has been considerably delayed, and should be viewed as
the culmination of all that foundational work of the past six years.

Sutured manifolds, introduced by Gabai~\cite{Gabai}, have been of great use in $3$-manifold topology, and especially
in knot theory. A sutured manifold~$(M,\g)$ is a compact oriented $3$-manifold~$M$ with boundary, together with a decomposition of the boundary~$\partial M$
into a positive part~$R_+(\g)$ and a negative part~$R_-(\g)$ that meet along a ``thickened'' oriented $1$-manifold $\g \subset \partial M$
called the suture.
Honda, Kazez, and Mati\'c~\cite{tight} drew a parallel between convex surface theory and sutured manifold theory, making apparent
the usefulness of sutured manifolds in contact topology. The author~\cite{sutured, survey} defined an invariant called sutured Floer homology, in short $\SFH$, for balanced sutured manifolds. $\SFH$ can be viewed as a common generalization of the hat version of Heegaard Floer homology and link Floer homology, both defined by Ozsv\'ath and Szab\'o~\cite{OSz, OSz2}. The author~\cite{decomposition} showed that $\SFH$ behaves nicely under sutured manifold decompositions, which has several important consequences, such as the above-mentioned detection of the genus and fibredness
by knot Floer homology.

In the present paper, we define a notion of cobordism between sutured manifolds $(M_0,\g_0)$ and $(M_1,\g_1)$.
It consists of a triple $\W = (W,Z,[\xi])$,
where~$W$ is a $4$-manifold with boundary and corners. The horizontal part of~$\partial W$ is $-M_0 \cup M_1$.
The vertical part~$Z \subset \partial W$ is a cobordism from~$-\partial M_0$ to~$-\partial M_1$, and carries a
positive cooriented contact structure~$\xi$ such that~$\partial M_i$ is a convex surface
with dividing set~$\g_i$ for $i \in \{0,1\}$.
We say that two such contact structures on~$Z$ are equivalent if they are homotopic through such contact structures,
and denote the equivalence class of~$\xi$ by~$[\xi]$.
Throughout this paper, all contact structures are considered to be cooriented.
Balanced sutured manifolds, together with certain equivalence classes of cobordisms between them, form a category.
We extend $\SFH$ to a functor from this category to finite dimensional $\Z_2$-vector spaces, giving a type of TQFT.

There might seem to be an alternative definition for cobordisms between sutured manifolds. One could consider triples
$(W,Z,F)$, where~$W$ is a $4$-manifold with corners, and has horizontal boundary $-M_0 \cup M_1$. Furthermore, $Z$
is a cobordism between~$-\partial M_0$ and~$-\partial M_1$, and $F \subset Z$ is a cobordism between~$\g_0$ and~$\g_1$.
A Morse-theoretic approach to define cobordism maps for such objects would require that every non-singular level set is a balanced
sutured manifold. For this, the pair~$(Z,F)$ has to be built up from pairs of 3-dimensional and 2-dimensional handles that are cut
into two equal halves by the 2-dimensional handle, or equivalently, contact handles.
A contact handle decomposition of~$Z$ gives rise to a contact structure~$\xi$ up to equivalence, and we arrive at the previous definition.

The construction of the map~$F_{\W}$, assigned to a cobordism~$\W$, goes as follows. The sutured manifold~$(-M_0,-\g_0)$
is a sutured submanifold of~$(-N,-\g_1)$ in the sense of Honda, Kazez, and Mati\'c~\cite{TQFT}, where $N = M_0 \cup -Z$.
The contact structure~$-\xi$ on~$Z$ -- which is~$\xi$ with the opposite coorientation -- has dividing set~$-\g_0$
on~$\partial M_0$ and~$-\g_1$ on~$\partial M_1$. It induces a gluing map
\[
\Phi_{-\xi} \colon \SFH(M_0,\g_0) \to \SFH(N,\g_1),
\]
as described in~\cite{TQFT}.
Then one can view~$W$ as a cobordism $\W_1 = (W,Z_1,[\xi_1])$
from~$(N,\g_1)$ to~$(M_1,\g_1)$ such that $Z_1 = \partial M_1 \times I$, and~$\xi_1$ is an
$I$-invariant contact structure such that $\partial M_1 \times \{t\}$ is a convex surface with dividing
set~$\g_1 \times \{t\}$ for every~$t \in I$. We call such a cobordism special, and it can be described using 1-, 2-, and 3-handle
attachments along the interior of~$N$. Generalizing the hat version of the cobordism maps on Heegaard-Floer homology, one gets
a map~$F_{\W_1}$ induced by a special cobordisms~$\W_1$. Finally, we set $F_{\W} = F_{\W_1} \circ \Phi_{-\xi}$.
This map is functorial; i.e., $F_{\W \circ \W'} = F_{\W} \circ F_{\W'}$.
Note that, in Remark~\ref{rem:gluing}, we outline a definition of the contact gluing maps,
and more generally, the sutured manifold cobordism maps, purely in terms of
special cobordism maps and the~$\EH$ class in sutured Floer homology~\cite{contact}.

We showed in~\cite{naturality} that~$\HFh$ is an invariant of based 3-manifolds. Given a based 3-manifold~$(Y,p)$,
moving~$p$ around a loop induces an automorphism of~$\HFh$, giving rise to an action of~$\pi_1(Y,p)$
on~$\HFh(Y,p)$. There are examples when this action is non-trivial, but we conjectured that it always factors through~$H_1(Y)$.
For a more precise formulation of this conjecture, see page~4 of~\cite{survey}.
This has recently been settled by Zemke~\cite{Zemke}, building on methods of this paper.
Consequently, given a connected cobordism~$X$ between the closed connected 3-manifolds~$Y_0$ and~$Y_1$,
the construction of cobordism maps~$\widehat{F}_X$ have to take into consideration the choice of basepoints.
Given basepoints~$p_0 \in Y_0$ and~$p_1 \in Y_1$, we have to fix an embedded arc~$a \colon I \to X$ from~$p_0$
to~$p_1$. Then we define the cobordism map
\[
\widehat{F}_{X,a} \colon \HFh(Y_0,p_0) \to \HFh(Y_1,p_1)
\]
to be~$F_\W$ for~$\W = (W,Z,\xi)$, where~$W = X \setminus N(a)$, the vertical boundary $Z = \ol{\partial N(a) \setminus \partial X}$,
and~$\xi$ is obtained from the unique tight contact structure on~$\partial N \approx S^3$ by removing two standard contact balls.
Note that~$\W$ is a cobordism between the sutured manifolds $Y_0(p_0)$ and~$Y_1(p_1)$ for some framings of~$p_0$ and~$p_1$.
Hence~$F_\W$ maps~$\SFH(Y_0(p_0)) = \HFh(Y_0,p_0)$ to~$\SFH(Y_1(p_1)) = \HFh(Y_1,p_1)$.

Suppose that in the based 3-manifold~$(Y,p)$, moving~$p$ around the loop~$\eta(t)$ induces a non-trivial automorphism~$\eta_*$ of~$\HFh(Y,p)$.
Consider the product cobordism~$X = Y \times I$ from~$Y$ to itself, together with the arc~$a(t) = (\eta(t),t)$ for~$t \in I$.
Then the cobordism map
\[
\widehat{F}_{X,a} \colon \HFh(Y,p) \to \HFh(Y,p)
\]
agrees with~$\eta_*$ and hence is not the identity. On the other hand, for the arc $a(t) = (p,t)$ for~$t \in I$,
the map~$\widehat{F}_{X,a}$ is the identity.

As a special case of cobordism maps on~$\SFH$, we also get maps on link Floer homology, induced by decorated link cobordisms. More precisely, we consider
decorated links~$(Y,L,P)$, where~$P$ consists of a positive even number of points on each component of the link~$L$,
together with a decomposition of~$L$ into compact $1$-dimensional submanifolds~$R_+(P)$ and~$R_-(P)$ such that
\[
R_+(P) \cap R_-(P) = P.
\]
Sarkar~\cite{basepoint} showed that knot Floer homology is only an invariant of based knots, as moving the basepoint around
the knot induces a non-trivial automorphism of knot Floer homology for most knots.

A cobordism from the decorated link $(Y_0,L_0,P_0)$ to~$(Y_1,L_1,P_1)$ consists of a triple
$(X,F,\sigma)$, where~$X$ is an oriented cobordism from~$Y_0$ to~$Y_1$,
the surface $F \subset X$ is orientable with boundary~$\partial F = L_0 \cup L_1$,
and~$\sigma \subset F$ is a properly embedded $1$-manifold  such that the map
\[
\pi_0(\partial\sigma) \to \pi_0((L_0 \setminus P_0) \cup (L_1 \setminus P_1))
\]
is a bijection. Furthermore, $\sigma$ divides~$F$ into two compact subsurfaces that meet along~$\sigma$,
and we can orient each component~$R$ of~$F \setminus \sigma$ such that whenever~$\partial\ol{R}$
crosses a point of~$P_0$, it goes from~$R_+(P_0)$ to~$R_-(P_0)$,
and whenever it crosses a point of~$P_1$, it goes from~$R_-(P_1)$ to~$R_+(P_1)$.
Finally, for every closed component~$F_0$ of~$F$, we have~$\sigma \cap F_0 \neq \emptyset$.

According to Lutz~\cite{Lutz}, the decoration~$\sigma$ uniquely defines an~$S^1$-invariant contact structure~$\xi$ on
the total space~$Z = \overline{\partial N(F) \setminus (Y_0 \cup Y_1)}$ of the normal $S^1$-bundle of~$F$ in~$X$
up to equivalence, making $(X \setminus N(F),Z,[\xi])$ into a cobordism~$\W$ between the sutured manifolds
$(Y_i \setminus N(L_i),P_i \times S^1)$ for~$i  \in \{0,1\}$ complementary to the decorated links.
By~\cite[Proposition~9.2]{polytope},
\[
\SFH(Y_i \setminus N(L_i),P_i \times S^1) \cong \HFLh(Y,L) \otimes V^{d_i},
\]
where~$V \cong \Z_2^2$, and~$d_i$ depends on the distribution of the marked points on~$L_i$. Hence, the cobordism
map~$F_{\W}$ maps between certain link Floer homology groups.
For computations of some elementary link cobordism maps, and a relationship
of these with the reduced Khovanov TQFT, see our paper with Marengon~\cite{JM}.
Also see the work of Kronheimer and Mrowka~\cite{unknot}, where they define maps
induced by knot cobordisms on their singular instanton knot homology, which is then
used to prove that Khovanov homology detects the unknot.

Finally, we extend the notion of Weinstein cobordisms to cobordisms between contact manifolds with convex boundary.
If~$\W$ is a Weinstein cobordism from the contact manifold $(M_0,\g_0,\z_0)$ to $(M_1,\g_1,\z_1)$, then we can view~$\W$ as a
cobordism~$\ol{\W}$ from~$(-M_1,-\g_1)$ to~$(-M_0,-\g_0)$. We prove that
\[
F_{\ol{\W}}(\EH(M_1,\g_1,\z_1)) = \EH(M_0,\g_0,\z_0),
\]
where~$\EH$ is the contact element in sutured Floer homology introduced by Honda, Kazez, and Mati\'c~\cite{contact}.

\section*{Acknowledgements}

I would like to thank Martin Hyland, Istv\'an Juh\'asz, Peter Kronheimer, Robert Lipshitz, Ciprian Manolescu, Marco Marengon,
Patrick Massot, Tom Mrowka, and Peter Ozsv\'ath for helpful conversations, and Jacob Rasmussen for his valuable comments on an early version of this paper.
I would also like to thank the referee for the constructive suggestions.

\section{The cobordism category of sutured manifolds}

Sutured manifolds were introduced by Gabai~\cite{Gabai}. The following definition
is slightly less general, in that it excludes toroidal sutures.

\begin{defn}
A \emph{sutured manifold} $(M,\g)$ is a compact oriented
3-manifold $M$ with boundary, together with a set $\g \subset
\partial M$ of pairwise disjoint annuli. Furthermore, the interior of each component of
$\g$ contains a \emph{suture}; i.e., a homologically
nontrivial oriented simple closed curve. We denote the union of the
sutures by $s(\g)$. Finally, every component of
\[
R(\g)=\partial M \setminus
\Int(\g)
\]
is oriented. Define $R_+(\g)$ (or
$R_-(\g)$) to be those components of $\partial M \setminus
\Int(\g)$ whose normal vectors point out of (into) $M$.
The orientation on $R(\g)$ must be coherent with respect to
$s(\g)$; i.e., if $\delta$ is a component of $\partial
R(\g)$ and is given the boundary orientation, then $\delta$ must
represent the same homology class in $H_1(\g)$ as some suture.
\end{defn}

\begin{rem}
In this paper, we are not going to make a distinction between $\g$ and~$s(\g)$, as it is usually clear from
the context which one we mean. One can think of $\g$ as a thickened oriented $1$-manifold.
\end{rem}

We now review some fundamental notions and results about contact structures and set our orientation conventions;
for more details see the notes of Etnyre~\cite{Etnyre} and the book of Geiges~\cite{Geiges}.
Let~$M$ be an oriented $3$-manifold. A \emph{contact structure}~$\xi$ on~$M$ is a nowhere integrable $2$-plane field.
This is equivalent to the condition that each point of~$M$ has a neighborhood~$U$ together with a
$1$-form~$\a$ such that~$\xi|_U = \ker(\a)$ and~$\a \wedge d\a$ is nowhere zero. If~$\xi$
is \emph{coorientable}, then one can choose~$\a$ globally. We say that~$\xi$ is \emph{positive} if
the $3$-form~$\a \wedge d\a$ is coherent with the orientation of~$M$. This is independent of the choice of
contact form~$\a$. In this paper, all contact structures are positive and cooriented, unless otherwise stated.
Given such a contact structure~$\xi$, we write~$-\xi$ for the same 2-plane field with the opposite coorientation;
this is also a positive contact structure.

A vector field~$v$ on~$M$ is a \emph{contact vector field} if its flow preserves~$\xi$. In terms of the
contact form~$\a$, this means that $\mathcal{L}_v \a = f\a$ for some function~$f \colon M \to \R$.
If~$F$ is a properly embedded surface in~$M$, then~$F$ is \emph{convex} if there exists a contact vector
field~$v$ transverse to~$F$. Every surface~$F$ is $C^\infty$-close to a convex surface, and
every convex surface has a product neighborhood in which the contact structure is invariant in
the normal direction.

The contact structure~$\xi$ defines a singular foliation~$\xi F$ on~$F$ called
the \emph{characteristic foliation} of~$\xi$ on~$F$.
Given an orientation of~$F$, we can orient the leaves of~$\xi F$ as follows.
Pick a point~$p \in F$ where~$\xi F$ is non-singular, and let~$l_p$ be the line tangent to the leaf
of~$\xi F$ through~$p$. Then $l_p$ is oriented as~$\xi_p \cap T_pF$. More concretely,
a vector~$v \in l_p$ defines the positive orientation of~$l_p$ if when we choose
vectors~$v_\xi \in \xi_p$ and~$v_F \in T_pF$ such that $(v,v_\xi)$ orients~$\xi$
and~$(v,v_F)$ orients~$T_pF$, then the triple~$(v,v_\xi,v_F)$ orients~$T_pM$.
Given a singular point~$p \in F$ of the foliation~$\xi F$, we can associate to it a sign
depending on whether~$\xi_p$ is positively or negatively tangent to~$F$.

Given a convex surface~$F$ and a contact vector field~$v$ transverse to it, we can define the
\emph{dividing set}
\[
\Gamma = \{\, p \in F \colon v(p) \in \xi_p \,\}.
\]
This is always a $1$-manifold transverse to the characteristic foliation~$ \xi F$,
and given another contact vector field, the resulting dividing set will be isotopic.
We orient~$\Gamma$ so that it is positively transverse to~$\xi$.
If~$F$ is oriented, then~$\Gamma$ splits~$F$ into two subsurfaces~$F_+$ and~$F_-$
that meet along~$\Gamma$. The leaves of~$\xi F$ go from~$F_+$ to~$F_-$, and~$F_+$
contains the positive singularities of~$\xi F$, while~$F_-$ contains the negative ones.
Observe that no matter how we orient~$F$, the dividing set~$\Gamma$ is always oriented
as the boundary of~$F_+$. If~$v$ is positively transverse to~$F$, then~$F_+$ is the
set of points~$p$ of~$F$ where~$v_p$ is positively transverse or tangent to~$\xi_p$,
while~$F_-$ is where~$v_p$ is negatively transverse or tangent to~$\xi$.
A surprising result of Giroux states that the dividing set determines~$\xi$
uniquely up to isotopy in a neighborhood of~$F$.

\begin{defn}
Let $(M,\g)$ be a sutured manifold, and suppose that $\xi_0$ and $\xi_1$ are contact
structures on $M$ such that $\partial M$ is a convex surface with dividing set $\g$ with
respect to both $\xi_0$ and $\xi_1$. Then we say that $\xi_0$ and $\xi_1$ are \emph{equivalent} if there is a
one-parameter family $\{\,\xi_t \,\colon\, t \in I\,\}$ of contact structures such that
$\partial M$ is convex with dividing set $\g$ with respect to $\xi_t$ for every $t \in I$.
In this case, we write $\xi_0 \sim \xi_1$, and we denote by $[\xi]$ the equivalence class of
a contact structure~$\xi$.
\end{defn}

\begin{defn} \label{defn:cob}
Let $(M_0,\g_0)$ and $(M_1,\g_1)$ be sutured manifolds. A \emph{cobordism from $(M_0,\g_0)$ to $(M_1,\g_1)$} is a triple
$\W = (W,Z,[\xi])$, where
\begin{enumerate}
\item $W$ is a compact, oriented $4$-manifold with boundary,
\item $Z$ is a compact, codimension-0 submanifold with boundary of $\partial W$, and $\partial W \setminus \Int(Z) = -M_0 \sqcup M_1$,
\item $\xi$ is a positive contact structure on $Z$, such that $\partial Z$ is a convex surface with dividing
set~$\g_i$ on~$\partial M_i$ for $i \in \{0,1\}$.
\end{enumerate}
\end{defn}

\begin{rem} \label{rem:cob}
For orienting the boundary of a manifold, we always use the ``outward normal first'' convention.
We think of $W$ as a 4-manifold with corners along $\partial Z$.
Furthermore, we say that $-M_0 \sqcup M_1$ is the horizontal and $Z$ is the  vertical part of the boundary of~$W$.
Note that the orientation of the dividing set of~$\xi$ on~$\partial Z$
only depends on the coorientation of~$\xi$,
and is independent of how we orient the convex surface~$\partial Z$.
\end{rem}

\begin{lem} \label{lem:chi}
If the sutured manifolds $(M_0,\g_0)$ and $(M_1,\g_1)$ are cobordant, then
\[
\chi(R_+(\g_0)) - \chi(R_-(\g_0)) =  \chi(R_+(\g_1)) - \chi(R_-(\g_1)).
\]
Furthermore, the map $\pi_0(\g_i) \to \pi_0(\partial M_i)$ is surjective for $i \in \{0, 1\}$.
\end{lem}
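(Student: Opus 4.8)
The plan is to exploit the convex surface $\partial Z$ inside the contact manifold $(Z,\xi)$ together with the handle/Morse structure of the cobordism $W$. The key identity should come from the fact that the dividing set $\partial Z = \g_0 \cup (-\g_1)$ divides $\partial Z$ into $R_+ = R_+(\g_0) \cup (-R_-(\g_1))$ and $R_- = R_-(\g_0) \cup (-R_+(\g_1))$, where the sign swap on the $M_1$ side comes from the orientation reversal noted in Remark~\ref{rem:cob} (reversing a convex surface exchanges $R_+$ and $R_-$). Since $\partial Z$ is a \emph{closed} oriented surface — it is the boundary of the $3$-manifold $Z$ — we have $\chi(R_+) - \chi(R_-) = 0$ for any closed convex surface in a contact $3$-manifold. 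Unwinding this gives $\chi(R_+(\g_0)) + \chi(R_-(\g_1)) = \chi(R_-(\g_0)) + \chi(R_+(\g_1))$, which rearranges to the claimed equality.

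So the first step is to justify the vanishing $\chi(R_+) - \chi(R_-) = 0$ for a closed convex surface $S$. One clean way: each component of the dividing set $\Gamma$ on a closed convex surface $S$ bounds on both sides, so $\Gamma$ is null-homologous in $S$; hence $[\partial R_+(\Gamma)] = 0$ in $H_1(S)$, and a standard Euler-characteristic count (or the fact that $R_+$ and $R_-$ are glued along $\Gamma$, a disjoint union of circles with $\chi = 0$, to form $S$, so $\chi(R_+) + \chi(R_-) = \chi(S)$, combined with a second relation) forces $\chi(R_+) = \chi(R_-)$. Actually the slickest route: take the contact vector field $v$ transverse to $S$; it has no zeros, so the sum of indices of the projected characteristic foliation balances, but I prefer the homological argument since it only needs that $S$ is closed. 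I would then apply this with $S = \partial Z$ and carefully track the orientation-induced sign swap on the $\partial M_1$ summand to extract the stated formula. The main obstacle here is purely bookkeeping: making sure the identification $R_+(\partial Z) \cap (-\partial M_1) = -R_-(\g_1)$ (rather than $-R_+(\g_1)$) is applied consistently, which is exactly the content of the last sentence of Remark~\ref{rem:cob}.

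For the second assertion, that $\pi_0(\g_i) \to \pi_0(\partial M_i)$ is surjective, the point is that a component $\Sigma$ of $\partial M_i$ missed by $\g_i$ would be a closed component of $R(\g_i)$ lying entirely in $\partial M_i$; but then $\Sigma$ is a closed convex surface inside $Z$ with empty dividing set, which is impossible — a convex surface in a positive contact structure must have nonempty dividing set (equivalently, by Giroux's criterion, $\Sigma$ with empty dividing set would violate the overtwistedness/tightness dichotomy, or more elementarily, the contact form restricted near such a surface cannot satisfy the contact condition). Concretely: if $\Gamma(\Sigma) = \emptyset$ then $\Sigma = R_+$ or $\Sigma = R_-$ entirely, and the characteristic foliation would have to be a foliation of a closed surface by curves tangent to $\ker\alpha|_\Sigma$ with no singularities forced to cancel against $d\alpha$ — the upshot from convex surface theory being that $\Gamma = \emptyset$ never occurs. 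Hence every boundary component of $M_i$ meets $\g_i$. I expect this half to be short; the genuinely substantive step is the first one, and within it the orientation/sign tracking is where care is needed rather than any real difficulty.
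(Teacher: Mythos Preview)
Your overall structure is exactly the paper's: view $\partial Z$ as a closed convex surface in $(Z,\xi)$ with dividing set $\g_0 \cup (-\g_1)$, track the orientation swap on the $\partial M_1$ side to get $R_+(\partial Z) = R_+(\g_0) \cup R_-(\g_1)$ and $R_-(\partial Z) = R_-(\g_0) \cup R_+(\g_1)$, and then conclude from $\chi(R_+(\partial Z)) = \chi(R_-(\partial Z))$. The second assertion is also the same argument as the paper's --- a closed convex surface has nonempty dividing set.

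The one genuine gap is your justification of $\chi(R_+) = \chi(R_-)$ for the closed convex surface $S = \partial Z$. The ``homological argument'' you prefer does not close: knowing that $\Gamma$ is null-homologous and that $\chi(R_+) + \chi(R_-) = \chi(S)$ gives only one linear relation between $\chi(R_+)$ and $\chi(R_-)$, and you explicitly note you need ``a second relation'' without providing it. (Indeed, for an arbitrary separating multicurve on a closed surface there is no reason for the two sides to have equal Euler characteristic.) The contact-vector-field sketch is also not carried far enough to yield the vanishing. The paper's missing ingredient is the Euler class of $\xi$: for any convex surface $S$ in $(Z,\xi)$ one has $\langle e(\xi),[S]\rangle = \chi(R_+) - \chi(R_-)$, and since $S = \partial Z$ is a boundary, $[\partial Z] = 0$ in $H_2(Z)$, so the pairing vanishes. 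That single sentence is the ``second relation'' you were looking for; with it inserted, your proof is complete and coincides with the paper's.
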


\begin{proof}
Recall that $(Z,\xi)$ is a contact manifold with convex boundary $\partial M_0 \cup -\partial M_1$ and
dividing set $\g = \g_0 \cup \g_1$.
We denote by~$R_+(\g)$ and~$R_-(\g)$, respectively,
the positive and negative subsurfaces of~$Z$ induced by~$\xi$. Then
\[
\chi(R_+(\g)) - \chi(R_-(\g)) =  \langle\,e(\xi),[\partial Z] \,\rangle = 0.
\]
Moreover,
\[
\chi(R_+(\g)) = \chi(R_+(\g_0)) + \chi(R_-(\g_1)) \text{ and }
\chi(R_-(\g)) = \chi(R_-(\g_0)) + \chi(R_+(\g_1)),
\]
as on~$-\partial M_1$ the positive subsurface induced by~$\xi$ is~$R_-(\g_1)$ and the negative
subsurface is~$R_+(\g_1)$, while on~$\partial M_0$ the positive subsurface is~$R_+(\g_0)$
and the negative subsurface is~$R_-(\g_0)$.
The second claim follows from the fact that the dividing set
on a closed convex surface is never empty, see~\cite[p.230]{Geiges}.
\end{proof}

\begin{defn} \label{defn:cobeq}
The cobordisms $\W = (W,Z,[\xi])$ and $\W' = (W',Z',[\xi'])$ between the same sutured manifolds $(M_0,\g_0)$ and $(M_1,\g_1)$
are called \emph{equivalent} if there is an orientation preserving diffeomorphism
$d \colon W \to W'$ such that $d(Z) = Z'$ and~$d_*(\xi) \sim \xi'$; furthermore,
$d(x) = x$ for every $x \in M_0 \cup M_1$. Such a map $d$ is called an \emph{equivalence}.

If $\W = (W,Z,[\xi])$ is a cobordism from the sutured manifold $(M_0,\g_0)$ to $(M_1,\g_1)$ and $\W' = (W',Z',[\xi'])$ is a cobordism from
$(M_0',\g_0')$ to $(M_1',\g_1')$, then $\W$ and~$\W'$ are called \emph{diffeomorphic}
if there is an orientation preserving diffeomorphism
$d \colon W \to W'$ such that $d(Z) = Z'$, $d(\g_0) = \g_0'$, $d(\g_1) = \g_1'$, and $d_*(\xi) \sim \xi'$.
Such a map $d$ is called a \emph{diffeomorphism} from~$\W$ to~$\W'$.
\end{defn}

\begin{defn} \label{defn:triv}
Let $(M,\g)$ be a sutured manifold such that there is at least one suture on each component of~$\partial M$.
The \emph{trivial cobordism from $(M,\g)$ to $(M,\g)$} is the triple $\W = (W,Z,[\xi])$, where
\begin{enumerate}
\item $W = M \times I$,
\item $Z = \partial M \times I$,
\item $\xi$ is an $I$-invariant contact structure on $Z$ such that $\partial M \times \{t\}$ is a convex surface with dividing
set $\gamma \times \{t\}$ for every $t \in I$. Note that such a $\xi$ is well-defined up to equivalence.
\end{enumerate}
\end{defn}

\begin{rem}
To be completely precise, just as in Milnor~\cite{Milnor}, one should define a cobordism from $(N_0,\nu_0)$ to $(N_1,\nu_1)$ as a $5$-tuple
\[
\W = ((W,Z,[\xi]),(M_0,\g_0),(M_1,\g_1),h_0,h_1),
\]
where $(W,Z,[\xi])$ is a cobordism from $(M_0,\g_0)$ to $(M_1,\g_1)$ in the sense of Definition~\ref{defn:cob},
and for $i \in \{0,1\}$, the map $h_i \colon M_i \to N_i$ is an orientation preserving diffeomorphism such that $h_i(\g_i) = \nu_i$.
If we have two such cobordisms~$\W$ and~$\W'$ from $(N_0,\nu_0)$ to $(N_1,\nu_1)$,
then an equivalence between them is a diffeomorphism~$g$ from~$\W$ to~$\W'$ such that $g|_{M_i} = (h_i')^{-1} \circ h_i$ for $i \in \{0,1\}$.

If $(N_0,\nu_0)$ and $(N_1,\nu_1)$ are disjoint, then we can safely restrict ourselves to cobordisms between them where
$M_i = N_i$ and $h_i = \text{Id}_{N_i}$ for $i \in \{0,1\}$, in which case the above notion of equivalence
coincides with the one in Definition~\ref{defn:cobeq}. However, to define the
identity morphism from~$(N,\nu)$ to itself, one does need the above more precise approach to cobordisms.
To keep the notation simple, we will use our previous less rigorous terminology, which should not cause much confusion.
\end{rem}

\begin{defn} \label{defn:comp}
Suppose that $\W_0 = (W_0,Z_0,[\xi_0])$ is a cobordism from $(M_0,\g_0)$ to $(M_1,\g_1)$ and $\W_1 = (W_1,Z_1,[\xi_1])$
is a cobordism from $(M_1,\g_1)$ to $(M_2,\g_2)$.
Since~$\partial M_1$ is a convex surface with dividing set~$\g_1$ in both $(Z_0,\xi_0)$ and $(Z_1,\xi_1)$,
we can glue the contact structures~$\xi_0$ and~$\xi_1$ together along~$\partial M_1$
to obtain a cooriented contact structure~$\xi_1 \cup \xi_2$ on~$Z_0 \cup_{\partial M_1} Z_1$, well-defined
up to equivalence. Then the \emph{composition}~$\W_1 \circ \W_0$ is the cobordism from $(M_0,\g_0)$ to $(M_2,\g_2)$ given by the triple
\[
(W_0 \cup_{M_1} W_1, Z_0 \cup_{\partial M_1} Z_1, [\xi_0 \cup \xi_1]).
\]
\end{defn}

\begin{defn}
The \emph{cobordism category of sutured manifolds}, $\Sut$, is given as follows. Its objects are sutured manifolds $(M,\g)$ that have
at least one suture on each boundary component. The set of morphisms from $(M_0,\g_0)$ to $(M_1,\g_1)$ is the set of equivalence classes of cobordisms from $(M_0,\g_0)$ to $(M_1,\g_1)$. Composition is given by Definition~\ref{defn:comp}. The identity morphism from $(M,\g)$ to itself is the equivalence class
of the trivial cobordism introduced in Definition~\ref{defn:triv}.

By Lemma~\ref{lem:chi}, for a given integer $k \in \Z$, those sutured manifolds that satisfy
\[
\chi(R_+(\g)) - \chi(R_-(\g)) = k
\]
form a full subcategory of $\Sut$
called $\Sut_k$, and the sum category of~$\{\, \Sut_k \,\colon\, k \in \Z \,\}$ is exactly $\Sut$.
\end{defn}

Note that, in order to have a unique identity morphism for each sutured manifold and to be able to define the composition of cobordisms,
it was necessary to work with equivalence classes of contact structures. It is not possible to set up a cobordism
category using contact structures without factoring out by this equivalence relation in addition to
taking equivalence classes of cobordisms. Indeed, an
equivalence from $(W,Z,\xi)$ to $(W',Z',\xi')$ would have to map the characteristic
foliation of~$\xi$ on~$\partial Z$ to that of~$\xi'$ on $\partial Z' = \partial Z$. Hence, the equivalence classes
of trivial cobordisms for a given sutured manifold $(M,\g)$ would decompose along the set of
possible characteristic foliations on~$\partial Z$.
Furthermore, to be able to compose the cobordism $(W_0,Z_0,\xi_0)$ from~$(M_0,\g_0)$ to~$(M_1,\g_1)$
with the cobordism $(W_1,Z_1,\xi_1)$ from~$(M_1,\g_1)$ to~$(M_2,\g_2)$,
the characteristic foliation of~$\xi_0$ on~$\partial M_1$ has to agree with the characteristic
foliation of~$\xi_1$ on~$\partial M_1$. If we are working with equivalence classes of contact
structures, we can always homotope~$\xi_0$ and~$\xi_1$ until the two characteristic foliations
line up and we can perform the gluing.

The following notion was introduced by the author~\cite{sutured}.

\begin{defn}
A sutured manifold $(M,\g)$ is \emph{balanced} if
\begin{enumerate}
\item $\chi(R_+(\g)) = \chi(R_-(\g))$,
\item the map $\pi_0(\g) \to \pi_0(\partial M)$ is surjective, and
\item $M$ has no closed components.
\end{enumerate}
\end{defn}

\begin{rem} \label{rem:upside}
The objects of $\Sut_0$ are precisely those sutured manifolds that can be written as finite disjoint unions of
balanced sutured manifolds and closed oriented 3-manifolds.

It is also worth noting that if $\W = (W,Z,[\xi])$ is a cobordism from $(M_0,\g_0)$ to $(M_1,\g_1)$, then $(-W,-Z,[-\xi])$ is not a cobordism
from $(-M_1,-\g_1)$ to $(-M_0,-\g_0)$ since $-\xi$ is a negative contact structure on $-Z$. But we can view
$(W,Z,[-\xi])$ as a cobordism~$\ol{\W}$ from~$(-M_1,-\g_1)$ to~$(-M_0,-\g_0)$ by writing
\[
\partial W = -(-M_1) \cup Z \cup -M_0.
\]
Loosely speaking, this is turning the cobordism $\W$ upside down.
\end{rem}

\begin{defn} \label{defn:bal}
We say that a cobordism from $(M_0,\g_0)$ to $(M_1,\g_1)$ is \emph{balanced} if
both $(M_0,\g_0)$ and $(M_1,\g_1)$ are balanced sutured manifolds.
The balanced sutured manifolds and equivalence classes of balanced cobordisms
form a full subcategory of $\Sut_0$ that we denote by $\BSut$.
\end{defn}

Sutured Floer homology was introduced by the author~\cite{sutured}. Over $\Z_2$, it assigns a finite-dimensional $\Z_2$ vector space $\SFH(M,\g)$ to every balanced sutured manifold~$(M,\g)$. The main goal of the present paper is to promote~$\SFH$ to a functor from~$\BSut$ to~$\Vect$. That is, for every balanced cobordism $\W$ from $(M_0,\g_0)$ to~$(M_1,\g_1)$, we are going to define a linear map
\[
F_{\W} \colon \SFH(M_0,\g_0) \to \SFH(M_1,\g_1)
\]
such that $F_{\W_1 \circ \W_0} = F_{\W_1} \circ F_{\W_0}$, and if $\W$ is a trivial cobordism, then $F_{\W} = \text{Id}$. In Theorem~\ref{thm:TQFT}, we will show that this is an instance of a $(3+1)$-dimensional TQFT, as defined by Atiyah~\cite{Atiyah, Blanchet}.

\section{Relative $\spinc$ structures}

First, we briefly review the definition of relative $\spinc$ structures on sutured manifolds as defined by the author~\cite{sutured}.
The definition given here requires a slightly less restrictive but equivalent boundary condition in order to be able to talk about $\spinc$
structures represented by contact structures with convex boundary.

\begin{defn}
Given a sutured manifold~$(M,\g)$, we say that a vector field~$v$ defined on a subset of~$M$
containing~$\partial M$ is \emph{admissible} if it is nowhere vanishing, it points into~$M$ along~$R_-(\g)$,
it points out of~$M$ along~$R_+(\g)$, and~$v|_{\g}$ is tangent to~$\partial M$ and either
points into~$R_+(\g)$ or is positively tangent to~$\g$
(as before, we think of~$\partial M$ as a smooth surface, and of~$\g$ as a $1$-manifold).

Let~$v$ and~$w$ be admissible vector fields on~$M$.
We say that~$v$ and~$w$ are homologous, and we write $v \sim w$,
if there is a collection of balls $B \subset M$, one in each component of~$M$, such that $v$ and~$w$ are homotopic on $M \setminus B$
through admissible vector fields.
Then $\spinc(M,\g)$ is the set of homology classes of admissible vector fields on~$M$.
\end{defn}

According to~\cite[Proposition 3.5]{polytope},
$\spinc(M,\g) \neq \emptyset$ if and only if for every component~$M_0$ of~$M$, we have
\[
\chi(M_0 \cap R_+(\g)) = \chi(M_0 \cap R_-(\g)).
\]
The space of vector fields arising as~$v|_{\partial M}$ for~$v$ admissible is convex, hence contractible.
Suppose that~$(M,\g)$ is a sutured submanifold of~$(N,\nu)$; i.e., $M \subset \Int(N)$.
If~$v$ is an admissible vector field on~$(M,\g)$ and~$w$ is an admissible vector field
on~$(N \setminus \Int(M), \g \cup \nu)$, then there is a homotopically unique deformation of~$v$ through
admissible vector fields such that $v|_{\partial M} = w|_{\partial M}$. This gives a unique
way of gluing the $\spinc$ structures represented by~$v$ and~$w$ to obtain a $\spinc$ structure on~$(N,\nu)$.

\begin{defn}
Let $(M,\g)$ be a sutured manifold. We say that an oriented 2-plane field~$\xi$ defined on a subset of~$M$
containing~$\partial M$ is \emph{admissible} if
there exists a Riemannian metric~$g$ on~$M$ such that~$\xi^{\perp_g}$ is an admissible vector
field. If~$v$ is defined on the whole manifold~$M$, we write
\[
\s_\xi = [\xi^{\perp_g}] \in \spinc(M,\g).
\]
This is independent of the choice of~$g$ since the space of metrics~$g$ for which
$\xi^{\perp_g}$ is an admissible vector field is convex.
\end{defn}

\begin{lem} \label{lem:contadm}
If $\xi$ is a contact structure on~$M$ such that~$\partial M$ is a convex surface with
dividing set~$\g$, then~$\xi$ is admissible.
\end{lem}

\begin{proof}
Let~$w$ be a contact vector field on~$M$ transverse to~$S$ such that
\[
\{\, p \in S \,\colon\, w(p) \in \xi_p \,\} = \g.
\]
Then we choose a Riemannian metric~$g$ on~$M$ such that~$w(p) \perp T_pS$ for every~$p \in S$,
and such that~$\xi_p \perp T_p\g$ for every~$p \in \g$ (the latter is possible since~$\xi$
is transverse to~$\g$). Then the vector field~$\xi^{\perp_g}$ is admissible.
So we can talk about the induced relative $\spinc$-structure~$\s_\xi$.
\end{proof}

Next, we recall a standard result from complex geometry.

\begin{lem} \label{lem:cx}
Let $V$ be a 4-dimensional real vector space, together with an endomorphism $J$ such that $J^2 = -I$. Then every 3-dimensional
subspace $U < V$ contains a unique $J$-invariant plane.
\end{lem}

\begin{proof}
Think of $(V,J)$ as a complex vector space. Since two different complex lines span $V$ over $\R$, they cannot both lie in $U$.
Thus $U \cap J(U)$ is the unique $J$-invariant 2-plane in $U$.
\end{proof}

So if $J$ is an almost complex structure on a $4$-manifold $W$ and $H$ is a $3$-dimensional
submanifold, then there is a $2$-plane field induced on $H$ called the field of \emph{complex tangencies} along $H$.
The following definition generalizes the one given by Ozsv\'ath and Szab\'o~\cite[Section~8.1.3]{OSz}, also
see~\cite[Lemma~2.1]{KM}.

\begin{defn} \label{defn:spinc}
Suppose that $\W = (W,Z,[\xi])$ is a cobordism from the sutured manifold $(M_0,\g_0)$ to~$(M_1,\g_1)$.
We say that an almost complex structure~$J$ defined on a subset of~$W$ containing~$\partial Z$
is \emph{admissible} if the field of complex tangencies in~$TM_i|_{\partial M_i}$ is admissible in~$(M_i,\g_i)$ for $i \in \{0,1\}$,
and the field of complex tangencies in $TZ|_{\partial Z}$ is admissible in~$(Z,\g_0 \cup \g_1)$.

A \emph{relative $\spinc$ structure} on~$\W$ is a homology class of pairs $(J,P)$, where
\begin{itemize}
\item $P \subset \text{Int}(W)$ is a finite collection of points,
\item $J$ is an admissible almost complex structure defined over $W \setminus P$, and
\item if $\xi_J$ is the field of complex tangencies along $Z$, then $\s_\xi = \s_{\xi_J}$.
\end{itemize}
We say that $(J,P)$ and $(J',P')$ are \emph{homologous} if there exists a compact $1$-manifold $C \subset W \setminus \partial Z$ such that
$P$, $P' \subset C$;
furthermore, $J|_{W \setminus C}$ and $J'|_{W \setminus C}$ are isotopic through admissible almost complex structures.
Denote by $\spinc(\W)$ the set of relative $\spinc$ structures over~$\W$.
\end{defn}

Given any $\spinc$ structure $\s \in \spinc(\W)$ and $i \in \{1, 2\}$, we can define
\[
\s_i = \s|_{(M_i,\g_i)} \in \spinc(M_i,\g_i)
\]
as the $\spinc$ structure of the field of complex tangencies of~$J$ along $M_i$ for an arbitrary representative
$(J,P)$ of $\s$. By definition, $\s|_{(Z,\g_0 \cup \g_1)} = \s_\xi$.

Let $i$ be the embedding of the pair~$(Z,\partial Z)$ into $(W,\partial Z)$,
and consider the induced restriction map
\[
i^* \colon H^2(W, \partial Z) \to H^2(Z, \partial Z).
\]
Then $\spinc(\W)$ is an affine space over $\ker(i^*)$. Indeed,
homology classes of admissible almost complex structures on~$W$
form an affine space over $H^2(W, \partial Z)$ as the space of admissible
almost complex structures on~$TW|_{\partial Z}$ is contractible.
Two such almost complex structures restrict to the same element of~$\spinc(Z, \g_0 \cup \g_1)$
if and only if their difference lies in $\ker(i^*)$.
We now define a related space of relative $\spinc$ structures.

\begin{defn} \label{def:spin-other}
Suppose that we are given an admissible almost complex structure~$J'$ on~$TW|_Z$
such that $\s_\xi = \s_{\xi_{J'}}$,
where~$\xi_{J'}$ is the filed of complex tangencies of~$J'$ along~$Z$.
Then $\spinc(\W, J')$ is the set of homology classes of pairs~$(J,P)$ such that~$J$
is an almost complex structure on~$W \setminus P$ and~$J|_Z = J'$.
\end{defn}

By obstruction theory, $\spinc(\W,J')$ is an affine space over $H^2(W,Z)$. Note that we mainly
focus on $\spinc(\W)$ instead of $\spinc(\W,J')$ in this paper because
in the definition of $\W$ we only fix the equivalence class $[\xi]$ of a contact
structure, so there is no (homotopically) unique almost complex structure along $Z$ that we could
use as a boundary condition. Had we fixed a concrete contact structure $\xi$ along $Z$,
equivalent balanced cobordisms would induce the same characteristic
foliations on $\partial M_i$, making it impossible to compose cobordisms,
or to define the identity morphism from $(M,\g)$ to itself.

\begin{lem} \label{lem:spec}
Suppose that for the balanced cobordism $\W = (W,Z,[\xi])$ we have
$H^k(Z, \partial M_1) = 0$ for $k \in \{1,2\}$.
Then, given an almost complex structure~$J'$ on~$Z$ as above, the
restriction map
\[
q \colon \spinc(\W,J') \to \spinc(\W)
\]
is a bijection.
\end{lem}

\begin{proof}
Consider the sequence of embeddings
\[
(W, \partial M_1) \stackrel{e}{\hookrightarrow} (W,\partial Z) \stackrel{f}{\hookrightarrow} (W,Z).
\]
Then, on second cohomology, $(f \circ e)^* = e^* \circ f^*$.
The restriction map $q$ is an affine map modeled on
\[
f^* \colon H^2(W,Z) \to \ker(i^*).
\]
From the long exact sequence of the triple $(W,Z,\partial Z)$,
we have $\text{im}(f^*) = \ker(i^*)$.
Furthermore, by the long exact sequence of the triple $(W,Z,\partial M_1)$
and our assumptions on $H^k(Z, \partial M_1)$, we see that
the map
\[
(f \circ e)^* \colon H^2(W,Z) \to H^2(W,\partial M_1)
\]
is an isomorphism. Hence $f^*$ is injective. By the above, $f^*$ is also surjective onto $\ker(i^*)$.
This shows that~$f^*$ is a bijection, and so is~$q$.
\end{proof}

\begin{rem}
As we shall see in Section~\ref{sec:special}, the space $\spinc(\W,J)$ naturally appears
when parameterizing homotopy classes of pseudo-holomorphic polygons.
In Definition~\ref{def:special}, we will introduce \emph{special} cobordisms, these
satisfy $Z = \partial M_0 \times I$. To define maps induced by special cobordisms, we
will count pseudo-holomorphic triangles.
Lemma~\ref{lem:spec} implies that, for special
cobordisms, the spaces $\spinc(\W)$ and $\spinc(\W,J')$ are isomorphic.
\end{rem}

\section{Link cobordisms} \label{sec:links}

\begin{defn} \label{defn:link}
For $i \in \{0,1\}$, let $Y_i$ be a connected, oriented 3-manifold, and let~ $L_i$ be a non-empty link in~$Y_i$.
Then a \emph{link cobordism} from $(Y_0,L_0)$ to $(Y_1,L_1)$ is a pair~$(X,F)$, where
\begin{enumerate}
\item $X$ is a connected, oriented cobordism from $Y_0$ to $Y_1$,
\item $F$ is a properly embedded, compact, orientable surface in $X$,
\item $\partial F = L_0 \cup L_1$.
\end{enumerate}
\end{defn}

We would like to associate to a link cobordism~$(X,F)$ a balanced cobordism $(W,Z,[\xi])$. However, to define the contact structure $\xi$,
we need more information, namely a set of dividing curves on $F$. For this, let us recall the notion of
a surface with divides from Honda et al.~\cite[Definition 4.1]{tight}, with the difference that we drop the orientation
of the surface.

\begin{defn}
A \emph{surface with divides} $(S,\sigma)$
is a compact orientable surface~$S$, possibly with boundary, together with a properly embedded $1$-manifold~$\sigma$
that divides~$S$ into two compact subsurfaces that meet along~$\sigma$.
\end{defn}

Link Floer homology of a link~$L$ is isomorphic to the~$\SFH$ of the sutured manifold complementary to~$L$.
Together with Dylan Thurston~\cite{naturality}, we constructed link Floer homology in a functorial way by first
defining sutured Floer homology functorially, and then applying a real blowup construction to~$L$ to
obtain a unique link complement, without having to make a choice of tubular neighborhood. We now review this
blowup procedure.

\begin{defn}
Suppose that $M$ is a smooth manifold, and let $L \subset M$ be a properly embedded submanifold.
For every $p \in L$, let $N_pL = T_pM/T_pL$ be the fibre of the normal bundle of $L$ over $p$,
and let $UN_pL = (N_pL \setminus \{0\})/\R_+$ be the fibre of the unit normal bundle of $L$ over $p$.
Then the \emph{(spherical) blowup} of $M$ along $L$, denoted by $\Bl_L(M)$, is a manifold with
boundary obtained from $M$ by replacing each point
$p \in L$ by $UN_pL$. There is a natural projection $\Bl_L(M) \to M$. For further details, see
Arone and Kankaanrinta~\cite{AK}.
\end{defn}

\begin{defn} \label{defn:declink}
A \emph{decorated link} is a triple $(Y,L,P)$, where $L$ is a non-empty link in the connected oriented 3-manifold $Y$, and $P \subset L$ is a finite set of points. We require that for every component~$L_0$ of~$L$, the number $|L_0 \cap P|$ is positive and even.
Furthermore, we are given a decomposition of~$L$ into compact $1$-manifolds~$R_+(P)$ and~$R_-(P)$ such that $R_+(P) \cap R_-(P) = P$.

We can canonically assign a balanced sutured manifold $\W(Y,L,P) = (M,\g)$ to every decorated link $(Y,L,P)$, as follows.
Let $M = \Bl_L(Y)$ and $\g = \bigcup_{p \in P} UN_pL$. Furthermore,
\[
R_\pm(\g) := \bigcup_{x \in R_\pm(P)} UN_xL,
\]
oriented as $\pm \partial M$, and we orient~$\g$ as~$\partial R_+(\g)$.
\end{defn}

\begin{defn} \label{def:linkcob}
We say that the triple $\X = (X,F,\sigma)$ is a \emph{decorated link cobordism} from $(Y_0,L_0,P_0)$ to $(Y_1,L_1,P_1)$ if
\begin{enumerate}
\item $(X,F)$ is a link cobordism from $(Y_0,L_0)$ to $(Y_1,L_1)$,
\item $(F,\sigma)$ is a surface with divides such that the map
\[
\pi_0(\partial\sigma) \to \pi_0((L_0 \setminus P_0) \cup (L_1 \setminus P_1))
\]
is a bijection,
\item \label{it:crossing} we can orient each component~$R$ of~$F \setminus \sigma$ such that whenever~$\partial\ol{R}$
crosses a point of~$P_0$, it goes from~$R_+(P_0)$ to~$R_-(P_0)$,
and whenever it crosses a point of~$P_1$, it goes from~$R_-(P_1)$ to~$R_+(P_1)$,
\item if $F_0$ is a closed component of $F$, then $\sigma \cap F_0 \neq \emptyset$.
\end{enumerate}

Two decorated link cobordisms $\X = (X,F,\sigma)$ and $\X' = (X',F',\sigma')$ between
the same decorated links $(Y_0,L_0,P_0)$ and $(Y_1,L_1,P_1)$ are said to be \emph{equivalent}
if there is an orientation preserving diffeomorphism $d \colon X \to X'$ such that $d(F) = F'$ and~$d(\sigma) = \sigma'$;
moreover, $d(y) = y$ for every $y \in Y_0 \cup Y_1$.

Suppose that $\X = (X,F,\sigma)$ is a cobordism from $(Y_0,L_0,P_0)$ to $(Y_1,L_1,P_1)$ and let $\X' = (X',F',\sigma')$ be a
cobordism from $(Y_0',L_0',P_0')$ to $(Y_1',L_1',P_1')$.
We say that~$\X$ and~$\X'$ are \emph{diffeomorphic} if there exists an orientation preserving
diffeomorphism~$d$ from~$X$ to~$X'$ such that $d(F) = F'$ and $d(\sigma) = \sigma'$,
and $d(R_\pm(P_i)) = R_\pm(P_i')$ for~$i \in \{0,1\}$.

Decorated links and equivalence classes of decorated link cobordisms form a category $\link$
with the obvious composition and identity morphisms.
As each link component has at least two marked points, when composing two
decorated link cobordisms, we do not create undecorated closed components of the surface.
\end{defn}

Note that in the above definition, neither the links~$L_0$ and~$L_1$, nor the surface~$F$ are required to be oriented.

\begin{prop}
Condition~\eqref{it:crossing} of Definition~\ref{def:linkcob} implies that
every non-closed component~$s$ of~$\sigma$ connects either~$R_+(P_0)$ and~$R_+(P_1)$, or $R_-(P_0)$ and~$R_-(P_1)$,
or~$R_+(P_i)$ and~$R_-(P_i)$ for~$i \in \{0,1\}$.
\end{prop}

\begin{proof}
Let~$R$ be the closure of a component of~$F \setminus \sigma$ such that~$s \subset \partial R$.
Then~$\partial R$ is a collection of polygonal
curves with edges alternatingly in~$\sigma$ and~$\partial F = L_0 \cup L_1$ along each component. Each edge in~$\partial F$
contains exactly one point of~$P_0 \cup P_1$. If we can orient~$R$ such that~\eqref{it:crossing} is satisfied,
then if~$s$ has both endpoints in~$L_0$ then it starts in~$R_-(P_0)$ and ends in~$R_+(P_0)$,
and if it has both endpoints in~$L_1$, then it starts in~$R_+(P_1)$ and ends in~$R_-(P_1)$.
As we go from~$+$ to~$-$ along~$L_0$ and from~$-$ to~$+$ along~$L_1$,
if~$s$ goes from~$L_0$ to~$L_1$, it start in~$R_-(P_0)$ and ends in~$R_-(P_1)$,
and if it goes from~$L_1$ to~$L_0$, then it starts in~$R_+(P_1)$ and ends in~$R_+(P_0)$.
 \end{proof}

\begin{rem}
Note that the converse of the above statement is not true. For this end, take the product cobordism
from the two-component unlink with two marked points on each component to itself, where the dividing set~$\sigma$
consists of four vertical lines, then connect the two cylinders with a tube. If chosen appropriately,
it is not possible to orient the component of~$F \setminus \sigma$ containing the tube correctly as for each orientation
exactly one of the two boundary components will go from~$R_+(P_0)$ to~$R_-(P_0)$.
\end{rem}

Let $\pi \colon M \to F$ be a principal circle bundle over the compact oriented surface~$F$, where the
orientation of~$M$ is determined by the orientation of the base and the fibre.
If $\xi$ is an $S^1$-invariant
contact structure on $M$, then it defines a dividing set $\sigma$ on $F$ as follows. A point $x \in F$ lies in $\sigma$
if and only if $\xi$ is tangent to the fibre $M_x = \pi^{-1}(x)$. Let $R_+(\sigma)$ consist of those $x \in F$ for which $M_x$ is positively
transverse or tangent to~$\xi$. Similarly, $R_-(\sigma)$ is the set of those~$x \in F$ for which~$M_x$ is negatively
transverse or tangent to~$\xi$. Then~$R_+(\sigma)$ and~$R_-(\sigma)$ are compact subsurfaces of~$F$ that meet along~$\sigma$.
The $S^1$ action defines a contact vector
field $v$ on~$M$ tangent to the fibres. The image of any (local) section of $\pi$ is hence a convex surface with
dividing set projecting onto $\sigma$.

The converse of the above is also true, in the following sense. 
Let~$\pi \colon M \to F$ be as above, and let~$\sigma$ be a dividing set on~$F$ that intersects each component of~$F$ non-trivially
and divides~$F$ into the subsurfaces~$R_+(\sigma)$ and~$R_-(\sigma)$.
According to Lutz~\cite{Lutz} and Honda~\cite[Theorem~2.11 and Section~4]{Ko}, up to isotopy,
there is a unique $S^1$-invariant contact structure~$\xi_{\sigma}$ on $M$ such that
the dividing set associated to~$\xi_\sigma$ is exactly~$\sigma$,
the coorientation of~$\xi_\sigma$ induces the splitting~$R_\pm(\sigma)$,
and the boundary~$\partial M$ is a convex.
Furthermore, if~$F$ has no $S^2$ or $T^2$ components, this correspondence is bijective between the isotopy classes of those
dividing sets~$\sigma$ that have no homotopically trivial components, and the isotopy classes of
universally tight contact structures on~$M$.

The dividing set of~$\xi_\sigma$ on~$\partial M$, which we denote by~$\g_\sigma$, is $S^1$-invariant.
In other words, each component of~$\g_\sigma$ projects to a single point in~$\partial M$ under~$\pi$.
By~\cite[Lemma~6.6]{Etnyre}, between any two adjacent points of~$\partial\sigma$, there is exactly
one point of~$P = \pi(\g_\sigma)$ and vice versa; i.e., the map
\[
\pi_0(\partial \sigma) \to \pi_0(\partial F \setminus P)
\]
is a bijection. The coorientation of~$\xi_\sigma$ determines a splitting of~$\partial M$ into
compact subsurfaces~$R_+(\g_\sigma)$ and~$R_-(\g_\sigma)$ that meet along~$\g_\sigma$. Let~$R_\pm(P) = \pi(R_\pm(\g_\sigma))$.

\begin{lem} \label{lem:crossing}
Whenever~$\partial R_+(\sigma)$ crosses a point of~$P$, it goes from~$R_+(P)$ to $R_-(P)$.
\end{lem}

\begin{proof}
Let $p \in P \cap R_+(\sigma)$. Since $p \in P$, the fiber~$M_p$ is a component of~$\g_\sigma$.
As~$p \in R_+(\sigma)$, the orientation of~$M_p$ is positively transverse to~$\xi_\sigma$,
and hence on~$M_p$ the fiber orientation coincides with the orientation of the dividing set~$\g_\sigma$.
On the other hand, $\g_\sigma$ is oriented as the boundary of~$R_+(\g_\sigma)$. Given
an arbitrary point~$x \in M_p$, a vector~$v_+ \in T_x\partial M$ pointing out of~$R_+(\g_\sigma)$,
and a vector~$v_p \in T_x M_p$ orienting~$M_p$, the pair $(v_+,v_p)$ orients~$\partial M$
as~$\partial R_+(\g_\sigma)$ is oriented via the ``outward normal first'' rule.
If~$w \in T_x M$ is an outward normal of~$M$, then the basis~$(w,v_+,v_p)$ orients~$M$.
But~$M$ is oriented via taking the orientation of the base~$F$, followed by the orientation
of the fibre~$M_p$. Since~$v_p$ orients~$M_p$, it follows that $(d\pi(w), d\pi(v_+))$
is a positive basis of~$T_pF$. Since~$d\pi(w)$ is an outward normal of~$F$, we get that~$d\pi(v_+)$
orients~$\partial F$. This proves that if~$p$ lies in~$R_+(\sigma)$, then~$\partial F$
is oriented from~$R_+(P)$ to~$R_-(P)$.
\end{proof}

\begin{defn} \label{defn:W}
Let $(X,F,\sigma)$ be a decorated link cobordism from $(Y_0,L_0,P_0)$ to~$(Y_1,L_1,P_1)$.
Then we define the cobordism $\W = \W(X,F,\sigma)$ as follows.
Choose an arbitrary splitting of~$F$ into~$R_+(\sigma)$ and~$R_-(\sigma)$, and orient~$F$
such that~$\partial R_+(\sigma)$ crosses~$P_0$ from~$R_+(P_0)$ to~$R_-(P_0)$
and~$P_1$ from~$R_-(P_1)$ to~$R_+(P_1)$.
Then~$\W$ is defined to be the triple $(W,Z,[\xi])$, where $W = \Bl_F(X)$ and $Z = UNF$, oriented
as a submanifold of~$\partial W$, finally $\xi = \xi_{\sigma}$.

Note that $\W$ is a cobordism from $\W(Y_0,L_0,P_0) = (M_0,\g_0)$ to $\W(Y_1,L_1,P_1) = (M_1,\g_1)$.
Indeed, let $\pi \colon Z \to F$ be the natural projection, then we can assume that~$\pi(\g_\sigma) = P_0 \cup P_1$.
By Lemma~\ref{lem:crossing} and our assumptions above,
\[
\pi(R_\pm(\g_\sigma)) \cap L_0 = R_\pm(P_0) \text{ and } \pi(R_\pm(\g_\sigma)) \cap L_1 = R_\mp(P_1).
\]
This implies that $R_\pm(\g_\sigma) \cap \partial M_0 = R_\pm(\g_0)$ and $R_\pm(\g_\sigma) \cap \partial M_1 = R_\mp(\g_1)$.
Since $\partial Z = \partial M_0 \cup (-\partial M_1)$, we obtain that~$\g_\sigma \cap \partial M_0 = \g_0$
and $\g_\sigma \cap \partial M_1 = \g_1$.
\end{defn}

The above definition is independent of the choice of splitting of~$F$ into~$R_+(\sigma)$ and~$R_-(\sigma)$.
Indeed, if we swap the splitting on a component~$F_0$ of~$F$, then on~$F_0$ the orientation of~$F$
is swapped as well. Hence, the orientation of each fiber of~$UNF_0$ is also reversed.
The same contact structure~$\xi_\sigma$ with the same coorientation is still $S^1$-invariant,
and induces the reversed~$R_+$ and~$R_-$ on~$F_0$ as the fiber orientation is reversed.

The following proposition is straightforward to verify using the definitions. Recall that for an object $(Y,L,P)$ of $\link$,
the sutured manifold $\W(Y,L,P)$ was introduced in Definition~\ref{defn:declink},
and for a morphism~$\X$ in~$\link$, the cobordism~$\W(\X)$ was defined in Definition~\ref{defn:W}.

\begin{prop} \label{prop:Wfunct}
The map~$\W$ is a \emph{functor} from $\link$ to $\BSut$. Furthermore, if the decorated
link cobordisms~$\X$ and~$\X'$ are equivalent or diffeomorhpic,
then~$\W(\X)$ and~$\W(\X')$ are also equivalent or diffeomorhpic, respectively.
\end{prop}

Hence, the composition $\SFH \circ \W$ gives a functor from $\link$ to $\Vect$.
If the triple $(Y,L,P)$ is an object of $\link$ and the components of $L$ are $L_1,\dots,L_k$, set
\[
d = d(Y,L,P) = \sum_{i = 1}^k (|L_i \cap P|/2 -1).
\]
Then, by work of the author~\cite[Proposition~9.2]{polytope},
\[
\SFH(\W(Y,L,P)) \cong \HFLh(Y,L) \otimes V^{\otimes d},
\]
where $V = \Z_2^2$.

\section{Special cobordisms, sutured multi-diagrams, and naturality} \label{sec:special}

Our first goal is to extend the hat version of the cobordism maps introduced by Ozsv\'ath and Szab\'o~\cite{OSz10}
to the class of sutured manifold cobordisms that are trivial along the boundary.

\begin{defn} \label{def:special}
We say that a cobordism $\W = (W,Z,[\xi])$ from $(M_0,\g_0)$ to $(M_1,\g_1)$ is \emph{special} if
\begin{enumerate}
\item $\W$ is balanced,
\item $\partial M_0 = \partial M_1$, and $Z = \partial M_0 \times I$ is the trivial cobordism between them,
\item \label{it:invar} $\xi$ is an $I$-invariant contact structure on $Z$ such that
each $\partial M_0 \times \{t\}$ is a convex surface with dividing set
      $\g_0 \times \{t\}$ for every $t \in I$ with respect to the contact vector field~$\partial/\partial t$.
\end{enumerate}
In particular, it follows from \eqref{it:invar} that $\g_0 = \g_1$.
\end{defn}

Recall that we introduced the notion of equivalence and diffeomorphism of sutured manifold
cobordisms in Definition~\ref{defn:cobeq}.
Balanced sutured manifolds and equivalence classes of special cobordisms form a
subcategory~$\BSut'$ of~$\BSut$.

For a special cobordism~$\W = (W,Z,[\xi])$, we have $H^k(Z, \partial M_1) = 0$ for $k \in \{1,2\}$.
Hence Lemma~\ref{lem:spec} implies that, given an admissible
almost complex structure $J'$ on $Z$ such that $\s_\xi = \s_{\xi_{J'}}$, the restriction map
$q \colon \spinc(\W,J') \to \spinc(\W)$ is a bijection.

We now make $\SFH$ into a functor from $\BSut'$ to~$\Vect$; i.e., we define the map~$\Phi_{\W}$ if~$\W$ is a special cobordism.
For this, we generalize the work of Ozsv\'ath and Szab\'o~\cite{OSz10} on cobordism maps induced
on the Heegaard Floer homology of closed $3$-manifolds.

\subsection{Sutured multi-diagrams and pseudo-holomorphic polygons}
Some of the necessary steps have already been done by Grigsby and Wehrli~\cite{GW},
we review and extend their results first.
In particular, we include the contact structure~$\xi$ on~$Z$ into the theory.

\begin{defn} \label{defn:multi}
A \emph{balanced sutured multi-diagram} is a tuple
$(\S, \bolde^0,\dots,\bolde^n)$,
where $\S$ is a compact, oriented, surface without closed components, and
there is a non-negative integer~$d$ such that, for every $0 \le i \le n$,
the set $\bolde^i$ consists of~$d$ pairwise disjoint simple closed curves
$\eta^i_1,\dots,\eta^i_d \subset \Int(\S)$ that are linearly independent in~$H_1(\S)$.
\end{defn}

\begin{rem}
By a slight abuse of notation, we will also write $\bolde^i$ for the $1$-dimensional submanifold $\bigcup \bolde^i$ of $\Int(\S)$.
\end{rem}

Suppose that we are given a balanced sutured multi-diagram $(\S, \bolde^0,\dots,\bolde^n)$. Then we associate to it a balanced cobordism
\[
\W_{\eta^0,\dots,\eta^n} = (W_{\eta^0,\dots,\eta^n},Z_{\eta^0,\dots,\eta^n},[\xi_{\eta^0,\dots,\eta^n}]).
\]
For an illustration of the construction, see Figure~\ref{fig:0}.

For $0 \le i \le n$, let $U_i$ be the sutured
compression body obtained from~$\S \times I$ by attaching $2$-handles along $\bolde^i \times \{0\} \subset \S \times \{0\}$,
and rounding the corners along $\partial \S \times \{0\}$. Then
\[
\partial U_i = \S^1 \cup (\partial \S \times I) \cup \S_{\eta^i},
\]
where $\S^1 = \S \times \{1\}$ and $\S_{\eta^i}$ is obtained from $\S \times \{0\}$
by performing surgery along each component of $\bolde^i \times \{0\}$.

\begin{center}
\begin{figure}
\includegraphics{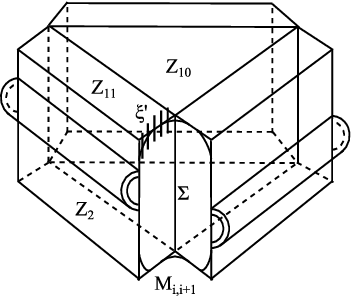}
\caption{A schematic picture of the cobordism $W_{\eta^0,\dots,\eta^n}$. Here $n = 2$,
and $P_{n+1} \times \S$ is represented by a triangle times a vertical interval.
Note how the corners are rounded. We also illustrate the $2$-plane field~$\xi'$ on~$Z$.}
\label{fig:0}
\end{figure}
\end{center}

Let $P_{n+1}$ denote a regular $(n+1)$-gon, with vertices
$v_i$ for $i \in \Z_{n+1}$, labeled in a clockwise fashion. Denote the edge connecting~$v_i$ and~$v_{i+1}$ by~$e_i$. Then let
\[
W_{\eta^0,\dots,\eta^n}  = \frac{(P_{n+1} \times \S) \sqcup \coprod_{i=0}^n (e_i \times U_i)}{(e_i \times \S) \sim (e_i \times \S^1)},
\]
where we round the corners along each $\{v_i\} \times \S$ for $i \in \Z_{n+1}$.

Denote by $(M_{i,j},\g_{i,j})$ the balanced sutured manifold defined by the diagram $(\S,\bolde^i,\bolde^j)$. Then
\[
M' = M_{0,1} \sqcup \dots \sqcup M_{n-1,n} \sqcup -M_{0,n} \subset \partial W,
\]
and we write $Z_{\eta^0,\dots,\eta^n}$ for $\partial W \setminus \Int(M')$.

Finally, we define the contact structure $\xi = \xi_{\eta^0,\dots,\eta^n}$ on the balanced sutured manifold
\[
(Z,\g) = (Z_{\eta^0,\dots,\eta^n}, \g_{0,1} \cup \dots \cup \g_{n-1,n} \cup \g_{n,0})
\]
by giving a sutured manifold hierarchy of $(Z,\g)$.
A sutured manifold hierarchy is a special case of a convex hierarchy by Honda et al.~\cite{tight}, hence it gives rise to a
contact structure~$\xi$, well-defined up to equivalence.

\begin{figure}[tb]
\includegraphics{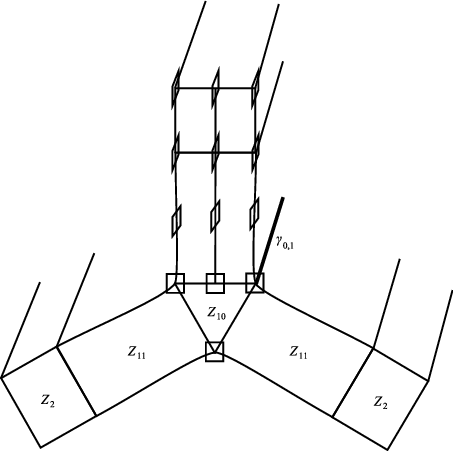}
\caption{A schematic picture of the sutured manifold $(Z,\g)$ and the $2$-plane field $\xi_{J'}$ isotopic
to the contact structure $\xi$.} \label{fig:1}
\end{figure}

Note that $Z$ consists of three parts: $Z_{10} = P_{n+1} \times \partial \S$, $Z_{11}=\bigcup_{i=0}^n (e_i \times \partial \S \times I)$,
and $Z_2 = \bigcup_{i=0}^n (e_i \times \S_{\eta^i})$, see Figure~\ref{fig:1}. We put $Z_1 = Z_{10} \cup Z_{11}$; then
\[
Z_1 = \left(P_{n+1} \cup \bigcup_{i=0}^n (e_i \times I) \right) \times \partial \S = P_{2n+2} \times \partial \S.
\]
Here, we get $P_{2n+2}$ by gluing the rectangle $e_i \times I$ to $P_{n+1}$ by identifying $e_i \times \{1\}$ with $e_i$ for each $i \in \Z_{n+1}$,
then rounding the corners at $v_0,\dots,v_n$. We still label the edge $e_i \times \{0\}$ of $P_{2n+2}$ by~$e_i$,
and the edge containing~$v_i$ is called~$g_i$. Observe that $\g_{i,i+1} = g_{i+1} \times \partial \S$.
Recall that when we defined the compression body $U_i$, we rounded its corners along $\partial \S \times \{0\}$.
When we glue~$Z_1$ and~$Z_2$, this corresponds to rounding the corners along $e_i \times \partial \S$,
so for every $x \in e_i$, the surfaces $\{x\} \times \partial \S \times I \subset Z_{11}$ and
$\{x\} \times \S_{\eta^i} \subset Z_2$ match smoothly, cf.~Figure~\ref{fig:0}.

Let $s_i$ be a $1$-manifold parallel to $\partial \S_{\eta^i}$ inside $\Int(\S_{\eta^i})$, and
let $A_i = e_i \times s_i$. Then
$A = A_0 \cup \dots \cup A_n$ is a decomposing surface, a union of product annuli, inside~$(Z,\g)$. Consider the sutured manifold
decomposition $(Z,\g) \rightsquigarrow^A (Z',\g')$. Then $(Z',\g')$ is the disjoint union of the product sutured manifolds
$(\S_{\eta^i} \times I, \partial \S_{\eta^i} \times I)$ for $i \in \Z_{n+1}$, and $S^1 \times D^2$ components with $2n+2$ longitudinal sutures on each.
Every product piece has a product disk decomposable contact structure, unique up to equivalence.
We further decompose each $S^1 \times D^2$ along $\{\text{pt}\} \times D^2$ to get a ball with a single suture, which
carries a unique tight contact structure. Note that we orient~$\{\text{pt}\} \times D^2$ such that
it is positively transverse to the~$\partial \S$ factor.
Our sequence of decompositions terminates in a product sutured manifold. Hence,
by the work of Honda et al.~\cite{tight}, we obtain a tight contact structure $\xi$ on $(Z,\g)$, which is well-defined up
to equivalence.

We will use the following lemma to show that the relative $\spinc$ structures used by
Grigsby and Wehrli~\cite[Proposition~3.7]{GW} and defined by a $2$-plane field~$\xi'$
along~$Z$ also define $\spinc$ structures relative to~$\xi$.

\begin{lem} \label{lem:bcond}
Consider $\W_{\eta^0,\dots,\eta^n} = (W,Z,[\xi])$. Let $\xi' = \xi'_{\eta^0,\dots,\eta^n}$ be the 2-plane field in
$TW|_Z$ such that on $Z_1 = P_{2n+2} \times \partial \S$ it is tangent to $\S$, and on $Z_2$ it is tangent to~$\S_{\eta^i}$.
(This is smooth on $Z$ since we rounded the corners along $e_i \times \partial \S$, cf.~Figure~\ref{fig:0}.)
Choose an arbitrary almost complex structure $J'$ on $TW|_Z$ such that~$\xi'$ consists of complex lines,
and let $\xi_{J'}$ denote the $2$-plane field of complex tangencies along~$Z$.
Then
\[
\s_\xi = \s_{\xi_{J'}} \in \spinc(Z,\g).
\]
\end{lem}

\begin{proof}
It suffices to check that the $2$-plane field $\xi_{J'}$ in $TZ$ never agrees with $-\xi$,
for some representative $\xi$ of the equivalence class $[\xi]$.
For an illustration of the following argument, see Figure~\ref{fig:1}.
Since $\xi'|_{Z_1}$ is tangent to $\S$ and is $J'$-invariant, the $2$-planes
$\xi_{J'}$ on $Z_1 = P_{n+1} \times \partial \S$ must be positively transverse to the $\partial \S$ factor.
On $Z_2$, the planes $\xi_{J'}$ agree with $\xi'$, which are tangent to $\S_{\eta^i}$.
The contact structure $\xi$ is a perturbation of the horizontal foliation on each $\S_{\eta^i} \times I$.
On $Z_1$, it is a perturbation of
the foliation by multi-saddles, so it is also transverse to the~$\partial \S$ factor.
In particular, $\xi$ and $\xi_{J'}$ are never opposite.
\end{proof}

The almost complex structures $J'$ in Lemma~\ref{lem:bcond} form a contractible space, so
homotopically it is unique, we denote it by $J'_{\eta^0,\dots,\eta^n}$. Since
$\s_\xi = \s_{\xi_{J'}}$ and it is admissible, we can talk about the set
$\spinc(\W_{\eta^0,\dots,\eta^n}, J'_{\eta^0,\dots,\eta^n})$ of $\spinc$ structures restricting
to~$J'_{\eta^0, \dots, \eta^n}$ along~$Z$. We are going to use the notation
\[
\spincu\left(\W_{\eta^0,\dots,\eta^n}\right) := \spinc\left(\W_{\eta^0,\dots,\eta^n}, J'_{\eta^0,\dots,\eta^n}\right).
\]
Furthermore, just as in Lemma~\ref{lem:spec}, we have a restriction map
\[
q \colon \spincu\left(\W_{\eta^0,\dots,\eta^n}\right) \to \spinc \left(\W_{\eta^0,\dots,\eta^n}\right).
\]

As usual, we denote by $\T_{\eta^i}$ the $d$-torus $\eta^i_1 \times \dots \times \eta^i_d$ inside $\sym^d(\S)$. For $i \in \Z_{n+1}$,
let $\x_{i+1} \in \T_{\eta^i} \cap \T_{\eta^{i+1}}$. Then we write $\pi_2(\x_0,\dots,\x_n)$ for the set of homotopy
classes of Whitney $(n+1)$-gons inside $\sym^d(\S)$ connecting $\x_0,\dots,\x_n$. We now recall
a result of Grigsby and Wehrli~\cite[Proposition~3.7]{GW},
which relates Whitney $(n+1)$-gons and $\spinc$ structures.

\begin{prop} \label{prop:ngon}
Suppose that $(\S,\bolde^0,\dots,\bolde^n)$ is a sutured multi-diagram, and $\W = \W_{\eta^0,\dots,\eta^n}$ is the associated cobordism.
Then there is a well-defined map
\[
\su \colon \pi_2(\x_0,\dots,\x_n) \to \spincu(\W)
\]
such that $\su(\Psi)|_{M_{i,i+1}} = \s(\x_i)$ for every $i \in \Z_{n+1}$.
\end{prop}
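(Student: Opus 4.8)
The plan is to mimic the standard construction of the map from $\spinc$-equivalence classes of Whitney polygons to $\spinc$-structures in the closed case, adapted to the sutured cobordism $\W_{\eta^0,\dots,\eta^n}$ and the fixed almost-complex boundary condition. First I would fix, once and for all, the almost-complex structure $J_0'$ on $TW|Z$ provided by Proposition \ref{prop:bcond}, so that by that proposition $\spinc(\W)$ is canonically identified with $\spinc(W,Z,J_0')$; it then suffices to produce a map $\pi_2(\x_0,\dots,\x_n)\to\spinc(W,Z,J_0')$ with the stated restriction property. Recall that $\sym^d(\S)$ carries a natural almost-complex structure and that a homotopy class $\Psi\in\pi_2(\x_0,\dots,\x_n)$ is represented by a smooth map $u\colon P_{n+1}\times\S\dashrightarrow\sym^d(\S)$ (more precisely a map from $P_{n+1}$ to the configuration space whose graph is a $2$-chain in $P_{n+1}\times\S$), sending the edge $e_i$ into the Lagrangian-type subset $\T_{\eta^i}$ and the vertex $v_i$ to $\x_i$.

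The key step is to convert such a $u$ into an almost-complex structure on $W_{\eta^0,\dots,\eta^n}$ minus a finite point set. Following the recipe of Ozsv\'ath--Szab\'o and of Grigsby--Wehrli, the intersection pattern of $u$ with the subvariety $\{x\}\times\sym^{d-1}(\S)\subset\sym^d(\S)$, as $x$ ranges over $\S$, produces a $2$-chain in $P_{n+1}\times\S$; cap it off inside the compression bodies $e_i\times U_i$ using the cylinder $\bolde^i\times I$ that records the $2$-handle cocores, to obtain a closed relative $2$-cycle, hence an element of $H_2(W,Z)$, which via Poincar\'e--Lefschetz duality is a class in $H^2(W,Z)$. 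Adding this class to a fixed reference $\spinc$-structure (coming from a vector field transverse to the point-subvariety intersections, built to restrict to $J_0'$ on $Z$ by construction, using that on $Z_2=\bigcup_i e_i\times\S_{\eta^i}$ the plane field $\xi'$ is tangent to the $\S_{\eta^i}$, matching the geometry of $u$ near the edges) gives $\s(\Psi)\in\spinc(W,Z,J_0')$. Concretely one realizes $\s(\Psi)$ directly as the $\spinc$-structure of an almost-complex structure built from $J_0'$ near $Z$, from the pulled-back complex structure of $\sym^d(\S)$ in the interior, and with the points $P$ placed at the local multiplicities of $\Psi$; the two descriptions agree by the usual obstruction-theoretic bookkeeping.

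Next I would verify well-definedness: two representatives of the same $\Psi$ differ by a homotopy, which changes the capped $2$-cycle by a boundary in $W$ and hence does not change its class in $H_2(W,Z)$; this is exactly the argument of \cite[Proposition 3.7]{GW}, the only new point being to check the homotopy can be taken relative to $Z$, which holds because all representatives already agree along $\partial W$ with the fixed boundary data $J_0'$. Finally, the restriction formula $\s(\Psi)|M_{i,i+1}=\s(\x_i)$ is local to the face $M_{i,i+1}\subset\partial W$ corresponding to the diagram $(\S,\bolde^i,\bolde^{i+1})$: near that face $W$ looks like a neighborhood of the Heegaard surface of the sutured diagram, the polygon $\Psi$ restricts along $e_i$ to (a small neighborhood of) the intersection point $\x_i$, and the capped $2$-cycle restricts to the $2$-chain that defines $\s(\x_i)\in\spinc(M_{i,i+1},\g_{i,i+1})$ in the sense of \cite{sutured}; so the statement follows by naturality of the Poincar\'e--Lefschetz duality under restriction $H_2(W,Z)\to H_2(M_{i,i+1},\g_{i,i+1})$ together with $H^2(W,\partial W)\to H^2(M_{i,i+1},\g_{i,i+1})$. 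The main obstacle I anticipate is purely bookkeeping: checking carefully that the reference $\spinc$-structure and all the capping choices can be made compatibly with the non-standard boundary condition $J_0'$ across the three pieces $Z_{10}$, $Z_{11}$, $Z_2$ of $Z$ — in particular that the $\pi/2$ rotation of $\xi(J_0')$ across $Z_{11}$ described in the proof of Proposition \ref{prop:bcond} does not introduce a spurious correction term — but once that compatibility is pinned down, the argument is formally identical to \cite[Proposition 3.7]{GW}.
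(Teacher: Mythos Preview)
Your proposal is correct and follows essentially the same route as the paper's proof: invoke the construction of \cite[Proposition~3.7]{GW}, which to any Whitney $(n+1)$-gon associates a $2$-plane field on $W$ restricting to $\xi'$ along $Z$ (and hence an element of $\spinc(W,Z,J_0')$ with the correct restrictions to the $M_{i,i+1}$), and then transport to $\spinc(\W)$ via the canonical identification of Proposition~\ref{prop:bcond}. The paper's proof is just those two sentences; your version unpacks the GW construction in more detail and flags the $Z_{11}$ bookkeeping as a possible worry, but that compatibility is exactly what Proposition~\ref{prop:bcond} handles, so no extra correction term appears.
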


\begin{proof}
The construction of Grigsby and Wehrli~\cite{GW}, based on the work of Ozsv\'ath and Szab\'o~\cite[Section~8]{OSz},
associates to any Whitney $(n+1)$-gon~$u$ a $2$-plane field~$\xi_u$ on~$W$ minus a contractible $1$-complex~$c$
that agrees with $\xi' = \xi'_{\eta^0,\dots,\eta^n}$ along $Z$.
Let~$J$ be an almost complex-structure on~$W \setminus c$ such that~$\xi_u$ consists of complex lines.
By construction, $J|_Z = J'_{\eta^0,\dots,\eta^n}$.
The relative homology class of~$J$ gives an element~$\su(\Psi)$ of $\spincu(\W)$
that satisfies the property $\su(\Psi)|_{M_{i,i+1}} = \s(\x_i)$.
\end{proof}

\begin{defn}
Let $\su$ be the map introduced in Proposition~\ref{prop:ngon}, and let~$q$ be
the restriction map from $\spincu(\W)$ to $\spinc(\W)$. Then we define
\[
\s \colon \pi_2(\x_0,\dots,\x_n) \to \spinc(\W)
\]
to be the composition $q \circ \su$.
\end{defn}

It follows from Proposition~\ref{prop:ngon} that
$\s(\Psi)|_{M_{i,i+1}} = \s(\x_i)$ for every $i \in \Z_{n+1}$.

\begin{defn}
Let $(\S,\bolde^0,\dots,\bolde^n)$ be a balanced sutured multi-diagram. Let $D_1,\dots,D_l$ denote the closures of the components of
$\S \setminus (\bolde^0 \cup \dots \cup \bolde^n)$ disjoint from~$\partial \S$. Then the set of \emph{domains}
in $(\S,\bolde^0,\dots,\bolde^n)$ is
\[
D(\S,\bolde^0,\dots,\bolde^n) = \Z \langle\,D_1,\dots,D_l \,\rangle.
\]
For a domain $\D \in D(\S,\bolde^0,\dots,\bolde^n)$, we write $\D \ge 0$ if
$\D \in \Z_{\ge 0} \langle\,D_1,\dots,D_l \,\rangle$. As usual, if
\[
(\x_0,\dots,\x_n) \in (\T_{\eta^n} \cap \T_{\eta^0} ) \times \dots \times (\T_{\eta^{n-1}} \cap \T_{\eta^n}),
\]
then $D(\x_0,\dots,\x_n)$ denotes the set of domains connecting $\x_0,\dots,\x_n$.

Finally, an \emph{$(n+1)$-periodic domain} is an element $\P \in D(\S,\bolde^0,\dots,\bolde^n)$ such that~$\partial\P$
is a $\Z$-linear combination of curves in $\bolde^0,\dots,\bolde^n$.
\end{defn}

The following proposition implies that any two Whitney $(n + 1)$-gons in the affine set $\pi_2(\x_0,\dots,\x_n)$ differ by an
$(n + 1)$-periodic domain.

\begin{prop} \label{prop:peri}
If $\pi_2(\x_0,\dots,\x_n) \neq \emptyset$, then
\[
\pi_2(\x_0,\dots,\x_n) \cong \ker\left(\bigoplus_{i=0}^n H_1(\bolde^i) \to H_1(\S)\right) \cong H_2(W_{\eta^0,\dots,\eta^n}).
\]
Furthermore,
\[
\mathrm{coker} \left (\bigoplus_{i=0}^n H_1(\bolde^i) \to H_1(\S)\right) \cong H_1(W_{\eta^0,\dots,\eta^n}).
\]
\end{prop}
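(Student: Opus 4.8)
The plan is to compute the homology of $W = W_{\eta^0,\dots,\eta^n}$ directly from its explicit handle-like decomposition, using a Mayer--Vietoris / excision argument. First I would observe that $W$ deformation retracts onto a simpler space: the polygon $P_{n+1}$ is contractible, so $P_{n+1} \times \S$ retracts to $\S$, and each piece $e_i \times U_i$ retracts to $U_i$; since $U_i$ is a compression body built from $\S \times I$ by attaching $d$ two-handles along $\bolde^i$, it is homotopy equivalent to $\S$ with $d$ two-cells glued along the curves $\eta^i_1,\dots,\eta^i_d$. Gluing these $n+1$ copies back along the edges $e_i$ (each of which contributes a copy of $\S \simeq \S \times \{1\}$) shows that $W$ is homotopy equivalent to the space $X$ obtained from a single copy of $\S$ by attaching, for each $i \in \Z_{n+1}$ and each $1 \le j \le d$, a $2$-cell along $\eta^i_j$. (The cyclic arrangement of the polygon edges does not introduce extra topology because $P_{n+1}$ is simply connected and the identifications $e_i \times \S \sim e_i \times \S^1$ are along contractible edges.)

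Next I would write down the cellular chain complex of $X$ in the relevant range. We have $C_2(X) = C_2(\S) \oplus \bigoplus_{i=0}^n \Z\langle \eta^i_1,\dots,\eta^i_d\rangle$ and $C_1(X) = C_1(\S)$, $C_0(X) = C_0(\S)$, with the boundary map $\partial_2$ on the new generators given precisely by the inclusion-induced map sending the $2$-cell along $\eta^i_j$ to the class $[\eta^i_j] \in H_1(\S)$ realized as a $1$-cycle. More cleanly, I would use the long exact sequence of the pair $(X,\S)$, or equivalently the cofiber sequence: $H_k(X,\S)$ is the homology of a wedge of $(n+1)d$ copies of $S^2$, hence is $\bigoplus_{i=0}^n H_1(\bolde^i) \cong \Z^{(n+1)d}$ in degree $2$ and $0$ otherwise (using $H_1(\bolde^i) \cong \Z^d$). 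The connecting homomorphism $H_2(X,\S) \to H_1(\S)$ is exactly the map $\bigoplus_{i=0}^n H_1(\bolde^i) \to H_1(\S)$ appearing in the statement, since it records the attaching classes. Then the long exact sequence
$$ 0 \to H_2(\S) \to H_2(X) \to \bigoplus_{i=0}^n H_1(\bolde^i) \xrightarrow{\ \phi\ } H_1(\S) \to H_1(X) \to H_1(\S,\S) = 0 $$
(using $H_2(\S) = 0$ as $\S$ is an open surface, and $H_2(X,\S)$ injects after the previous term vanishes) immediately yields $H_2(W) \cong H_2(X) \cong \ker\phi$ and $H_1(W) \cong H_1(X) \cong \mathrm{coker}\,\phi$. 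Combined with Proposition~\ref{prop:peri}'s identification $\pi_2(\x_0,\dots,\x_n) \cong \ker\phi$ (which is the algebraic statement about periodic domains, provable by the standard argument that a periodic domain is determined by its local multiplicities and these are constrained exactly by lying in the kernel), this gives both claims.

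The main obstacle, I expect, is justifying rigorously that $W$ really is homotopy equivalent to the simple cell complex $X$ — in particular, checking that rounding the corners along $\{v_i\} \times \S$ and the cyclic gluing pattern of the $2n+2$ edges of the enlarged polygon do not alter the homotopy type beyond what the naive picture suggests. I would handle this by building $W$ up one piece at a time and applying the gluing lemma for homotopy equivalences (or a Mayer--Vietoris induction on $n$), at each stage using that the overlap $e_i \times \S$ is homotopy equivalent to $\S$ and the inclusion into $U_i$ is the standard one. An alternative, perhaps cleaner route that avoids homotopy-theoretic bookkeeping is to compute $H_*(W)$ purely via Mayer--Vietoris with the two pieces $P_{n+1} \times \S$ and $\coprod_i (e_i \times U_i)$, whose intersection is $\coprod_i (e_i \times \S)$; the resulting sequence directly exhibits $H_1$ and $H_2$ of $W$ in terms of $H_1(\S)$ and $\bigoplus_i H_1(U_i) = \bigoplus_i \ker(H_1(\S) \to H_1(\S_{\eta^i}))$, and one identifies the latter with the stated kernel/cokernel using that $H_1(U_i, \S) \cong H_1(\bolde^i)$.
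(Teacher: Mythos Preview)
Your argument is correct. The paper does not actually prove this proposition; its proof reads in full ``This appeared as Proposition 3.3 and 3.4 in \cite{GW}.'' So there is no in-paper argument to compare against, and your cellular / long-exact-sequence computation via the homotopy equivalence $W \simeq \S \cup_{\bolde^0,\dots,\bolde^n} (\text{2-cells})$ is exactly the standard route and almost certainly what the cited reference does. The paragraph immediately following the proposition in the paper, which builds $H(\mathcal{P}) \in H_2(W)$ out of $\{x\}\times\mathcal{P}$ capped off by the disks $E^i_j$, is precisely the explicit inverse to your isomorphism $H_2(X)\cong \ker\phi$, confirming that the intended picture matches yours.

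Two minor clean-ups: in your long exact sequence the final term should be $H_1(X,\S)=0$ (you wrote $H_1(\S,\S)$), and in your alternative Mayer--Vietoris sketch the identification of $H_1(U_i)$ is off (it is the cokernel, not the kernel, of $H_1(\bolde^i)\to H_1(\S)$, and it is $H_2(U_i,\S)$ rather than $H_1(U_i,\S)$ that is isomorphic to $H_1(\bolde^i)$). Neither affects your main argument, which goes through as written once you invoke $H_2(\S)=0$ for the open surface $\S$.
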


\begin{proof}
This was shown by Grigsby and Wehrli~\cite[Proposition~3.3 and~3.4]{GW}.
\end{proof}

The correspondence in Proposition~\ref{prop:peri} can be made explicit by associating to each periodic
domain~$\mathcal{P}$ an element~$H(\mathcal{P})$ of $H_2(W_{\eta^0,\dots,\eta^n})$, as follows.
Pick an interior point $x \in P_{n+1}$, and connect~$x$ to every~$e_i$ by a straight arc~$a_i$.
For each curve $\eta^i_j \in \bolde^i$, let $E^i_j \subset W_{\eta^0,\dots,\eta^n}$
denote the union of the annulus $a_i \times \eta^i_j \subset P_{n+1} \times \S$,
the annulus $(a_i \cap e_i) \times\eta^i_j \times I$ in~$Z_{11}$,
and the core disk of the $2$-handle attached to $(a_i \cap e_i) \times \eta^i_j \times \{0\}$
in~$Z_2$.
Suppose that
\[
\partial \mathcal{P} = \sum_{i = 0}^n \sum_{j = 1}^d e^j_i \eta^i_j.
\]
Then
\[
H(\mathcal{P}) = \{x\} \times \mathcal{P} + \sum_{i = 0}^n \sum_{j = 1}^d e^j_i E^i_j \in H_2(W_{\eta^0,\dots,\eta^n}).
\]

The following is a corrected version of \cite[Proposition 3.9]{GW}.
Recall from Proposition~\ref{prop:ngon}
that~$\su$ is the relative $\spinc$ map defined by Grigsby and Wehrli,
which is different from the map~$\s$.

\begin{prop}
Let $\Psi$, $\Psi' \in \pi_2(\x_0,\dots,\x_n)$. Then $\su(\Psi) = \su(\Psi')$
if and only if~$\Psi-\Psi'$ can be written as a
$\Z$-linear combination of doubly-periodic domains.
\end{prop}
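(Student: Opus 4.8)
The plan is to reduce the statement to a comparison between two affine spaces — the set of Whitney $(n+1)$-gons $\pi_2(\x_0,\dots,\x_n)$ and the set of relative $\spinc$ structures $\spinc(\W)$ — under the map $\s$ of Proposition \ref{prop:ngon}. By Proposition \ref{prop:peri}, $\pi_2(\x_0,\dots,\x_n)$ is an affine space over $H_2(W_{\eta^0,\dots,\eta^n})$, two $(n+1)$-gons differing by an $(n+1)$-periodic domain, while $\spinc(\W)$ is an affine space over $H^2(W,Z)$. So the whole statement is equivalent to understanding the linear map
$$\mathbf{d} \colon H_2(W) \cong \pi_2(\x_0,\dots,\x_n) - \pi_2(\x_0,\dots,\x_n) \To H^2(W,Z)$$
induced by $\s$, and showing that its kernel is exactly the subgroup spanned by differences of doubly-periodic domains. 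Here a doubly-periodic domain is an $(n+1)$-periodic domain whose boundary involves curves from only two of the $\bolde^i$'s (so that it genuinely represents a difference $\Psi - \Psi'$ of two $(n+1)$-gons with the same endpoints and is detected by the relative homology of the corresponding two-handlebody piece of $W$); I would first make this notion precise and record that such domains do lie in $\ker \mathbf{d}$, using the explicit formula $H(\P) = \{x\}\times\P + \sum e^j_i E^i_j$ together with the observation that a surface $E^i_j$ built from an annulus $a_i\times\eta^i_j$ and the core of the attached $2$-handle has, after the compression body surgery, boundary lying in $Z$, hence contributes trivially to the pairing that computes $\s(\Psi) - \s(\Psi')$.

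For the identification of $\ker \mathbf{d}$ I would run the long exact sequence of the pair $(W,Z)$ together with the Mayer–Vietoris-type decomposition of $W$ coming from its construction as $(P_{n+1}\times\S) \cup \coprod (e_i\times U_i)$. Concretely, since $\s(\Psi) - \s(\Psi') = PD[H(\Psi - \Psi')] \in H^2(W,Z) \cong H_2(W, \partial W \setminus Z)$ (using the fact that $J_0'$ is compatible with the fixed boundary plane field $\xi'$, so that the difference class of two almost-complex structures agreeing on $Z$ is measured by a relative homology class rel $Z$ via Poincaré–Lefschetz duality), the condition $\s(\Psi) = \s(\Psi')$ says precisely that the image of $H(\Psi - \Psi')$ in $H_2(W,\partial W\setminus Z)$ vanishes, equivalently that $H(\Psi - \Psi')$ lies in the image of $H_2(\partial W \setminus Z) = \bigoplus_{i} H_2(M_{i,i+1}) \oplus H_2(M_{0,n}) \to H_2(W)$. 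One then computes this image inside $H_2(W) \cong \ker\left(\bigoplus_i H_1(\bolde^i)\to H_1(\S)\right)$: a class in $H_2(M_{i,i+1})$ is a periodic domain for the sub-diagram $(\S,\bolde^i,\bolde^{i+1})$, i.e.\ an $(n+1)$-periodic domain with boundary on only the two curve sets $\bolde^i, \bolde^{i+1}$ — this is exactly a doubly-periodic domain in the sense above. Summing over the faces of $P_{n+1}$ shows the image is the span of the doubly-periodic domains, which gives the reverse inclusion.

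The main obstacle I anticipate is pinning down, carefully and correctly, the identification $\spinc(\W)$-difference $\leftrightarrow$ relative homology class, so that $\s(\Psi) - \s(\Psi')$ really equals $PD[H(\Psi-\Psi')]$ in $H^2(W,Z)$: this is where the earlier discussion of the boundary condition is essential, because $\spinc(\W)$ is only an affine space over $H^2(W,Z)$ after we fix the $J(\xi)'$-identification of Proposition \ref{prop:bcond}, and the Grigsby–Wehrli construction of the $2$-plane field attached to a Whitney $(n+1)$-gon must be shown to transform under change of $(n+1)$-gon by precisely the Pontrjagin–Thom dual of $H(\Psi - \Psi')$. The rest — that doubly-periodic domains lie in the kernel, and that they span it via the $(W,Z)$ exact sequence — is then a bookkeeping exercise with the handle decomposition of $W$, and the statement that this fixes the error in \cite[Proposition 3.9]{GW} amounts to noting that the erroneous version omitted the distinction between general $(n+1)$-periodic domains and the doubly-periodic ones that actually annihilate $\s$.
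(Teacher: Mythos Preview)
The paper does not supply its own proof of this proposition; it is stated, without argument, as a corrected version of \cite[Proposition 3.9]{GW}, the correction being the replacement of general $(n+1)$-periodic domains by doubly-periodic ones. Your outline is the expected argument and is essentially sound: reduce to the linear map $H_2(W)\to H^2(W,Z)\cong H_2(W,M')$ induced by $\s$, identify it with the natural map in the long exact sequence of the pair $(W,M')$, and read off that its kernel is the image of $H_2(M')=\bigoplus H_2(M_{i,i+1})$, which is precisely the span of (the $H$-classes of) doubly-periodic domains.

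One small correction: your parenthetical about $E^i_j$ having ``boundary lying in $Z$'' after the compression-body surgery is not accurate as written. Each $E^i_j$ is a disk with boundary $\{x\}\times\eta^i_j$ inside $\{x\}\times\S$, and $H(\P)$ is a \emph{closed} surface. The reason a doubly-periodic $\P$ (with $\partial\P$ supported on $\bolde^i\cup\bolde^j$ only) lies in the kernel is the one you give correctly in your second paragraph: $H(\P)$ is homologous to a surface lying entirely in $M_{i,j}\subset M'$ (push $\{x\}\times\P$ to the vertex of $P_{n+1}$ between $e_i$ and $e_j$, and slide the caps $E^i_*,E^j_*$ into the adjacent compression bodies), hence dies in $H_2(W,M')$. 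Since your exact-sequence argument already handles both directions, this is only a cosmetic fix.

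The step you flag as the main obstacle --- that $\s(\Psi)-\s(\Psi')$ is the Poincar\'e--Lefschetz dual of $H(\Psi-\Psi')$ in $H^2(W,Z)$ --- is indeed the substantive technical point, and is established in the closed and relative settings in \cite{OSz} and \cite{GW}; once Proposition~\ref{prop:bcond} fixes the boundary condition, no new difficulty arises in the sutured case.
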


\begin{defn}
A balanced sutured multi-diagram is \emph{admissible} if every non-trivial
$(n+1)$-periodic domain has both positive and negative coefficients.
\end{defn}

The following statement is \cite[Lemma~3.12]{GW}.

\begin{lem}
Every balanced sutured multi-diagram is isotopic to an admissible one.
\end{lem}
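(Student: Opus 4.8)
The statement to prove is: every balanced sutured multi-diagram is isotopic to an admissible one, where admissibility means every non-trivial $(n+1)$-periodic domain has both positive and negative coefficients. The plan is to mimic the classical winding-region argument of Ozsv\'ath–Szab\'o, adapted to the sutured multi-diagram setting as in \cite{GW}. First I would fix the diagram $(\S,\bolde^0,\dots,\bolde^n)$ and choose a finite collection of pairwise disjoint embedded arcs $\tau_1,\dots,\tau_m$ in $\Int(\S)$, each properly embedded with endpoints on $\partial \S$ (or, more precisely, arcs whose union with a neighborhood of $\partial\S$ carries $H_1(\S,\partial\S)$), chosen so that the homology classes of the $\tau_k$ span a subspace of $H_1(\S,\partial\S)$ pairing non-degenerately with $H_1(\S)$ modulo the span of the curves $\bolde^0\cup\dots\cup\bolde^n$ — equivalently, the $\tau_k$ detect the image of the boundary map $\bigoplus_i H_1(\bolde^i)\to H_1(\S)$ and its complement. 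By Proposition \ref{prop:peri} the group of $(n+1)$-periodic domains is $H_2(W_{\eta^0,\dots,\eta^n})\cong\ker(\bigoplus_i H_1(\bolde^i)\to H_1(\S))$, so a periodic domain $\P$ is pinned down (up to the span of regions touching $\partial\S$, which can be removed without changing $\partial\P$) by the intersection numbers of $\partial\P$ with the arcs $\tau_k$.

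The second step is the isotopy itself. For each curve $\eta^i_j$ that meets some arc $\tau_k$, I would isotope $\eta^i_j$ by winding it repeatedly (a large number $N$ of times, alternating a positive then a negative strand) in a thin annular neighborhood of $\tau_k$, keeping all other curves fixed. This is an isotopy of the multi-diagram, so the resulting diagram is isotopic to the original and still balanced (winding does not change the homology classes of the curves, so linear independence in $H_1(\S)$ and the count $d$ are preserved, and the curves stay in $\Int(\S)$). The key local computation is the standard one: in the winding region, a periodic domain whose boundary has nonzero algebraic intersection with $\tau_k$ is forced to have coefficients that grow without bound as one passes through the $N$ parallel strands, so for $N$ large its coefficients on the small regions of the winding region take both signs. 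I would carry out this computation for a single arc and a single curve, exactly as in the Heegaard-Floer case, and note it is local and unaffected by what happens elsewhere on $\S$.

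The third step is the bookkeeping that combines the local pictures into a global conclusion. After winding along all of $\tau_1,\dots,\tau_m$ (doing them in disjoint neighborhoods, one at a time), suppose $\P$ is a non-trivial $(n+1)$-periodic domain on the wound diagram. Either $\partial\P$ has nonzero intersection with some $\tau_k$, in which case the local argument of step two immediately gives coefficients of both signs; or $\partial\P$ is disjoint (algebraically) from every $\tau_k$, in which case, by the choice of the $\tau_k$ in step one, $[\partial\P]=0$ in the relevant quotient of $H_1(\S)$ and hence $\P$ corresponds to the zero class in $H_2(W_{\eta^0,\dots,\eta^n})$ — but by Proposition \ref{prop:peri} that forces $\P$ to be a trivial periodic domain, contrary to assumption. (Here one must be slightly careful that "non-trivial periodic domain" in the definition of admissibility matches "nonzero element of $H_2$"; since the regions $D_1,\dots,D_l$ used to build $D(\S,\bolde^0,\dots,\bolde^n)$ are exactly the components of $\S\setminus(\bolde^0\cup\dots\cup\bolde^n)$ disjoint from $\partial\S$, a periodic domain is determined by its class in $H_2(W)$ up to nothing, so this is automatic.) This dichotomy shows the wound diagram is admissible.

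I expect the main obstacle to be step one: making precise which arcs $\tau_k$ to wind along, so that the two cases of step three are genuinely exhaustive. In the closed Heegaard case one uses a complete system of arcs dual to the $\beta$-curves; here the presence of the boundary $\partial\S$, the fact that several sets $\bolde^i$ are in play simultaneously, and the need to match the algebraic description $H_2(W_{\eta^0,\dots,\eta^n})\cong\ker(\bigoplus_i H_1(\bolde^i)\to H_1(\S))$ from Proposition \ref{prop:peri}, all require care. Everything else — the winding isotopy and the local coefficient estimate — is routine and identical to \cite{OSz} and \cite{GW}, so I would keep those parts brief and cite \cite[Lemma 3.12]{GW} for the details while spelling out the adaptation to the sutured multi-diagram framework used above.
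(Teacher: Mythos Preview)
The paper itself does not give a proof of this lemma; it simply records the statement and cites \cite[Lemma 3.12]{GW}. Your overall strategy --- the winding argument of Ozsv\'ath--Szab\'o, adapted to the sutured multi-diagram setting --- is exactly the approach taken in \cite{GW}, so in spirit you are doing precisely what the paper intends.

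However, there is a genuine gap in your step three. Your dichotomy is based on whether the \emph{total} algebraic intersection number $\partial\P \cdot \tau_k$ is nonzero for some $k$. But by definition a periodic domain satisfies $[\partial\P] = 0$ in $H_1(\S)$, and since the $\tau_k$ represent classes in $H_1(\S,\partial\S)$, the Poincar\'e--Lefschetz pairing gives $\partial\P \cdot \tau_k = 0$ for \emph{every} $k$ and every periodic domain $\P$. So the first case of your dichotomy is always empty, and the second case then forces you to conclude that every periodic domain is trivial, which is false. The confusion is that ``$\P$ non-trivial'' does not mean $[\partial\P]\neq 0$ in $H_1(\S)$; it means the coefficients $c_{ij}$ in $\partial\P = \sum c_{ij}\eta^i_j$ are not all zero, i.e.\ the corresponding element of $\ker\bigl(\bigoplus_i H_1(\bolde^i)\to H_1(\S)\bigr)$ is nonzero.

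The fix is to work with the individual curves rather than the total boundary. Choose arcs $\tau_k$ with endpoints on $\partial\S$ whose complement in $\S$ is a disjoint union of disks; then every homologically non-trivial simple closed curve in $\Int(\S)$ crosses some $\tau_k$. For a non-trivial $\P$, pick some $\eta^i_j$ with $c_{ij}\neq 0$ and an arc $\tau_k$ it crosses. The multiplicity of $\P$ along $\tau_k$ starts and ends at $0$ (the endpoints lie in regions touching $\partial\S$), and jumps by $\pm c_{ij}$ at each crossing with $\eta^i_j$. After winding $\eta^i_j$ around $\tau_k$ sufficiently many times, these partial sums are forced to take both signs. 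One must still handle possible cancellation between different curves wound along the same arc; this is dealt with in \cite{GW} (following \cite{OSz} and \cite{sutured}) by a careful choice of winding amounts or by choosing enough arcs, and is the step worth citing.
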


This weak form of admissibility is specific to the hat version of Heegaard Floer
homology. It enables us to talk about various polygon counts without restricting
to a particular $\spinc$ structure, making all sums finite.
For the following, see Grigsby and Wehrli~\cite[Proposition~3.14]{GW}.

\begin{prop} \label{prop:finite}
If $(\S,\bolde^0,\dots,\bolde^n)$ is admissible, then for every
\[
(\x_0,\dots,\x_n) \in (\T_{\eta^n} \cap \T_{\eta^0} ) \times \dots \times (\T_{\eta^{n-1}} \cap \T_{\eta^n}),
\]
the set $\{\,\D \in D(\x_0,\dots \x_n) \,\colon\, \D \ge 0 \,\}$ is finite.
\end{prop}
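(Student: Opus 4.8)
The plan is to mimic the standard admissibility argument of Ozsv\'ath--Szab\'o (and its multi-diagram generalizations), arguing by contradiction. Suppose that the set $\{\D \in D(\x_0,\dots,\x_n) \colon \D \ge 0\}$ is infinite. Since $D(\x_0,\dots,\x_n)$ is an affine space over the group of $(n+1)$-periodic domains, fixing one reference domain $\D_0 \in D(\x_0,\dots,\x_n)$ identifies our set with $\{\P \in \P(\S,\bolde^0,\dots,\bolde^n) \colon \D_0 + \P \ge 0\}$, where $\P(\S,\bolde^0,\dots,\bolde^n)$ denotes the group of $(n+1)$-periodic domains. By Proposition \ref{prop:peri}, this group is a finitely generated free abelian group (it is isomorphic to a subgroup of $\bigoplus_i H_1(\bolde^i)$), say of rank $r$. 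So we are assuming that infinitely many lattice points of $\Z^r$ lie in the cone $\{\P \colon \D_0 + \P \ge 0\}$.

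First I would reduce to a statement about a genuine cone. Write the coefficient of $\D_0 + \P$ on the region $D_k$ as an affine-linear function $\ell_k \colon \Z^r \to \Z$ of $\P$; the condition $\D_0 + \P \ge 0$ is $\ell_k(\P) \ge 0$ for all $k = 1,\dots,l$. If infinitely many lattice points satisfy all these inequalities, then a standard compactness argument (normalize a sequence of such points and pass to a limit on the sphere in $\R^r$) produces a nonzero \emph{real} vector $v \in \R^r$ in the recession cone, i.e.\ with $L_k(v) \ge 0$ for every $k$, where $L_k$ is the linear part of $\ell_k$. The linear parts $L_k$ are exactly the coefficient functions of periodic domains on the regions $D_k$; that is, $L_k(\P)$ is the $D_k$-coefficient of $\P$ itself. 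So $v$ corresponds to a nonzero real periodic domain all of whose coefficients are nonnegative.

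Next I would upgrade this to a \emph{rational}, hence (after clearing denominators) \emph{integral}, nonzero periodic domain with all coefficients $\ge 0$. This is the usual point: the cone $\{v \in \R^r \colon L_k(v) \ge 0 \text{ for all } k\}$ is a rational polyhedral cone (defined by inequalities with integer coefficients), so if it contains a nonzero vector it contains a nonzero \emph{rational} one, and scaling by the common denominator gives a nonzero integral periodic domain $\P_1$ with $\P_1 \ge 0$. But admissibility says every nontrivial $(n+1)$-periodic domain has both positive and negative coefficients, contradicting $\P_1 \ge 0$ and $\P_1 \ne 0$. This contradiction shows the set is finite.

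The main obstacle is the passage from "infinitely many nonnegative lattice points in an affine cone" to "a nonzero nonnegative periodic domain": one must be careful that the limiting direction $v$ is nonzero and that it really satisfies $L_k(v)\ge 0$ with the \emph{linear} parts (the affine shifts wash out under normalization), and then that rationality of the defining half-spaces lets one replace the real solution by an integral one. None of these steps is deep, but they are where the argument actually happens; everything else is bookkeeping with the identification $D(\x_0,\dots,\x_n) \cong \D_0 + \P(\S,\bolde^0,\dots,\bolde^n)$ supplied by Proposition \ref{prop:peri}. (Alternatively, one can avoid the real-cone argument entirely by invoking Gordan's lemma / the fact that the monoid of effective periodic domains is finitely generated, but the limiting-direction argument is the most self-contained.)
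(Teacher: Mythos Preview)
Your argument is correct and is precisely the standard Ozsv\'ath--Szab\'o admissibility argument, extended to the multi-diagram setting. The paper itself does not give a proof but simply cites \cite[Proposition 3.14]{GW}; the proof there follows exactly the route you outline (affine identification with periodic domains via Proposition~\ref{prop:peri}, a limiting-direction argument to produce a nonnegative real periodic domain, then rationality of the cone to get an integral one contradicting admissibility). So your proposal is not merely correct but reconstructs the intended argument in full.
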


Let $(\S,\bolde^0,\dots,\bolde^n)$ be an admissible sutured multi-diagram, and for every $i \in \{1, \dots,n \}$,
let $\x_i \in \T_{\eta^{i-1}} \cap \T_{\eta^i}$ and $\y \in \T_{\eta^0} \cap \T_{\eta^n}$.
Fix a complex structure on $\S$, and a $1$-parameter variation of the induced almost complex structure on $\sym^d(\S)$.
As usual, we denote by $\M(\phi)$ the moduli-space of pseudo-holomorphic representatives of Whitney
$(n+1)$-gons lying in the homotopy class $\phi \in \pi_2(\x_1,\dots,\x_n,\y)$.
The Maslov index; i.e., the expected dimension, of $\M(\phi)$ is denoted by $\mu(\phi)$.
If $n=1$ and $\mu(\phi) = 1$, then there is a natural $\R$-action on $\M(\phi)$, we let
$\widehat{\M}(\phi) = \M(\phi)/\R$. When $n > 1$ and $\mu(\phi) = 0$, the moduli space $\M(\phi)$ is compact.
Then $\#\M(\phi)$ denotes the number of points in $\M(\phi)$ modulo two.
In the case $n = 1$ and $\mu(\phi) = 1$, the reduced moduli space $\widehat{\M}(\phi)$ is compact.

For $0 \le i < j \le n$, we let
\[
CF(\S,\bolde^i,\bolde^j) = \Z_2 \langle\,\T_{\eta^i} \cap \T_{\eta^j} \,\rangle.
\]
This becomes a chain complexes when endowed with the differential that counts points in
$\widehat{\M}(\phi)$ modulo two, where~$\phi$ is a homotopy class of Whitney bigons with
boundary on $\T_{\eta^i}$ and $\T_{\eta^j}$ and having $\mu(\phi) = 1$. Its homology is
the sutured Floer homology group  $\SFH(\S,\bolde^i,\bolde^j)$.

\begin{defn} \label{defn:products}
Let $(\S,\bolde^0,\dots,\bolde^n)$ be an admissible sutured multi-diagram such that $n \ge 2$,
and fix a relative $\spinc$ structure $\su \in \spincu(\W_{\eta^0,\dots,\eta^n})$. Then we have chain maps
\[
f_{\eta^0,\dots,\eta^n} \colon \bigotimes_{i=1}^n CF(\S,\bolde^{i-1},\bolde^i) \to CF(\S,\bolde^0,\bolde^n)
\]
and
\[
f_{\eta^0,\dots,\eta^n}(\,\cdot\,,\su) \colon \bigotimes_{i=1}^n CF(\S,\bolde^{i-1},\bolde^i,\su|_{M_{i-1,i}}) \to
CF(\S,\bolde^0,\bolde^n,\su|_{M_{0,n}}),
\]
defined by the formulas
\[
f_{\eta^0,\dots,\eta^n}(x_1 \otimes \dots \otimes x_n) = \sum_{\y \in \T_{\eta^0} \cap \T_{\eta^n}}
\sum_{\{\,\phi \in \pi_2(\x_1,\dots,\x_n,\y)\, \colon \,\mu(\phi) = 0\,\}} \#\M(\phi) \cdot \y
\]
and
\[
f_{\eta^0,\dots,\eta^n}(x_1 \otimes \dots \otimes x_n,\su) = \sum_{\y \in \T_{\eta^0} \cap \T_{\eta^n}}
\sum_{\{\,\phi \in \pi_2(\x_1,\dots,\x_n,\y)\, \colon \,\mu(\phi) = 0,\,\su(\phi) =\su\,\}} \#\M(\phi) \cdot \y.
\]
We denote by $F_{\eta^0,\dots,\eta^n}$ and $F_{\eta^0,\dots,\eta^n}(\,\cdot\,,\su)$ the maps induced on the homology.
Analogous maps can be defined for $\s \in \spinc(\W)$, replacing $\su(\phi)$ by $\s(\phi)$.
\end{defn}

The finiteness of the above sums is ensured by Proposition~\ref{prop:finite},
since if $\phi$ supports a pseudo-holomorphic representative, then its domain $\D(\phi) \ge 0$.

\begin{prop} \label{prop:fin}
Let $(\S,\bolde^0,\dots,\bolde^n)$ be an admissible multi-diagram, and set $\W = \W_{\eta^0,\dots,\eta^n}$.
Then there are only finitely many $\su \in \spincu(\W)$ for which the map $f_{\eta^0,\dots,\eta^n}(\,\cdot\,,\su)$ is non-zero, and
\[
f_{\eta^0,\dots,\eta^n} = \sum_{\su \in \spincu(\W)} f_{\eta^0,\dots,\eta^n}(\,\cdot\,,\su).
\]
An analogous statement holds for $F_{\eta^0,\dots,\eta^n}$ and $F_{\eta^0,\dots,\eta^n}(\,\cdot\,,\su)$,
and also for $\spinc$ structures $\s \in \spinc(\W)$.
\end{prop}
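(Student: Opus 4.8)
The plan is to reduce the statement to two finiteness facts: (i) that only finitely many relative $\spinc$ structures $\s$ on $\W$ admit a nonnegative domain connecting a fixed tuple $(\x_1,\dots,\x_n,\y)$; and (ii) that for a fixed homotopy class $\phi$, the value $\s(\phi) \in \spinc(\W)$ determined by Proposition \ref{prop:ngon} depends only on $\phi$, so summing over $\s$ and then over $\{\phi \colon \s(\phi) = \s\}$ reassembles the sum over all $\phi$. The second fact is essentially the bookkeeping identity $f_{\eta^0,\dots,\eta^n} = \sum_{\s} f_{\eta^0,\dots,\eta^n}(\cdot,\s)$, which is a tautology once the index sets are shown to partition correctly, so the real content is fact (i).

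First I would fix generators $\x_i \in \T_{\eta^{i-1}} \cap \T_{\eta^i}$ for $1 \le i \le n$ and $\y \in \T_{\eta^0} \cap \T_{\eta^n}$, and observe that if $\pi_2(\x_1,\dots,\x_n,\y) = \emptyset$ there is nothing to prove for that tuple. Otherwise, by Proposition \ref{prop:peri} the set $\pi_2(\x_1,\dots,\x_n,\y)$ is an affine space over $H_2(W_{\eta^0,\dots,\eta^n})$, and by Proposition \ref{prop:ngon} combined with the explicit correspondence $H(\mathcal{P})$ the difference map $\s(\phi) - \s(\phi')$ equals the image of $\phi - \phi'$ under the natural map $H_2(W) \to H^2(W,Z)$ (Poincaré–Lefschetz duality plus the affine structure on $\spinc(\W)$). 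Hence the set $\{\s(\phi) \colon \phi \in \pi_2(\x_1,\dots,\x_n,\y)\}$ is a coset of the image of $H_2(W) \to H^2(W,Z)$. The point is that among all $\phi$ in this affine space, only those with $\D(\phi) \ge 0$ contribute to $f_{\eta^0,\dots,\eta^n}(\cdot,\s)$ via a nonzero $\#\M(\phi)$, since a pseudo-holomorphic representative forces $\D(\phi) \ge 0$; and Proposition \ref{prop:finite} says precisely that $\{\D \in D(\x_1,\dots,\x_n,\y) \colon \D \ge 0\}$ is finite. Therefore at most finitely many $\phi$ contribute, hence at most finitely many $\s = \s(\phi)$ occur with $f_{\eta^0,\dots,\eta^n}(\cdot,\s) \neq 0$ coming from this particular tuple.

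Next I would note that $CF(\S,\bolde^{i-1},\bolde^i)$ is finitely generated (each $\T_{\eta^{i-1}} \cap \T_{\eta^i}$ is finite, being a transverse intersection of compact tori), so there are only finitely many tuples $(\x_1,\dots,\x_n,\y)$ to consider. Taking the union over all such tuples of the finite sets of contributing $\s$'s produced in the previous paragraph yields a single finite set $S \subset \spinc(\W)$ outside of which $f_{\eta^0,\dots,\eta^n}(\cdot,\s)$ vanishes identically on generators, hence on all of $\bigotimes_{i=1}^n CF(\S,\bolde^{i-1},\bolde^i)$. The identity $f_{\eta^0,\dots,\eta^n} = \sum_{\s \in \spinc(\W)} f_{\eta^0,\dots,\eta^n}(\cdot,\s)$ then holds because, for each fixed $\phi$ appearing in the defining sum for $f_{\eta^0,\dots,\eta^n}$, the constraint $\s(\phi) = \s$ selects $\phi$ in exactly one summand on the right, namely $\s = \s(\phi)$; summing the $f(\cdot,\s)$ over $\s \in S$ (a finite sum, hence well defined) recovers every term of $f_{\eta^0,\dots,\eta^n}$ exactly once. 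Passing to homology gives the analogous statement for $F_{\eta^0,\dots,\eta^n}$ and $F_{\eta^0,\dots,\eta^n}(\cdot,\s)$, since taking homology commutes with the finite direct sum decomposition of the chain map.

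The main obstacle I anticipate is the first paragraph's claim that $\s$ is constant on homotopy classes and that the induced difference map on $\spinc$ structures is compatible with the difference map on domains — i.e.\ that "same domain'' implies "same $\spinc$ structure,'' so that finiteness of nonnegative domains transfers to finiteness of $\spinc$ structures. One must be slightly careful: a priori $\s(\phi)$ could depend on more than $\D(\phi)$, but the construction in \cite{GW} underlying Proposition \ref{prop:ngon} factors through the domain (the 2-plane field built from a Whitney $(n+1)$-gon depends only on its domain up to the relevant homotopy), and the periodic-domain description together with the corrected Proposition comparing $\s(\Psi)$ and $\s(\Psi')$ pins this down. Once that compatibility is in hand, everything else is routine finiteness bookkeeping.
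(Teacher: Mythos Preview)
Your proposal is correct and follows essentially the same approach as the paper's proof: finitely many intersection tuples, finitely many contributing $\phi$ per tuple via Proposition~\ref{prop:finite}, and each $\phi$ appears in exactly one $f_{\eta^0,\dots,\eta^n}(\cdot,\s(\phi))$. Your ``main obstacle'' paragraph is unnecessary overthinking, though: you do not need to know that $\s(\phi)$ factors through the domain, only that $\s$ is a well-defined function on homotopy classes $\phi$ (which is immediate from Proposition~\ref{prop:ngon}), so the finite set of contributing $\phi$'s maps to a finite set of $\s$'s---the paper's proof makes exactly this streamlined observation and dispenses with the affine-structure discussion entirely.
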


\begin{proof}
Each $\T_{\eta^i} \cap \T_{\eta^j}$ is finite, so there are only finitely many choices
for $\x_1,\dots,\x_n$ and~$\y$, and for each choice, there are only finitely many $\phi \in \pi_2(\x_1,\dots,\x_n,\y)$
such that $\M(\phi) \neq \emptyset$ by Proposition~\ref{prop:finite}. Finally, such a $\phi$ can only appear in
the formula defining $f_{\eta^0,\dots,\eta^n}(\,\cdot\,,\su(\phi))$. The result follows.
\end{proof}

We now generalize the associativity theorem of the triangle maps due to Ozsv\'ath and Szab\'o~\cite[Theorem 8.16]{OSz} to the sutured setting.
Fix a sutured quadruple diagram $(\S,\bolde^0,\dots,\bolde^3)$, and let $\W_{\eta^0,\dots,\eta^3}$ be the corresponding
cobordism. Then we have a restriction map
\[
\spincu(\W_{\eta^0,\dots,\eta^3}) \to \spincu(\W_{\eta^0,\eta^1,\eta^2}) \times \spincu(\W_{\eta^0,\eta^2,\eta^3}),
\]
which corresponds to splitting the cobordism $\W_{\eta^0,\dots,\eta^3}$ along an embedded copy of~$M_{02}$.
There is a subgroup
\[
\d H^1(M_{02},\partial M_{02}) < H^2(W_{\eta^0,\dots,\eta^3},Z_{\eta^0,\dots,\eta^3})
\]
whose orbits on $\spincu(\W_{\eta^0,\dots,\eta^3})$ are the fibers of the restriction map,
where~$\d$ is the coboundary map in the corresponding relative Mayer-Vietoris sequence. Similarly, we have a restriction map
\[
\spincu(\W_{\eta^0,\dots,\eta^3}) \to \spincu(\W_{\eta^0,\eta^1,\eta^3}) \times \spincu(\W_{\eta^1,\eta^2,\eta^3}),
\]
which corresponds to splitting along $M_{13}$, and a subgroup
\[
\d H^1(M_{13},\partial M_{13}) < H^2(W_{\eta^0,\dots,\eta^3},Z_{\eta^0,\dots,\eta^3}).
\]

\begin{thm} \label{thm:assoc}
Let $(\S,\bolde^0,\bolde^1,\bolde^2,\bolde^3)$ be an admissible sutured quadruple diagram,
and fix a
\[
\d H^1(M_{02},\partial M_{02}) + \d H^1(M_{13},\partial M_{13})
\]
orbit~$\mathfrak{S}$
in $\spincu(W_{\eta^0,\dots,\eta^3})$. For any~$\su \in \mathfrak{S}$ and $i \in \{\,0,1,2\,\}$,
the restriction $\s_{i,i+1} = \su|_{M_{i,i+1}}$ is independent of the choice of $\su$;
pick an element
\[
x_{i,i+1} \in \SFH(\S,\bolde^i,\bolde^{i+1},\s_{i,i+1}).
\]
Furthermore, for~$0 \le i < j < k \le 4$, let $F_{ijk} = F_{\eta^i,\eta^j,\eta^k}$
and $\s_{ijk} = \su|_{\W_{\eta^i,\eta^j,\eta^k}}$. Then
\[
\sum_{\su \in \mathfrak{S}} F_{023}(F_{012}(x_{01}\otimes x_{12}, \s_{012}) \otimes x_{23}, \s_{023}) =
\]
\[
= \sum_{\su \in \mathfrak{S}} F_{013}(x_{01} \otimes F_{123}(x_{12} \otimes x_{23}, \s_{123}),\s_{013}).
\]
\end{thm}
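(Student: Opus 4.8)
The plan is to reduce everything to the classical associativity theorem \cite[Theorem 8.16]{OSz} for Heegaard quadruple diagrams, and to argue that the only new content is bookkeeping of relative $\spinc$ structures in $\spinc(\W_{\eta^0,\dots,\eta^3})$ together with the canonical boundary condition identifications established earlier in this section. First I would recall that a balanced sutured multi-diagram $(\S,\bolde^0,\dots,\bolde^3)$ can be completed, by filling in the boundary of $\S$ with a punctured surface and adding extra pairs of curves as in \cite{sutured}, to an honest pointed Heegaard quadruple diagram whose triangle and square maps restrict to the maps $F_{\eta^i,\eta^j,\eta^k}$ and $f_{\eta^0,\dots,\eta^3}$ on the summands of interest; the holomorphic polygon counts $\#\M(\phi)$ appearing in our $F_{ijk}$ are literally the ones appearing in the Ozsv\'ath--Szab\'o formulas, because admissibility (via Proposition \ref{prop:finite}) guarantees all sums are finite, and the domains of pseudo-holomorphic polygons are non-negative. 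The associativity identity is then proved by the standard argument: one considers homotopy classes $\psi \in \pi_2(\x_0,\x_1,\x_2,\x_3)$ of Whitney rectangles with $\mu(\psi) = 1$, and examines the ends of the one-dimensional moduli spaces $\widehat{\M}(\psi)$. The codimension-one degenerations split $\psi$ either as a concatenation of a $\mu = 0$ triangle in $\pi_2(\x_0,\x_1,\x_2)$ with a $\mu = 0$ triangle in $\pi_2(\x_0,\x_2,\x_3)$ (together with possibly a flow line, which after passing to homology is absorbed since the $x_{ij}$ are cycles), or symmetrically as a triangle in $\pi_2(\x_1,\x_2,\x_3)$ glued to one in $\pi_2(\x_0,\x_1,\x_3)$; counting the ends with sign gives $0$, which is exactly the desired equality once we know the two types of degeneration are organized by the two restriction maps in the statement.

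The $\spinc$-refinement is where the sutured setup requires care beyond citing \cite{OSz}. The key point is to match the homotopy-class decomposition with the cohomological one: a class $\psi \in \pi_2(\x_0,\dots,\x_3)$ determines $\s(\psi) \in \spinc(\W_{\eta^0,\dots,\eta^3})$ by Proposition \ref{prop:ngon} applied to the square cobordism (using the boundary condition $\xi'_{\eta^0,\dots,\eta^3}$ and Proposition \ref{prop:bcond} to get into $\spinc(\W)$ canonically), and when $\psi$ degenerates as $\psi_1 \# \psi_2$ along $M_{02}$ one has $\s(\psi)|\W_{\eta^0,\eta^1,\eta^2} = \s(\psi_1)$ and $\s(\psi)|\W_{\eta^0,\eta^2,\eta^3} = \s(\psi_2)$; this is because the 2-plane field constructed from $\psi$ in \cite{GW} restricts to the ones constructed from $\psi_1$ and $\psi_2$ on the two pieces, and it is essentially the naturality of the Grigsby--Wehrli construction under splitting the cobordism. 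Conversely, given $\s_{012}$ and $\s_{023}$ arising from a common $\s \in \mathfrak{S}$, the classes $\psi$ with $\s(\psi)$ in the $\d H^1(M_{02},\partial M_{02})$-orbit through $\s$ — equivalently, with prescribed restriction to $M_{01}, M_{12}, M_{23}$ via the triangle-level $\s(\x_i)$ — are precisely those that contribute to $\sum_{\s \in \mathfrak{S}} F_{023}(F_{012}(\cdot,\s_{012})\otimes\cdot,\s_{023})$. Thus I would fix the orbit $\mathfrak{S}$ at the outset, restrict all polygon counts to classes $\psi$ with $\s(\psi) \in \mathfrak{S}$, run the moduli-space-ends argument within this restricted set, and read off that the two sides of the claimed identity both equal the signed count of ends of $\bigcup_{\{\psi\,:\,\s(\psi)\in\mathfrak{S},\,\mu(\psi)=1\}} \widehat{\M}(\psi)$.

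The main obstacle I anticipate is precisely the compatibility between the two notions of ``orbit'': I must verify that the fiber of $\spinc(\W_{\eta^0,\dots,\eta^3}) \to \spinc(\W_{\eta^0,\eta^1,\eta^2})\times\spinc(\W_{\eta^0,\eta^2,\eta^3})$ over a chosen pair is exactly a $\d H^1(M_{02},\partial M_{02})$-orbit, as asserted just before the theorem, and — more delicately — that the \emph{simultaneous} fixing of both restriction maps (the one along $M_{02}$ and the one along $M_{13}$) is captured by the sum $\d H^1(M_{02},\partial M_{02}) + \d H^1(M_{13},\partial M_{13})$, i.e.\ that $\mathfrak{S}$ is simultaneously a union of fibers for both restrictions. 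This is a diagram-chase in the relative Mayer--Vietoris sequences for the two decompositions $W_{\eta^0,\dots,\eta^3} = W_{\eta^0,\eta^1,\eta^2}\cup_{M_{02}} W_{\eta^0,\eta^2,\eta^3}$ and $W_{\eta^0,\dots,\eta^3} = W_{\eta^0,\eta^1,\eta^3}\cup_{M_{13}} W_{\eta^1,\eta^2,\eta^3}$; the content is that these two coboundary subgroups together exhaust the ambiguity in $\psi$ not seen by the triangle-level data, which follows from Proposition \ref{prop:peri} identifying $\pi_2$-differences with periodic domains and the corresponding statement for triangles. Once this linear-algebra/homological statement is in place, the rest is the standard Floer-theoretic argument with no essential new analytic input, since all compactness and gluing statements are inherited verbatim from the closed case via the completion of the sutured diagram.
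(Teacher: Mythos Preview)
Your proposal is correct and follows essentially the same approach as the paper, which simply notes that the proof of \cite[Theorem~8.16]{OSz} carries over because admissibility of the quadruple diagram (and hence of every subdiagram) guarantees finiteness of all the bigon, triangle, and rectangle counts appearing in the chain homotopy. The completion-to-a-closed-diagram step you mention is an unnecessary detour --- the paper runs the moduli-space-ends argument directly on the sutured surface --- but the rest of your account, including the $\spinc$ bookkeeping via Propositions~\ref{prop:ngon} and~\ref{prop:bcond} and the Mayer--Vietoris orbit discussion, is a faithful unpacking of what that one-line citation entails.
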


\begin{proof}
Every subdiagram of an admissible sutured multi-diagram is also admissible. Hence, the proof of
Ozsv\'ath and Szab\'o~\cite[Theorem 8.16]{OSz} works in this setting too, since the admissibility
of $(\S,\bolde^0,\dots,\bolde^3)$ ensures the finiteness of all the counts of pseudo-holomorphic bigons,
triangles, and rectangles that appear in the formula for the chain homotopy connecting the two sides.
\end{proof}

In a similar manner, one can prove an associativity result without
fixing an orbit~$\mathfrak{S}$ of $\spinc$ structures on $\W_{\eta^0,\dots,\eta^3}$.

\begin{thm}
Let $(\S,\bolde^0,\bolde^1,\bolde^2,\bolde^3)$ be an admissible sutured quadruple diagram, and for every $i \in \{\,0,1,2\,\}$,
pick an element $x_{i,i+1} \in \SFH(\S,\bolde^i, \bolde^{i+1})$. Then
\[
F_{023}(F_{012}(x_{01}\otimes x_{12}) \otimes x_{23}) = F_{013}(x_{01} \otimes F_{123}(x_{12} \otimes x_{23})).
\]
\end{thm}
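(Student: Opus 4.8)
The plan is to deduce this unrefined associativity statement from the $\spinc$-refined version, Theorem~\ref{thm:assoc}, by summing over all $\spinc$ structures. First I would invoke Proposition~\ref{prop:fin}, which gives $F_{012} = \sum_{\s' \in \spinc(\W_{\eta^0,\eta^1,\eta^2})} F_{012}(\cdot,\s')$, and likewise for each of the four triple maps $F_{013}$, $F_{023}$, $F_{123}$; all of these sums are finite, so there are no convergence issues and we may freely interchange the (finite) sums. Expanding both sides of the desired identity in this way, the left side becomes $\sum_{\s',\s''} F_{023}(F_{012}(x_{01}\otimes x_{12},\s')\otimes x_{23},\s'')$, where $\s'$ ranges over $\spinc(\W_{\eta^0,\eta^1,\eta^2})$ and $\s''$ over $\spinc(\W_{\eta^0,\eta^2,\eta^3})$, and similarly the right side becomes a double sum over $\spinc(\W_{\eta^0,\eta^1,\eta^3}) \times \spinc(\W_{\eta^1,\eta^2,\eta^3})$.

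The key step is to reorganize each of these double sums according to the partition of $\spinc(W_{\eta^0,\dots,\eta^3})$ into $\d H^1(M_{02},\partial M_{02}) + \d H^1(M_{13},\partial M_{13})$ orbits. I would argue that the restriction map $\spinc(\W_{\eta^0,\dots,\eta^3}) \to \spinc(\W_{\eta^0,\eta^1,\eta^2}) \times \spinc(\W_{\eta^0,\eta^2,\eta^3})$ discussed just before Theorem~\ref{thm:assoc} is surjective onto the set of compatible pairs (those agreeing on $M_{02}$), with fibers the $\d H^1(M_{02},\partial M_{02})$-orbits; combined with the analogous statement for the splitting along $M_{13}$, this shows that the full double sum on the left equals $\sum_{\mathfrak{S}} \sum_{\s \in \mathfrak{S}} F_{023}(F_{012}(x_{01}\otimes x_{12},\s_{012})\otimes x_{23},\s_{023})$, the outer sum over all orbits $\mathfrak{S}$, and similarly for the right. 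The point is that within a single orbit the restrictions $\s_{012}$, $\s_{023}$ (resp. $\s_{013}$, $\s_{123}$) are exactly the data appearing in Theorem~\ref{thm:assoc}. Now Theorem~\ref{thm:assoc} applied to each orbit $\mathfrak{S}$ gives the orbit-by-orbit equality, and summing over all orbits yields the result. (One should note that only finitely many orbits contribute, since by Proposition~\ref{prop:fin} only finitely many $\spinc$ structures give nonzero maps, so the outer sum over $\mathfrak{S}$ is effectively finite.)

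The main obstacle I anticipate is the bookkeeping in the reorganization step: carefully matching the finite sums over the triple-diagram $\spinc$ sets with the orbit decomposition of the quadruple-diagram $\spinc$ set, and checking that the restriction maps behave as claimed (surjectivity onto compatible pairs, fibers equal to the relevant coboundary subgroups). This is essentially the content already set up in the paragraph preceding Theorem~\ref{thm:assoc}, so it is a matter of assembling that input correctly rather than proving anything genuinely new. Everything else — the finiteness, the interchange of sums, the final invocation of Theorem~\ref{thm:assoc} — is routine once the indexing is straightened out. In fact the cleanest write-up may simply observe that $\sum_{\mathfrak{S}}$ of the two sides of Theorem~\ref{thm:assoc}, together with Proposition~\ref{prop:fin}, literally is the two sides of the claimed identity, so the theorem follows immediately.
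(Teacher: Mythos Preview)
Your approach is correct in spirit but differs from the paper's. The paper does not deduce the unrefined statement by summing Theorem~\ref{thm:assoc} over orbits; it simply indicates (``in a similar manner'') that one reruns the holomorphic-rectangle chain-homotopy argument of \cite[Theorem~8.16]{OSz} directly, dropping the constraint $\s(\phi)\in\mathfrak{S}$. Admissibility of the quadruple diagram makes every count finite, so this is immediate.

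Your route is viable, but the reorganization step hides a genuine subtlety you have not addressed. The terms $F_{023}(F_{012}(\cdot,\s_{012})\otimes\cdot,\s_{023})$ on the left of Theorem~\ref{thm:assoc} depend only on the pair $(\s_{012},\s_{023})$, i.e., only on the $\d H^1(M_{02},\partial M_{02})$-orbit of $\s$ inside $\mathfrak{S}$; if that subgroup is nontrivial the literal sum $\sum_{\s\in\mathfrak{S}}$ repeats each term, and the analogous repetition on the right side is governed by $\d H^1(M_{13},\partial M_{13})$, which need not have the same order. Thus your claimed equality between $\sum_{\s',\s''}$ over compatible pairs and $\sum_{\mathfrak{S}}\sum_{\s\in\mathfrak{S}}$ is off by this multiplicity. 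The fix is to read the sums in Theorem~\ref{thm:assoc} as sums over the distinct restriction pairs arising from $\mathfrak{S}$ (which is how the rectangle-degeneration proof actually organizes the count); with that reading your argument goes through cleanly. The paper's direct approach sidesteps this bookkeeping entirely, which is why it is the shorter path here.
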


Note that writing associativity using $\spinc(\W_{\eta^0,\dots,\eta^3})$ would be cumbersome
as such $\spinc$ structures do not restrict to the cobordisms $\W_{\eta^0,\eta^1,\eta^2}$ etc.

\subsection{Naturality of sutured Floer homology} \label{sec:naturality}

When the first draft of this paper appeared,
the assignment of a Heegaard Floer group to a $3$-manifold was not functorial.
Ozsv\'ath and Szab\'o~\cite{OSz} showed that different admissible Heegaard diagrams of the same manifold give rise
to isomorphic Floer homology groups.
However, this is not sufficient to define maps induced by cobordisms, or to talk about the diffeomorphism action.
It had also caused some confusion in the case of the contact invariant, as it is unclear what it means to
be an element of Heegaard Floer homology. Being aware of this issue, Ozsv\'ath and Szab\'o~\cite[Theorem 2.1]{OSz10}
constructed ``canonical isomorphisms'' for equivalent Heegaard diagrams.
However, as the author noticed, they did not check that these maps were indeed isomorphisms,
or that the composition of two canonical isomorphisms was a canonical isomorphism.
Furthermore, it turns out it is not enough to say that a Heegaard diagram of a manifold is an abstract triple $(\S,\bolda,\boldb)$,
one has to take into account how~$\Sigma$ is embedded in the manifold.
Together with Dylan Thurston, we settled the above issues in~\cite{naturality}.
This was a prerequisite for defining our cobordism maps.

In this section, we review the functorial construction of sutured Floer homology, which includes $\widehat{HF}$.
The other flavors of Heegaard Floer homology can be addressed in an analogous manner.
In particular, we explain how to canonically associate a group $\SFH(M,\g)$ to every balanced sutured manifold $(M,\g)$
via a simple limiting procedure.
This will enable us to precisely define how diffeomorphisms act on sutured Floer homology.
For more details, we refer the reader to~\cite{naturality}.

The following definition is a refinement of \cite[Definition 2.7]{sutured}.
By a balanced diagram $(\S,\bolda,\boldb)$, we
mean a balanced sutured double-diagram in the sense of Definition \ref{defn:multi}.
Recall that $(\S,\bolda,\boldb)$ defines a sutured manifold $(M_{\a,\b},\g_{\a,\b})$
as in \cite[Definition 2.8]{sutured}, which is unique up to diffeomorphism relative to $\S$.
The following is~\cite[Definition~2.14]{naturality}.

\begin{defn}
Let $(M,\g)$ be a sutured manifold. Then we say that $(\S, \bolda, \boldb)$ is a \emph{diagram of $(M,\g)$}
if
\begin{enumerate}
\item $\S \subset M$ is an oriented surface with $\partial \S =
  s(\g)$ as oriented 1-manifolds,
\item the components of $\bolda$ bound disjoint disks to the negative side of $\S$, and the components of $\boldb$
bound disjoint disks to the positive side of $\S$,
\item if we compress $\S$ along $\bolda$, we get a surface isotopic to
  $R_-(\g)$ relative to $\g$,
\item if we compress $\S$ along $\boldb$, we get a surface isotopic to $R_+(\g)$ relative to $\g$.
\end{enumerate}
\end{defn}

Let $\H = (\S,\bolda,\boldb)$ be an \emph{admissible} diagram of the balanced sutured manifold~$(M,\g)$.
Since $\chi(R_+(\g)) = \chi(R_-(\g))$, we have $|\bolda| = |\boldb|$, we denote this number by $k$.
We also choose a complex structure $\j$ on $\S$ and a generic perturbation $J_s \subset \mathcal{U}$
of the induced complex structure on $\sym^g(\S)$, where $\mathcal{U}$ is a contractible set
of almost complex structures.
The sutured Floer homology $\SFH_{J_s}(\H)$ is the Lagrangian intersection Floer homology
of the tori $\T_\a$ and $\T_\b$ inside $\text{Sym}^k(\S)$ endowed with a particular
symplectic structure compatible with $J_s$. Recall that we are using $\Z_2$-coefficients.
Furthermore, this splits along relative $\spinc$ structures:
\[
\SFH_{J_s}(\H) = \bigoplus_{\s \in \spinc(M,\g)} \SFH_{J_s}(\H,\s).
\]

Given two different choices $(\j, J_s)$ and $(\j',J_s')$ of complex structures and perturbations,
Ozsv\'ath and Szab\'o~\cite[Lemma~2.11]{OSz10} constructed isomorphisms
\[
\Phi_{J_s \to J_s'} \colon \SFH_{J_s}(\H,\s) \to \SFH_{J_s'}(\H,\s).
\]
These isomorphisms are natural in the sense that
\[
\Phi_{J_s \to J_s''} = \Phi_{J_s' \to J_s''} \circ \Phi_{J_s \to J_s'}
\]
and $\Phi_{J_s \to J_s}$ is the identity. Then we define $\SFH(\H,\s)$
to be the set of those elements $g$ of $\prod_{J_s} \SFH_{J_s}(\H,\s)$
for which $\Phi_{J_s \to J_s'}(g(J_s)) = g(J_s')$ for any pair of
generic perturbations~$J_s$ and~$J_s'$ in~$\mathcal{U}$.

Let $\H(M,\g)$ be the set of all admissible diagrams of $(M,\g)$.
(Note that this is indeed a set, not a proper class, as $\S \subset M$.)
Our goal is to construct an isomorphism
\[
F_{\H,\H'} \colon \SFH(\H) \to \SFH(\H')
\]
for any pair $(\H,\H') \in \H(M,\g) \times \H(M,\g)$
such that these isomorphisms respect the splitting along $\spinc(M,\g)$.
We require that the groups $\SFH(\H)$ and isomorphisms~$F_{\H,\H'}$
form a \emph{transitive system}; i.e., that $F_{\H,\H} = \text{Id}_{\H}$ and
\[
F_{\H,\H''} = F_{\H',\H''} \circ F_{\H,\H'}
\]
for every $\H$, $\H'$, $\H'' \in \H(M,\g)$.
Then $\SFH(M,\g)$ is the set of elements $g$ of $\prod_{\H \in \H(M,\g)} \SFH(\H)$
for which $F_{\H,\H'}(g(\H)) = g(\H')$ for every $\H$, $\H' \in \H(M,\g)$.
Projection onto the factor corresponding to the diagram $\H$ gives an isomorphism
\[
P_{\H} \colon \SFH(M,\g) \to \SFH(\H).
\]

Any pair of diagrams $\H$, $\H' \in \H(M,\g)$ can be connected by a sequence of
moves that we describe next. After that, we define an elementary isomorphism $F_{\H,\H'}$ for each such move.
One of the main results of~\cite{naturality} is that, given any two diagrams $\H$, $\H' \in \H(M,\g)$,
no matter how we get from $\H$ to $\H'$, the composition of the corresponding
elementary isomorphisms is always the same.

Given two $1$-manifolds $\boldd$ and $\boldd'$ on a surface $\S$, we say that
they are \emph{equivalent}, and write $\boldd \sim \boldd'$,
if one can obtain one from the other via a sequence of isotopies and handleslides.
Suppose that $(\S,\bolda,\boldb)$ and $(\S,\bolda',\boldb')$ are both
diagrams of the sutured manifold $(M,\g)$. Then, by~\cite[Lemma~2.11]{naturality},
we have $\bolda \sim \bolda'$ and $\boldb \sim \boldb'$.

\begin{defn}
We say that the diagrams $(\S_1, \bolda_1, \boldb_1)$ and $(\S_2, \bolda_2, \boldb_2)$ are
\emph{strongly equivalent} if $\S_1 = \S_2$, $\bolda_1 \sim \bolda_2$, and $\boldb_1 \sim \boldb_2$.
\end{defn}

\begin{defn}
The sutured diagram $(\S_2, \bolda_2, \boldb_2)$ is obtained from $(\S_1, \bolda_1, \boldb_1)$ by a \emph{stabilization} if
\begin{itemize}
\item there is a disk $D \subset \S_1$ and a punctured torus $T \subset \S_2$ such that we have $\S_1 \setminus D = \S_2 \setminus T$,
\item $\bolda_1  = \bolda_2 \cap (\S_2 \setminus T)$ and $\boldb_1  = \boldb_2 \cap (\S_2 \setminus T)$,
\item $\bolda_2 \cap T$ and $\boldb_2 \cap T$ are simple closed curves that intersect each other transversely in a single point.
\end{itemize}
In this case, we also say that $(\S_1, \bolda_1, \boldb_1)$ is obtained from $(\S_2, \bolda_2, \boldb_2)$ by a \emph{destabilization}.
\end{defn}

\begin{defn}
Given diagrams $\H_1 = (\S_1, \bolda_1, \boldb_1)$ and $\H_2 = (\S_2, \bolda_2, \boldb_2)$, a diffeomorphism $d \colon \H_1 \to \H_2$
is an orientation preserving diffeomorphism $d \colon \S_1 \to \S_2$ such that $d(\bolda_1) = \bolda_2$ and $d(\boldb_1) = \boldb_2$.

Suppose that $\H_1$ and~$\H_2$ are both diagrams for $(M,\gamma)$,
and let $\iota_i \colon \Sigma_i \to M$ be the inclusion for $i \in \{1,2\}$.
Then a diffeomorphism $d \colon \H_1 \to \H_2$ is \emph{isotopic to the identity in~$M$}
if $\iota_2 \circ d \colon \S_1 \to M$ is isotopic to
$\iota_1 \colon \S_1 \to M$ relative to $s(\gamma)$.
\end{defn}

For any two diagrams $\H$, $\H' \in \H(M,\g)$, there exists a sequence
\[
\H_1, \dots, \H_n \in \H(M,\g)
\]
such that for every $i \in \{\, 1, \, \dots, n-1 \,\}$,
the diagrams~$\H_i$ and~$\H_{i+1}$ are related by a strong equivalence,
a (de)stabilization, or a diffeomorphism isotopic to the identity in~$M$.

We now review \cite[Lemma~9.2]{naturality}.
If $\H = (\S,\boldd,\boldd')$ is an admissible diagram and $\boldd \sim \boldd'$,
then this defines the connected sum of a product sutured manifold and~$k$ copies of $S^1 \times S^2$.
(Note that if $\S$ is disconnected, then we might be taking connected sums along different components
of a disconnected product sutured manifold diffeomorphic to
$(\S(\boldd) \times I, \partial\S \times I)$, where $\S(\boldd)$ is $\S$ compressed along~$\boldd$.)
There is a unique $\spinc$ structure $\s_0$ on this manifold with~$c_1(\s_0) = 0$ and
which can be represented by a vertical vector field on the product summand.
Then
\[
\SFH(\H, \s_0) \cong \Lambda^*H_1(S^1 \times S^2; \Z_2)
\]
as relative $\Z$-graded groups. Hence, the ``top'' non-zero grading is isomorphic to~$\Z_2$,
we denote its generator by $\Theta_{\d,\d'}$.

If $(\S,\bolda,\boldb,\boldb')$ is an admissible triple, then we write $\Psi^{\bolda}_{\boldb \to \boldb'}$
for the map
\[
F_{\a,\b,\b'}(- \otimes \Theta_{\b,\b'}) \colon \SFH(\S,\bolda,\boldb) \to \SFH(\S,\bolda,\boldb').
\]
Similarly, if the triple $(\S,\bolda',\bolda,\boldb)$ is admissible and $\bolda \sim \bolda'$,
then we write $\Psi^{\bolda \to \bolda'}_{\boldb}$ for the map
\[
F_{\a',\a,\b}(\Theta_{\a',\a} \otimes -) \colon \SFH(\S,\bolda,\boldb) \to \SFH(\S,\bolda',\boldb).
\]
According to~\cite[Proposition~9.8]{naturality}, these maps are isomorphisms.

Suppose that the diagrams $\H = (\S,\bolda,\boldb)$ and $\H' = (\S,\bolda',\boldb')$ are admissible
and strongly equivalent. If the quadruple diagram $(\S,\bolda,\boldb,\bolda',\boldb')$ is admissible,
then let
\[
\Psi^{\bolda \to \bolda'}_{\boldb \to \boldb'} :=
\Psi^{\bolda'}_{\boldb \to \boldb'} \circ \Psi^{\bolda \to \bolda'}_{\boldb}.
\]
If $(\S,\bolda,\boldb,\bolda',\boldb')$ is not necessarily admissible,
by~\cite[Lemma~9.3]{naturality}, there exist attaching sets $\ol{\bolda}$, $\ol{\boldb} \subset \S$
such that the quadruple diagrams $(\S, \bolda, \boldb, \ol{\bolda}, \ol{\boldb})$
and~$(\S, \bolda', \boldb', \ol{\bolda}, \ol{\boldb})$
are both admissible. Then we obtain an isomorphism
\[
\Phi^{\bolda \to \bolda'}_{\boldb \to \boldb'} :=
\Psi^{\ol{\bolda} \to \bolda'}_{\ol{\boldb} \to \boldb'} \circ \Psi^{\bolda \to \ol{\bolda}}_{\boldb \to \ol{\boldb}}
\]
from $\SFH(\H)$ to $\SFH(\H')$ that is independent of the choice of $\ol{\bolda}$ and $\ol{\boldb}$.
Furthermore, when $(\S,\bolda,\boldb,\bolda',\boldb')$ is admissible, then
$\Phi^{\bolda \to \bolda'}_{\boldb \to \boldb'} = \Psi^{\bolda \to \bolda'}_{\boldb \to \boldb'}$.

We show in~\cite[Section~9]{naturality} that all the isomorphisms above are functorial
in the obvious way. For example, if the diagrams $(\S,\bolda,\boldb)$, $(\S,\bolda',\boldb')$,
and $(\S,\bolda'',\boldb'')$ are all admissible, then
\[
\Phi^{\bolda \to \bolda''}_{\boldb \to \boldb''} =
\Phi^{\bolda' \to \bolda''}_{\boldb' \to \boldb''} \circ \Phi^{\bolda \to \bolda'}_{\boldb \to \boldb'},
\]
and $\Phi^{\bolda \to \bolda}_{\boldb \to \boldb}$ is the identity.

Given a diffeomorphism $d \colon \H \to \H'$ between the diagrams $\H = (\S, \bolda, \boldb)$
and $\H' = (\S',\bolda',\boldb')$, we defined an isomorphism
\[
d_* \colon \SFH(\H) \to \SFH(\H')
\]
in \cite[Definition~9.19]{naturality}. For this end, choose a perturbation $(\j, J_s)$
for $\H$. We can push this forward along $d$ to obtain a perturbation $(\j', J_s')$ for $\H'$.
Then $d$ induces a tautological isomorphism
\[
d_{J_s,J_s'} \colon \SFH_{J_s}(\H) \to \SFH_{J_s'}(\H').
\]
These isomorphisms then descend to the limits along the different perturbations, giving
rise to $d_*$. When $\H$ and $\H'$ are both diagrams of $(M,\g)$ and~$d$ is
isotopic to the identity in~$M$, we let $F_{\H,\H'}$ be $d_*$.

Finally, suppose that the diagram $\H' = (\S',\bolda',\boldb')$ is obtained from
$\H = (\S, \bolda, \boldb)$ via a stabilization. Then $\bolda' = \bolda \cup \{\a\}$
and $\boldb' = \boldb \cup \{\b\}$, where $\a \cap \b$ consists of a single point~$c$.
The map
\[
\sigma_{\H \to \H'} \colon CF(\H) \to CF(\H')
\]
mapping $\x \in \T_\a \cap \T_\b$ to $\x \times \{c\} \in \T_{\a'} \cap \T_{\b'}$
is a chain map for a suitable choice of complex structures on~$\S$ and~$\S'$,
and induces an isomorphism $F_{\H,\H'}$ on homology.
When $\H'$ is obtained from $\H$ via a destabilization, we let
$F_{\H,\H'} = F_{\H',\H}^{-1}$.

As explained above, given any pair of admissible diagrams $\H$, $\H' \in \H(M,\g)$,
we obtain a ``canonical'' isomorphism
\[
F_{\H,\H'} \colon \SFH(\H) \to \SFH(\H')
\]
by composing the elementary isomorphisms associated to an arbitrary sequence of
Heegaard moves connecting $\H$ and $\H'$.

Now we define the diffeomorphism action on sutured Floer homology.
Consider a diffeomorphism $\phi \colon (M,\g) \to (M',\g')$, and pick an admissible
diagram $\H = (\S,\bolda,\boldb)$ for $(M,\g)$. Let $d = \phi|_{\S}$, then
$\H' = d(\H)$ is an admissible diagram of $(M',\g')$.
We define $\phi_* \colon \SFH(M,\g) \to \SFH(M',\g')$ to be $(P_{\H'})^{-1} \circ d_* \circ P_{\H}$.
In other words, the following diagram is commutative:
\[
\xymatrix{
\SFH(M,\g) \ar[r]^{\phi_*} \ar[d]^{P_{\H}} &\SFH(M',\g') \ar[d]^{P_{\H'}} \\
\SFH(\H) \ar[r]^{d_*} &\SFH(\H').
}
\]

We conclude this section with a lemma that will be useful later on.

\begin{lem} \label{lem:n-commute}
Let $d \colon \S \to \S'$ be a diffeomorphism mapping the admissible sutured multi-diagram
$(\S,\eta^0, \dots, \eta^n)$ to $(\S',\nu^0,\dots, \nu^n)$.
For every $i \in \Z_n$, this induces an isomorphism
\[
d^i_* \colon \SFH(\S,\eta^i,\eta^{i+1}) \to \SFH(\S',\nu^i,\nu^{i+1}).
\]
Then, for every $x_i \in \SFH(\S,\eta^{i},\eta^{i+1})$ for $i \in \{\,0, \dots, n-1\,\}$, we have
\[
d^n_*\left(F_{\eta^0,\dots,\eta^n}(x_0 \otimes\dots \otimes x_{n-1})\right) =
F_{\nu^0,\dots,\nu^n}\left(d^0_*(x_0) \otimes \dots \otimes d^{n-1}_*(x_{n-1})\right).
\]
\end{lem}

\begin{proof}
Choose a complex structure $\j$ on $\S$, and let $\j' = d_*(\j)$,
together with corresponding $1$-parameter perturbations $J_s$ on $\sym^k(\S)$ and $J_s'$ on $\sym^k(\S')$.
Then $\sym^k(d)$ is a symplectomorphism from $\sym^k(\S)$ to $\sym^k(\S')$ that maps
the Lagrangians $\T_{\eta^0}, \dots, \T_{\eta^n}$ to $\T_{\nu^0}, \dots, \T_{\nu^n}$, respectively,
and intertwines the almost complex structures $J_s$ and $J_s'$. Hence, the statement becomes a tautology.
\end{proof}

\section{The map associated with a framed link}

Here, we generalize the work of Ozsv\'ath and Szab\'o \cite[Section~4]{OSz10}.
Some of the following notions already appeared in the paper of  Grigsby and Wehrli~\cite[Section~4]{GW}
for framed links in product sutured manifolds, in the context of a link surgery spectral sequence.

\begin{defn}
A \emph{framed link} $\L$ in a sutured manifold $(M,\g)$ is a collection of~$n$ pairwise disjoint,
smoothly embedded circles $K_1, \dots, K_n \subset \Int(M)$,
together with a choice of homology classes $\ell_i \in H_1(\partial N(K_i))$ satisfying $m_i \cdot \ell_i = 1$,
where~$m_i$ is the meridian of~$K_i$.

By attaching $2$-handles along the components of the framed link~$\L$,
we naturally get a special cobordism $W(\L)$ from $(M,\g)$ to a sutured manifold $(M(\L),\g)$.
Note that $M(\L)$ is obtained by surgery along $\L$, and $\g$ is left unchanged.
We call the cobordism $W(\L)$ the \emph{trace} of the surgery.

\end{defn}

For every framed link~$\L$ in~$(M,\g)$ and $\spinc$ structure $\s \in \spinc(W(\L))$, we are going to define maps
\[
F_{M,\L} \colon \SFH(M,\g) \to \SFH(M(\L),\g) \text{ and}
\]
\[
F_{M,\L,\s} \colon \SFH \left(M,\g,\s|_M \right) \to \SFH \left(M(\L),\g,\s|_{M(\L)}\right).
\]

\begin{defn}
A \emph{bouquet~$B(\L)$ for the link~$\L$} is a $1$-complex embedded in~$M$ which is the union of~$\L$
with a collection of arcs $a_1,\dots,a_n$, such that for every $i \in \{\, 1,\dots, n\,\}$,
the arc $a_i$ connects $K_i$ and $R_+(\g)$. We denote the punctured torus
$\partial N(K_i) \setminus N(a_i)$ by $F_i$.
\end{defn}

\begin{defn} \label{defn:bouquet}
A \emph{sutured triple diagram subordinate to the bouquet $B(\L)$} is a triple diagram
\[
(\S,\bolda,\boldb,\boldd) = (\S, \{\,\a_1,\dots,\a_d\,\}, \{\,\b_1,\dots, \b_d\,\}, \{\,\d_1,\dots,\d_d\,\}),
\]
such that
\begin{enumerate}
\item \label{it:b1} the triple $\left(\S, \{\,\a_1,\dots,\a_d\,\}, \{\,\b_{n+1},\dots,\b_d\,\} \right)$
      is a diagram of the sutured manifold $(M',\g') = (M \setminus N(B(\L)),\g)$
      (in particular, $\S \subset M'$),
\item \label{it:b2} the curves $\d_{n+1}, \dots, \d_d$ are small isotopic translates of $\beta_{n+1},\dots,\b_d$,
\item \label{it:b3} after compressing $\S$ along $\b_{n+1},\dots,\b_d$, for $i  \in \{\,1, \dots, n \,\}$,
      the induced curves $\b_i$ and $\d_i$ on $R_+(\g')$  lie in the punctured torus~$F_i$,
\item \label{it:b4} for $i \in \{\, 1,\dots, n \,\}$, the curve $\b_i$ represents a meridian of $K_i$ that is disjoint from all the $\d_j$ for $j \neq i$,
      and meets $\d_i$ in a single transverse intersection point,
\item \label{it:b5} for $i  \in \{\, 1,\dots, n \,\}$, the homology class of $\d_i$ corresponds to the framing $\ell_i$.
\end{enumerate}
\end{defn}

\begin{figure}[tb]
\includegraphics{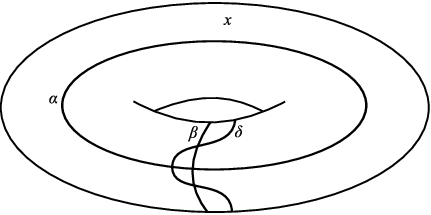}
\caption{A proper stabilization of a triple diagram is obtained by
taking the connected sum with the above diagram at the point marked by $x$.} \label{fig:2}
\end{figure}

\begin{defn} \label{def:stab}
By a \emph{stabilization} of a triple diagram $(\S,\bolda,\boldb,\boldd)$ subordinate to some bouquet, we mean the following.
Take the connected sum of $(\S,\bolda,\boldb,\boldd)$ with a diagram $(E,\a_{d+1},\b_{d+1},\d_{d+1})$, where $E \subset M'$ is a genus one surface,
$|\a_{d+1} \cap \b_{d+1}| = 1$, and $\d_{d+1}$ is a small isotopic translate of $\b_{d+1}$ such that $|\b_{d+1} \cap \d_{d+1}| = 2$, see Figure~\ref{fig:2}. Furthermore, we say that a stabilization is \emph{proper} if the connected sum tube joins a component of $\S \setminus (\bolda \cup \boldb \cup \boldd)$ that intersects $\partial \S$ nontrivially with the component of
$E \setminus (\a_{d+1} \cup \b_{d+1} \cup \d_{d+1})$ disjoint from the isotopy connecting $\b_{d+1}$ and $\d_{d+1}$.
A \emph{(proper) destabilization} is the reverse of a (proper) stabilization.
\end{defn}

The following generalizes the corresponding result of Ozsv\'ath and Szab\'o~\cite[Lemma 4.5]{OSz10}.

\begin{lem} \label{lem:bouquet}
Let $(M,\g)$ be a balanced sutured manifold, together with a framed link $\L \subset M$ and associated
bouquet $B(\L)$. Then there is a sutured triple diagram subordinate to $B(\L)$, and any two such triple
diagrams can be connected by a sequence of the following moves:
\begin{enumerate}
\item \label{it:aisot} isotopies and handleslides amongst $\{\, \a_1,\dots,\a_d \,\}$,
\item \label{it:bisot} isotopies and handleslides amongst $\{\, \b_{n+1},\dots,\b_d \,\}$,
      while carrying along the curves $\d_{n+1},\dots,\d_d$, as well,
\item \label{it:stab} proper stabilizations and destabilizations,
\item \label{it:ahandle} for $i \in \{\, 1, \dots, n \,\}$, an isotopy of $\b_i$, or a handleslide of $\b_i$ across a $\b_j$ with $j \in \{\, n+1, \dots ,d \,\}$,
\item \label{it:bhandle} for $i \in \{\, 1,\dots, n \,\}$, an isotopy of $\d_i$, or a handleslide of $\d_i$ across a $\d_j$ with $j \in \{\, n+1, \dots, d \,\}$
\item \label{it:diff} a diffeomorphism isotopic to the identity in $M'$.
\end{enumerate}
\end{lem}

\begin{proof}
By the work of the author~\cite[Proposition 2.13]{sutured}, there exists a sutured diagram
\[
(\S,\{\, \a_1,\dots\a_d \,\},\{\, \b_{n+1},\dots,\b_d \,\})
\]
defining the sutured manifold $(M',\g') = (M \setminus N(B(\L)),\g)$.
Using \cite[Proposition~2.37]{naturality}, any two sutured diagrams defining $(M',\g')$
can be connected by isotopies and handleslides of the $\a$- and $\b$-curves, diffeomorphisms
of the Heegaard surface isotopic to the identity in~$M'$, stabilizations, and destabilizations.
To see that proper stabilizations suffice (i.e., stabilizations in a region intersecting
$\partial \S$), note that we can obtain an arbitrary stabilization by
isotoping the $\a$- and $\b$-curves via a finger move along an arc~$a$ connecting~$\partial \S$
with the stabilization point, performing a proper stabilization,
followed by a sequence of handleslides along the inverse of~$a$.
The curves $\d_{n+1},\dots,\d_d$ are chosen to be small translates of $\b_{n+1},\dots,\b_d$,
respectively. It follows from the above discussion that any two such triple diagrams
\[
(\S, \{\, \a_1,\dots\a_d \,\},\{\, \b_{n+1},\dots,\b_d \,\}, \{\, \d_{n+1}, \dots, \d_d \,\})
\]
can be connected by moves~\eqref{it:aisot}, \eqref{it:bisot}, \eqref{it:stab}, and~\eqref{it:diff}.

Since $\S$ surgered along $\b_{n+1},\dots,\b_d$ is canonically diffeomorphic to $R_+(\g')$,
parts~\eqref{it:b3}--\eqref{it:b5} of Definition~\ref{defn:bouquet} prescribe how to choose $\b_1,\dots,\b_n$ and $\d_1,\dots,\d_n$.
For $i \in \{\, 1, \dots,  n \,\}$, the framed link specifies the homology classes of~$\b_i$ and~$\d_i$ in~$F_i$. Different choices~$\b_i$
and~$\b_i'$ can be connected by an isotopy in~$F_i$. It follows that in~$\S$ they can be connected by a sequence of
isotopies and handleslides across the curves $\b_{n+1},\dots,\b_d$. The same argument works for $\d_1,\dots,\d_n$.
These give rise to moves~\eqref{it:b4} and~\eqref{it:b5}.
\end{proof}

The following proposition is a generalization of Ozsv\'ath and Szab\'o~\cite[Proposition~4.3]{OSz10}.

\begin{prop} \label{prop:filling}
Let $(\S,\bolda,\boldb,\boldd)$ be a triple diagram subordinate to the bouquet~$B(\L)$ in~$(M,\g)$.
\begin{enumerate}
\item \label{it:f1} $\W_{\a,\b,\d}$ is a cobordism from $(M,\g)$ to the disjoint union of $(M(\L),\g)$
and
\[
(M_{\b,\d},\g_{\b,\d}) \approx (R_+ \times I, \partial R_+ \times I)  \#^{d-n} (S^2 \times S^1),
\]
where $R_+ = R_+(\g)$, and different copies of $S^2 \times S^1$ might be summed along different components of $R_+ \times I$.

\item \label{it:f2} The Heegaard surface $\S$ divides $(M,\g)$ into the sutured compression bodies~$U_\a$
and~$U_\b$, such that the framed link $\L \subset U_\b$. The sutured mono-diagram $(\S,\boldb)$ defines
a cobordism $\W_\b$ from $U_\b \cup_\S (-U_\b)$ to $\emptyset$. Let $\W_\b(\L)$ be the cobordism
obtained from $\W_\b$ by attaching $4$-dimensional $2$-handles to $\L$ along $U_\b$.
Then $\W_\b(\L)$ is a cobordism from $(M_{\b,\d}, \g_{\b,\d})$ to $\emptyset$.
Then we can glue $\W_{\a,\b,\d}$ and~$\W_{\b}(\L)$ along $(M_{\b,\d},\g_{\b,\d})$ such that we obtain
a cobordism equivalent to~$W(\L)$, and this equivalence is well-defined up to isotopy.
\end{enumerate}
\end{prop}

\begin{proof}
It follows from part~\eqref{it:b4} of Definition~\ref{defn:bouquet} that $(\S,\bolda,\boldb)$ is a diagram of
the sutured manifold~$(M,\g)$. Furthermore, by part~\eqref{it:b5},
the diagram $(\S,\bolda,\boldd)$ defines the surgered manifold $(M(\L),\g)$,
since we glue $3$-dimensional $2$-handles to the complement $(M',\g')$ of the bouquet $B(\L)$ along curves specified by the framing of $\L$.
Finally, let $\S_{\b}$ be $\S$ surgered along $\boldb$. Then the diagram
$(\S,\boldb,\boldd)$ defines
\[
\left(\S_{\b} \times I, \partial \S_{\b} \times I \right) \#^n S^3  \#^{d-n} \left(S^2 \times S^1 \right),
\]
where $\b_i$ and $\d_i$ give the $S^3$ components for $i \in \{\, 1, \dots, n \,\}$, and the $S^2 \times S^1$ components for
$i \in \{\, n+1, \dots, d \,\}$. Note that different copies of $S^3$ or $S^2 \times S^1$ might be added to different components of $\S_{\b} \times I$.
However, $\S_{\b} \approx R_+(\g)$, which concludes the proof of~\eqref{it:f1}.

We now prove~\eqref{it:f2}; i.e., that we can glue $\W_{\b}(\L)$ to~$\W_{\a,\b,\d}$ such that we obtain~$W(\L)$.
Let $\W_{\a,\b,\d} = (W,Z,[\xi])$ and
$\W_\b(\L) = (W',Z',[\xi'])$. As usual, $P_3$ denotes a regular triangle, and we label its edges $e_{\a}$, $e_{\b}$, and~$e_{\d}$
in a clockwise fashion. Furthermore, $U_{\a}$, $U_{\b}$, and~$U_{\d}$ are the sutured compression bodies corresponding to
$\bolda$, $\boldb$, and $\boldd$, respectively. Recall that~$W$ is obtained from~$P_3 \times \S$ by gluing
$e_{\a} \times U_{\a}$, $e_{\b} \times U_{\b}$, and $e_{\d} \times U_{\d}$ along
$e_{\a} \times \S$, $e_{\b} \times \S$, and $e_{\d} \times \S$, respectively, and smoothing corners.

Let $P_1$ be a monogon with edge $e$. Then $W_\b$ is obtained from $\S \times P_1$ by gluing
$e \times U_\b$ along $e \times \S$, and smoothing corners. The only end of this cobordism
is obtained by gluing two copies of~$U_\b$, namely the components of $\partial e \times U_\b$,
along~$\S$. Then we attach $4$-dimensional $2$-handles along $\L$ in one of the $U_\b$
components to obtain $W'$. An alternative way to describe $W'$ is the following.
Take $U_\b \times I$, and attach $4$-dimensional $2$-handles to $U_\b \times \{1\}$ along $\L \times \{1\}$,
obtaining the $4$-manifold $(U_\b \times I)(\L)$.
The top boundary $U_\b \times \{1\}$ becomes $U_\b(\L) = U_\d$.
After smoothing corners,
\[
(U_\b \times \{0\}) \cup (\S \times I) \cup U_\d
\]
becomes diffeomorphic to $M_{\b,\d}$.

\begin{figure}
\includegraphics{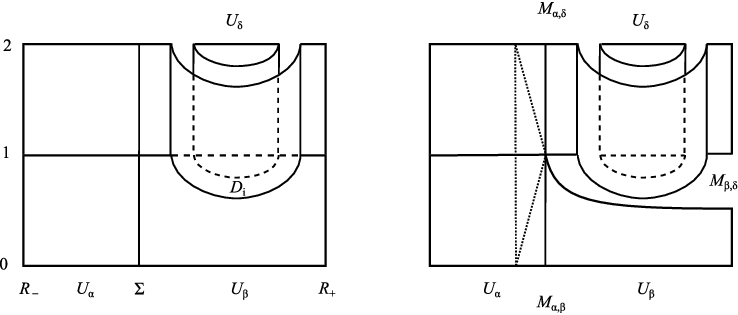}
\caption{The cobordism $\W(\L)'$ on the left, and $\W(\L)''$ on the right.} \label{fig:2.5}
\end{figure}

Recall that the cobordism $W(\L)$ is obtained from $M \times I$ by attaching $4$-di\-men\-sional
$2$-handles $D_1, \dots, D_n$ to $M \times \{1\}$ along the components of the framed link
$\L \times \{1\}$. Note that
\[
W(\L) = (U_\a \times I) \cup_{\S \times I} (U_\b \times I)(\L).
\]
There is an isotopically unique equivalence from $W(\L)$ to the cobordism $\W(\L)'$ obtained by gluing
$W(\L)$ and the trivial cobordism $M(\L) \times [1,2]$ along $M(\L) \times \{1\}$, see the left-hand
side of Figure~\ref{fig:2.5}.
Let $N(\L)$ be the tubular neighborhood of~$\L$ in~$U_\b$ used for attaching
$D_1, \dots, D_n$, and denote by~$E(\L)$ the link exterior $U_\b \setminus \text{Int}(N(\L))$.
If we remove the interiors of $D_1,\dots,D_n$ from $W(\L)'$, then cut the resulting manifold along
$E(\L) \times \{1\}$, and finally smooth the resulting corner at $\S \times \{1\}$, we obtain
a cobordism $\W(\L)''$ that is isotopically uniquely equivalent to $\W_{\a,\b,\d}$.
Indeed, $\W(\L)''$ is obtained by taking $U_\a \times [0,2]$, and gluing $U_\b \times [0,1]$
along $\S \times [0,1]$ and $U_\d \times [1,2]$ along $\S \times [1,2]$. For an illustration
of $\W(\L)''$, see the right-hand side of Figure~\ref{fig:2.5}. The dotted triangle indicates
the identification with $\W_{\a,\b,\d}$.

We also obtain $\W(\L)''$ from $\W(\L)'$ if we remove the interior of
\[
\left( U_\b \times [1-\varepsilon, 1] \right) \cup D_1 \cup \dots \cup D_n,
\]
and the latter manifold is diffeomorphic to $(U_\b \times I)(\L)$, which we have shown
to agree with $W'$. So indeed, we can glue $W'$ to $W$ along $M_{\b,\d}$ such that
we obtain a manifold diffeomorphic to $W(\mathbb{L})$.

\begin{figure}[tb]
\includegraphics{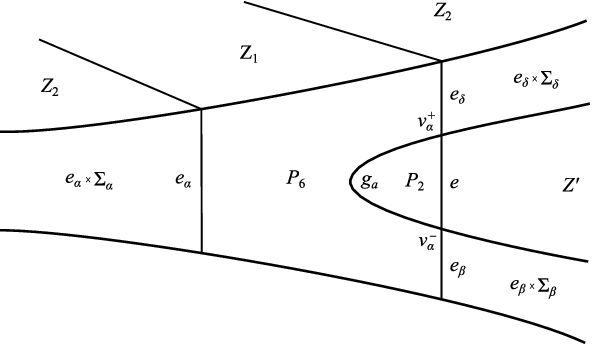}
\caption{A section of the manifold $Z \cup Z'$.} \label{fig:3}
\end{figure}

Next, we check that the diffeomorphism from $W \cup W'$ to $W(\L)$ constructed above maps
$(Z \cup Z',[\xi \cup \xi'])$ to $(\partial M \times I,[\z])$, where~$\z$ is an $I$-invariant contact structure such
that every $\partial M \times \{t\}$ is a convex surface with dividing set~$\g$.
This will conclude the proof of $\W_{\a,\b,\d} \cup \W_\b(\L) = W(\L)$.

Recall that $Z = Z_1 \cup Z_2$, where $Z_1 = P_6 \times \partial \S$ and
\[
Z_2 = (e_{\a} \times \S_{\a}) \cup (e_{\b} \times \S_{\b}) \cup (e_{\d} \times \S_{\d}),
\]
see Figure \ref{fig:3}.
The contact structure~$\xi$ is given by a hierarchy that starts with decomposing along a set of product annuli~$A \subset Z_2$
parallel to
\[
(e_{\a} \times \partial\S_{\a}) \cup (e_{\b} \times \partial \S_{\b}) \cup (e_{\d} \times \partial \S_{\d}),
\]
then along surfaces $P_6 \times \{q\}$, for one~$q$ in each component of~$Z_1$.

Similarly, $Z' = Z_1' \cup Z_2'$, where $Z_1' = P_2 \times \partial \S$ and $Z_2' = e \times \S_\b$.
Furthermore, $\xi'$ is defined by decomposing the sutured manifold $(Z',\g_{\b,\d})$
along product annuli~$A'$ parallel to $e \times \partial R_+$, after which we get $Z_1'$, a union
of tori with two longitudinal sutures on each, and $Z_2'$, a product sutured manifold.
Hence $Z'$ is the manifold $\S_\b \times I$ with~$\xi'$ being the canonical $I$-invariant contact structure.
In particular, $(Z',\g_{\b,\d})$ is diffeomorphic to the product sutured manifold $(\S_\b \times I, \partial \S_\b \times I)$.

Let $g_{\a}$ be the edge of $P_6$ lying between $e_{\b}$ and $e_{\d}$,
and let $e_{\b} \cap g_{\a} = v_{\a}^-$ and $e_{\d} \cap g_{\a} = v_{\a}^+$.
The surface $\S_{\d}$ is naturally identified with $\S_\b$, and $Z'$ is glued to $Z$ along
\[
(\{v_{\a}^-\} \times \S_{\b}) \cup (\{v_{\a}^+\}\times \S_{\d}) \cup (g_{\a} \times \partial \S)
\]
using this identification. More precisely, $R_-(Z',\g_{\b,\d}) = \S_\b \times \{0\}$ is glued to $\{v_{\a}^-\} \times \S_{\b}$ and
$R_+(Z',\g_{\b,\d}) = \S_\b \times \{1\}$ is glued to $\{v_{\a}^+\}\times \S_{\d}$,
whereas $\g_{\b,\d}$ is glued to $g_{\a} \times \partial \S$. It follows that $Z \cup Z'$ is
diffeomorphic to $(\S_{\a} \cup \g_{\a,\b} \cup \S_{\b}) \times I = \partial M \times I$.

The dividing set of $\xi \cup \xi'$ on $\partial M \times \{i\}$ is $s(\g_{\a,\b}) \times \{i\}$ for both $i=0$ and $i=1$.
The product annuli $A$ and $A'$ for $(Z,\g)$ and $(Z',\g_{\b,\d})$ glue up to a set of product annuli $A \cup A'$
inside $\partial M \times I$. After decomposing
$\partial M \times I$ along $A \cup A'$, we get a product sutured manifold diffeomorphic to
\[
(\S_{\a} \times I,\partial \S_{\a} \times I) \cup (\g_{\a,\b} \times I,\partial \g_{\a,\b} \times I) \cup (\S_{\b} \times I,\partial \S_{\b} \times I).
\]
The decomposing surfaces $P_6 \times \{q\}$ and $P_2 \times \{q\}$ glue together to give product disks
inside $(\g_{\a,\b} \times I,\partial \g_{\a,\b} \times I)$. Hence $\xi \cup \xi'$ is
given by a hierarchy which starts with the product annuli $A \cup A'$,
and continues with decompositions along product disks, and is consequently equivalent to an $I$-invariant contact structure.
Since the dividing set on $\partial M \times \{0\}$ is $s(\g_{\a,\b}) \times \{0\}$, we have $\xi \cup \xi' = \z$.

Alternatively, using the description of $\xi$ in the proof of Lemma~\ref{lem:bcond},
we see that the $2$-pane field~$\xi \cup \xi'$ is never opposite to the $2$-plane field
that is tangent to the product foliation on $\S_{\a} \times I$ and $\S_{\b} \times I$,
and rotates $\pi$ as we traverse $\g_{\a,\b} \times \{t\}$ from~$\S_{\a}$ to~$\S_{\b}$.
Hence $\xi \cup \xi'$ is equivalent to a contact structure
such that $\partial M \times \{t\}$ is convex for every $t \in I$ with dividing set $s(\g_{\a,\b}) \times \{t\}$.
\end{proof}

Using part~\eqref{it:f2} of Proposition~\ref{prop:filling}, we get a restriction map
\[
r \colon \spinc(W(\L)) \to \spincu(\W_{\a,\b,\d}).
\]
To define $r(\s)$ for $\s \in \spinc(W(\L))$,
first choose an $I$-invariant contact structure~$\eta$ on $\partial M \times I \subset W(\L)$
such that each $\partial M \times \{t\}$ is a convex surface with dividing set~$\g \times \{t\}$.
Then pick an almost complex structure $J'$ on $T(M \times I)|_{\partial M \times I}$ such that
the field of complex tangencies $\xi_{J'} = \eta$. Since $W(\L)$ is a special cobordism, the map
\[
q \colon \spinc(W(\L),J') \to \spinc(W(\L))
\]
is an affine isomorphism. We take $q^{-1}(\s) \in \spinc(W(\L),J')$, and restrict it
to the complement of
\[
\left( U_\b \times [1-\varepsilon, 1] \right) \cup D_1 \cup \dots \cup D_n,
\]
which we identified with $\W_{\a,\b,\d}$. The result in $\spincu(\W_{\a,\b,\d})$ is independent of the choices made.
This construction will enable us to define maps on $\SFH$ induced by cobordisms equipped with $\spinc$ structures.

By the connected sum formula~\cite[Proposition 9.15]{sutured}, we have
\[
\SFH\left((R_+ \times I, \partial R_+ \times I) \# \left(\#^{d-n} (S^2 \times S^1) \right)\right) \cong
\Lambda^*H^1\left(\#^{d-n}(S^1 \times S^2)\right).
\]
This is supported in the $\spinc$ structure~$\s_0$ that is characterized by
\begin{enumerate}
\item \label{it:horizontal} the restriction of $\s_0$ to $(R_+ \times I, \partial R_+ \times I)$
    is homologous to the $2$-plane field tangent to the horizontal foliation,
\item \label{it:disk} the restriction of $\s_0$ to $\#^{d-n}(S^1 \times S^2)$
    extends to $\sharp^{d-n}(S^1 \times D^3)$, or equivalently, its first Chern class vanishes.
\end{enumerate}

We introduce the shorthand
\[
M(R_+,d-n) = (R_+ \times I, \partial R_+ \times I) \# \left(\#^{d-n} (S^2 \times S^1)\right).
\]
Then $\SFH(M(R_+,d-n))$ with the relative Maslov grading is isomorphic to
\[
\Lambda^* H^1\left(\#^{d-n} (S^1 \times S^2); \Z_2\right).
\]
So the ``top-dimensional'' part of $\SFH(M(R_+,d-n))$ is $\Z_2$, whose generator we denote by~$\t$.

\begin{lem} \label{lem:theta}
Let $(\S,\bolda,\boldb,\boldd)$ be a triple diagram subordinate to the bouquet~$B(\L)$ in~$(M,\g)$.
Furthermore, let $\s \in \spinc(W(\L))$ be an arbitrary $\spinc$ structure. Then, using the identification
between $(M_{\b,\d}, \g_{\b,\d})$ and $M(R_+,d-n)$ in part~\eqref{it:f1} of Proposition~\ref{prop:filling}, we have
\[
r(\s)|_{(M_{\b,\d},\g_{\b,\d})} = r(\s)|_{M(R_+,d-n)} = \s_0.
\]
\end{lem}

\begin{proof}
To see that $r(\s)|_{M(R_+,d-n)}$ satisfies condition~\eqref{it:horizontal} characterizing~$\s_0$ above,
notice that the $(R_+ \times I,\partial R_+ \times I)$ component of~$M(R_+,d-n)$
is parallel to~$(Z',\g_{\b,\d}) \subset \partial M \times I$ in the proof of Proposition~\ref{prop:filling},
which carries the ``horizontal" $\spinc$ structure.

Now we check property~\eqref{it:disk}. Recall from the proof of Proposition~\ref{prop:filling}
that $M_{\b,\d}$ is embedded in $U_\b(\L)$ as
\[
(U_\b \times \{0\}) \cup (\S \times I) \cup U_\d.
\]
Observe that $r(\s)$ extends to $U_\b(\L)$, as we obtain $W(\L)$ from $\W_{\a,\b,\d}$
by gluing~$U_\b(\L)$ to $(M_{\b,\d},\g_{\b,\d})$ along the above subset of its boundary.
Let $C_i$ be the core of the $3$-dimensional
$2$-handle in $U_\b$ glued to $\S \times I$ along $\b_i \times \{0\}$, together with the annulus~$\b_i \times I$.
Then $C_i \times I$ is a 3-ball in $U_\b(\L)$ with boundary a $2$-sphere that is isotopic to $\{pt\} \times S^2$ in the
$i$-th $S^1 \times S^2$ component of $(M_{\b,\d}, \g_{\b,\d}) \approx M(R_+,d-n)$. Since $r(\s)$ extends
to $C_i \times I$, we see that $c_1(\s_0)$ vanishes on this~$S^1 \times S^2$ component.
\end{proof}

\begin{defn}
Let $\L$ be a framed link in $(M,\g)$, and fix a $\spinc$ structure $\s \in \spinc(W(\L))$. Then we define maps
\[
F_{M,\L}  \colon \SFH(M,\g) \to \SFH(M(\L),\g) \text{ and}
\]
\[
F_{M,\L,\s} \colon \SFH\left(M,\g,\s|_{M} \right) \to \SFH \left(M(\L),\g,\s|_{M(\L)}\right),
\]
as follows.

Pick a bouquet $B(\L)$ for $\L$, and an admissible triple diagram $\mathcal{T} = (\S,\bolda,\boldb,\boldd)$ subordinate to this
bouquet. As above, $\t$ is the generator of the top-dimensional part of
\[
\SFH(\S,\boldb,\boldd,\s_0) \cong \SFH(M(R_+,d-n),\s_0).
\]
Let $\H = (\S,\bolda,\boldb)$, which is a diagram of $(M,\g)$, and $\H_\L = (\S,\bolda,\boldd)$, which
is a diagram of $(M(\L),\g)$.
According to Definition~\ref{defn:products}, we have a triangle map
\[
F_{\a,\b,\d} \colon \SFH(\H) \otimes \SFH(\S,\boldb,\boldd) \to \SFH(\H_\L).
\]
We define the map $E_{\mathcal{T}} \colon \SFH(\H) \to \SFH(\H_\L)$
via $E_{\mathcal{T}}(x) = F_{\a,\b,\d}(x \otimes \t)$. By Lemma~\ref{lem:theta}, this can
be refined using $\spinc(W(\L))$, and we define
\[
E_{\mathcal{T},\s} \colon \SFH \left(\H,\s|_M \right) \to \SFH \left(\H_\L,\s|_{M(\L)} \right)
\]
via $E_{\mathcal{T},\s}(x) = F_{\a,\b,\d}(x \otimes \t, r(\s))$.
Then
\[
F_{M, \L} = P_{\H_\L}^{-1} \circ E_{\mathcal{T}} \circ P_{\H},
\]
where $P_{\H} \colon \SFH(M,\g) \to \SFH(\H)$ and $P_{\H_\L} \colon \SFH(M(\L),\g) \to \SFH(\H_\L)$ are
the natural isomorphisms introduced in Section~\ref{sec:naturality}.
Similarly, we let
\[
F_{M, \L, \s} = P_{\H_\L}^{-1} \circ E_{\mathcal{T},\s} \circ P_{\H}.
\]
\end{defn}

The following theorem ensures that the above definition is independent of the choice of bouquet and subordinate triple diagram.

\begin{thm}
Let $(M,\g)$ be a balanced sutured manifold equipped with a framed link $\L$,
and let $\s \in \spinc(W(\L))$.
Suppose $\mathcal{T} = (\S,\bolda,\boldb,\boldd)$ and
$\mathcal{T}' = (\S',\bolda',\boldb',\boldd')$ are admissible triple diagrams
subordinate to bouquets~$B$ and~$B'$ for~$\L$, respectively, and let
$\H = (\S,\bolda,\boldb)$, $\H_\L = (\S,\bolda,\boldd)$, $\H' = (\S',\bolda',\boldb')$,
and $\H_\L' = (\S',\bolda',\boldd')$.
Then we have a commutative diagram
\begin{equation} \label{eqn:bouquet}
\begin{CD}
\SFH\left(\H,\s|_M\right) @>E_{\mathcal{T},\s}>> \SFH\left(\H_\L,\s|_{M(\L)}\right)\\
@VVF_{\H,\H'}V   @VVF_{\H_\L,\H_\L'}V\\
\SFH\left(\H',\s|_M \right) @>E_{\mathcal{T}',\s}>> \SFH \left(\H_\L',\s|_{M(\L)} \right),
\end{CD}
\end{equation}
where $F_{\H,\H'}$ and $F_{\H_\L,\H_\L'}$
are the canonical isomorphisms defined in Section~\ref{sec:naturality}.
An analogous statement holds for $E_{\mathcal{T}}$ and $E_{\mathcal{T}'}$.
\end{thm}

\begin{proof}
We follow the proof of \cite[Theorem 4.4]{OSz10}.
The most important new point is that for naturality reasons
we also consider the embeddings of~$\S$ and~$\S'$ in~$M$.

First, assume that $B = B'$. Then~$\mathcal{T}$ and~$\mathcal{T}'$ can be connected by
a sequence of moves~\eqref{it:aisot}--\eqref{it:diff} of Lemma~\ref{lem:bouquet}.
It suffices to check that diagram~\eqref{eqn:bouquet} is commutative when the two triples differ by exactly one of these moves.
Indeed, the general case follows by writing down a ladder where each small rectangle corresponds to an elementary
move and is hence commutative, while the composition of the maps along the two vertical sides give
the canonical isomorphisms $F_{\H,\H'}$
and $F_{\H_\L,\H_\L'}$ as the composition of any sequence of canonical
isomorphisms is again a canonical isomorphism.

First, we check invariance under move~\eqref{it:stab}, a proper stabilization, cf.~Definition~\ref{def:stab}.
Just as in~\cite[Lemma~4.7]{OSz10}, we obtain that the maps~$E_{\mathcal{T},\s}$
commute with the isomorphisms induced by proper stabilizations.
The argument is somewhat simpler in our case, since we are stabilizing near~$\partial \S$
and holomorphic discs avoid the boundary, which make neck-stretching unnecessary.
Indeed, we obtain~$\mathcal{T}'$ by taking the connected sum of~$\mathcal{T}$ in a region
that intersects~$\partial\S$ with the diagram~$(E,\a_{d+1},\b_{d+1},\d_{d+1})$ at the point $x \in E$,
see Figure~\ref{fig:2}. Let $\a_{d+1} \cap \b_{d+1} = \{x_{d+1}\}$, $\a_{d+1} \cap \d_{d+1} = \{y_{d+1}\}$,
and let~$w_{d+1}$ be the point of~$\b_{d+1} \cap \d_{d+1}$ of higher relative grading.
If~$\theta \in \T_\b \cap \T_\d$ represents the top-dimensional generator $\Theta \in \SFH(\S,\boldb,\boldd,\s_0)$,
then $\theta \times \{w_{d+1}\}$ represents the top-dimensional generator~$\Theta' \in \SFH(\S,\boldb',\boldd',\s_0)$.
For~$\x \in \T_\a \cap \T_\b$ with~$\s(\x) = r(\s)$, the coefficient~$n_\y$ of~$\y \in \T_\a \cap \T_\d$
in the chain $f_{\mathcal{T}}(\x \otimes \theta,r(\s))$ is obtained by
counting rigid pseudo-holomorphic triangles with corners at~$\x$, $\theta$, and~$\y$ and inducing the $\spinc$
structure~$\s$. The stabilization map~$F_{\H_\L,\H_\L'}$
on the chain level is given by~$\y \mapsto \y \otimes \{y_{d+1}\}$.
On the other hand, the stabilization map~$F_{\H,\H'}$ on the chain level is given by~$\x \mapsto \x \times \{x_{d+1}\}$.
The coefficient of~$\y \otimes \{y_{d+1}\}$ in the chain
\[
f_{\mathcal{T}'}((\x \times \{x_{d+1}\}) \otimes (\theta \times \{w_{d+1}\}), r(\s))
\]
is also~$n_\y$, as in~$E$ there is a unique rigid holomprphic triangle with corners~$x_{d+1}$, $y_{d+1}$, and $w_{d+1}$
having multiplicity zero at the connected sum point~$x \in E$, giving a bijection between
pseudo-holomorphic representatives of each homotopy class in~$\pi_2(\x,\y,\theta)$
and of the corresponding class in
\[
\pi_2(\x \times \{x_{d+1}\},\y \times \{y_{d+1}\},\theta \times \{w_{d+1}\}).
\]
This shows that diagram~\ref{eqn:bouquet} is commutative already on the chain level for proper stabilizations.

To check invariance under move~\eqref{it:diff},
suppose that the two triple diagrams~$\mathcal{T}$ and~$\mathcal{T}'$ differ by a diffeomorphism isotopic to the identity in~$M'$.
Then, in particular, there is a diffeomorphism $\phi \colon \S \to \S'$ such that $\phi(\bolda) = \bolda'$,
$\phi(\boldb) = \boldb'$, and~$\phi(\boldd) = \boldd'$.
Note that $\phi_*(\Theta_{\b,\d}) = \Theta_{\b',\d'}$, as~$\phi_*$ is an isomorphism
mapping the top-dimensional part of $\SFH(\S,\boldb,\boldd,\s_0)$, generated by~$\Theta_{\b,\d}$,
to the top-dimensional part of $\SFH(\S',\boldb',\boldd',\s_0)$, generated by~$\Theta_{\b',\d'}$.
Hence the diagram
\[
\begin{CD}
\SFH\left(\H,\s|_M \right) @>F_{\mathcal{T}}(\,\cdot\, \otimes\Theta_{\b,\d},r(\s))>> \SFH\left(\H_\L, \s|_{M(\L)} \right)\\
@VV\phi_*V   @VV\phi_*V\\
\SFH\left(\H',\s|_M \right) @>F_{\mathcal{T}'}(\,\cdot\, \otimes\Theta_{\b',\d'},r(\s))>> \SFH\left(\H_\L',\s|_{M(\L)} \right)
\end{CD}
\]
commutes by Lemma~\ref{lem:n-commute}.

Invariance under moves~\eqref{it:aisot}, \eqref{it:bisot}, \eqref{it:ahandle}, and ~\eqref{it:bhandle}
follows in a way similar to the proof of~\cite[Proposition~4.6]{OSz10}, but it is slightly
simpler as our naturality isomorphisms do not involve the continuation maps~$\Gamma$, cf.~Section~\ref{sec:naturality}.
More concretely, suppose that~$(\S,\bolda,\boldb,\boldd)$ and~$(\S,\bolda',\boldb',\boldd')$ are related by one
of the moves above. Then, by~\cite[Lemma~9.3]{naturality}, we can take isotopic copies~$\ol{\bolda}$, $\ol{\boldb}$, and~$\ol{\boldd}$
of~$\bolda$, $\boldb$, and~$\boldd$, respectively, such that the multi-diagrams
\[
(\S,\bolda,\boldb,\boldd,\ol{\bolda},\ol{\boldb},\ol{\boldd}) \text{ and } (\S,\ol{\bolda},\ol{\boldb},\ol{\boldd},\bolda',\boldb',\boldd')
\]
are both admissible.
Let~$\ol{\mathcal{T}} = (\S,\ol{\bolda},\ol{\boldb},\ol{\boldd})$,
$\ol{\H} = (\S,\ol{\bolda},\ol{\boldb})$, and~$\ol{\H}_\L = (\S,\ol{\bolda},\ol{\boldd})$.
Then we claim that the following diagram is commutative:
\[
\begin{CD}
\SFH\left(\H,\s|_M \right) @>F_{\mathcal{T}}(\,\cdot\, \otimes\Theta_{\b,\d},r(\s))>> \SFH\left(\H_\L,\s|_{M(\L)} \right)\\
@VV\Psi^{\a \to \ol{\a}}_{\b \to \ol{\b}} V  @VV\Psi^{\a \to \ol{\a}}_{\d \to \ol{\d}} V\\
\SFH\left(\ol{\H},\s|_M \right) @>F_{\ol{\mathcal{T}}}(\,\cdot\, \otimes\Theta_{\ol{\b},\ol{\d}},r(\s))>> \SFH\left(\ol{\H}_\L, \s|_{M(\L)} \right).
\end{CD}
\]
Indeed, for~$x \in \SFH\left(\H,\s \right)$, suppressing $\spinc$ structures in our notation, we have
\[
\begin{split}
\Psi^{\a \to \ol{\a}}_{\d \to \ol{\d}}\left(F_{\mathcal{T}}(x \otimes \Theta_{\b,\d})\right) =
F_{\ol{\a},\a,\ol{\d}}\left(\Theta_{\ol{\a},\a} \otimes
F_{\a,\d,\ol{\d}}\left(F_{\mathcal{T}}(x \otimes \Theta_{\b,\d}) \otimes \Theta_{\d,\ol{\d}}\right) \right) = \\
F_{\ol{\a},\a,\ol{\d}}\left(\Theta_{\ol{\a},\a} \otimes
F_{\a,\b,\ol{\d}}\left(x \otimes F_{\b,\d,\ol{\d}}\left(\Theta_{\b,\d} \otimes \Theta_{\d,\ol{\d}}\right)\right)\right) = \\
F_{\ol{\a},\a,\ol{\d}}\left(\Theta_{\ol{\a},\a} \otimes
F_{\a,\b,\ol{\d}}\left(x \otimes \Theta_{\b,\ol{\d}}\right)\right) =
F_{\ol{\a},\b,\ol{\d}} \left(F_{\ol{\a},\a,\b}(\Theta_{\ol{\a},\a} \otimes x) \otimes
\Theta_{\b,\ol{\d}}\right) =\\
F_{\ol{\a},\b,\ol{\d}} \left(F_{\ol{\a},\a,\b}(\Theta_{\ol{\a},\a} \otimes x) \otimes F_{\b,\ol{\b},\ol{\d}}
\left(\Theta_{\b,\ol{\b}} \otimes \Theta_{\ol{\b},\ol{\d}}\right)\right) = \\
F_{\ol{\mathcal{T}}}\left(F_{\ol{\a},\b,\ol{\b}}
\left(F_{\ol{\a},\a,\b}(\Theta_{\ol{\a},\a} \otimes x) \otimes \Theta_{\b,\ol{\b}}\right) \otimes \Theta_{\ol{\b},\ol{\d}} \right) =
F_{\ol{\mathcal{T}}}\left(\Psi^{\a \to \ol{\a}}_{\b \to \ol{\b}}(x) \otimes \Theta_{\ol{\b},\ol{\d}}\right),
\end{split}
\]
where we used Theorem~\ref{thm:assoc} -- the associativity of the triangle maps -- in the second, fourth, and sixth step.
The third step follows from the observation that
\[
F_{\b,\d,\ol{\d}}\left(\Theta_{\b,\d} \otimes \Theta_{\d,\ol{\d}}\right) =
\Psi^\b_{\d \to \ol{\d}}(\Theta_{\d,\ol{\d}}) = \Theta_{\b,\ol{\d}},
\]
which holds as $\Psi^\b_{\d \to \ol{\d}} \,\colon\, \SFH_{\text{top}}(\S,\b,\d,\s_0) \to \SFH_{\text{top}}(\S,\b,\ol{\d},\s_0)$ is an isomorphism,
and so maps the generator~$\Theta_{\b,\d}$ to the generator~$\Theta_{\b,\ol{\d}}$. Similarly, we have
\[
F_{\b,\ol{\b},\ol{\d}}
\left(\Theta_{\b,\ol{\b}} \otimes \Theta_{\ol{\b},\ol{\d}}\right) = \Theta_{\b,\ol{\d}},
\]
which implies the fifth step. An analogous argument gives the commutativity of the following diagram:
\[
\begin{CD}
\SFH\left(\ol{\H},\s|_M \right) @>F_{\ol{\mathcal{T}}}(\,\cdot\, \otimes\Theta_{\ol{\b},\ol{\d}},r(\s))>> \SFH\left(\ol{\H}_\L,\s|_{M(\L)} \right)\\
@VV\Psi^{\ol{\a} \to \a'}_{\ol{\b} \to \b'} V  @VV\Psi^{\ol{\a} \to \a'}_{\ol{\d} \to \d'} V\\
\SFH\left(\H',\s|_M \right) @>F_{\mathcal{T}'}(\,\cdot\, \otimes\Theta_{\b',\d'},r(\s))>> \SFH\left(\H'_\L, \s|_{M(\L)} \right),
\end{CD}
\]
which, together with the previous commutative diagram, implies the commutativity of diagram~\eqref{eqn:bouquet}.
The claim in the case~$B = B'$ follows.

We now show that $F_{M,\L,\s}$ is independent of the bouquet, in the spirit of~\cite[Lemma~4.8]{OSz10}.
Suppose that~$B$ and~$B'$ are a pair of bouquets that differ in the choice of one arc~$a_1 \subset B$ and~$a_1' \subset B'$.
If~$a_1'$ is a small isotopic translate of~$a_1$, then we can use the same triple
diagram~$\mathcal{T} = \mathcal{T}'$ subordinate to both~$B$ and~$B'$, in which case diagram~\eqref{eqn:bouquet}
is obviously commutative. Hence, if~$(B \setminus \L) \cap a_1' \neq \emptyset$, we can perturb~$a_1'$ by a
small isotopy such that it becomes disjoint from~$B \setminus \L$.
We construct two triples $(\S,\bolda,\boldb,\boldd)$ and $(\S,\bolda,\boldb',\boldd')$ subordinate to~$B$
and~$B'$, respectively, such that~$\boldb'$
is obtained from~$\boldb$ by a sequence of isotopies and handleslides, and $\boldd'$ is obtained from $\boldd$ by a sequence
of isotopies and handleslides.

To this end, consider $(M'',\g'') = (M \setminus N(B \cup B'), \g)$. Note that
\[
\chi(R_-(\g'')) - \chi(R_+(\g'')) = 2(n+1).
\]
By~\cite[Proposition~2.13]{sutured}, there exists
a sutured diagram
\[
\left(\S, \{\a_1,\dots,\a_d\}, \{\b_{n+2},\dots, \b_d\} \right)
\]
defining $(M'',\g'')$. Let $\b_1, \dots, \b_n$
be meridians of the components $K_1, \dots, K_n$ of~$\L$, respectively. Furthermore, $\b_{n+1}$ is a meridian of~$a_1$.
Similarly, $\b_{n+1}'$ is a meridian of~$a_1'$, and we set $\b_i' = \b_i$ if $i \in \{\, 1,\dots, d\,\} \setminus \{n+1\}$.
The curves $\d_1,\dots,\d_n$ correspond to the framing of $\L$, and for $i \in \{\, n+1,\dots,d \,\}$,
the curve $\d_i$ is a small isotopic translate of~$\b_i$.
Finally, $\d_{n+1}'$ is a small isotopic translate of $\b_{n+1}'$, and
we set~$\d_i' = \d_i$ for $i \in \{\,1, \dots, d\,\} \setminus \{n+1\}$.
Then $(\S,\bolda,\boldb,\boldd)$ is subordinate to~$B$, while $(\S,\bolda,\boldb',\boldd')$ is subordinate to~$B'$.

If we surger $\S$ along $\b_1,\dots,\b_n,\b_{n+2},\dots,\b_d$, then we obtain
$(R_+(\g) \setminus (B_1 \cup B_2)) \cup A,$
where $B_1$, $B_2 \subset R_+(\g)$ are disjoint disks and $A$ is an annulus glued along $\partial B_1 \cup \partial B_2$.
Then $\b_{n+1}$ and $\b_{n+1}'$ induce two disjoint, embedded, homologically non-trivial curves lying in $A$.
These curves must then be isotopic in~$A$, thus $\b_{n+1}'$ can be obtained by handlesliding $\b_{n+1}$ over
some collection of the curves~$\b_1,\dots,\b_n,\b_{n+2},\dots,\b_d$.
We can obtain $\d_{n+1}'$ from $\d_{n+1}$ in an analogous manner. Consequently, $\boldb'$
is obtained from $\boldb$ by a sequence of isotopies and handleslides, and $\boldd'$ is obtained from $\boldd$ by a sequence
of isotopies and handleslides.

From here, commutativity of diagram~\eqref{eqn:bouquet} follow just like
invariance under moves~\eqref{it:aisot}, \eqref{it:bisot}, \eqref{it:ahandle}, and ~\eqref{it:bhandle}
in the case~$B = B'$, applied to the triples $(\S,\bolda,\boldb,\boldd)$ and~$(\S,\bolda,\boldb',\boldd')$.
As one can always get from a bouquet~$B$ to any other bouquet~$B'$ by a sequence of small isotopies and
changing one arc at a time,
the map~$F_{\L,\s}$ is independent of both the bouquet and the subordinate triple diagram.
\end{proof}

We have the following analogue of \cite[Proposition~4.9]{OSz10}.

\begin{prop} \label{prop:linkcomp}
Let $\L$ be a framed link in $(M,\g)$. Suppose that we are given a partition $\L_1 \cup \L_2$ of $\L$.
Then we have cobordisms
\[
W(\L_1) \colon (M,\g) \to (M(\L_1),\g),
\]
and if we view $\L_2$ as a link in $M(\L_1)$,
\[
W(\L_2) \colon (M(\L_1),\g) \to (M(\L),\g).
\]
Choose $\spinc$ structures $\s_i \in \spinc(W(\L_i))$ for $i \in \{1,2\}$.
Then there is an isotopically unique diffeomorphism between~$W(\L)$ and~$W(\L_2) \circ W(\L_1)$, and
under this identification,
\[
\sum_{\s \in \spinc(W(\L)) \colon \s|_{W(L_1)} = \s_1 \text{, } \s|_{W(\L_2)} = \s_2} F_{M,\L,\s} =
F_{M(\L_1),\L_2,\s_2} \circ F_{M,\L_1,\s_1}.
\]
Moreover, $F_{M,\L} = F_{M(\L_1),\L_2} \circ F_{M,\L_1}$.
\end{prop}

\begin{proof}
The proof is completely analogous to the proof of \cite[Proposition 4.9]{OSz10}.
Note that one can disregard the last paragraph
there, as one can always achieve admissibility for the associativity theorem (Theorem~\ref{thm:assoc}) in our case.
The summation is necessary in the untwisted case due to the corresponding ambiguity of the gluing of $\spinc$
structures in the associativity theorem.
\end{proof}

\section{One- and three-handles}

\begin{defn} \label{def:framed-pair}
Let $(M,\g)$ be a sutured manifold. A \emph{framed pair of points} $\bP$ consists of two distinct points~$p_+$ and~$p_-$
lying in the interior of~$M$, together with a positive frame $\langle\, v_1^+, v_2^+, v_3^+ \,\rangle$ of
$T_{p_+}M$ and a negative frame $\langle\, v_1^-, v_2^-, v_3^- \,\rangle$ of $T_{p_-}M$.
\end{defn}

Given a sutured manifold $(M,\g)$ and a framed pair of points $\bP$, let
\[
W(\bP) = (W,Z,[\xi])
\]
be the cobordism from $(M,\g)$ to $(M(\bP),\g)$ obtained by attaching a single
$4$-dimensional $1$-handle~$H = D^1 \times D^3$ to $M \times I$ along~$\bP$. I.e., $(-1,0) \in H$ is glued
to~$p_-$ and~$(1,0) \in H$ is glued to~$p_+$, and $\{-1\} \times D^3$ induces the framing of~$p_-$, while
$\{1\} \times D^3$ induces the framing of~$p_+$.
This is also a special cobordism, with~$Z = \partial M \times I$,
and~$\xi$ being an $I$-invariant contact structure such that for every~$t \in I$ the surface
$\partial M \times \{t\}$ is convex with dividing set~$\g \times \{t\}$.
The manifold~$M(\bP)$ is obtained from~$M$ by removing the interiors of the balls~$\{-1\} \times D^3$
and~$\{1\} \times D^3$, and gluing~$D^1 \times S^2$.
There are two possibilities for~$M(\bP)$:
\begin{enumerate}
\item \label{it:same} If $p_+$ and $p_-$ lie in the same
component $M_0$ of $M$,
then
\[
M(\bP) \approx M\# (S^1 \times S^2),
\]
where the connected sum is taken between $M_0$ and $S^1 \times S^2$.
\item  \label{it:different} Otherwise, there are distinct components~$M_1$ and~$M_2$ of~$M$
such that $M(\bP)$ has $M_1$ and~$M_2$ replaced by $M_1 \# M_2$.
\end{enumerate}
In both cases, we have $\SFH(M(\bP),\g) \cong \SFH(M,\g) \otimes \Z^2$ by~\cite[Proposition~9.15]{sutured}.
As the restriction map $H^2(W,Z) \to H^2(M, \partial M)$ is an isomorphism, we have
$\spinc(W(\bP)) \cong \spinc(M,\g)$ (recall that for a special cobordism it does not
matter whether we consider $\spinc$ structures relative to~$\partial Z$ or~$Z$).
In case~\eqref{it:same}, a $\spinc$ structure $\s'$ on $(M(\bP),\g)$ extends over $W(\bP)$ if and only if
$\s'|_{S^1 \times S^2} = \s_0$, where~$\s_0$ is characterized by $c_1(\s_0) = 0$.
In case~\eqref{it:different}, every $\spinc$ structure extends from~$(M(\bP),\g)$ over~$W(\bP)$.
Given a $\spinc$ structure $\s \in \spinc(W(\bP))$, we also write~$\s$
for the corresponding element of $\spinc(M,\g)$ by a slight abuse of notation,
and let~$\s_{\bP} = \s|_{(M(\bP),\g)}$.
Note that in case~\eqref{it:same}, we have $\s_{\bP} = \s \# \s_0$.

\begin{defn} \label{def:point-bouquet}
Let $\bP$ be a framed pair of points in the sutured manifold~$(M,\g)$.
A \emph{bouquet for~$\bP$} consists of a pair of disjoint
embedded arcs~$\eta_+$ and~$\eta_-$ in~$M$ and a framing of~$\eta_\pm$
given by a normal vector field~$v_\pm$ such that
\begin{enumerate}
\item $\eta_\pm(0) = p_\pm$ and $\eta_\pm(1) \in s(\g)$,
\item $\eta_\pm'(0) = v_1^\pm$ and $\eta_\pm'(1)$ is transverse to~$\partial M$,
\item $v_\pm(0) = v_2^\pm$.
\end{enumerate}
\end{defn}

\begin{defn}
Let $(M,\g)$ be a sutured manifold, together with a framed pair of points~$\bP$,
and let~$B(\bP)$ be a bouquet for~$\bP$.
We say that the diagram $\H = (\S,\bolda,\boldb)$ is \emph{subordinate to~$B(\bP)$} if
\begin{enumerate}
\item $\eta_+ \cup \eta_- \subset \S$,
\item the framings~$v_+$ and~$v_-$ are tangent to~$\S$, and
\item $(\eta_+ \cup \eta_-) \cap (\bolda \cup \boldb) = \emptyset$.
\end{enumerate}
\end{defn}

\begin{lem} \label{lem:connect-framed}
Let $(M,\g)$ be a balanced sutured manifold, together with a framed pair of points~$\bP$ and bouquet~$B(\bP)$.
Then $(M,\g)$ has an admissible diagram subordinate to $B(\bP)$.

If\, $\H = (\S,\bolda,\boldb)$ and~$\overline{\H} = (\overline{\S},\overline{\bolda},\overline{\boldb})$
are admissible diagrams of $(M,\g)$ subordinate to~$B(\bP)$,
then there is a sequence of admissible diagrams $\H_0,\dots,\H_n$, each subordinate to~$B(\bP)$,
such that $\H_0 = \H$ and $\H_n = \overline{\H}$. Furthermore, for every $i \in \{\,0, \dots, n-1\,\}$,
the diagrams~$\H_i$ and~$\H_{i+1}$ are related by an strong equivalence,
a stabilization, a destabilization, or a diffeomorphism isotopic in~$(M,\g)$ to the identity
through diagrams subordinate to~$B(\bP)$.
\end{lem}

\begin{proof}
Let~$N_\pm$ be a regular neighborhood of~$\eta_\pm$, and choose properly embedded disks~$D_\pm \subset N_\pm$
such that $\eta_\pm \subset D_\pm$, $v_\pm$ is tangent to~$D_\pm$, and
\[
D_\pm \cap \partial M = s(\g) \cap N_\pm.
\]
We define the suture manifold~$(M',\g')$ by taking~$M' = \overline{M \setminus (N_+ \cup N_-)}$ and rounding the corners, and
\[
s(\g') = s(\g) \bigtriangleup \partial (D_+ \cup D_-),
\]
where $\bigtriangleup$ denotes the symmetric difference. By~\cite{sutured}, the sutured manifold~$(M',\g')$
has an admissible diagram $(\S',\bolda,\boldb)$. If we let~$\S = \S' \cup D_+ \cup D_-$, then $(\S,\bolda,\boldb)$ is
an admissible diagram of~$(M,\g)$ subordinate to the bouquet~$B(\bP)$.

Now suppose we are given admissible diagrams~$\H = (\S,\bolda,\boldb)$
and~$\overline{\H} = (\overline{\S},\overline{\bolda},\overline{\boldb})$ of $(M,\g)$ subordinate to~$B(\bP)$.
Choose regular neighborhoods~$N_+$ and~$N_-$ of~$\eta_+$ and~$\eta_-$
so thin that they are disjoint from~$\bolda$, $\boldb$, $\bolda'$, and~$\boldb'$,
and let~$D_\pm = \S \cap N_\pm$.
After removing $D_+ \cup D_-$ from both~$\S$ and $\overline{\S}$,
we obtain admissible diagrams~$\H'$ and $\overline{\H'}$ of $(M',\g')$. By~\cite[Proposition~2.37]{naturality},
these can be connected by a sequence of admissible diagrams $\H_0',\dots,\H_n'$
such that for every $i \in \{\,0, \dots, n-1\,\}$,
the diagrams~$\H_i'$ and~$\H_{i+1}'$ are related by an strong equivalence,
a stabilization, a destabilization, or a diffeomorphism isotopic in~$(M',\g')$ to the identity.
By adding~$D_+ \cup D_-$ to each~$\H_i'$, we obtain the desired sequence of
diagrams~$\H_0,\dots,\H_n$.
\end{proof}

We define the map for $1$-handle addition as follows.

\begin{defn} \label{defn:onehandle}
Let $(M,\g)$ be a balanced sutured manifold together with a framed pair of points~$\bP$, and let
$W(\bP)$ be the corresponding cobordism from~$(M,\g)$ to $(M(\bP),\g)$.
Let $(A,\a,\b)$ be a sutured diagram, where $A = D^1 \times S^1$ is an annulus inside the
1-handle $H = D^1 \times D^3$,
and $\a$ and~$\b$ are homologically non-trivial, transverse simple closed curves such that~$|\a \cap \b| = 2$.
Let $\theta \in \a \cap \b$ be the intersection point with higher relative grading.

Choose a bouquet~$B(\bP)$ for~$\bP$, and suppose that $\H = (\S,\bolda,\boldb)$ is an admissible balanced
diagram of $(M,\g)$ subordinate to~$B(\bP)$.
Consider the open disks $D_- = \{-1\} \times \text{Int}(D^2)$ and $D_+ = \{1\} \times \text{Int}(D^2)$ whose union is~$\text{int}(\S \cap H)$
(these are neighborhoods of $p_-$ and $p_+$ in~$\S$, respectively),
and let $\S^0 = \S \setminus (D_- \cup D_+)$. We write~$\S_{\bP}$ for the surface obtained from~$\S^0$ by gluing~$A$ along
$\partial D_- \cup \partial D_+$, and smoothing the corners. Furthermore, we isotope $\bolda$ and $\boldb$ without
crossing~$p_+$ and~$p_-$ such that they become disjoint from~$D_+$ and~$D_-$.
Then
\[
\H_{\bP} = (\S_{\bP},\bolda_{\bP},\boldb_{\bP}) = (\S^0 \cup A, \bolda \cup \{\a\},\boldb \cup \{\b\})
\]
is a balanced diagram of $(M(\bP),\g)$.

For $\s \in \spinc(W(\bP))$, we define the map
\[
f_{\H,\bP,\s} \colon CF(\H,\s) \to CF(\H_{\bP},\s_{\bP})
\]
by the formula $g_{\H,\bP,\s}(\x) = \x \times \{\theta\}$.
This makes sense since $\s(\x \times \{\theta\}) = \s_{\bP}$,
and is a chain map since~$\partial D_+$ and $\partial D_-$ lie in components of
$\S \setminus (\bolda \cup \boldb)$ that intersect~$\partial \S$ non-trivially.
The induced map on homology is
\[
F_{\H,\bP,\s} \colon \SFH(\H,\s) \to \SFH(\H_{\bP},\s_{\bP}).
\]
Then the map associated to attaching a $1$-handle along $\bP$ is
\[
F_{M,\bP, \s} = P_{\H_{\bP}}^{-1} \circ F_{\H,\bP,\s} \circ P_{\H} \colon \SFH(M,\g,\s) \to \SFH(M(\bP),\g,\s).
\]
Similarly, we define the map
\[
F_{\H,\bP} \colon \SFH(\H) \to \SFH(\H_{\bP})
\]
to be the one induced by $f_{\H,\bP}(\x) = \x \times \{\theta\}$,
and let
\[
F_{M,\bP} = P_{\H_{\bP}}^{-1} \circ F_{\H,\bP} \circ P_{\H} \colon \SFH(M,\g) \to \SFH(M(\bP),\g).
\]
\end{defn}

The following theorem is an analogue of \cite[Theorem 4.10]{OSz10}, and ensures that the
maps $F_{M,\bP}$ and $F_{M,\bP,\s}$ are well-defined; i.e., independent of the
choice of bouquet and subordinate diagram.

\begin{thm} \label{thm:1-nat}
Let $\bP$ be a framed pair of points in the balanced sutured manifold~$(M,\g)$.
If $\H = (\S,\bolda,\boldb)$ and $\H' = (\S',\bolda',\boldb')$ are admissible balanced diagrams for $(M,\g)$
subordinate to the bouquets~$B(\bP)$ and $B(\bP)'$, respectively,
and if~$\s \in \spinc(W(\bP))$, then the following diagram is commutative:
\begin{equation} \label{eqn:one-nat}
\begin{CD}
\SFH(\H,\s) @>F_{\H,\bP,\s}>> \SFH(\H_{\bP},\s_{\bP})\\
@VV F_{\H,\H'} V   @VV F_{\H_{\bP}, \H_{\bP}'} V\\
\SFH(\H',\s) @>F_{\H',\bP,\s}>> \SFH(\H_{\bP}',\s_{\bP}),
\end{CD}
\end{equation}
where the vertical maps are the canonical isomorphisms defined in Section~\ref{sec:naturality}.
An analogous statement holds for~$F_{\H,\bP}$ and~$F_{\H',\bP}$.
\end{thm}

\begin{proof}
First, suppose that~$B(\bP) = B(\bP)'$. By Lemma~\ref{lem:connect-framed}, we can connect~$\H$ and~$\H'$
via a sequence of admissible diagram $\H_0, \dots, \H_n$ such that consecutive diagrams
differ by a strong equivalence, a (de)stabilization, or a diffeomorphism isotopic to the identity
through diagrams subordinate to~$B(\bP)$. This gives rise to a sequence of diagrams
$(\H_0)_{\bP},\dots,(\H_n)_{\bP}$ of $(M(\bP),\g)$ such that consecutive diagrams also differ by one
of the above moves. It suffices to show that diagram~\eqref{eqn:one-nat} commutes for~$\H = \H_i$ and~$\H' = \H_{i+1}$
for every $i \in \{\,0, \dots, n-1\,\}$.

If~$\H$ and~$\H'$ differ by a strong equivalence, then
the maps~$F_{\H,\H'}$ and~$F_{\H_\bP,\H_\bP'}$ are compositions of triangle maps where the relevant
triple diagrams have multiplicity zero along~$\eta_+$ and~$\eta_-$, and there is a single small triangle
in~$A$ that contributes, cf.~\cite[Theorem~4.10]{OSz10}. More precisely, recall that the map
$F_{\H,\H'}$ is constructed by taking attaching sets~$\ol{\bolda}$ and~$\ol{\boldb}$ such that
the quadruple diagrams $(\S,\bolda,\boldb,\ol{\bolda},\ol{\boldb})$ and $(\S,\bolda',\boldb',\ol{\bolda},\ol{\boldb})$
are both admissible, and then
\[
F_{\H,\H'} = \Psi^{\ol{\bolda} \to \bolda'}_{\ol{\boldb} \to \boldb'} \circ
\Psi^{\bolda \to \ol{\bolda}}_{\boldb \to \ol{\boldb}}.
\]
Furthermore,
\[
\Psi^{\ol{\bolda} \to \bolda'}_{\ol{\boldb} \to \boldb'} = \Psi^{\bolda'}_{\ol{\boldb} \to \boldb'} \circ
\Psi^{\ol{\bolda} \to \bolda'}_{\ol{\boldb}}
\text{  and  }
\Psi^{\bolda \to \ol{\bolda}}_{\boldb \to \ol{\boldb}} = \Psi^{\ol{\bolda}}_{\boldb \to \ol{\boldb}} \circ
\Psi^{\bolda \to \ol{\bolda}}_{\boldb}.
\]
By finger moves along~$\eta_\pm$, we can also arrange that $(\eta_+ \cup \eta_-) \cap (\ol{\bolda} \cup \ol{\boldb})  = \emptyset$.
Hence, it suffices to show that diagram~\eqref{eqn:one-nat} is commutative either when
$(\S,\boldd,\bolda,\boldb)$ is an admissible triple, the diagrams
$\H = (\S,\bolda,\boldb)$ and $\H' = (\S,\boldd,\boldb)$ are subordinate to~$B(\bP)$,
and $F_{\H,\H'}$ is given by $\Psi^{\bolda \to \boldd}_{\boldb}$,
or when $(\S,\bolda,\boldb,\boldd)$ is admissible, the diagrams $\H = (\S,\bolda,\boldb)$ and $\H' = (\S,\bolda,\boldd)$
are subordinate to~$B(\bP)$,
and $F_{\H,\H'}$ is given by $\Psi^{\bolda}_{\boldb \to \boldd}$. Since the second case is completely analogous,
we will only prove the first one. The attaching sets $\bolda_\bP = \bolda \cup \{\a\}$, $\boldb_\bP = \boldb \cup \{\b\}$,
and $\boldd_\bP = \boldd \cup \{\d\}$ on $\S_\bP$ are obtained by adding the meridional curves $\a$, $\b$, $\d$ in the annulus~$A$
that pairwise intersect in two points, and are arranged as shown in Figure~\ref{fig:one-handle}.
\begin{figure}
\includegraphics{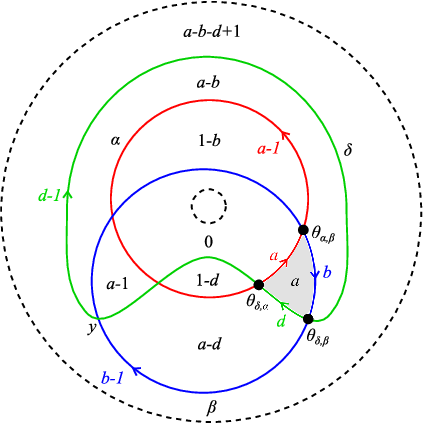}
\caption{The annulus~$A$ is bounded by the two dashed circles. The diagram shows possible multiplicities
of a domain of a triangle in the triple diagram $(\d,\a,\b)$ connecting the ``top'' intersection points $\theta_{\d,\a}$,
$\theta_{\a,\b}$, and $\theta_{\d,\b}$.}
\label{fig:one-handle}
\end{figure}
The ``top'' intersection point between~$\a$ and~$\b$ is denoted by $\theta_{\a,\b}$, and we define~$\theta_{\d,\a}$
and~$\theta_{\d,\b}$ similarly. Pick a generator $\x \in \T_\a \cap \T_\b$,
then commutativity of diagram~\eqref{eqn:one-nat} follows if we show that
\[
F_{\d_\bP,\a_\bP,\b_\bP}\left(\Theta_{\d_\bP,\a_\bP} \otimes \left(\x \times \{\theta_{\a,\b}\}\right)\right) =
F_{\d,\a,\b}(\Theta_{\d,\a} \otimes \x) \times \{\theta_{\d,\b}\}.
\]
Note that $\Theta_{\d_\bP,\a_\bP} = \Theta_{\d,\a} \times \{\theta_{\d,\a}\}$.
Now suppose that~$\D_\bP$ is a positive domain in~$(\S,\boldd_\bP,\bolda_\bP,\boldb_\bP)$ connecting the
intersection points $\Theta_{\d,\a} \times \{\theta_{\d,\a}\}$, $\x \times \{\theta_{\a,\b}\}$,
and $\y \times \{\theta_{\d,\b}\}$, where $\y \in \T_\d \cap \T_\b$. Then the coefficients of~$\D_\bP$
are zero along~$\partial A$, and hence $\D_\bP = \D + \D'$, where~$\D$ is supported in~$\S$, while~$\D'$
is supported in~$A$.

Now we determine~$\D'$. As in Figure~\ref{fig:one-handle}, since $\D'$ connects $\theta_{\d,\a}$, $\theta_{\a,\b}$,
and $\theta_{\d,\b}$, we must have $\partial(\partial \D' \cap \a) = \theta_{\a,\b} - \theta_{\d,\a}$,
$\partial (\partial \D' \cap \b) = \theta_{\d,\b} - \theta_{\a,\b}$, and
$\partial (\partial \D' \cap \d) = \theta_{\d,\a} - \theta_{\d,\b}$. Hence, there are integers $a$, $b$, and $c$
such that $\partial \D' \cap \a$ has coefficients $a$ and $a-1$, while $\partial \D' \cap \b$
has coefficients $b$ and $b-1$, and $\partial \D' \cap \d$ has coefficients $d$ and $d-1$,
with the orientation as in Figure~\ref{fig:one-handle}.
From this, we can determine all the coefficients of $\D'$, they are as in Figure~\ref{fig:one-handle}.
As the coefficients of~$\D'$ are zero along~$\partial A$, it follows that $a-b-d+1 = 0$.
Since~$\D' \ge 0$, we have both $b-1 = a-d \ge 0$ and~$1-b \ge 0$,
hence $b = 1$ and $a = d$. Furthermore, $a-1 \ge 0$ and $1-d = 1-a \ge 0$, so $a = 1$.
It follows that all the coefficients of~$\D'$ are zero, except it is one in the shaded triangle.
Hence, by the Riemann mapping theorem, the moduli space of pseudo-holomorphic triangles~$\M(\D)$
can be identified with $\M(\D_\bP)$ by pasting in the unique holomorphic representative of the shaded triangle.

If $\{y\} = (\d \cap \b) \setminus \{\theta_{\d,\b}\}$, then $\y \times \{y\}$ does not appear in
\[
F_{\d_\bP,\a_\bP,\b_\bP}\left(\Theta_{\d_\bP,\a_\bP} \otimes \left(\x \times \{\theta_{\a,\b}\}\right)\right),
\]
as $\mu \left(\y \times \{\theta_{\d,\b}\}, \y \times \{y\} \right) = 1$.
In fact, a computation analogous to the above shows that in $(A,\d,\a,\b)$,
there are three domains~$\D'$ of triangles connecting $\theta_{\d,\a}$, $\theta_{\a,\b}$, and~$y$,
each one of which has Maslov index one. Hence, if $\D_\bP = \D + \D'$ did contribute to the coefficient of $\y \times \{y\}$,
then $\mu(\D) = \mu(\D_\bP) - \mu(\D') = 0-1 = -1$, and so $\D$, and also $\D_\bP$, would have no pseudo-holomorphic
representative for a generic path of almost complex structures.

Stabilization invariance is straightforward.
When~$\H$ and~$\H'$ differ by a diffeomorphism~$d$ isotopic to the identity, as this isotopy can be chosen
to fix a neighborhood of~$\eta_+ \cup \eta_-$, then~$d$ extends to a diffeomorphism
$d_\bP \colon \H_\bP \to \H_\bP'$ isotopic to the identity in $(M(\bP),\g)$. Furthermore,
$d(\x \times \{\theta\}) = d(\x) \times \{\theta\}$, and the diagram commutes.
So we have shown that diagram~\eqref{eqn:one-nat} commutes when~$B(\bP) = B(\bP)'$.

Now we show commutativity of diagram~\eqref{eqn:one-nat} when the bouquets~$B(\bP)$ and~$B(\bP)'$
are ambient isotopic through bouquets of~$\bP$. More precisely, if~$\phi_t \colon M \to M$
is the isotopy, then we require that
\begin{itemize}
\item $\phi_t(s(\g)) = s(\g)$,
\item $\phi_t(\bP) = \bP$, and
\item $\phi_1(B(\bP)) = B(\bP)'$.
\end{itemize}
By the previous part, it suffices to show commutativity of~\eqref{eqn:one-nat}
for a single pair of admissible diagrams~$\H$ and~$\H'$ subordinate to~$B(\bP)$ and~$B(\bP)'$,
respectively. Choose an arbitrary admissible diagram~$\H$ subordinate to~$\bP$, then $\H' = \phi_1(\H)$ is
a diagram of~$(M,\g)$ subordinate to~$B(\bP)'$. Since~$d = \phi_1|_{\H} \colon \H \to \H'$
is a diffeomorphism isotopic to the identity,
\[
F_{\H,\H'} = d_* \,\colon\, \SFH(\H,\s) \to \SFH(\H',\s).
\]
As each~$\phi_t$ fixes~$\bP$, the diffeomorphism~$d$ extends to a diffeomorphism $d_\bP \colon \H_\bP \to \H_\bP'$
isotopic to the identity in $(M(\bP),\g)$, and such that~$d_\bP$ fixes~$A$ pointwise.
In particular, $F_{\H_\bP,\H_\bP'} = (d_\bP)_*$. For any generator~$\x$ of $CF(\H,\s)$,
we get that
\[
f_{\H',\bP,\s} \circ F_{\H,\H'}(\x) =  d(\x) \times \{\theta\} = d_\bP(\x \times \{\theta\}) = F_{\H_\bP,\H_\bP'} \circ f_{\H,\bP,\s}(\x).
\]
This concludes the proof of invariance under ambient isotopy of the bouquet.

The final step is showing that diagram~\eqref{eqn:one-nat} is commutative for an arbitrary pair
of bouquets~$B(\bP)$ and~$B(\bP)'$. We can make~$B(\bP)$ and~$B(\bP)'$ disjoint in the complement
of~$\bP$ by an ambient isotopy, and by the previous paragraph, it suffices to prove the claim
under this assumption.

Next, we construct a diagram~$\H = (\S,\bolda,\boldb)$ subordinate to~$B(\bP)$ and almost subordinate to~$B(\bP)'$,
except that~$|\eta_\pm' \cap \bolda| = 1$ and~$|\eta_\pm' \cap \boldb| = 1$.
This can be done similarly to the proof of Lemma~\ref{lem:connect-framed}.
Indeed, pick regular neighborhoods~$N_\pm$ of~$\eta_\pm \cup \eta_\pm'$,
and let $D_\pm \subset N_\pm$ be properly embedded disks containing~$\eta_\pm \cup \eta_\pm'$
and tangent to the vector fields~$v_\pm$ and~$v_\pm'$. We also assume
that~$D_\pm \cap \partial M = s(\g) \cap N_\pm$. We define the sutured manifold~$(M',\g')$
by taking~$M' = M \setminus (N_+ \cup N_-)$ and
\[
s(\g') = s(\g) \bigtriangleup \partial (D_+ \cup D_-).
\]
Pick an admissible diagram $(\S',\bolda',\boldb')$ of $(M',\g')$. If we set $\S = \S' \cup D_+ \cup D_-$,
then $(\S,\bolda',\boldb')$ is an admissible diagram of~$(M,\g)$ with holes drilled above and below~$D_\pm$.
We can fill these holes by attaching 3-dimensional 2-handles, which translates to adding an $\a$-curve~$\a_+$ and
a $\b$-curve~$\b_+$ intersecting~$\eta_+ \cup \eta_+'$ in one point each,
which we can then isotope to intersect only $\eta_+'$, and similarly, we add a new $\a$-curve~$\a_-$
and a $\b$-curve~$\b_-$ intersecting~$\eta_-'$ in one point each.
It will be helpful later if we isotope~$\a_\pm$ and~$\b_\pm$ such that they each intersect~$D_\pm$
in a single arc, these arcs have two transverse intersection points on opposite sides of~$\eta_\pm'$,
and the point $\a_\pm \cap \eta_\pm'$ is closer to~$p_\pm$ along~$\eta_\pm'$ than $\b_\pm \cap \eta_\pm'$.

So if we set~$\bolda = \bolda' \cup \a_+ \cup \a_-$ and~$\boldb = \boldb' \cup \b_+ \cup \b_-$, then the diagram
$\H = (\S,\bolda,\boldb)$ is a diagram of~$(M,\g)$ that is subordinate to $B(\bP)$ and almost subordinate to~$B(\bP)'$.
If we perform a finger move on~$\a_\pm$ and~$\b_\pm$ along~$\eta_\pm'$, obtaining curves~$\a_\pm'$
and~$\b_\pm'$, and applying a small Hamiltonian isotopy to $\bolda'$ and $\boldb'$,
we get a diagram~$\H' = (\S,\boldd,\bolde)$ that is
subordinate to~$B(\bP)'$ and almost subordinate to~$B(\bP)$. By a small isotopy, we also arrange that
$\a_\pm$ and~$\a_\pm'$ -- and similarly, $\b_\pm$ and $\b_\pm'$ -- intersect transversely in two points.
The curves~$\a_+$, $\a_+'$, and $\b_+$ inside~$D_+$ are depicted in Figure~\ref{fig:one-handle-nat}.
\begin{figure}
\includegraphics{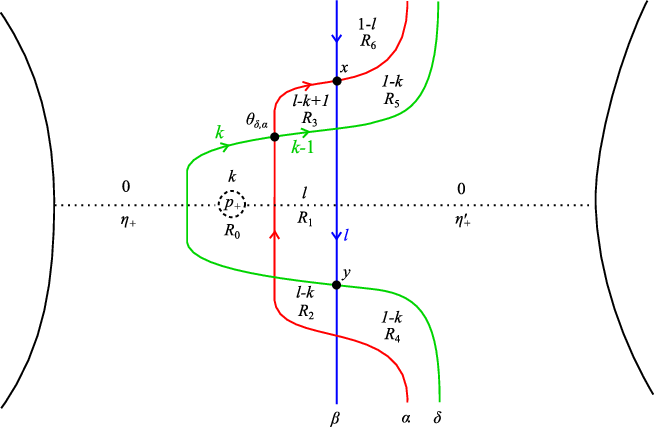}
\caption{The curves $\a = \a_+$, $\b = \b_+$, and $\d =\a_+'$ inside the disk $D_+$.}
\label{fig:one-handle-nat}
\end{figure}
For simplicity, we write $\a = \a_+$, $\b = \b_+$, and~$\d = \a_+'$.
It suffices to show the commutativity of diagram~\eqref{eqn:one-nat} for these diagrams~$\H$ and~$\H'$.
Note that the quadruple diagram $(\S,\bolda,\boldb,\boldd,\bolde)$ is admissible, so
\[
F_{\H,\H'} = \Psi^{\boldd}_{\boldb \to \bolde} \circ \Psi^{\bolda \to \boldd}_{\boldb}.
\]
We will explain why diagram~\eqref{eqn:one-nat} with vertical map $\Psi^{\bolda \to \boldd}_{\boldb}$
and diagrams $\H = (\S,\bolda,\boldb)$ and $\H' = (\S,\boldd,\boldb)$ is commutative, as
commutativity for $\Psi^{\boldd}_{\boldb \to \bolde}$ is analogous.

We denote by $\Theta_{\d,\a}$ the ``top'' intersection point between $\boldd$ and $\bolda$,
this has coordinate $\theta_{\d,\a} \in \d \cap \a$.
Let $\D$ be a positive domain in $(\S,\boldd,\bolda,\boldb)$ connecting the generators $\Theta_{\d,\a}$,
$\x \in \T_\a \cap \T_\b$, and $\y \in \T_\b \cap \T_\d$ that has a rigid pseudo-holomorphic representative
for a generic path of almost complex structures.
We claim that the coefficients of~$\D$ at~$p_+$ and~$p_-$ are both zero. We label the closures of some of the components
of $\S \setminus (\bolda \cup \boldb \cup \boldd)$ by $R_0, \dots, R_6$, as in Figure~\ref{fig:one-handle-nat}.
Let~$k$ and~$l$ denote the coefficients of~$\D$ at~$R_0$ and~$R_1$, respectively.
Since $\partial (\partial\D \cap \a) = x - \theta_{\d,\a}$, where $x = \x \cap \a$,
the multiplicities of $\partial\D \cap \a$ are $l-k$ and $l-k+1$ along the two components of
$\a \setminus \{\theta_{\d,\a},x\}$. As~$\D$ has multiplicity zero along
$\eta_+ \setminus R_0$, the coefficient of~$\D$ at~$R_2$ has to be~$l-k$, hence $l \ge k$.
Furthermore, the multiplicity of $\partial \D \cap (\b \cap R_1)$ is~$l$, so
if~$x$ were not the point $R_3 \cap R_6$ shown in Figure~\ref{fig:one-handle-nat},
then $R_6$ would have coefficient~$-l$. As $\D \ge 0$, this would imply that $l = 0$,
and hence also~$k=0$.

Now assume that~$x = R_3 \cap R_6$, then the coefficient of~$R_6$ is~$1-l$.
So if $k \ge 1$, then $\D \ge 0$ implies that $k = l = 1$. If $y = \y \cap \b$ was not the
point~$R_1 \cap R_4$, then, by considering the multiplicities of~$\partial \D$ along~$\d$,
we see that $R_4$ would have coefficient~$-k = -1$. We conclude that $y = R_1 \cap R_4$,
and inside $D_+$, the domain~$\D$ has coefficients one in $R_0$, $R_1$, and $R_3$, and zero
everywhere else. In particular, around the ``triangle'' $T = R_0 \cup R_1 \cup R_3$,
all multiplicities are zero. Any pseudo-holomorphic representative of such a domain~$\D$
would have to consist of a branched cover where one component is a triangle mapping to $T$.
But one can make cuts along the arcs $R_0 \cap R_1$ and along $R_1 \cap R_3$, and so by the
Riemann mapping theorem, the space of such mappings is one-dimensional, and so cannot be rigid.
In fact, the Maslov index of $T$ is one, so if $\mu(\D) = 0$, then $\mu(\D - T) < 0$, and hence
generically~$\D$ has no pseudo-holomorphic representative.
An analogous argument shows that the coefficient of~$\D$ at~$p_-$ is also zero.

Then the commutativity of diagram~\eqref{eqn:one-nat} follows just like in the case when we had $\H$ and $\H'$
strongly equivalent and both subordinate to the same bouquet.
\end{proof}

Now we define the map induced by attaching a 3-handle.
Let~$(M',\g')$ be a balanced sutured manifold, together with a framed 2-sphere~$S \subset M$.
By this, we mean that a neighborhood of~$S$ is identified with~$S^2 \times D^1$ such that $S = S^2 \times \{0\}$.
Let $W(S)$ be the special cobordism from~$(M',\g')$ to~$(M,\g)$ obtained as the trace of attaching a three-handle~$D^3 \times D^1$
to~$(M',\g')$ along the framed two-sphere $S$. Then we obtain $(M,\g)$ from $(M',\g')$ by removing
$S \times (-1,1)$ and capping off the two boundary components with~$D^3 \times D^0$.
Note that if~$S$ is non-separating, then $(M,\g)$ is automatically balanced. When~$S$ is separating, then
we make the additional assumption that~$(M,\g)$ is also balanced.
Let~$\bP$ be the pair of points
\[
p_\pm = (0,\pm 1) \in D^3 \times D^0 \subset M,
\]
framed by $(\partial/\partial x,\partial/\partial y, \partial/\partial z)$ at~$p_+ \in D^3 \times \{1\}$, and
by the opposite vectors at~$p_- \in D^3 \times \{-1\}$.

\begin{defn} \label{def:adapted-to-sphere}
We say that the diagram $\H' = (\S',\bolda',\boldb')$ is adapted to the framed 2-sphere~$S$
if it is of the form~$\H_\bP = (\S^0 \cup A, \bolda \cup \{\a\},\boldb \cup \{\b\})$
for some admissible diagram~$\H = (\S,\bolda,\boldb)$ of~$(M,\g)$ subordinate to
some bouquet~$B(\bP)$ for~$\bP$ (recall that~$\H_\bP$ was introduced in Definition~\ref{defn:onehandle}).
In other words,
\[
A := \S' \cap (S \times D^1) = S^1 \times D^1,
\]
while $\bolda' \cap A = \{\a\}$ and
$\boldb' \cap A = \{\b\}$ are two meridians of~$A$ that intersect each other in a pair of points,
and both components of~$\partial A$ can be connected with $\partial \S'$ along an embedded arc disjoint
from $\bolda$, $\boldb$, and the interior of~$A$.
\end{defn}

Such a diagram always exists according to Lemma~\ref{lem:connect-framed} applied to~$(M,\g)$
and an arbitrary bouquet~$B(\bP)$ for~$\bP$.

\begin{defn} \label{defn:3handle}
Let $S \subset M'$ be a framed embedded sphere in the balanced sutured manifold $(M',\g')$, and take an adapted balanced diagram
\[
\H' = (\S^0 \cup A, \bolda \cup \{\a\},\boldb \cup \{\b\}) = \H_\bP.
\]
For $\s \in \spinc(W(S))$, let
\[
f_{\H',S,\s} \colon CF\left(\H',\s|_{M'}\right) \to CF\left(\H,\s|_{M}\right)
\]
such that for $\x \in \T_{\bolda} \cap \T_{\boldb}$ and~$y \in \a \cap \b$, we have $f_{\H,\s}(\x \times \{y\}) = 0$ if $y = \theta$,
and~$f_{\H',S,\s}(\x \times \{y\}) = \x$ if $y$ is the intersection point of smaller relative grading.
Then $f_{\H',S,\s}$ is a chain map, since any domain in $\H'$ has coefficient zero along~$\partial A$.
It induces the map
\[
F_{\H',S,\s} \colon \SFH\left(\H',\s|_{M'}\right) \to \SFH\left(\H,\s|_M \right).
\]
on the homology. From this, we obtain the map
\[
F_{M',S,\s} = \P_{\H}^{-1} \circ f_{\H',S,\s} \circ \P_{\H'} \colon \SFH \left(M',\g',\s|_{M'}\right) \to \SFH \left(M,\g,\s|_M \right).
\]
Similarly, we also have a map
\[
F_{M',S} \colon \SFH(M',\g') \to \SFH(M,\g)
\]
that does not refer to a particular $\spinc$ structure.
\end{defn}

\begin{thm}
Let $S \subset M'$ be a framed embedded sphere in the balanced sutured manifold $(M',\g')$, and
let $(M,\g)$, together with the framed pair of points~$\bP$, be the result of surgering~$(M',\g')$
along~$S$.
Suppose that~$(M,\g)$ is balanced, and let~$\H_1$ and~$\H_2$ be diagrams for~$(M,\g)$ adapted to
bouquets~$B_1(\bP)$ and~$B_2(\bP)$, respectively.
Then $\H_1' = (\H_1)_\bP$ and $\H_2' = (\H_2)_\bP$ are diagrams of~$(M',\g')$ adapted to~$S$.
If $\V(S)$ is the 3-handle cobordism corresponding to~$S$
and~$\s \in \spinc(W(S))$, then the following diagram commutes:
\[
\begin{CD}
\SFH \left(\H_1',\s|_{M'} \right) @>F_{\H_1',S,\s}>> \SFH \left(\H_1,\s|_M \right)\\
@VVF_{\H_1',\H_2'}V   @VVF_{\H_1,\H_2}V\\
\SFH \left(\H_2',\s|_{M'} \right) @>F_{\H_2',S,\s}>> \SFH \left(\H_2,\s|_M \right),
\end{CD}
\]
where the vertical maps are the canonical isomorphisms. An analogous result holds for $F_{\H_1',S}$
and $F_{\H_2',S}$.
\end{thm}

\begin{proof}
This is analogous to the proof of Theorem~\ref{thm:1-nat}.
\end{proof}

\section{The map associated to a special cobordism}

In~\cite{naturality}, we constructed a functor~$\SFH$ from the category of balanced sutured manifolds and diffeomorphisms
to~$\Vect$. Furthermore, we have homomorphisms $F_{M,\bP,\s}$ induced by surgery along a framed pair of points~$\bP$,
homomorphisms $F_{M,\L,\s}$ induced by surgery along a framed link~$\L$, and $F_{M,S,\s}$ induced by surgery along a framed 2-sphere~$S$.
In~\cite[Theorem~1.2]{surgery}, we give a set of necessary and sufficient conditions for these
to give rise to a TQFT $F \colon \BSut' \to \Vect$. Note that this result also states that we can avoid 0-handle and 4-handle attachments.
We restate this theorem here for convenience specifically for~$\BSut'$.
We only enrich the theory with $\spinc$ structures later for clarity, as they introduce an additional
layer of complexity.

Let $(M,\g)$ be a balanced sutured manifold.
A framed $k$-sphere in~$M$ is an embedding of~$S^k \times D^{3-k}$ in the interior of~$M$. We denote by~$W(M,\SS)$ the
elementary cobordism obtained by attaching a 4-dimensional $k$-handle to~$M \times I$ along~$M \times \{1\}$ with
an I-invariant contact structure on~$Z = \partial M \times I$
with dividing set~$s(\g) \times \{t\}$ on~$\partial M \times \{t\}$ for every~$t \in I$.
This is a cobordism from~$(M,\g)$ to~$(M(\SS),\g)$, where~$M(\SS)$ is obtained by surgery along~$\SS$, and hence we
call~$W(M,\SS)$ the trace of the surgery. Usually the manifold~$M$ is unambiguous from
the notation~$\SS$, in which case we only write~$W(\SS)$ instead of~$W(M,\SS)$.
If~$\SS \colon S^k \times D^{3-k} \hookrightarrow M$
is a framed $k$-sphere for $k < 3$, let~$\ol{\SS}$ be the framed sphere defined by
\[
\ol{\SS}(\underline{x},\underline{y}) = \SS \left(r_{k+1}(\underline{x}),r_{3-k}(\underline{y}) \right),
\]
where $\underline{x} \in \R^{k+1}$, $\underline{y} \in \R^{3-k}$,
and
\[
r_n(x_1,x_2,\dots,x_n) = (-x_1,x_2,\dots,x_n).
\]

\begin{thm} \label{thm:bsut}
To define a functor $F \colon \BSut' \to \Vect$, it suffices to construct
a functor~$F$ from the category of balanced sutured manifolds and diffeomorphisms to~$\Vect$,
and for every balanced sutured manifold~$(M,\g)$, framed 0-, 1- or 2-sphere~$\SS \subset \text{Int}(M)$,
a linear map
\[
F_{M,\SS} \colon F(M,\g) \to F(M(\SS),\g)
\]
that satisfy the following axioms:
\begin{enumerate}
\item \label{it:isot} 
If $d \in \Diff_0(M,\g)$, then $F(d) = \Id_{F(M,\g)}$.
\item \label{it:d-F}
Given a diffeomorphism $d \colon (M,\g) \to (M',\g')$ between balanced sutured manifolds and a framed
0-, 1-, or 2-sphere $\SS \subset \text{Int}(M)$, let $\SS' = d(\SS)$ and
\[
d^\SS \colon (M(\SS),\g') \to (M'(\SS'),\g')
\]
be the induced diffeomorphisms. Then the following diagram is commutative:
\[
\xymatrixcolsep{3pc} \xymatrix{
  F(M,\g) \ar[r]^-{F_{M,\SS}} \ar[d]^-{F(d)} & F(M(\SS),\g) \ar[d]^{F(d^\SS)} \\
  F(M',\g')  \ar[r]^-{F_{M',\SS'}} & F(M'(\SS'),\g').}
\]
\item \label{it:commut} If $(M,\g)$ is a balanced sutured manifold and $\SS$ and $\SS'$ are \emph{disjoint} framed
0-, 1-, or 2-spheres in $M$,
then $M(\SS)(\SS') = M(\SS')(\SS)$, we denote this manifold by $M(\SS,\SS')$.
Then the following diagram is commutative:
\[
\xymatrixcolsep{5pc}\xymatrix{
  F(M,\g) \ar[r]^-{F_{M,\SS}} \ar[d]_-{F_{M,\SS'}} & F(M(\SS),\g) \ar[d]^{F_{M(\SS),\SS'}} \\
  F(M(\SS'),\g)  \ar[r]^-{F_{M(\SS'),\SS}} & F(M(\SS,\SS'),\g).}
\]
\item \label{it:birth} If $\SS' \subset M(\SS)$ intersects the belt sphere of the handle attached along~$\SS$ once transversely,
then there is a diffeomorphism $\varphi \colon M \to M(\SS)(\SS')$ (which is the identity on $M \cap M(\SS)(\SS')$,
and is unique up to isotopy),
such that
\[
F_{M(\SS),\SS'} \circ F_{M,\SS} = F(\varphi).
\]
\item \label{it:0-sphere} For every framed 0-, 1-, or 2-sphere~$\SS$, we have
$F_{M,\SS} = F_{M,\ol{\SS}}$.
\end{enumerate}
The functor~$F$ is a TQFT if and only if it is symmetric and monoidal.
In the opposite direction, every functor $F \colon \BSut' \to \Vect$ arises in this way.
\end{thm}

\begin{thm} \label{thm:welldef}
The functor~$\SFH$ together with the surgery maps~$F_{M,\bP}$, $F_{M,\L}$, and~$F_{M,S}$
satisfy the axioms listed in Theorem~\ref{thm:bsut}, hence they
give rise to a TQFT $F \colon \BSut' \to \Vect$.
\end{thm}

\begin{proof}
Axiom~\eqref{it:isot}, isotopy invariance, is part of~\cite[Theorem~1.9]{naturality}.
Axiom~\eqref{it:0-sphere} is straightforward as the construction of the map~$F_{M,\SS}$
is independent of the orientation of the framed sphere~$\SS$.

We now show axiom~\eqref{it:d-F}, naturality of the surgery maps.
First, let~$\SS$ be a framed 1-sphere. Choose a bouquet~$B(\SS)$ for~$\SS$;
this is just an embedded arc connecting~$\SS$ and~$R_+(\g)$. Furthermore,
let $\cT = (\S,\bolda,\boldb,\boldd)$ be a triple diagram subordinate to~$B(\SS)$,
and let~$\H = (\S,\bolda,\boldb)$ and~$\H_\SS = (\S,\bolda,\boldd)$.
Then the triple diagram
\[
\cT' = d(\cT) = (\S',\bolda',\boldb',\boldd')
\]
is subordinate to the bouquet~$B(\SS') = d(B(\SS))$.
Let~$\phi = d|_\S \colon \S \to \S'$.
If we write~$\H' = (\S',\bolda',\boldd')$ and~$\H'_{\SS'} = (\S',\bolda',\boldd')$,
then the following diagram commutes by Lemma~\ref{lem:n-commute}:
\[
\begin{CD}
\SFH\left(\H \right) @>F_{\mathcal{T}}(\,\cdot\, \otimes\Theta_{\b,\d})>> \SFH\left(\H_{\SS} \right)\\
@VV\phi_*V   @VV\phi_*V\\
\SFH\left(\H' \right) @>F_{\mathcal{T}'}(\,\cdot\, \otimes\Theta_{\b',\d'})>> \SFH\left(\H'_{\SS'} \right).
\end{CD}
\]
Commutativity of the diagram in axiom~\eqref{it:d-F} follows from this by considering the commutative
cube whose top face is the diagram above, its bottom face is the diagram in axiom~\eqref{it:d-F},
the maps along the vertical edges are given by~$P_\H$, $P_{\H'_{\SS'}}$, $P_{\H_\SS}$, and~$P_{\H'}$
described in Section~\ref{sec:naturality},
and whose vertical faces commute by the definition of the maps $F(d)$, $F(d^\SS)$, $F_{M,\SS}$,
and~$F_{M',\SS'}$. When~$\SS$ is a framed 0- or 2-sphere, axiom~\eqref{it:d-F} follows similarly,
by choosing a diagram adapted to~$\SS$, and pushing it forward along~$d$.

Next, we verify axiom~\eqref{it:commut}, commutativity of disjoint surgeries.
There are six different cases depending on the dimensions~$s$
and~$s'$ of the framed spheres~$\SS$ and~$\SS'$. The case when~$s = s' = 1$ follows immediately from
Proposition~\ref{prop:linkcomp}.

Now suppose that $s = s' = 0$. Pick disjoint bouquets~$B(\SS)$ and~$B(\SS')$ for~$\SS$ and~$\SS'$,
respectively, as in Definition~\ref{def:point-bouquet}. The proof of Lemma~\ref{lem:connect-framed}
can be adapted to show the existence of
an admissible diagram~$\H = (\S,\bolda,\boldb)$ of~$(M,\g)$ subordinate to both~$B(\SS)$ and~$B(\SS')$.
We obtain the diagram~$\H_{\SS,\SS'}$ of~$M(\SS,\SS')$
by adding two annuli~$A$ and~$A'$ to~$\S$, together with
new curves~$\a$, $\b$ in $A$ and~$\a'$, $\b'$ in~$A'$. We get the diagram~$\H_\SS$
of~$M(\SS)$ by only adding~$(A,\a,\b)$, and~$\H_{\SS'}$ of~$M(\SS')$ by adding~$(A',\a',\b')$.
Then the diagram in axiom~\eqref{it:commut} already commutes on the chain level.
Indeed, for any $\x \in \T_\a \cap \T_\b$, we have
\begin{equation*}
\begin{split}
f_{\H_\SS,\SS'} \circ f_{\H,\SS}(\x) = \x \times \{\theta_{\a,\b}\} \times \{\theta_{\a',\b'}\} = \\
\x \times \{\theta_{\a',\b'}\} \times \{\theta_{\a,\b}\} = f_{\H_{\SS'},\SS} \circ f_{\H,\SS'}(\x).
\end{split}
\end{equation*}

The case $s = s' = 2$ is similar. We consider the manifold~$M(\SS,\SS')$, and if~$\bP$ and~$\bP'$ are the belt
spheres of the handles attached along~$\SS$ and~$\SS'$, respectively, then we choose disjoint bouquets~$B(\bP)$
and~$B(\bP')$ for~$\bP$ and~$\bP'$, respectively, and a diagram~$\H$ subordinate to both. Then commutativity
follows on the chain level in the diagram~$\H_{\bP,\bP'}$ adapted to both~$\SS$ and~$\SS'$.

When $s = 0$ and $s' = 2$, or vice versa, consider the manifold~$M(\SS')$, and let~$\bP'$ be the belt sphere
of the handle attached along~$\SS'$. Then pick disjoint bouquets~$B(\SS)$ and~$B(\bP')$ for~$\SS$ and~$\bP'$, respectively,
and take a diagram~$\H$ subordinate to both. The diagram~$\H_{\bP'}$ is then a diagram subordinate to
both~$\SS$ and~$\SS'$, in which commutativity follows on the chain level.

In the case~$s = 0$ and~$s' = 1$, or vice versa, pick disjoint bouquets~$B(\SS)$ and~$B(\SS')$.
Combining the proofs of  Lemma~\ref{lem:connect-framed} and Lemma~\ref{lem:bouquet},
we obtain a triple diagram~$\cT = (\S,\bolda,\boldb,\boldd)$ subordinate to~$B(\SS')$
such that~$(\S,\bolda,\boldb)$ is subordinate to~$B(\SS)$ and~$\boldd \cap B(\SS) = \emptyset$.
Then commutativity follows from the small triangle argument in the
annulus~$A$ attached along~$\SS$ illustrated by Figure~\ref{fig:one-handle}.

Finally, consider the case~$s = 1$ and~$s' = 2$. Then, as above, we can obtain
a diagram adapted to both~$\SS$ and~$\SS'$, in which commutativity follows from a small
triangle argument.

\begin{lem} \label{lem:onetwo}
$\SFH$ and the surgery maps~$F_{M,\SS}$
satisfy axiom~\eqref{it:birth} when~$\SS$ is a framed $0$-sphere and $\SS'$ is a framed $1$-sphere.
\end{lem}

\begin{proof}
We obtain~$M(\SS)$ from~$M$ by attaching the tube~$D^1 \times S^2$ to~$M \setminus N(\SS)$ along~$\partial N(\SS)$.
Since the curve~$\SS'$ intersects the core~$\{0\} \times S^2$ of the tube $D^1 \times S^2$ once transversely,
both points of~$\SS$ have to lie in the same component~$M_0$ of $M$.
Furthermore, we can isotope~$\SS'$ such that
\[
\SS' \cap (D^1 \times S^2) = D^1 \times \{(0,0,1)\}.
\]
Pick a bouquet~$B(\SS')$ for~$\SS' \subset M(\SS)$ by connecting~$\SS'$ and~$R_+(\g)$ via an arc disjoint from
the tube~$D^1 \times S^2$. Then extend~$D^1 \times \{(0,0,-1)\}$ to an embedded arc~$s$ in~$M(\SS)$
with both endpoints lying in~$R_-(\g)$ and disjoint from~$B(\SS')$.

We are now going to construct a triple diagram
$(\S',\bolda',\boldb',\boldd')$ of~$(M(\SS),\g)$ subordinate to~$B(\SS')$
that has a specific form in~$D^1 \times S^2$. Namely, we require that
\begin{equation} \label{eqn:tube}
\S' \cap (D^1 \times S^2) = D^1 \times S^1,
\end{equation}
and that~$\bolda' \cap (D^1 \times S^1) = \{0\} \times S^1$, call this curve~$\a$.
Furthermore, $\boldb' \cap (D^1 \times S^1)$ is a small translate of~$\a$ such that
$\a$ and~$\b$ intersect in two points transversely.

For this end, consider the sutured manifold
$(M',\g)$ obtained from~$(M(\SS),\g)$ by removing a regular neighborhood~$N$ of~$B(\SS')$
from~$M(\SS)$ and adding~$\partial N \setminus R_+(\g)$ to~$R_+$.
Note that $U := (\{0\} \times S^2) \setminus N$ is diffeomorphic to~$D^2$.
Let~$B$ be a regular neighborhood of~$U \cup s$, properly embedded in~$M'$;
this is a manifold with corners.
Recall that sutured functions and gradient-like vector fields for them were defined in
\cite[Definitions~5.12 and~5.13]{naturality}.
We can choose a Morse function~$f \colon B \to [-1,1]$
and a gradient-like vector field~$v$ for~$f$
such that~$f|_{B \cap \partial N} \equiv 1$, $f|_{B \cap R_-(\g)} \equiv -1$,
it has an index one critical point at~$o = (0,(0,0,-1))$ with stable manifold~$s$ and unstable
manifold~$U$, and no other critical points. Indeed, consider the Morse function
\[
g(x,y,z) = -x^2 + y^2 + z^2
\]
on~$\R^3$. Then there is a diffeomorphism
\[
d \colon B \to g^{-1}([-1,1]) \cap 2D^3,
\]
and we let~$f = g \circ d$. The vector filed~$v$ is obtained by pulling back the Euclidean gradient of~$g$
along~$d$.

We extend~$f$ and~$v$ to the rest of~$(M',\g)$ as a generic sutured function and a gradient-like vector field,
respectively. As in~\cite[Definition~6.14]{naturality}, choose a spanning tree~$T_\pm$ of the graph~$\Gamma_\pm(f,v)$
and a splitting surface~$\S' \in \S(f,v)$. This data gives rise to a diagram~$H(f,v,\S',T_-,T_+)$ of~$(M',\g)$
that we denote~$(\S',\bolda',\boldb)$.
Since~$s = W^s(o)$ corresponds to a loop edge of~$T_-$, it does not lie
in the spanning tree~$T_-$. Hence, $\a := U \cap \S' \subset \bolda'$ by definition.

By construction, $\S' \cap U = \a$. As~$U$ is the unstable manifold of~$o$ and~$\partial U \subset R_+(\g)$,
there is no flow-line from~$o$ to another critical point, and hence~$\a \cap \boldb = \emptyset$.
If follows that we can isotope~$\S'$ by ``stretching out'' a neighborhood of~$\a$ in~$(D^1 \times S^2) \setminus N$
such that it satisfies~\eqref{eqn:tube}, $\bolda \cap (D^1 \times S^1) = \a$, and~$\boldb \cap (D^1 \times S^1) = \emptyset$.
If~$\b$ is a small isotopic translate of~$\a$, then
$(\S',\bolda',\boldb \cup \{\b\})$ is a diagram of~$(M(\SS),\g)$. We let~$\boldb' := \boldb \cup \{\b\}$.
The framing of~$\SS'$ is given by a curve in~$\partial N(\SS')$, flowing this up along~$v$ we obtain a curve~$\d$ in~$\S'$
that intersects both~$\a$ and $\b$ in a single point. Hence $(\S',\bolda',\boldb \cup \{\d\})$ is a diagram
of~$(M(\SS)(\SS'),\g)$.

We write~$A = D^1 \times S^1$, and let~$c$ be a component of~$s(\g)$.
Pick disjoint embedded arcs~$\eta_\pm \subset \S' \setminus \text{Int}(A)$ connecting~$\{\pm 1\} \times S^1$ with~$c$.
We can achieve that
\[
\eta_\pm \cap (\bolda \cup \boldb) = \emptyset
\]
by handlesliding each attaching curve
over~$\a$ or~$\b$ along~$\eta_\pm$. Let~$\boldd$ be a small Hamiltonian translate of~$\boldb$,
and we write~$\boldd' = \boldd \cup \{\d\}$.
By the proof of~\cite[Lemma~3.12]{GW}, we can achieve
that the triple $(\S',\bolda',\boldb',\boldd')$ is admissible
by winding the attaching curves in the complement of~$A$.
We do not need to wind in~$A$, since after connecting~$\eta_+$ and~$\eta_-$
in~$A$, we obtain a properly embedded arc~$\eta$ in~$\S'$ dual to~$\b$
that only intersects~$\a$ and~$\b$, and these two curves already
intersect each other in two points so no further winding is required
along~$\eta$.

We set
\[
\S := (\S' \setminus \text{Int}(A)) \cup (\partial D^1 \times D^2),
\]
where $\partial D^1 \times D^2$ is given by the framing of~$\SS$
that identifies a neighborhood of each point of~$\SS$ with~$D^3$.
Then $(\S,\bolda,\boldb)$ is a diagram of~$(M,\g)$.
We denote $\{\pm 1\} \times D^2$ by~$D_\pm$.
The surface~$\S'$ gives a framing of the arcs~$\eta_+$ and~$\eta_-$,
which then give rise to a bouquet~$B(\SS)$ for~$\SS$.
By construction, the diagram $(\S,\bolda,\boldb)$ is subordinate to~$B(\SS)$.

In summary, we have obtained an admissible triple diagram
\[
(\S',\bolda',\boldb',\boldd') = (\S^0 \cup A, \bolda \cup \{\a\},\boldb \cup \{\b\},\boldd \cup \{\d\}), \text{ where}
\]
\begin{itemize}
\item $(\S,\bolda,\boldb)$ is a balanced diagram of $(M,\g)$ subordinate the bouquet~$B(\SS)$,
\item there is a component~$c$ of $\partial \S$, and disks~$D_-$ and~$D_+$ lying in the component of
$\S \setminus (\bolda \cup \boldb)$ containing $c$, such that $\S^0 = \S \setminus (D_- \cup D_+)$,
\item $A = D^1 \times S^1 \subset M(\SS)$ is an annulus attached to $\S^0$ along $\partial D_- \cup \partial D_+$,
\item the curve $\a = \{0\} \times S^1 \subset A$, and $\b$ is a small Hamiltonian translate of $\a$,
\item the attaching set~$\boldd$ is a small Hamiltonian translate of $\boldb$, and $\d$ intersects both $\a$ and $\b$ transversally in a single point,
\item the triple diagram $(\S',\bolda',\boldb',\boldd')$ is subordinate to the bouquet~$B(\SS')$ for~$\SS'$,
in such a way that $\b$ is a meridian of $\SS'$, and $\d$ represents the framing of $\SS'$.
\end{itemize}
For an illustration, see Figure \ref{fig:4}. Since~$D_-$ and~$D_+$ lie in the same component
of $\S \setminus (\bolda \cup \boldb)$ as~$c$, there is an arc~$a$ that connects
a point~$p$ of~$\delta$ with~$c$, and whose interior lies in $\S^0 \setminus (\bolda' \cup \boldb' \cup \boldd')$. Indeed,
connect~$\partial D_-$ to~$c$ with an arc~$a'$ inside~$\S \setminus (\bolda \cup \boldb)$ that intersects~$\d$ transversely,
and take~$a$ to be the closure of the last component of $a' \setminus \d$. Since $a \cap \boldb = \emptyset$
and~$\boldd$ is a small Hamiltonian
translate of~$\boldb$, we can also achieve that $a \cap \boldd = \emptyset$.

\begin{figure}[tb]
\includegraphics{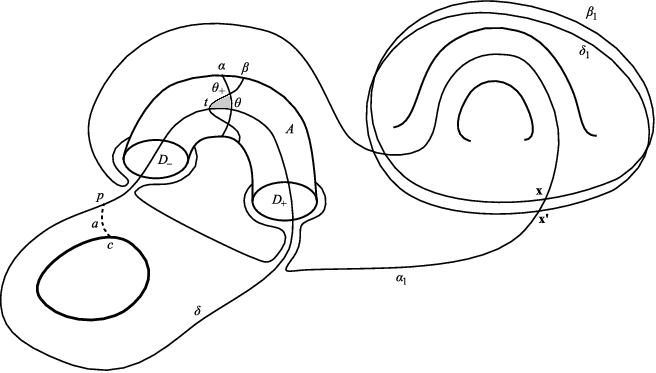}
\caption{A triple diagram corresponding to a canceling pair of one and two-handles.} \label{fig:4}
\end{figure}

We can also assume that $\d \cap \boldb = \emptyset$, since $\boldd$ is a small Hamiltonian translate of~$\boldb$.
Next, we achieve that $\d \cap \bolda = \emptyset$. Just push the curves in $\bolda$ that intersect~$\d$ simultaneously using a finger move along the
arcs $\d \setminus (A \cup \{p\})$ towards $\partial D_- \cup \partial D_+$, then handleslide them over~$\a$.
This process can be done away from the arc~$a$. Furthermore, since $\d \cap \boldb = \emptyset$, the new $(\S,\bolda,\boldb)$
differs from the old one by isotoping the~$\a$ curves inside~$\S \setminus \boldb$. So we can suppose that we started with
a triple diagram where~$\d \cap \bolda = \emptyset$.

We use the diagram
\[
(\S',\bolda',\boldb') = (\S^0 \cup A, \bolda \cup \{\a\},\boldb \cup \{\b\})
\]
to define the map $F_{M,\SS}$, and we compute $F_{M(\SS),\SS'}$
using the triple $(\S',\bolda',\boldb',\boldd')$.
If $\a \cap \b = \{\theta_+,\theta_-\}$, where $\theta_+$ has higher relative grading, then
$F_{M,\SS}(\x) = \x \times \{\theta_+\}$.
The fact that the arc $a$ connects $\d$ and $\partial \S$ and
its interior avoids $\bolda' \cup \boldb' \cup \boldd'$ ensures that every domain in the triple diagram $(\S',\bolda',\boldb',\boldd')$ has multiplicity zero on one side of $\d \setminus A$. Here, we can talk about the two sides of $\d \setminus A$ since
$\d \cap (\bolda \cup \boldb) = \emptyset$. Note that the only component of $\bolda' \cup \boldb' \cup \boldd'$ that intersects $\partial D_- \cup \partial D_+$ is $\delta$.  Moreover, both $\partial D_- \setminus \d$ and $\partial D_+ \setminus \d$ are connected, so every domain in the triple diagram $(\S',\bolda',\boldb',\boldd')$
is the disjoint union of a domain supported in $A$ and a domain supported in $\S^0$; i.e., it has zero multiplicity along
$\partial D_- \cup \partial D_+$. If $\a \cap \d = \{\theta\}$ and $\b \cap \d = \{t\}$, then there is a unique Maslov index zero triangle
connecting $\theta_+$, $t$, and $\theta$ that contributes to the composite map in~$A$, which is shaded in Figure~\ref{fig:4}.
Hence the composite map induces the same map on homology as the map
\[
CF(\S,\bolda,\boldb) \to CF(\S',\bolda',\boldd')
\]
given by $\x \mapsto f_{\a',\b',\d'}((\x \times \{\theta_+\}) \otimes \Theta_{\b',\d'})
= f_{\a,\b,\d}(\x \otimes \Theta_{\b,\d}) \times \{\theta\}$ for $\x \in \T_\a \cap \T_\b$.

We now compute the map~$F(\varphi)$. Recall that~$\varphi$ is the identity on the intersection~$M \cap M(\SS)(\SS')$.
As~$\S' \cap \SS' = \emptyset$ and
\[
\S' \cap M = \S' \setminus A = \S \setminus (D_+ \cup D_-),
\]
it follows that
\[
\S \cap (M \cap M(\SS)(\SS')) = \S \setminus (D_+ \cup D_-).
\]
But~$\bolda$ and~$\boldb$ are disjoint from~$D_\pm$,
so~$\varphi(\bolda) = \bolda$, $\varphi(\boldb) = \boldb$, and
\[
\varphi(\S) \cap \S \supset \S \setminus (D_+ \cup D_-).
\]
Let
\[
\H = (\varphi(\S),\varphi(\bolda),\varphi(\boldb)) = (\varphi(\S),\bolda,\boldb)
\]
and~$\H' = (\S',\bolda',\boldd')$.
For a homology class $x \in SFH(\S,\bolda,\boldb)$ represented by an intersection point $\x \in \T_\a \cap \T_\b$,
we obtain $F(\varphi)(x)$ by taking the homology class represented by~$\varphi(\x)$ in~$SFH(\H)$,
then finding its image under the natural isomorphism with $SFH(\H')$.
As explained in Section~\ref{sec:naturality}, we obtain the canonical isomorphism~$F_{\H,\H'}$
by connecting~$\H$ and~$\H'$ by a sequence of elementary moves, and composing the isomorphisms
induced by these. We can get from~$\H$ to~$\H'$ by first performing a small Hamiltonian isotopy
of~$\boldb$ to~$\boldd$, then stabilizing by adding the tube~$A$, both of whose feet are in the same component
of $\varphi(\S) \setminus (\bolda \cup \boldd)$ containing~$c \subset \partial \varphi(\S)$, and
adding the curves~$\a$ and~$\d$. As we can connect the feet of~$A$ in the complement of the attaching
curves, this is the same as taking the connected sum of the diagram with a punctured torus.
The isotopy induces the map $\Psi^{\bolda}_{\boldb \to \boldd}(\x) = f_{\a,\b,\d}(\x \otimes \Theta_{\b,\d})$
given by the same triangle count as~$F_{M(\SS),\SS'}$ in the complement of~$A$.
The stabilization then maps~$f_{\a,\b,\d}(\x \otimes \Theta_{\b,\d})$ to~$f_{\a,\b,\d}(\x \otimes \Theta_{\b,\d}) \times \{\theta\}$. This shows that
\[
F_{M(\SS),\SS'} \circ F_{M,\SS} = F(\varphi),
\]
and concludes the proof of Lemma~\ref{lem:onetwo}.
\end{proof}

\begin{lem}
$\SFH$ and the surgery maps~$F_{M,\SS}$
satisfy axiom~\eqref{it:birth} when~$\SS$ is a framed $1$-sphere and $\SS'$ is a framed $2$-sphere.
\end{lem}

\begin{proof}
The proof follows from ``turning around'' the proof of Lemma~\ref{lem:onetwo}.
\end{proof}

Having shown that $\SFH$ and the surgery maps~$F_{M,\SS}$ satisfy all the axioms,
this concludes the proof of Theorem~\ref{thm:welldef}.
\end{proof}

\subsection{$\spinc$ structures}
To show that the the cobordism maps split along $\spinc$ structures, we need to be more careful,
since there is no canonical way of composing two $\spinc$ cobordisms. As in the work of Ozsv\'ath and Szab\'o~\cite{OSz10},
the solution is to attach all the 2-handles simultaneously, but then we also need to consider handleslides
among them.

However, 1- and 3-handles cause no trouble, as we shall now see.
Recall from Lemma~\ref{lem:spec} that, for a special cobordism~$\W$, it does not matter whether we take
the $\spinc$ structures relative to~$\partial Z$ or~$Z$. Hence, in the latter case, if $\W = \W_1 \circ \W_2$,
then we can restrict any $\spinc$ structure to~$\W_1$ and~$\W_2$.

\begin{lem} \label{lem:spinc-1-3}
Let~$\W = (W,Z,\xi)$ be a special cobordism from~$(M_0,\g_0)$ to~$(M_1,\g_1)$ that is diffeomorphic
to the trace of attaching 1-handles to~$(M_0,\g_0)$. Then the restriction map
$\spinc(\W) \to \spinc(M_1,\g_1)$ is injective. If~$\W$ is a 3-handle cobordism, then the
restriction map $\spinc(\W) \to \spinc(M_0,\g_0)$ is injective.
\end{lem}

\begin{proof}
We only treat the case when~$\W$ is diffeomorphic to the trace of attaching 1-handles to $(M_0,\g_0)$,
as the other case follows by ``turning the cobordism upside down.''
This follows from the exact sequence of the triple $(W,M_1,\partial M_1)$:
\[
H^2(W,M_1) \to H^2(W, \partial M_1) \cong H^2(W,Z) \to H^2(M_1,\partial M_1),
\]
and the fact that $H^2(W,M_1) \cong H_2(M_0 \cup Z) \cong H_2(W,M_0) = 0$.
\end{proof}

Note that, in the first case, $\ft \in \spinc(M_1,\g_1)$ extends to~$\W$ if and only if $c_1(\ft)$
vanishes on the belt spheres of all the 1-handles attached to~$(M_0,\g_0)$.
In the second case, $\ft$ extends to~$\W$ if and only if $c_1(\ft)$
vanishes on the belt spheres of the 1-handles in the reversed cobordism~$\ol{\W'}$.

\begin{lem} \label{lem:spinc-compose}
Let~$\W = (W,Z,\xi)$ be a special cobordism from~$(M_0,\g_0)$ to~$(M_1,\g_1)$, and let~$\W' = (W',Z',\xi')$
be a special cobordism from~$(M_1,\g_1)$ to~$(M_2,\g_2)$.
If~$\W$ or~$\W'$ is diffeomorphic to the trace of attaching 1- or 3-handles to~$(M_0,\g_0)$, and given $\spinc$ structures
$\s \in \spinc(\W)$ and $\s' \in \spinc(\W')$ such that $\s|_{M_1} = \s'|_{M_1}$,
then there is a unique $\spinc$ structure~$\s''$ on the composite~$\W' \circ \W$ such that $\s''|_{\W} = \s$ and $\s''|_{\W'} = \s'$.
\end{lem}

\begin{proof}
We only treat the case when~$\W$ is the trace of attaching 1- or 3-handles to $(M_0,\g_0)$,
as the other case follows by ``turning $\W$ upside down.''
Consider the following relative Mayer-Vietoris sequence:
\[
\begin{split}
\dots \to H^1(W,Z) \oplus H^1(W',Z') &\stackrel{j}{\to} H^1(M_1,\partial M_1) \stackrel{\d}{\to} \\
H^2(W \cup W', Z \cup Z')  \to H^2(W,Z) \oplus H^2(W',Z') &\to \dots.
\end{split}
\]
Then the gluing of~$\s$ and~$\s'$ is unique if and only if~$\d = 0$, or equivalently, if~$j$ is surjective.
As $j(a,b) = a|_{M_1} - b|_{M_1}$, it suffices to show that the map $H^1(W,Z) \to H^1(M_1,\partial M_1)$
is onto. This again follows from the long exact sequence of the triple $(W,M_1,\partial M_1)$:
\[
H^1(W, \partial M_1) \cong H^1(W,Z) \to H^1(M_1,\partial M_1) \to H^2(W, M_1),
\]
and the fact that $H^2(W,M_1) \cong H_2(W,M_0) = 0$ using cellular homology, as~$W$ only
consists of 1- or 3-handles attached to~$M_0$.
\end{proof}

\begin{defn} \label{def:splits}
We say that a TQFT $F \colon \BSut' \to \Vect$ \emph{splits along $\spinc$ structures} if,
for every sutured manifold~$(M,\g)$ and $\ft \in \spinc(M,\g)$, there is a group~$F(M,\g,\ft)$,
and for every special cobordism~$\W$ from~$(M_0,\g_0)$ to~$(M_1,\g_1)$ and $\spinc$ structure $\s \in \spinc(\W)$, there
is a linear map
\[
F_{\W,\s} \colon F(M,\g,\s|_{M_0}) \to F(M,\g,\s|_{M_1})
\]
that satisfy the following properties:
\begin{enumerate}
\item For every sutured manifold~$(M,\g)$,
\[
F(M,\g) = \bigoplus_{\ft \in \spinc(M,\g)} F(M,\g,\ft),
\]
\item Given a diffeomorphism $d \colon (M,\g) \to (M',\g')$ and $\ft \in \spinc(M,\g)$,
the image of
\[
F(d,\ft) := F(d)|_{\SFH(M,\g,\ft)}
\]
lies in $F(M,\g,d_*(\ft))$.
\item For every special cobordism $\W$,
\[
F_\W = \sum_{\s \in \spinc(M,\g)} F_{\W,\s}.
\]
\item Given a diffeomorphism~$D$ from the $\spinc$ special cobordism
\[
(\W,\s) \colon (M_0,\g_0,\ft_0) \to (M_1,\g_1,\ft_1)
\]
to the $\spinc$ special cobordism
\[
(\W',\s') \colon (M_0',\g_0',\ft_0') \to (M_1',\g_1',\ft_1'),
\]
the following diagram is commutative:
\[
\xymatrixcolsep{3pc} \xymatrix{
  F(M_0,\g_0,\ft_0) \ar[r]^-{F_{\W,\s}} \ar[d]^-{F(D|_{M_0},\ft_0)} & F(M_1,\g_1,\ft_1) \ar[d]^{F(D|_{M_1},\ft_1)} \\
  F(M_0',\g_0',\ft_0')  \ar[r]^-{F_{\W',\s'}} & F(M_1',\g_1',\ft_1').}
\]
\item \label{it:compose-s} Let $\W_1$ be a special cobordism from $(M_0,\g_0)$ to $(M_1,\g_1)$, and $\W_2$ a special cobordism from $(M_1,\g_1)$ to $(M_2,\g_2)$,
and set $\W = \W_2 \circ \W_1$. Fix $\spinc$ structures $\s_i \in \spinc(\W_i)$ for $i \in \{1,2\}$ such that $\s_1|_{M_1} = \s_2|_{M_1}$.
Then
\[
F_{\W_2,\s_2} \circ F_{\W_1,\s_1} = \sum_{\{\s \in \spinc(\W) \colon \s|_{\W_1} = \s_1, \, \s|_{\W_2} = \s_2\}} F_{\W,\s}.
\]
\end{enumerate}
\end{defn}

\begin{defn}
Let $\L$ be a framed link in the sutured manifold~$(M,\g)$. We are given
an embedded framed arc~$a$ connecting two distinct components~$L_i$ and~$L_j$ of~$\L$
whose interior is disjoint form~$\L$, and whose framing at~$\partial a$ is tangent to~$\L$.
We say that~$\L'$ is obtained form~$\L$
by \emph{handlesliding~$L_i$ over~$L_j$ along~$a$} if $\L' = (\L \setminus L_i) \cup L_i'$, where~$L_i'$
is the band sum of~$L_i$ and a push-off of~$L_j$ in the direction of its framing, taken along~$a$.
\end{defn}

Corresponding to the above handleslide,
there is an ambient isotopy~$\{d_t \colon t \in I\}$ in~$M(\L \setminus L_i)$ such that $d_1(L_i) = L_i'$.
This consists of a finger move of~$L_i$ along~$a$, continued by a radial
isotopy across the center of the disk $\{a(1)\} \times D^2$ in the handle~$D^2 \times D^2$
attached along~$L_j$, and then radially ``blowing out'' the portion of the curve in~$\{a(1)\} \times D^2$
until it reaches~$\{a(1)\} \times S^1$. The space of such isotopies is contractible.
Then~$d_t$ extends to an isotopy~$D_t$ of~$W(\L \setminus L_i)$ that is the identity outside
a collar neighborhood of~$M(\L \setminus L_i)$. Furthermore,
$d_1$ induces a diffeomorphism $d_1^{L_i} \colon M(\L) \to M(\L')$, and~$D_1$ induces a diffeomorphism
$D_1^{L_i} \colon W(\L) \to W(\L')$ in a natural manner.

We have the following refinement of Theorem~\ref{thm:bsut}. Note that here we also use the symbol~$\SS$
to denote a framed link.

\begin{thm} \label{thm:bsut-s}
To define a functor $F \colon \BSut' \to \Vect$ that splits along $\spinc$ structures, it suffices to construct
a functor~$F$ from the category of $\spinc$ balanced sutured manifolds and diffeomorphisms to~$\Vect$,
and for every balanced sutured manifold~$(M,\g)$, framed 0-sphere, link, or 2-sphere~$\SS \subset \text{Int}(M)$,
and $\spinc$ structure $\s \in \spinc(W(\SS))$,
a linear map
\[
F_{M,\SS,\s} \colon F(M,\g,\s|_M) \to F(M(\SS),\g,\s|_{M(\SS)})
\]
that satisfy the following axioms:
\begin{enumerate}
\item \label{it:isot-s} 
If $d \in \Diff_0(M,\g)$, then $F(d,\s) = \Id_{F(M,\g,\s)}$.
\item \label{it:d-F-s}
Consider a diffeomorphism $d \colon (M,\g) \to (M',\g')$ between balanced sutured manifolds,
a framed 0-sphere, link, or 2-sphere $\SS \subset \text{Int}(M)$, and a $\spinc$ structure~$\s \in \spinc(W(\SS))$.
Let $\SS' = d(\SS)$, let
\[
d^\SS \colon (M(\SS),\g') \to (M'(\SS'),\g') \text{ and } D^\SS \colon W(\SS) \to W(\SS')
\]
be the induced diffeomorphisms, and write $\s' = D^\SS(\s) \in \spinc(W(\SS'))$.
Then the following diagram is commutative:
\[
\xymatrixcolsep{3pc} \xymatrix{
  F(M,\g,\s|_M) \ar[r]^-{F_{M,\SS,\s}} \ar[d]^-{F(d)} & F(M(\SS),\g,\s|_{M(\SS)}) \ar[d]^{F(d^\SS)} \\
  F(M',\g',\s'|_{M'})  \ar[r]^-{F_{M',\SS',\s'}} & F(M'(\SS'),\g',\s'|_{M'(\SS')}).}
\]
\item \label{it:commut-s} If $(M,\g)$ is a balanced sutured manifold and $\SS$ and $\SS'$ are \emph{disjoint} framed
0-spheres, links, or 2-spheres in $M$,
then $M(\SS)(\SS') = M(\SS')(\SS)$, we denote this manifold by $M(\SS,\SS')$.
There is an isotopically unique equivalence
\[
\W := W(M(\SS),\SS') \circ W(M,\SS) \cong W(M(\SS'),\SS) \circ W(M,\SS').
\]
Given $\s \in \spinc(\W)$, this equivalence allows us to define $\s_\SS = \s|_{W(M,\SS)}$, $\s_{\SS'} = \s|_{W(\SS')}$,
$\s_{\SS,\SS'} = \s|_{W(M(\SS),\SS')}$, and $\s_{\SS',\SS} = \s|_{W(M(\SS'),\SS)}$.
If at least one of $\SS$ and~$\SS'$ is not 1-dimensional, then the following diagram is commutative:
\[
\xymatrixcolsep{5pc}\xymatrix{
  F(M,\g,\s|_M) \ar[r]^-{F_{M,\SS,\s_{\SS}}} \ar[d]_-{F_{M,\SS',\s_{\SS'}}} & F(M(\SS),\g,\s|_{M(\SS)}) \ar[d]^{F_{M(\SS),\SS'},\s_{\SS,\SS'}} \\
  F(M(\SS'),\g,\s|_{M(\SS')})  \ar[r]^-{F_{M(\SS'),\SS},\s_{\SS',\SS}} & F(M(\SS,\SS'),\g,\s|_{M(\SS,\SS')}).}
\]
\item \label{it:splitlink} Let~$\L = \L_1 \cup \L_2$ be a partition of a framed link in~$M$,
and write~$\W_1 = W(M,\L_1)$, $\W_2 = W(M(\L_1), \L_2)$,
and $\W = \W_2 \circ \W_1 \cong W(\L)$. If~$\s_1 \in \spinc(\W_1)$ and $\s_2 \in \spinc(\W_2)$, then
\[
F_{M(\L_1),\L_2,\s_2} \circ F_{M,\L_1,\s_1} = \sum_{\s \in \spinc(\W) \colon \s|_{\W_1} = \s_1 \text{, } \s|_{\W_2} = \s_2} F_{M,\L,\s}.
\]
\item \label{it:birth-s} If $\SS' \subset M(\SS)$ intersects the belt sphere of the handle attached along~$\SS$ once transversely,
then there is a diffeomorphism $\varphi \colon M \to M(\SS)(\SS')$ (which is the identity on $M \cap M(\SS)(\SS')$,
and is unique up to isotopy),
such that for every $\s \in \spinc(W(\SS') \circ W(\SS))$, if we write $\s_\SS = \s|_{W(\SS)}$
and $\s_{\SS'} = \s|_{W(\SS')}$, then $\s|_{M(\SS)(\SS')} = \varphi_*(\s|_M)$, and
\[
F_{M(\SS),\SS',\s_{\SS'}} \circ F_{M,\SS,\s_\SS} = F(\varphi,\s|_M),
\]
where $F(\varphi,\s|_M)$ is the restriction of~$F(\varphi)$ to $SFH(M,\g,\s|_M)$.
\item \label{it:handleslide} The 2-handle maps are invariant under handleslides. More precisely,
suppose that~$\L$ is a framed link in~$(M,\g)$, and $\L'$ is obtained by sliding~$L_i$ over~$L_j$ along
a framed arc~$a$. Let $\s \in \spinc(W(\L))$, and let $\s' = (D_1^{L_i})_*(\s)$, $\ft = \s|_M$,
$\ft_\L = \s|_{M(\L)}$, and $\ft_{\L'} = \s'|_{M(\L')}$. Then the following diagram is commutative:
\[
\xymatrixcolsep{5pc}\xymatrix{
F(M,\g,\ft) \ar[r]^{\Id} \ar[d]_{F_{M,\L,\s}} & F(M,\g,\ft) \ar[d]^{F_{M,\L',\s'}} \\
F(M(\L),\g,\ft_\L) \ar[r]^{F\left(d_1^{L_i}, \ft_\L \right)} & F(M(\L'),\g, \ft_{\L'}).}
\]
\item \label{it:0-sphere-s} For every framed 0-sphere, link, or 2-sphere~$\SS$ and
$\s \in \spinc(W(\SS)) = \spinc(W(\ol{\SS}))$, we have
$F_{M,\SS,\s} = F_{M,\ol{\SS},\s}$.
\end{enumerate}
The functor~$F$ is a TQFT if and only if it is symmetric and monoidal.
In the opposite direction, every functor $F \colon \BSut' \to \Vect$
that splits along $\spinc$ structures arises in this way.
\end{thm}

\begin{proof}
This is similar to the proof of~\cite[Theorem~1.2]{surgery}.
The additional idea is the following. Let~$\W = (W,Z,\xi)$ be a special cobordism
from~$(M_0,\g_0)$ to~$(M_1,\g_1)$, and fix a metric on~$W$.
We say that a Morse function~$f$ on~$W$ is \emph{nice} if
it is the projection $\partial M_0 \times I \to I$ on~$Z$,
has only critical points of index 1, 2, and 3, and such that all index 2 critical values lie
between the index 1 and 3 critical values. Furthermore, we require that all index~1
and~3 critical points have distinct values, and there is no gradient flow-line between
two index~2 critical points.

A nice Morse function gives rise to a type of parametrized Cerf decomposition (cf.~\cite{surgery})
where we first attach 1-handles, then attach all 2-handles simultaneously along a framed link,
and finally attach the 3-handles. More specifically:

\begin{defn} \label{def:param-Kirby}
A \emph{parameterized Kirby decomposition} of a special cobordism~$\W$ from~$(M,\g)$ to~$(M',\g')$ consists of
\begin{itemize}
\item a decomposition
\[
\W = \W_0 \circ \W_1 \circ \dots \circ \W_m,
\]
where each~$\W_i$ is a cobordism from~$(M_i,\g_i)$ to~$(M_{i+1},\g_{i+1})$; furthermore,
$(M_0,\g_0) = (M,\g)$ and $(M_{m+1},\g_{m+1}) = (M',\g')$,
\item there is a number $c \in \{1,\dots,m\}$
such that $\W_i$ is an elementary 1-handle cobordism for $i < c$ and is an elementary 3-handle cobordism
for $i  > c$, and~$\W_c$ is diffeomorphic to~$W(\L)$ for some framed link~$\L \subset M_c$,
\item an attaching sphere $\SS_i \subset M_i$ for~$W_i$, where $\SS_c = \L$,
\item a diffeomorphism $D_i \colon W(\SS_i) \to W_i$, well-defined up to isotopy,
such that $D_i(x,0) = x$ for~$x \in M_i$.
We write~$d_i$ for
\[
D_i|_{M_i(\SS_i)} \colon (M_i(\SS_i),\g_i) \to (M_{i+1},\g_{i+1}).
\]
\end{itemize}
\end{defn}

Let $\s \in \spinc(\W)$, and consider $\ft = \s|_M$ and $\ft' = \s|_{M'}$.
Given a parameterized Kirby decomposition~$\mathcal{K}$ of~$\W$,
we write~$\s_i$ for~$(D_i)_*^{-1}(\s|_{\W_i}) \in \spinc(W(\SS_i))$ for $i \in \{1,\dots,m\}$.
Then we define
\[
F(\W,\s,\mathcal{K}) = \prod_{i=0}^{m} \left( F(d_i) \circ F_{M_i,\SS_i,\s_i} \right) \colon F(M,\g,\ft) \to F(M',\g',\ft').
\]
This is consistent with the composition rule~\eqref{it:compose-s} of Definition~\ref{def:splits}
since~$\s|_{\W_c}$ uniquely determines~$\s$ according to Lemmas~\ref{lem:spinc-1-3}
and~\ref{lem:spinc-compose}. In particular, if~$F$ is a TQFT that splits along $\spinc$
structures, then
\[
F_{\W,\s} = \prod_{i = 0}^m F_{\W_i,\s|_{\W_i}}
\]
since~$\s$ is the only $\spinc$ structure on~$\W$ that restricts to~$\s|_{\W_i}$ for every $i \in \{1,\dots,m\}$.
Furthermore, if we compose two special cobordisms endowed with parameterized Kirby decompositions,
then we can move the 1-handles to the bottom
and the 3-handles to the top using axiom~\eqref{it:commut-s} of Theorem~\ref{thm:bsut-s},
then compose the 2-handle cobordisms maps via axiom~\eqref{it:splitlink}. Again,
Lemma~\ref{lem:spinc-compose} ensures that gluing $\spinc$ structures is not
unique only in the case of two 2-handle cobordisms.

It remains to show that $F(\W,\s,\mathcal{K})$ is independent of~$\mathcal{K}$.
Let~$f_0$ and~$f_1$ be nice Morse functions on~$W$,
with associated parameterized Kirby decompositions~$\mathcal{K}_0$ and~$\mathcal{K}_1$, respectively.
By part~3 of the paper of Kirby on the calculus~\cite{Kirby},
there is a path of smooth function~$\{f_t \colon t \in I\}$ connecting~$f_0$ and~$f_1$
such that it is through nice Morse functions, except for a finite number of parameter values,
when there is either an index 1-2 birth-death
between the index 1 and 2 Morse critical values,
or an index 2-3 birth-death between the index 2 and 3 Morse critical values.
Furthermore, two index~1 or two index~3 critical points might have the same value,
or there might be a gradient flow-line between two index~2 critical points.

These bifurcations give rise to a set of moves connecting~$\mathcal{K}_0$
and~$\mathcal{K}_1$, where  one has to keep track of the diffeomorphisms~$D_i$, not just~$d_i$.
Invariance of $F(\W,\s,\mathcal{K})$ under birth-death bifurcations
follows from axiom~\eqref{it:birth-s}. Note that if $\W = \W_3 \circ \W_2 \circ \W_1$,
and~$\W_1$ is an elementary 1-handle cobordism canceling the elementary 2-handle cobordism~$\W_2$,
and~$\W_3$ is a 2-handle cobordism, then every~$\s_3 \in \spinc(\W_3)$ uniquely extends to a
$\spinc$ structure $\s \in \spinc(\W)$ as~$\W_1 \circ \W_2$ is a product cobordism, and
hence $\s|_{\W_1}$ and~$\s|_{\W_2}$ are also uniquely determined by~$\s_3$. So there is no
ambiguity in gluing $\spinc$ structures when checking handle cancelation invariance.
An analogous argument applies for canceling a 2- and a 3-handle.
Invariance under a 2-handle slide, corresponding to a flow-line between index~2 critical points,
follows from axiom~\eqref{it:handleslide}.
Invariance under two index~1 or two index~3 critical points passing each other follows from
axiom~\eqref{it:commut-s}. If there are no bifurcations, the Kirby decompositions change
by isotoping the diffeomorphisms~$d_i$ and the attaching spheres~$\SS$, and invariance under these
follows from axioms~\eqref{it:isot-s} and~\eqref{it:d-F-s}.
From here, we proceed as in \cite[Theorem~1.2]{surgery}.
\end{proof}

\begin{thm} \label{thm:welldef-s}
The functor~$\SFH$ together with the surgery maps~$F_{M,\bP,\s}$, $F_{M,\L,\s}$, and~$F_{M,S,\s}$
satisfy the axioms listed in Theorem~\ref{thm:bsut-s}, hence they
give rise to a TQFT $F \colon \BSut' \to \Vect$ that splits along $\spinc$ structures.
\end{thm}

\begin{proof}
We proved that the maps~$F_{M,\L,\s}$ satisfy axiom~\eqref{it:splitlink} in Proposition~\ref{prop:linkcomp}.
Every other axiom follows essentially as in the proof of Theorem~\ref{thm:bsut},
except for axiom~\eqref{it:handleslide}, invariance of~$F_{M,\L,\s}$ under 2-handles slides:

We generalize the proof of \cite[Lemma 4.14]{OSz10}.
Let $K_1,\dots,K_n$ be the components of~$\L$,
and $K_1',\dots,K_n'$ the components of~$\L'$.
Suppose that $K_1'$ is obtained from $K_1$ by a handleslide over $K_2$ along a framed arc~$a$,
and $K_i' = K_i$ for $i \in \{2 ,\dots,n\}$.
After the handleslide, there is a natural path $a'$ joining $K_1'$ and $K_2.$

We construct a bouquet $B(\L)$ for $\L$ by picking arcs~$a_i$ connecting~$K_i$ to~$R_+(\g)$
for $i \in \{1,\dots,n\}$, as follows.
Isotope~$a$ fixing $\partial a$ until it intersects~$R_+(\g)$ in a single point~$p$.
Then choose the arcs~$a_1$ and~$a_2$ such that they run close to~$a$,
and they both end near $p$. We pick $a_3,\dots, a_n$ in an arbitrary way.
Let $(\S,\bolda,\boldb,\boldd)$ be a triple diagram subordinate to $B(\L)$, such that $\b_1$  is dual to~$K_1$
and~$\b_2$ is dual to~$K_2$. Let $(\S,\bolda,\boldb',\boldd')$ be the triple diagram where~$\b_2'$ is obtained by a handleslide
of~$\b_2$ over~$\b_1$ along $a$, and~$\g_1'$ is obtained by a handleslide of~$\g_1$ over~$\g_2$ along~$a$,
while $\b_i' = \b_i$ and $\g_i' = \g_i$ for $i \in \{2,\dots,n\}$.
Then $(\S,\bolda,\boldb',\boldd')$ is subordinate to a bouquet~$B(\L')$ for~$\L'$, constructed using~$a'$.

We then have the commutative diagram
\[
\begin{CD}
\SFH(\S,\bolda,\boldb,\ft) @>\otimes\t_{\b,\b'}>> \SFH(\S,\bolda,\boldb',\ft)\\
@VV\otimes \t_{\b,\d}V   @VV\otimes \t_{\b',\d'}V\\
\SFH(\S,\bolda,\boldd,\ft_{\L}) @>\otimes \t_{\d,\d'}>> \SFH(\S,\bolda,\boldd',\ft_{\L'}),
\end{CD}
\]
where the vertical arrows only count triangles representing the $\spinc$ structures~$r(\s)$
and~$r(\s')$, respectively.
Commutativity follows from associativity, and the observation that
\[
F_{\b,\b',\d'}(\t_{\b,\b'} \otimes \t_{\b',\d'},\s_0) = \t_{\b,\d'} =  F_{\b,\d,\d'}(\t_{\b,\d} \otimes \t_{\d,\d'},\s_0),
\]
according to the handleslide invariance of the homology groups.
This concludes the proof of Theorem~\ref{thm:welldef-s}.
\end{proof}

\begin{prop} \label{prop:specialsum}
Let $\W$ be a special cobordism. Then there are only finitely many $\s \in \spinc(\W)$ for which $F_{\W,\s} \neq 0$, and
\[
F_{\W} = \sum_{\s \in \spinc(\W)} F_{\W,\s}.
\]
\end{prop}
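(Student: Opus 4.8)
The plan is to reduce the statement to the three types of handle separately, using the decomposition $\W = \W_3 \circ \W_2 \circ \W_1$ and the formula $F_{\W,\s} = E_{\W_3,\s_3}\circ F_{\L,\s_2}\circ G_{\W_1,\s_1}$ of Definition \ref{defn:map}, and then reassemble. Throughout I would use the basic fact from \cite{sutured} that $SFH$ of a balanced sutured manifold is finitely generated, hence supported in only finitely many $\spinc$ structures; this makes every ``finitely many nonzero terms'' claim immediate once the relevant $\spinc$ splitting of the map in question is in place.

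For the one-handle cobordism $\U = \W_1$ the restriction map $H^2(U,Z)\to H^2(M,\partial M)$ is an isomorphism, so $\spinc(\U)\cong\spinc(M,\g)$, and the chain map $g_{\U,\s}(\x)=\x\times\{\theta\}$ is literally the restriction of $g_\U$ to the $\s$-summand of $CF(\S,\bolda,\boldb)$, taking values in the $\s'$-summand of $CF(\S',\bolda',\boldb')$. Hence $G_\U=\sum_{\s\in\spinc(\U)}G_{\U,\s}$, and all but finitely many terms vanish by finite generation. The identical argument with $e_{\V,\s}$ in place of $g_{\U,\s}$ gives $E_\V=\sum_{\s}E_{\V,\s}$ for the three-handle cobordism $\W_3$, again with finitely many nonzero terms.

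For the two-handle piece $F_\L$ I would invoke Proposition \ref{prop:fin} for a triple diagram $(\S,\bolda,\boldb,\boldd)$ subordinate to a bouquet for $\L$: the triangle map satisfies $F=\sum_{\s'\in\spinc(\W_{\a,\b,\d})}F(\cdot,\s')$ with finitely many nonzero summands. Evaluating on $x\otimes\t$ and using $\t\in SFH(M(R_+,d-n),\s_0)$, only those $\s'$ with $\s'|M(R_+,d-n)=\s_0$ can contribute; by Lemma \ref{lem:theta} the image of the restriction map $r\colon\spinc(W(\L))\to\spinc(\W_{\a,\b,\d})$ from Part~(2) of Proposition \ref{prop:filling} lies in this subset, and in fact equals it via a relative Mayer--Vietoris argument for the decomposition $W(\L)=\W_{\a,\b,\d}\cup\W_\emptyset'$, since the $\spinc$ structure on the glued-in piece $\W_\emptyset'$ compatible with $\s_0$ is unique. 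As $F_{\L,\s}(x)=F(x\otimes\t,r(\s))$ by definition, this yields $F_\L=\sum_{\s\in\spinc(W(\L))}F_{\L,\s}$ with finitely many nonzero terms.

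To assemble, substitute the three sums into $F_\W=E_{\W_3}\circ F_\L\circ G_{\W_1}$ and expand, obtaining the finite sum $F_\W=\sum_{\s_1,\s_2,\s_3}E_{\W_3,\s_3}\circ F_{\L,\s_2}\circ G_{\W_1,\s_1}$. A term is zero unless $\s_1$ and $\s_2$ agree on $(M_0',\g_0')$ and $\s_2$ and $\s_3$ agree on $(M_1',\g_1')$, because $G_{\W_1,\s_1}$ takes values in the $(\s_1|M_0')$-summand while $F_{\L,\s_2}$, respectively $E_{\W_3,\s_3}$, vanishes off the appropriate summand; the surviving compatible triples are exactly the triples $(\s|\W_1,\s|\W_2,\s|\W_3)$ for $\s\in\spinc(\W)$, in one-to-one fashion, so regrouping gives $F_\W=\sum_{\s\in\spinc(\W)}F_{\W,\s}$. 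The main obstacle is precisely this $\spinc$ bookkeeping under gluing: one must verify, by relative Mayer--Vietoris, that $r$ is a bijection onto $\{\s'\colon\s'|M(R_+,d-n)=\s_0\}$ and that $\spinc(\W)$ maps bijectively onto the set of compatible triples $(\s_1,\s_2,\s_3)$, so that no $\spinc$ structure is over- or under-counted in the regroupings. The analytic finiteness input is already contained in Proposition \ref{prop:fin} and in the finite generation of $SFH$.
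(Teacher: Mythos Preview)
Your proposal is correct and follows the same overall strategy as the paper, but the paper's own proof is far more economical. Rather than decomposing into one-, two-, and three-handle pieces and checking a $\spinc$ gluing bijection for compatible triples $(\s_1,\s_2,\s_3)$, the paper simply observes that the restriction map $\spinc(\W)\to\spinc(\W_2)$ is \emph{injective} (since attaching one- and three-handles to $\W_2$ does not enlarge $H^2(W,Z)$), so that an $\s\in\spinc(\W)$ is determined by $\s|\W_2$; the claim then follows directly from Proposition~\ref{prop:fin}. Your more explicit route has the virtue of making the $\spinc$ bookkeeping visible --- in particular you correctly isolate the two Mayer--Vietoris checks (that $r\colon\spinc(W(\L))\to\spinc(\W_{\a,\b,\d})$ bijects onto $\{\s':\s'|M(R_+,d-n)=\s_0\}$, and that compatible triples biject with $\spinc(\W)$) --- whereas the paper's argument hides these inside the single injectivity statement and the reference to Proposition~\ref{prop:fin}. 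Both work, but the paper's observation that restriction to the two-handle part is already injective short-circuits most of your gluing analysis.
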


\begin{proof}
Write $\W = \W_3 \circ \W_2 \circ \W_1$, where~$\W_i$ is a composition of $i$-handle cobordisms for~$i \in \{1,2,3\}$.
If $\s$, $\s' \in \spinc(\W)$ satisfy $\s|_{\W_2} = \s'|_{\W_2}$, then $\s = \s'$ according to Lemmas~\ref{lem:spinc-1-3}
and~\ref{lem:spinc-compose}.
So the claim follows from Proposition~\ref{prop:fin}.
\end{proof}

In the following two results, we explicitly spell out two of the consequences of Theorem~\ref{thm:bsut-s}
stating that~$\SFH$ is a TQFT that splits along $\spinc$ structures.

\begin{prop} \label{prop:specialid}
Let $(M,\g)$ be a balanced sutured manifold. If $\W = (W,Z,[\xi])$ is the trivial cobordism from $(M,\g)$ to $(M,\g)$, then $F_{\W}$ is the identity of $\SFH(M,\g)$.
Furthermore, the restriction map from $\spinc(\W)$ to $\spinc(M,\g)$ is an isomorphism, and for every $\s \in \spinc(\W)$, the map $F_{\W,\s}$ is the identity of $\SFH(M,\g,\s|_{M})$.
\end{prop}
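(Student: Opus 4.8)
The plan is to realize the trivial cobordism $\W$ via an explicit handle decomposition and then apply the cancellation lemmas already established in the proof of Theorem \ref{thm:welldef}. First I would note that $W = M \times I$ admits a Morse function with no $0$- or $4$-handles, with critical points only in the interior of $M \times I$, and equal to the projection $\partial M \times I \to I$ on $Z$; indeed one can take the trivial Morse function with no critical points at all, so that $\W = \W_3 \circ \W_2 \circ \W_1$ with each $\W_i$ empty. In that degenerate decomposition the maps $G_{\W_1}$, $F_{\L}$ (with $\L = \emptyset$), and $E_{\W_3}$ are each defined by a balanced diagram for $(M,\g)$ with no extra curves, hence are literally the identity on the chain level; the same holds for each $F_{\W,\s}$, which shows directly that $F_{\W,\s} = \mathrm{Id}$ on $SFH(M,\g,\s|M)$ and $F_{\W} = \mathrm{Id}$, once we know the construction is independent of the chosen handle decomposition. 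That independence is exactly the content of Theorem \ref{thm:welldef} (and its $\s$-refined version), so no new argument is needed there.

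For the statement about $\spinc$ structures, I would invoke the long exact sequence of the pair $(W,Z) = (M \times I, \partial M \times I)$, together with excision and homotopy invariance, to compute $H^2(W,Z) \cong H^2(M,\partial M)$; since $\spinc(\W)$ is an affine space over $H^2(W,Z)$ and $\spinc(M,\g)$ is an affine space over $H^2(M,\partial M)$ (cf. \cite{sutured}), the restriction map $\spinc(\W) \to \spinc(M,\g)$ is an affine isomorphism modeled on this cohomology isomorphism. Concretely, the inclusion $M = M \times \{1/2\} \hookrightarrow W$ is a deformation retract that carries $Z$ into a neighborhood of $\partial M$, so restricting an almost-complex structure on $W$ (with its fixed boundary condition on $Z$) to $M$ is a bijection on homology classes. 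One should check that the chosen boundary almost-complex structure $J_0$ on $TW|Z$, coming from the $I$-invariant contact structure $\xi$, restricts on each $\partial M \times \{t\}$ to the data $v_0$ of \cite[Notation 4.1]{sutured}, which is precisely the compatibility already recorded in the remark following Definition \ref{defn:spinc}.

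The main obstacle, such as it is, is purely bookkeeping: one must make sure that the ``empty handle decomposition'' really is a legitimate special cobordism in the sense required by Definition \ref{defn:map}, i.e. that $\W_1$, $\W_2$, $\W_3$ are allowed to be trivial cobordisms themselves, and that the maps $G$, $F_{\L}$, $E$ specialize correctly in that case. Alternatively, and perhaps more cleanly, I would take any nontrivial decomposition $\W = \W_3 \circ \W_2 \circ \W_1$ into a canceling sequence of one-, two-, and three-handles (for instance a single one-handle canceled by a two-handle, composed with nothing else, which is trivial) and repeatedly apply Lemma \ref{lem:onetwo} and its dual to collapse the composite to the identity; functoriality $F_{\W_3 \circ \W_2 \circ \W_1} = F_{\W_3} \circ F_{\W_2} \circ F_{\W_1}$ together with those cancellation lemmas then gives the result. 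Either route is short; I expect the write-up to be a paragraph or two, with the only care needed being the identification of $\spinc(\W)$ with $\spinc(M,\g)$ and the verification that the boundary condition $J_0$ matches the one used to define $\spinc(M,\g)$.
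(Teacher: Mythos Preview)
Your proposal is correct and follows essentially the same route as the paper: use the empty handle decomposition of $W = M \times I$ and observe that each of $G_{\W_1}$, $F_\L$ (with $\L = \emptyset$), and $E_{\W_3}$ is the identity, while the $\spinc$ statement follows from $H^2(W,Z) \cong H^2(M,\partial M)$.

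One small correction worth making in the write-up: the map $F_\L$ for $\L = \emptyset$ is \emph{not} literally the identity on the chain level. With $\L = \emptyset$ the subordinate triple diagram is $(\S,\bolda,\boldb,\boldd)$ where each $\d_i$ is a small Hamiltonian translate of $\b_i$, and $F_\L(\x) = F_{\a,\b,\d}(\x \otimes \t_{\b,\d})$ is the triangle map into $CF(\bolda,\boldd)$, a different chain complex. This map induces the identity on homology only after the canonical identification of Theorem~\ref{thm:nat}; the paper's proof makes exactly this point. Your alternative route via canceling handles and Lemma~\ref{lem:onetwo} also works but is unnecessary.
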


\begin{proof}
Since $W = M \times I$, $Z = \partial M \times I$, and $\xi$ is $I$-invariant, $\spinc(\W)$ and $\spinc(M,\g)$ are obviously
isomorphic. The rest follows from the fact that there is a relative handle decomposition of $\W$ with no handles at all.
If~$\L = \emptyset$, then both~$F_{M,\L}$ and~$F_{M,\L,\s}$ are identity maps, as they are defined via a triple diagram~$(\S,\bolda,\boldb,\boldd)$,
where~$\d_i$ is a small Hamiltonian translate of $\b_i$ for every $i \in \{1 ,\dots, d\}$.
\end{proof}


\begin{thm} \label{thm:specialcomp}
Let $\W_1$ be a special cobordism from $(M_0,\g_0)$ to $(M_1,\g_1)$, and $\W_2$ a special cobordism from $(M_1,\g_1)$ to $(M_2,\g_2)$,
and set $\W = \W_2 \circ \W_1$. Fix $\spinc$ structures $\s_i \in \spinc(\W_i)$ for $i \in \{1,2\}$ such that $\s_1|_{M_1} = \s_2|_{M_1}$.
Then
\[
F_{\W_2,\s_2} \circ F_{\W_1,\s_1} = \sum_{\{\s \in \spinc(\W) \colon \s|_{\W_1} = \s_1, \, \s|_{\W_2} = \s_2\}} F_{\W,\s}.
\]
Moreover, $F_{\W} = F_{\W_2} \circ F_{\W_1}$.
\end{thm}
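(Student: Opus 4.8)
The plan is to reduce the composition law to the handle-by-handle picture established in Definition \ref{defn:map}, using the Kirby-calculus invariance of Theorem \ref{thm:welldef}. First I would decompose each $\W_i$ into its one-, two-, and three-handle pieces, so that $\W_1 = \W_1^3 \circ \W_1^2 \circ \W_1^1$ and $\W_2 = \W_2^3 \circ \W_2^2 \circ \W_2^1$, with $\W_i^2$ represented by a framed link $\L_i$. The glued cobordism $\W = \W_2 \circ \W_1$ then inherits a relative handle decomposition, but not yet in the standard ``all one-handles, then all two-handles, then all three-handles'' order: between the pieces of $\W_1$ and $\W_2$ we have a block of $3$-handles ($\W_1^3$) followed by a block of $1$-handles ($\W_2^1$). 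The essential point is that a $3$-handle followed by a $1$-handle can be traded, via pair creation/cancellation and handleslides and reordering, for the standard order, and by Theorem \ref{thm:welldef} the map $F_{\W,\s}$ does not change under such moves. This is exactly the sort of reshuffling handled in the proof of \cite[Theorem 3.4]{OSz10}, and the individual ingredients we need are already in place: independence of ordering of one-handles and of three-handles, handleslide invariance of $F_{\L,\s}$, the cancellation Lemmas \ref{lem:onetwo} and its dual, and the composition law for framed links, Proposition \ref{prop:linkcomp}.

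Concretely, the $\spinc$-refined statement follows this route. Given $\s_1 \in \spinc(\W_1)$ and $\s_2 \in \spinc(\W_2)$ with $\s_1|M_1 = \s_2|M_1$, one uses the relative Mayer--Vietoris sequence for $W = W_1 \cup_{M_1} W_2$ (as in the discussion preceding Theorem \ref{thm:assoc}) to see that the set $\{\s \in \spinc(\W) \colon \s|\W_1 = \s_1,\ \s|\W_2 = \s_2\}$ is a torsor over $\d H^1(M_1,\partial M_1)$, and to identify each such $\s$ with the corresponding pair. The map $F_{\W_2,\s_2} \circ F_{\W_1,\s_1}$ is computed, after reordering the handles into standard form, by a single admissible multi-diagram; summing over the intermediate $\spinc$ structures that extend both $\s_1$ and $\s_2$ amounts, via the associativity theorem \ref{thm:assoc} and Proposition \ref{prop:fin}, to grouping the holomorphic-polygon counts according to their total $\spinc$ class on $\W$. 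The signs $\pm$ are unavoidable exactly because $\Phi_\xi$, and hence every map in the theory, is only defined up to sign; no uniform sign convention is claimed. For the unrefined statement $F_{\W} = F_{\W_2} \circ F_{\W_1}$, one either runs the identical argument dropping all $\spinc$ decorations, or sums the refined identity over all $\s_1, \s_2$ using Proposition \ref{prop:specialsum} (which guarantees finiteness) together with $F_{\W_i} = \sum_{\s_i} F_{\W_i,\s_i}$.

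The main obstacle is the reordering step: arranging, purely by Kirby moves that avoid creating $0$- or $4$-handles and that restrict to the product projection on $Z$, to commute the $3$-handles of $\W_1$ past the $1$-handles of $\W_2$, and then to recognize the result as a standard handle decomposition of $\W$ to which Definition \ref{defn:map} applies. Once this is done, every step is forced by a lemma already proved: ordering-independence of the $1$- and $3$-handle maps, handleslide invariance of $F_{\L,\s}$, the link-partition formula of Proposition \ref{prop:linkcomp}, and the canceling-pair Lemmas \ref{lem:onetwo} and its dual for the $1$/$2$- and $2$/$3$-handle cancellations introduced during the reshuffling. A secondary (but routine) point is bookkeeping the restriction maps on relative $\spinc$ structures through the gluing, for which the long exact sequences of the triples and the Mayer--Vietoris argument sketched above suffice. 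I therefore expect the write-up to mirror the proof of \cite[Theorem 3.4]{OSz10} closely, with the only genuinely new input being that all constructions now carry the contact structure $\xi$ along on $Z$ --- and since $\xi$ is $I$-invariant throughout a special cobordism, it plays no active role in the reshuffling and simply restricts compatibly under the gluing of Definition \ref{defn:comp}.
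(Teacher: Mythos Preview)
Your approach is essentially the paper's: both follow the template of \cite[Theorem~3.4]{OSz10}, decomposing each $\W_i$ into one-, two-, and three-handle parts and then reordering the concatenated decomposition into standard form, with Proposition~\ref{prop:linkcomp} handling the merged link of two-handles. However, you have misidentified the main obstacle. Commuting the three-handles of $\W_1$ past the one-handles of $\W_2$ is the \emph{easy} step: the attaching data (a $2$-sphere and a pair of points) can be taken disjoint in $M_1$, so the handles may be attached in either order, and the maps $E$ and $G$ visibly commute on a split diagram. After that swap you still have
\[
\W_2^3 \circ \W_2^2 \circ \W_1^3 \circ \W_2^1 \circ \W_1^2 \circ \W_1^1,
\]
which is not yet in standard order.

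The substantive commutations are the two you do not name: two-handle (framed link) maps past one-handle maps, and three-handle maps past two-handle maps. These are precisely the two propositions the paper isolates (analogues of \cite[Propositions~4.18 and~4.19]{OSz10}), and they are \emph{not} consequences of Theorem~\ref{thm:welldef}: that theorem only verifies invariance under Kirby moves connecting two \emph{standard-order} decompositions of a single cobordism, not invariance under interleaving handles of different indices. Your appeal to the cancellation Lemma~\ref{lem:onetwo} and its dual is also misplaced---those are used to prove Theorem~\ref{thm:welldef}, not the composition law. Once you add the two commutation propositions to your toolkit, the rest of your outline (Proposition~\ref{prop:linkcomp} for the links, Mayer--Vietoris bookkeeping for the $\spinc$ structures, Proposition~\ref{prop:specialsum} for the unrefined statement) goes through exactly as you say.
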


\section{The contact invariant $\EH$ and the gluing map $\Phi_{\xi}$} \label{sub:glue}

Here, we review the necessary definitions and result from \cite{contact}, \cite{Ozbagci}, and \cite{TQFT}, and enrich the theory with $\spinc$ structures.
We will prove the necessary naturality results in a separate paper.

\subsection{The contact invariant $\EH$}
Suppose that $(M,\xi)$ is a contact 3-manifold with convex boundary and dividing set $\g$ on $\partial M$. We denote such a contact
manifold by~$(M,\g,\xi)$. Honda, Kazez, and Mati\'c~\cite{contact} defined an invariant of~$(M,\g,\xi)$
which is an element~$\EH(M,\g,\xi)$ of~$\SFH(-M,-\g)$, also see \cite{Ozbagci}.
We briefly review the construction.

\begin{defn}
A \emph{partial open book decomposition} is a pair $(S,h \colon P \to S)$, where
\begin{itemize}
\item $S$ is a compact oriented surface with $\partial S \neq \emptyset$, called the \emph{page},
\item $P \subset S$ is a compact subsurface, such that each component of $\partial P$ is polygonal with
consecutive sides $r_1,\dots,r_{2n}$, and $r_i \subset \partial S$ for $i$ even,
\item $h \colon P \to S$ is a diffeomorphism such that $h|_{P \cap \partial S} = \text{Id}$.
\end{itemize}
The partial open book $(S,h)$ \emph{defines} a contact manifold $(M,\g,\xi)$ as follows.
Let $M = S \times I/_{\sim_h}$, where $\sim_h$ is the equivalence relation
such that $(x,t) \sim_h (x,t')$ for all~$x \in \partial S$ and~$t \in I$, and~$(x,1) \sim_h (h(x),0)$ for all
$x \in P$. Furthermore, let~$R_+(\g) = \Int(S \setminus P) \times \{1\}$ and $R_-(\g) = \Int(S \setminus h(P)) \times \{0\}$,
whereas the sutures are~$\g = \partial R_+(\g) = \partial R_-(\g)$. To define~$\xi$, notice that one can obtain~$(M,\g)$ by gluing together
the product sutured manifolds $(S \times I,\partial S \times I)$ and $(P \times I,-\partial P \times I)$. Each of them
carries a unique product disc decomposable contact structure, $\xi$ is obtained by gluing these together.
\end{defn}

\begin{figure}[tb]
\includegraphics{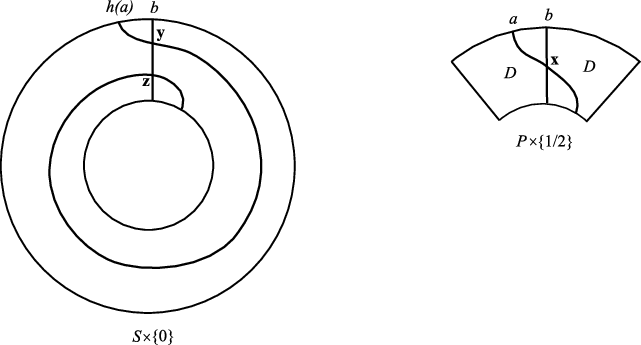}
\caption{A balanced diagram obtained from a partial open book decomposition.} \label{fig:5}
\end{figure}

Given a contact manifold $(M,\g,\xi)$, there exists a compatible partial open book decomposition $(S,h \colon P \to S)$
by~\cite[Theorem~1.3]{contact}. The closure of~$S \setminus P$ is naturally identified with~$R_+(\g)$. A set
$\{b_1,\dots,b_d\}$ of properly embedded, pairwise disjoint arcs in $P$ is called a \emph{basis for $(S,R_+(\g))$} if
$S \setminus \left(\bigcup_{i=1}^d b_i\right)$ deformation retracts onto~$R_+(\g)$. In fact, $\{b_1,\dots,b_d\}$
is a basis for $H_1(P,P \cap \partial S)$; fix such a basis.

For $i \in \{1 ,\dots, d\}$, let $a_i$ be an arc that is isotopic to $b_i$ by a small isotopy such that the following hold:
\begin{itemize}
\item The endpoints of $b_i$ are isotoped along~$\partial S$ in the direction given by the boundary orientation of~$S$.
\item The arcs~$a_i$ and~$b_i$ intersect transversely in one point in~$\Int(S)$.
\end{itemize}

Then we obtain a balanced diagram $(\S,\bolda,\boldb)$ defining $(M,\g)$ by setting
\[
\S = (S \times \{0\}) \cup -(P \times \{1/2\}),
\]
and taking
\[
\a_i = (a_i \times \{1/2\}) \cup (h(a_i) \times \{0\})
\]
and~$\b_i = \partial (b_i \times [0,1/2])$ for~$i \in \{1 ,\dots,d\}$, see Figure~\ref{fig:5}.
To see that the orientations match up, note that $\partial \S = \g$, and~$\S$ is oriented as the boundary of the~$\bolda$ compression body.

Let $x_i = (a_i \cap b_i) \times \{1/2\}$ for $i = 1,\dots,d$. Then $\x = (x_1,\dots,x_d)$ is a cycle in~$CF(-\S,\bolda,\boldb)$.
Indeed, if $D$ is the closure of a component of $-\S \setminus (\bolda \cup \boldb)$ such that $x_i \in \partial D$ and
$\partial D \cap \a_i$ points out of $x_i$, then $D \cap \partial \S \neq \emptyset$. So every domain emanating from $\x$ is forced to be zero. Note that $(-\S,\bolda,\boldb)$ defines $(-M,-\g)$. Hence $\x$ defines a class in $\SFH(-M,-\g)$, which is shown to be an invariant of the contact manifold $(M,\g,\xi)$ in \cite[Theorem 3.1]{contact}. This is the contact invariant $\EH(M,\g,\xi)$.

\begin{rem} \label{rem:EH}
Note that we changed the notation of~\cite{contact}, where they labeled our~$\a$ curves by~$\b$, our~$\b$ curves by~$\a$,
and our~$-\S$ by~$\S$. We did this to make the underlying orientation conventions more transparent. In fact, with the notation
of~\cite{contact}, one has $\x \in CF(\S,\boldb,\bolda)$, but their~$(\S,\bolda,\boldb)$ defines~$(M,-\g)$ instead of~$(M,\g)$.
If~$(\S,\bolda,\boldb)$ defined~$(M,\g)$, then $(\S,\boldb,\bolda)$ would define~$(-M,\g)$.
However, this is not a real issue as~$\SFH(-M,\g)$ and~$\SFH(-M,-\g)$ are canonically isomorphic by~\cite[Proposition~2.14]{decateg}.
\end{rem}

\begin{ex} \label{ex:partial}
To illustrate the above construction, let us review \cite[Example 2]{Ozbagci}. For an illustration, see Figure \ref{fig:5}.
In this example, the page $S$ of our partial open book decomposition is an annulus, and $P$ is a radial sector of~$S$.
The diffeomorphism~$h \colon P \to S$ is the restriction of a left-handed Dehn twist to $P$. This partial open book decomposition
$(S,h \colon P \to S)$ defines a contact
manifold $(M,\g,\xi)$. In the resulting chain complex $CF(-\S,\bolda,\boldb)$, there are three
generators; we denote them by $\x$, $\y$, and $\mathbf{z}$, as in Figure \ref{fig:5}.
Observe that $\partial \x = 0$, while $\partial \y = \x$ and~$\partial \mathbf{z} = \x$. Thus~$\SFH(-M,-\g) \cong \Z_2$ and
$\EH(M,\g,\xi) = [\x] = 0$. The sutured manifold~$(M,\g)$ is the once punctured sphere $S^3(1)$, and~$\xi$ is an overtwisted
contact structure on it.
\end{ex}

As described above \cite[Proposition 2.14]{decateg}, if the vector field~$v$ represents
a $\spinc$ structure on $(M,\g)$, then the same vector field
also represents a $\spinc$ structure on~$(-M,-\g)$. Hence there is a canonical identification
between~$\spinc(M,\g)$ and~$\spinc(-M,-\g)$, and we will not distinguish between the two.

\begin{prop} \label{prop:EHspinc}
Suppose that $(M,\g,\xi)$ is a contact manifold, and let $\s_{\xi} \in \spinc(M,\g)$ be the homology class of the vector field $\xi^{\perp}$. Then
\[
\EH(M,\g,\xi) \in \SFH(-M,-\g,\s_{\xi}).
\]
\end{prop}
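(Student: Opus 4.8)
The plan is to unwind both sides as homology classes of nowhere-zero vector fields and to produce a homotopy between them rel~$\partial M.$ Recall from \cite[Notation 4.1]{sutured} that, for a balanced diagram $(\S',\bolda,\boldb)$ of a balanced sutured manifold $(N,\nu)$ and a generator $\x\in\T_{\bolda}\cap\T_{\boldb},$ the relative $\spinc$ structure $\s(\x)\in\spinc(N,\nu)$ is represented by a vector field built as follows: one picks a Morse function $f$ adapted to the diagram, with $d$ index-one and $d$ index-two critical points, no critical points of index zero or three, with $\S'$ a regular level set separating the index-one from the index-two critical points, and with gradient-like field equal to the model field $v_0$ near $\partial N;$ the generator $\x$ singles out $d$ flow lines, each joining an index-one to an index-two critical point; deleting regular neighbourhoods of these flow lines together with the critical points and extending the gradient over the complement yields a nowhere-zero field $v(\x)$ with $v(\x)|\partial N = v_0,$ and $\s(\x):=[v(\x)].$ Since $EH(M,\g,\xi)=[\x]$ is computed from the diagram $(-\S,\bolda,\boldb)$ for $(-M,-\g),$ and since the canonical identification $\spinc(-M,-\g)\cong\spinc(M,\g)$ of \cite[Proposition 2.12]{decateg} sends the class of a vector field to the class of the same vector field, it suffices to prove that the field $v(\x)$ attached to the Honda--Kazez--Mati\'c diagram of Figure~\ref{fig:5} is homotopic rel~$\partial M$ to $\xi^{\perp}.$

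Next I specialise to the partial open book diagram, writing $M=(S\times I)/\!\sim_h.$ The contact structure $\xi$ is obtained by gluing the product-disk-decomposable contact structures on the product sutured manifolds $(S\times I,\partial S\times I)$ and $(P\times I,-\partial P\times I),$ and each of these admits a contact form whose Reeb field is the vertical field $\partial_t;$ hence $\xi^{\perp}$ is homotopic rel~$\partial M$ to a nowhere-zero field on $M$ that agrees with the image of $\partial_t$ away from the binding region $\partial S\times I.$ So the proposition reduces to showing that $v(\x)$ is homotopic rel~$\partial M$ to such a ``vertical'' field. To this end one chooses $f$ compatibly with the product structure $S\times I$: the Heegaard surface $\S=(S\times\{0\})\cup-(P\times\{1/2\})$ cuts $M$ into the two compression bodies, whose $\bolda$- and $\boldb$-handle cores are, up to the monodromy identification $(x,1)\sim_h(h(x),0),$ the strips swept out by the arcs $a_i$ and $b_i$ in the $I$-direction; off regular neighbourhoods of these cores and of the binding one may take the gradient-like field to be $\partial_t.$ The generator point $x_i=(a_i\cap b_i)\times\{1/2\}$ is the unique transverse intersection of $a_i$ and $b_i,$ and the flow line through it runs along the $i$-th $\boldb$-handle core and then, via $\sim_h,$ along the $i$-th $\bolda$-handle core; because $a_i$ is a small isotopic translate of $b_i$ and $h$ is the identity outside $P,$ this concatenation is isotopic rel endpoints to a vertical segment. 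Therefore, after deleting neighbourhoods of these flow lines, $v(\x)$ already equals $\partial_t$ off a collar of $\partial M$ and a finite union of balls, and on each such ball it extends to a field isotopic to $\partial_t$ — the extension being forced into this class because $\x$ is the distinguished cycle of \cite[Theorem 3.1]{contact} from which every domain has a zero coefficient, leaving no winding to be absorbed. Hence $v(\x)\simeq\partial_t\simeq\xi^{\perp}$ rel~$\partial M,$ so $\s(\x)=\s_{\xi},$ as claimed.

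The technical heart — and the step I expect to be the main obstacle — is the last one: the vector-field description of $\s(\x)$ involves several choices (the adapted Morse function, the sizes of the deleted tubes, the extension of the gradient over the cores), and one must check that these can all be arranged to respect the ambient product structure of $M=(S\times I)/\!\sim_h,$ both where the monodromy $h$ is non-trivial and, more delicately, near the binding $\partial S\times I,$ where $\xi^{\perp}$ is only homotopic to the vertical field and the homotopy has to be tracked carefully to keep the boundary normalisation $v_0$ intact; one must also pin down the orientation conventions so that the identification works out to $\xi^{\perp}$ rather than $-\xi^{\perp}$ (cf.\ Remark~\ref{rem:EH}). An alternative that avoids part of this bookkeeping is to cap $(M,\g,\xi)$ off to a closed contact manifold $(Y,\xi_Y)$ with an honest open book extending $(S,h),$ invoke Ozsv\'ath--Szab\'o's computation that $c(\xi_Y)\in\widehat{HF}(-Y,\s_{\xi_Y}),$ and transfer the statement along the capping; but this still requires controlling how relative $\spinc$ structures and their vector-field representatives restrict, which is essentially the same geometric input, so I expect the direct argument to be shortest.
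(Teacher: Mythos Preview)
Your overall strategy is right and matches the paper's: both reduce to comparing the vector field $v(\x)$ built from the balanced diagram with $\xi^{\perp},$ and both exploit that the diagram arises from a partial open book. The difference is in how the comparison is carried out, and this is where your argument has a real gap.

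The step that fails is the sentence ``on each such ball it extends to a field isotopic to $\partial_t$ --- the extension being forced into this class because $\x$ is the distinguished cycle \dots\ from which every domain has a zero coefficient, leaving no winding to be absorbed.'' The vanishing of outgoing domains is a chain-level statement about holomorphic disks; it says nothing about which homology class of vector field the standard modification along the flow lines through $\x$ produces. Different generators $\y\in\T_{\a}\cap\T_{\b}$ with $\s(\y)\neq\s(\x)$ can equally well be cycles with no outgoing domains (they may lie in different $\spinc$ summands altogether), so ``no winding'' does not single out $\s_{\xi}.$ Relatedly, the claim that the concatenated flow line is ``isotopic rel endpoints to a vertical segment'' because ``$h$ is the identity outside $P$'' is not meaningful: $h$ is only defined on $P,$ and the $\a$-core genuinely involves $h(a_i),$ which need not be close to $a_i.$ So you have not explained what is geometrically special about the point $x_i=(a_i\cap b_i)\times\{1/2\}$ as opposed to any other intersection point.

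The paper resolves this by passing to the underlying contact handle decomposition ($R_-(\g)\times I$ plus $d$ contact one-handles and $d$ contact two-handles) and working handle-by-handle with an obstruction-theoretic argument. On each handle $H\cong D^3$ with dividing set $\nu$ one writes down the model gradient $v_0$ explicitly, observes that $\xi^{\perp}$ points into $H$ along $R_-(\nu)$ and out along $R_+(\nu),$ and checks that $(v_0)_p=-(\xi^{\perp})_p$ at \emph{exactly one} point of $\partial H,$ namely $x_i=\x\cap H$ (which lies on $R_-(\nu)$ for one-handles and $R_+(\nu)$ for two-handles). Since the construction of $\s(\x)$ reverses $v_0$ along the one-chain $\theta$ through $\x,$ the modified field $v$ and $\xi^{\perp}$ are never opposite on the two-skeleton, so their difference class vanishes. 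This pinpoints precisely why the distinguished $\x$ --- and not some other generator --- represents $\s_{\xi}$: it is the location of $x_i$ relative to $\nu$ on each handle boundary, not any chain-complex property, that does the work.
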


\begin{proof}

\begin{figure}[tb]
\includegraphics{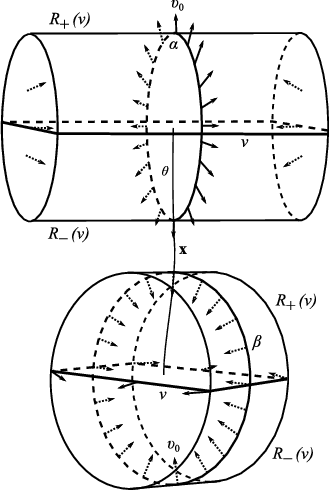}
\caption{A contact 1-handle above a contact 2-handle, together with the vector field $v_0$ used for constructing the relative $\spinc$ structures.} \label{fig:6}
\end{figure}

The partial open book used to define the contact class $\EH(M,\g,\xi)$ is constructed using a relative contact handle
decomposition of $(M,\g,\xi)$. One takes the product $R_-(\g) \times I$, and attaches~$d$ contact 1-handles, then~$d$ contact
2-handles. Suppose that $H$ is a contact 1- or 2-handle. This means that $\xi|_H$ is
contactomorphic to the unique tight contact structure on~$D^3$ with convex boundary and connected dividing set.
Denote by~$\nu$ the dividing set on~$\partial H$.

The handle decomposition gives a balanced diagram $(-\S,\bolda,\boldb)$ for $(-M,-\g)$, which agrees with the one arising from
the partial open book decomposition. The Heegaard surface $\S$ is obtained from $R_-(\g) \times \{1\}$ by performing the 1-handle
surgeries. If $H$ is a 1-handle, then its belt circle is an $\a$-curve that intersects~$\nu$ in exactly two points.
If~$H$ is a 2-handle, then its attaching circle is a $\b$-curve that also intersects $\nu$ in two points.
The contact element is represented by an intersection point $\x \in \T_{\a} \cap \T_{\b}$ such that~$\x \cap H$ lies
on~$R_-(\nu)$ if~$H$ is a 1-handle, and on~$R_+(\nu)$ if~$H$ is a 2-handle.

The construction of the $\spinc$ structure $\s(\x)$ associated to an intersection point~$\x$
coming from a relative handle decomposition is explained in Section~3.4
of~\cite{decateg}. A vector field~$v$ representing~$\s(\x)$ is obtained as follows. One takes the vector field $v_0$ on~$M$ that agrees
with~$\partial/\partial t$ on~$M \times I$. On a 1-handle
$H = D^1 \times D^2$, we take
\[
v_0 = -x \cdot \frac{\partial}{\partial x} + y \cdot \frac{\partial}{\partial y} + z \cdot \frac{\partial}{\partial z},
\]
while on a 2-handle $H = D^2 \times D^1$ one considers
\[
v_0 = -x \cdot \frac{\partial}{\partial x} - y \cdot \frac{\partial}{\partial y} + z \cdot \frac{\partial}{\partial z}.
\]
We also choose a smooth 1-chain $\theta$ in $M$ that connects the centers of the 1- and 2-handles, and passes through~$\x$.
Then one extends $v_0|_{M \setminus N(\theta)}$ to a nowhere zero vector field~$v$ on~$M$. Suppose that $\theta_1,\dots,\theta_d$ are the components
of~$\theta$, and~$\theta_i$ connects the centers of the handles~$H_{\a_i}$ and~$H_{\b_i}$ corresponding to~$\a_i$ and~$\b_i$, respectively,
for $i \in \{1,\dots,d\}$. Along~$\theta_i$, the vectors~$v_0$ point from the center of~$H_{\a_i}$ towards the
center of~$H_{\b_i}$, whereas~$v$ points in the opposite direction.

Now we show that~$v$ and~$\xi^{\perp}$ are homotopic over the 2-skeleton of~$M$; i.e., they are homologous. This will follow
if we prove that for every 1- or 2-handle~$H$, and for every~$p \in \partial H$, we have
\[
v_p \neq -(\xi^{\perp})_p.
\]
Indeed, on $R_-(\g) \times I$ both~$v$ and~$\xi^{\perp}$ point up. If~$v$ is generic, the set where~$v$ and~$\xi^{\perp}$ are
opposite represent the difference of the corresponding $\spinc$ structures. If each component of this difference cycle lies
in a handle, then it is obviously null-homologous.

First, observe that $\xi^{\perp}$ points into~$H$ along~$R_-(\nu)$, and points out of~$H$ along $R_+(\nu)$. Hence, one has
\[
(v_0)_p = -(\xi^{\perp})_p
\]
for exactly one point~$p \in \partial H$, which we can assume to be~$\x \cap H$. Indeed, if~$H$ is a 1-handle~$H_{\a_i}$, then
along its belt circle~$\a_i$ the field~$v_0$ points out, and $\a_i \cap R_-(\nu)$ consists of an arc whose ``midpoint'' $x_i$ is where
$v_0$ and~$\xi^{\perp}$ are opposite. Similarly, if~$H$ is a 2-handle~$H_{\b_i}$ with attaching circle~$\b_i$, then the midpoint~$x_i$
of the arc $\b_i \cap R_+(\g)$ is where~$v_0$ and~$\xi^{\perp}$ are opposite. However, as described above, $v$ is opposite to~$v_0$
along~$\theta$. In particular, $v$ and~$\xi^{\perp}$ point in the same direction at~$x_i$, and are never opposite elsewhere along the
2-skeleton.
\end{proof}

\subsection{The gluing map $\Phi_{\xi}$}
\begin{defn}
We say that $(M',\g')$ is a \emph{sutured submanifold} of the sutured manifold $(M,\g)$ if $M'$ is a submanifold with boundary of
$M$, and $M' \subset \Int(M)$. A connected component $C$ of $M \setminus \Int(M')$ is called \emph{isolated} if $C \cap \partial M = \emptyset$.
\end{defn}

Next, we recall \cite[Theorem 1.1]{TQFT}.

\begin{thm} \label{thm:glue}
Let $(M',\g')$ be a sutured submanifold of $(M,\g)$, and let $\xi$ be a contact structure on $M \setminus \Int(M')$ with convex boundary, and dividing set $\g$ on $\partial M$ and $\g'$ on $\partial M'$. If $M \setminus \Int(M')$ has $m$ isolated components, then $\xi$ induces a natural map
\[
\Phi_{\xi} \colon \SFH(-M',-\g') \to \SFH(-M,-\g) \otimes V^{\otimes m}.
\]
Here $V = \widehat{HF}(S^1 \times S^2) \cong \Z_2 \oplus \Z_2$ is a $\Z$-graded vector space, where the two summands have gradings that differ by one, say $0$ and $1$.

Moreover, if $\xi'$ is any contact structure on $M'$ with dividing set $\g'$
on $\partial M'$, then
\[
\Phi_{\xi}(\EH(M',\g',\xi')) = \EH(M,\g,\xi' \cup \xi) \otimes (x \otimes \dots \otimes x),
\]
where $x$ is the contact class of the standard contact structure on $S^1 \times S^2$.
\end{thm}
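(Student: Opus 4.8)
The plan is to construct $\Phi_{\xi}$ from a relative contact handle decomposition of $(M \setminus \Int(M'), \xi)$ built on top of $(M',\g')$, and to read off the induced map on $SFH$ one handle at a time. First I would invoke the existence of such a decomposition: since $\xi$ has convex boundary on the cobordism $M\setminus\Int(M')$ between the convex surfaces $\partial M'$ and $\partial M$, with dividing sets $\g'$ and $\g$, it can be built from the collar $\partial M' \times I$ (with its $I$-invariant contact structure) by attaching contact $0$-, $1$-, $2$-, and $3$-handles in order of index, with the one caveat that each isolated component of $M\setminus\Int(M')$ must be created by a contact $0$-handle and then joined to the rest by a contact $1$-handle; this is exactly the source of the $V^{\otimes m}$ factor.

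Next I would define the elementary maps. A contact $0$-handle is a tight $B^3$ with connected dividing set, for which $SFH$ is $\Z$, so attaching it disjointly induces the canonical map $SFH(-M',-\g') \to SFH(-M',-\g')\otimes SFH(-B^3)\cong SFH(-M',-\g')$. A contact $1$-handle, a contact $2$-handle, and a contact $3$-handle each modify the sutured manifold by a local move which, on a compatible balanced diagram, corresponds to a stabilization, a triangle map against a model diagram, or a destabilization; I would define the associated maps on $SFH$ precisely as the $1$-, $2$-, and $3$-handle maps of Section \ref{sec:special} (as in Definition \ref{defn:onehandle}, the framed-link map, and Definition \ref{defn:3handle}), performed near $\partial M'$ so that holomorphic curves avoid the attaching region and each elementary map is literally a chain map. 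Then $\Phi_{\xi}$ is the composition of these elementary maps in the order the handles are attached, with the contact $1$-handles that absorb isolated components producing the $V$ tensor factors — the two intersection points of the model annulus diagram giving the two $\Z$-summands, graded one apart.

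Then I would prove well-definedness up to sign. Any two relative contact handle decompositions of $(M\setminus\Int(M'),\xi)$ are related by isotopy of attaching data, creation/cancellation of a canceling $0/1$-, $1/2$-, or $2/3$-handle pair, reordering of handles of non-adjacent index, and handleslides — a contact-geometric Cerf theory. For each move I would check that the corresponding composition of elementary maps is unchanged: cancellation uses the analogues of Lemma \ref{lem:onetwo} and its ``turned around'' counterpart, reordering uses the commutation of handle attachments of different index, and handleslides use handleslide invariance of $SFH$; all of these are available because the handles sit near the boundary. This step is the \emph{main obstacle}: one must know the list of moves is complete and must carefully track the overall $\pm$ ambiguity of $SFH$, which (by subsection 7.3 of \cite{TQFT}) genuinely cannot be removed, so $\Phi_{\xi}$ is only well-defined up to sign.

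Finally, for the statement about contact classes, given a contact structure $\xi'$ on $M'$ with dividing set $\g'$ I would pick a partial open book decomposition of $(M',\g',\xi')$, equivalently a relative contact handle decomposition over $R_-(\g')\times I$, and stack the contact handle decomposition of $(M\setminus\Int(M'),\xi)$ on top; this yields a relative contact handle decomposition, hence a partial open book decomposition, of $(M,\g,\xi'\cup\xi)$, together with a compatible balanced diagram $(-\S,\bolda,\boldb)$ containing the diagram for $(M',\g',\xi')$ as a sub-diagram plus the model pieces for the handles and isolated components. Under this identification the distinguished cycle $\x'$ representing $EH(M',\g',\xi')$ is carried by the elementary handle maps to the distinguished cycle $\x$ representing $EH(M,\g,\xi'\cup\xi)$, tensored with the top generator $x$ of each isolated $S^1\times S^2$-summand, because each contact handle map sends the distinguished generator to the distinguished generator (a $0$-handle tensors with the generator of $SFH(-B^3)$; a $1$- or $2$-handle appends the intersection point $x_i$ lying on the appropriate side of the dividing set; a $3$-handle reads off the higher-graded component). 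Since $\Phi_{\xi}$ is by definition this composition, the identity $\Phi_{\xi}(EH(M',\g',\xi')) = EH(M,\g,\xi'\cup\xi)\otimes(x\otimes\dots\otimes x)$ follows, and by Proposition \ref{prop:EHspinc} both sides lie in the $\spinc$ summand determined by $\xi'\cup\xi$, so the formula is compatible with the $\spinc$ refinement.
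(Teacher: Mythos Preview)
This theorem is not proved in the paper; it is quoted from \cite[Theorem 1.1]{TQFT}, and the paper only sketches the Honda--Kazez--Mati\'c construction. That construction is \emph{not} the one you propose: rather than attaching contact handles one at a time, HKM take an arbitrary balanced diagram $(-\S_0',\bolda_0',\boldb_0')$ for $(-M',-\g')$, enlarge it to a diagram $(-\S',\bolda',\boldb')$ that is \emph{contact compatible with the $I$-invariant structure $\zeta$ near $\partial M'$} (this uses two special partial open books for the collar), and then extend once more, using a partial open book for $\xi$, to a diagram $(-\S,\bolda,\boldb)$ for $(-M,-\g)$ with $\bolda=\bolda'\cup\bolda''$, $\boldb=\boldb'\cup\boldb''$. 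The chain map is the single formula $\phi_\xi(\y)=(\y,\x'')$, where $\x''$ is the distinguished intersection point coming from the partial open book; well-definedness is proved by showing this is independent of the choices of diagram and of partial open book, via the naturality arguments of \cite{TQFT}. The identity for $EH$ falls out immediately, since $(\y_0,\x'')$ is literally the distinguished generator for the glued partial open book.

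Your handle-by-handle strategy is a reasonable alternative in spirit (and is close to what was later done in the literature on contact handle maps), but as written it has a genuine gap: the elementary maps you invoke are the \emph{four-dimensional} handle maps of Section~\ref{sec:special}, and these are not the right maps for contact handles. A contact $k$-handle is a three-dimensional object attached along $\partial M'$, changing the sutured manifold $(M',\g')$ into a new sutured manifold; it does \emph{not} give a special cobordism of the sort in Definition~\ref{defn:onehandle} or Definition~\ref{defn:3handle}, where the handles are attached along $\Int(M')$ and $\partial M$ is unchanged. Concretely, the map $g_{\U}(\x)=\x\times\{\theta\}$ in Definition~\ref{defn:onehandle} models a $4$-dimensional $1$-handle producing a connect sum with $S^1\times S^2$; a contact $1$-handle instead glues a band to $\partial\S$ with no new attaching curves, and the induced map is of a different shape. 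Similarly, a contact $2$-handle is not a framed-link surgery in $\Int(M')$. So before your argument can run, you must first \emph{define} contact handle maps on $SFH$ from scratch (as diagram operations near $\partial\S$), and then prove the Cerf-type invariance statement for contact handle decompositions; neither ingredient is supplied by Section~\ref{sec:special}. Once those are in place your outline for the $EH$ identity is essentially correct, but the construction and well-definedness of $\Phi_\xi$ are the real content, and they do not reduce to the special-cobordism maps already in the paper.
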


Honda, Kazez, and Mati\'c call $\Phi_{\xi}$ the ``gluing map.'' Its construction is rather involved, we only describe it
briefly and highlight its properties that we need later.

Suppose for now that $m = 0$. First, we choose an arbitrary balanced diagram $(-\S'_0,\bolda'_0,\boldb'_0)$ for $(-M',-\g')$. Let $T = \partial M'$, and denote by $\z$ the $I$-invariant contact structure on $T \times I$ such that for every $t \in I$ the surface $T \times \{t\}$ is convex with
dividing set $\g' \times \{t\}$. Then we construct two special partial open book decompositions for $(T \times I,\z)$, and denote by $(-\S_{\z}^1,\bolda_{\z}^1,\boldb_{\z}^1)$ and $(-\S_{\z}^2,\bolda_{\z}^2,\boldb_{\z}^2)$ the corresponding balanced diagrams.
We set $\S' = \S'_0 \cup \S_{\z}^1 \cup \S_{\z}^2$, and extend $\bolda_0' \cup \bolda_{\z}^1 \cup \bolda_{\z}^2$ and
$\boldb_0' \cup \boldb_{\z}^1 \cup \boldb_{\z}^2$ to full sets of attaching circles $\bolda'$ and $\boldb'$, respectively.
We end up with a special kind of balanced diagram $(-\S',\bolda',\boldb')$ for $(-M',-\g')$, that Honda, Kazez, and Mati\'c~\cite{TQFT} call \emph{contact compatible with $\z$ near $\partial M'$.}

In the second step, using the contact structure $\xi$, one extends the balanced diagram $(-\S',\bolda',\boldb')$ to a balanced diagram $(-\S,\bolda,\boldb)$ for $(-M,-\g)$ such that $\S' \subset \S$, while $\bolda = \bolda' \cup \bolda''$ and $\boldb = \boldb' \cup \boldb''$. The curves $\bolda''$ and $\boldb''$ come from a partial open book decomposition, hence there is a distinguished
intersection point
\[
\x'' = (x_1'',\dots,x_m'') \in \T_{\a''} \cap \T_{\b''},
\]
with the properties described above Remark~\ref{rem:EH}. More precisely, if $D$ is the closure of a component of $-\S \setminus (\bolda \cup \boldb)$ such that $x_i'' \in \partial D$ and
$\partial D \cap \a_i$ points out of $x_i''$, then $D \cap \partial \S \neq \emptyset$.

We define the map
\[
\phi_{\xi} \colon CF(-\S',\bolda',\boldb') \to CF(-\S,\bolda,\boldb)
\]
on each generator $\y \in \T_{\a'} \cap \T_{\b'}$ of $CF(-\S',\bolda',\boldb')$ by the formula
$\phi_{\xi}(\y) = (\y,\x'')$, and extend it to $CF(-\S',\bolda',\boldb')$ linearly. This is a chain map, since every domain emanating from $(\y,\x'')$ has zero multiplicities around $\x''$, hence must consist of a domain in $(\S',\bolda',\boldb')$ emanating from $\y$, plus the trivial domain from $\x''$ to itself. The induced map on the homology is
\[
\Phi_{\xi} \colon \SFH(-M',-\g') \to \SFH(-M,-\g).
\]

We point out that the definition of $\Phi_{\xi}$ also works when $(M',\g') = \emptyset$, so $(M,\g,\xi)$ is a contact manifold. Then $\SFH(M',\g') = \Z_2$, and the map $\Phi_{\xi}$ is given by
\[
\Phi_{\xi}(1) = \EH(M,\g,\xi).
\]

The \emph{naturality} of the map $\Phi_{\xi}$ means the following. Suppose that~$\H'_1$ and~$\H'_2$
are balanced diagrams for~$(-M',-\g')$ that are contact compatible with~$\z$
near~$\partial M'$, and let~$\H_1$ and~$\H_2$ be their extensions to~$(-M,-\g)$, respectively. Then the following diagram is
commutative:
\[\begin{CD}
\SFH(\H'_1) @>\Phi_{\xi}^1>> \SFH(\H_1)\\
@VVF_{\H'_1,\H'_2}V   @VVF_{\H_1,\H_2}V\\
\SFH(\H'_2) @>\Phi_{\xi}^2>> \SFH(\H_2),
\end{CD}\]
where~$F_{\H'_1,\H'_2}$ and~$F_{\H_1,\H_2}$ are canonical isomorphisms induced by equivalences between the sutured diagrams, cf.~Section~\ref{sec:naturality}.
Furthermore, if $(M',\g')$ is a sutured submanifold of $(M,\g)$ with contact structure $\xi$ on $Z = M \setminus \Int(M')$
and dividing set~$\g$ on~$\partial M$ and~$\g'$ on~$\partial M'$,
and $(\ol{M'},\ol{\g'})$ is a sutured submanifold of $(\ol{M},\ol{\g})$ with contact structure~$\ol{\xi}$ on $\ol{Z} = \ol{M} \setminus \Int(\ol{M'})$
with dividing set~$\ol{\g}$ on~$\partial \ol{M}$ and~$\ol{\g'}$ on~$\partial \ol{M'}$, then an orientation preserving diffeomorphism $d \colon M \to \ol{M}$ such that $d(Z) = \ol{Z}$, $d(\g') = \ol{\g'}$, $d(\g) = \ol{\g}$, and $d_*(\xi) = \ol{\xi}$ gives rise to a commutative diagram
\[\begin{CD}
\SFH(-M',-\g') @>\Phi_{\xi}>> \SFH(-M,-\g)\\
@VV(d|_{M'})_*V   @VV d_*V\\
\SFH(-\ol{M'},-\ol{\g'}) @>\Phi_{\ol{\xi}}>> \SFH(-\ol{M},-\ol{\g}).
\end{CD}\]

For the case $m>0$, see the discussion on page 7 of \cite{TQFT}. We now list some basic properties of this gluing map. The first is~\cite[Theorem 6.1]{TQFT}.

\begin{thm} \label{thm:id}
Let $(M,\g)$ be a balanced sutured manifold, and let $\xi$ be an $I$-invariant contact structure on $\partial M \times I$ with dividing set $\g \times \{t\}$ on $\partial M \times \{t\}$. Then the gluing map
\[
\Phi_{\xi} \colon \SFH(-M,-\g) \to \SFH(-M,-\g)
\]
obtained by
attaching $(\partial M \times I,\xi)$ onto $(M,\g)$ along $\partial M \times \{0\}$ is the identity map.
\end{thm}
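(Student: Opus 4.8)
The plan is to unwind the construction of $\Phi_\xi$ for the special case $Z = \partial M \times I$ with the $I$-invariant contact structure, and show that one can make choices in that construction so that the map $\phi_\xi$ on the chain level is literally the identity. First I would take an arbitrary admissible balanced diagram $(-\S',\bolda',\boldb')$ for $(-M,-\g)$ and, following the recipe recalled before Theorem~\ref{thm:glue}, upgrade it to a diagram that is contact compatible with the $I$-invariant contact structure $\z$ near $\partial M$. The key point is that this upgrade is accomplished by attaching to $\S'$ the surfaces $\S_\z^1$ and $\S_\z^2$ coming from \emph{two} special partial open book decompositions of $(\partial M \times I,\z)$; the resulting diagram $(-\S,\bolda,\boldb)$ still defines $(-M,-\g)$ because gluing $(\partial M\times I,\z)$ onto $(M,\g)$ along $\partial M \times \{0\}$ yields $(M,\g)$ back up to canonical diffeomorphism. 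In this situation the ``extra'' curves $\bolda''$ and $\boldb''$ are empty (there is nothing outside $M$ to fill), so $m=0$, the distinguished intersection point $\x''$ is vacuous, and $\phi_\xi(\y) = \y$ for every generator $\y$.

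The remaining work is to identify the induced isomorphism $SFH(-\S',\bolda',\boldb') \to SFH(-\S,\bolda,\boldb)$ with the canonical naturality isomorphism of Theorem~\ref{thm:nat}, i.e.\ to show that the passage from the original diagram to the contact-compatible one is realized by a sequence of stabilizations, handleslides, and isotopies, compatibly with the way $\Phi_\xi$ is defined. Here I would invoke the naturality square for $\Phi_\xi$ quoted just before Theorem~\ref{thm:id}: it lets me replace the particular contact-compatible diagram by any other, and it lets me compare the chain map $\phi_\xi$ with the canonical identification maps $\Psi_1,\Psi_2$. Concretely, one shows that attaching the two open-book pieces $\S_\z^1$ and $\S_\z^2$ amounts to a sequence of (proper) stabilizations of the original diagram near $\partial \S'$, together with isotopies; this is precisely the kind of move that Theorem~\ref{thm:nat} declares induces the canonical isomorphism. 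Since $\phi_\xi$ is the identity on generators, the composite $SFH(-M,-\g) \to SFH(-M,-\g)$ obtained by going around the naturality square is the identity up to sign.

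The main obstacle I expect is the bookkeeping in the second step: verifying that the specific balanced diagram $(-\S,\bolda,\boldb)$ produced by the two special partial open book decompositions of $(\partial M\times I,\z)$ really is a stabilization of $(-\S',\bolda',\boldb')$ in the technical sense, and that the identification of $S \setminus P$ with $R_+(\g)$ used in the open book construction matches the boundary of $\S'$ correctly, orientations included. One must check that each of the two open-book pieces contributes, after the dust settles, only genus that can be absorbed by handleslides back to the original diagram — equivalently, that $\S_\z^1 \cup \S_\z^2$ carries a product piece whose $\bolda$, $\boldb$ curves are parallel and can be cancelled. This is where I would lean on the explicit local models for contact one- and two-handles (as in Figure~\ref{fig:6}) and on the fact, used throughout Section~\ref{sec:special}, that all Heegaard moves and stabilizations can be performed near $\partial \S$ where holomorphic discs do not reach. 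Once this local verification is in place, the theorem follows formally from Theorem~\ref{thm:nat} and the naturality of $\Phi_\xi$.
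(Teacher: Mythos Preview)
The paper does not actually prove this theorem; it is quoted verbatim as \cite[Theorem 6.1]{TQFT}, and the proof lives entirely in the Honda--Kazez--Mati\'c paper. So your proposal should be compared against that argument, not against anything in the present paper.

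There is a genuine gap in your first paragraph. You assert that because the complement $M \setminus \Int(M')$ is just the collar $\partial M \times I$, the extension step contributes no extra curves, so $\bolda'' = \boldb'' = \emptyset$ and $\phi_\xi(\y)=\y$ literally. This is not how the construction works. The extension of $(-\S',\bolda',\boldb')$ to $(-\S,\bolda,\boldb)$ is carried out by choosing a partial open book decomposition of $(\partial M \times I,\z)$ and grafting on the resulting surface and curves; even for the $I$-invariant structure this open book has a nonempty page $P$, nonempty basis of arcs, and hence nonempty $\bolda''$, $\boldb''$ and a genuine distinguished point $\x''$. The chain map is $\y \mapsto (\y,\x'')$, not $\y \mapsto \y$. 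The whole content of the theorem is that adding this third $\z$-piece to a diagram that already carries two $\z$-pieces yields, after the naturality identifications, the identity on homology --- and this is not automatic.

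Your final paragraph in fact almost recognizes this: you talk about showing that the extra open-book pieces can be ``absorbed'' by handleslides and stabilizations. But you have framed this as residual bookkeeping after the main step, whereas it \emph{is} the main step, and it is not just bookkeeping. In \cite{TQFT} the argument proceeds by analyzing the specific partial open books for $\z$ and exhibiting an explicit sequence of positive stabilizations and Heegaard moves under which the extra $(\bolda'',\boldb'',\x'')$ data is cancelled; one has to check at each stage that the naturality isomorphisms of Theorem~\ref{thm:nat} track the distinguished intersection point correctly. Your outline does not contain this argument, and the shortcut you propose (declaring $\bolda''=\emptyset$) is not available.
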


The following statement is~\cite[Proposition~6.2]{TQFT}.

\begin{prop} \label{prop:comp}
Consider the inclusions $(M_0,\g_0) \subset (M_1,\g_1) \subset (M_2,\g_2)$ of sutured submanifolds. For $i \in \{0,1\}$, let $\xi_i$ be a contact structure on $M_{i+1} \setminus \Int(M_i)$ that has convex boundary and dividing set $\g_j$ on $\partial M_j$ for $j \in \{i,i+1\}$. Then
\[
\Phi_{\xi_1} \circ \Phi_{\xi_0}  = \Phi_{\xi_0 \cup \xi_1} \colon \SFH(-M_0,-\g_0) \to \SFH(-M_2,-\g_2).
\]
\end{prop}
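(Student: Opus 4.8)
The plan is to derive the identity from the \emph{naturality} of the gluing map recorded above: naturality lets us compute $\Phi_{\xi_1}\circ\Phi_{\xi_0}$ and $\Phi_{\xi_0\cup\xi_1}$ using one carefully chosen collection of balanced diagrams, and for that collection the two chain maps underlying the two sides will literally coincide. Note that under the hypotheses $M_2\setminus\Int(M_0)$ has no isolated components, so there are no $V^{\otimes m}$ factors to track.

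First I would fix a balanced diagram $(-\S_0,\bolda_0,\boldb_0)$ for $(-M_0,-\g_0)$ that is contact compatible, near $\partial M_0,$ with the $I$-invariant contact structure $\z_0$ on $\partial M_0\times I.$ Next, following the construction of $\Phi_{\xi_0},$ I would pick a contact handle decomposition of $(M_1\setminus\Int(M_0),\xi_0)$ relative to $\partial M_0$ --- equivalently a compatible partial open book decomposition, which exists by \cite[Theorem 1.3]{contact} --- but arranged so that the part of the decomposition sitting near $\partial M_1$ produces exactly the two ``neck'' subdiagrams $(-\S_{\z_1}^1,\bolda_{\z_1}^1,\boldb_{\z_1}^1)$ and $(-\S_{\z_1}^2,\bolda_{\z_1}^2,\boldb_{\z_1}^2)$ that enter the definition of the gluing map across $\partial M_1.$ Extending $(-\S_0,\bolda_0,\boldb_0)$ by the resulting curves $\bolda_0'',\boldb_0''$ gives a balanced diagram $(-\S_1,\bolda_1,\boldb_1)$ for $(-M_1,-\g_1)$ which, by this arrangement, is itself contact compatible with $\z_1$ near $\partial M_1,$ hence is a legitimate starting point for $\Phi_{\xi_1}.$ Finally I would extend once more, via a contact handle decomposition of $(M_2\setminus\Int(M_1),\xi_1)$ relative to $\partial M_1,$ producing curves $\bolda_1'',\boldb_1''$ and a balanced diagram $(-\S_2,\bolda_2,\boldb_2)$ for $(-M_2,-\g_2).$ The key observation is that concatenating the two chosen contact handle decompositions is a contact handle decomposition of $(M_2\setminus\Int(M_0),\xi_0\cup\xi_1)$ relative to $\partial M_0,$ so $(-\S_2,\bolda_2,\boldb_2)$ is simultaneously a diagram of the type used to compute $\Phi_{\xi_0\cup\xi_1}.$

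With these diagrams in place, the computation is immediate. Let $\x_0''\in\T_{\bolda_0''}\cap\T_{\boldb_0''}$ and $\x_1''\in\T_{\bolda_1''}\cap\T_{\boldb_1''}$ be the distinguished intersection points attached to the two partial open books. On chains, $\phi_{\xi_1}\circ\phi_{\xi_0}$ sends a generator $\y$ of $CF(-\S_0,\bolda_0,\boldb_0)$ first to $(\y,\x_0'')$ and then to $(\y,\x_0'',\x_1''),$ while $\phi_{\xi_0\cup\xi_1}$ sends $\y$ to $(\y,\x''),$ where $\x''$ is the distinguished intersection point of the concatenated partial open book. So it only remains to check $\x''=(\x_0'',\x_1''),$ i.e.\ that the distinguished point of a concatenated contact handle decomposition is the concatenation of the distinguished points of the two pieces. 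This is local: for each coordinate of $\x_0''$ or $\x_1''$ the defining property --- that every region of $-\S_2\setminus(\bolda_2\cup\boldb_2)$ whose boundary emanates from that point along an $\bolda$-curve meets $\partial\S_2$ --- is inherited from the corresponding property inside the relevant piece, because the surfaces are glued near their boundaries. Hence $\phi_{\xi_1}\circ\phi_{\xi_0}=\phi_{\xi_0\cup\xi_1}$ on the nose; passing to homology, and then using naturality to transport to arbitrary diagrams (which introduces the usual $\pm$ from the orientation systems), yields $\Phi_{\xi_1}\circ\Phi_{\xi_0}=\pm\,\Phi_{\xi_0\cup\xi_1}.$

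The step I expect to be the main obstacle is the flexibility needed in the second paragraph: that a contact handle decomposition of $(M_1\setminus\Int(M_0),\xi_0)$ can be chosen whose near-$\partial M_1$ part realizes the two prescribed neck subdiagrams, so that the intermediate diagram for $(-M_1,-\g_1)$ is contact compatible near $\partial M_1.$ This amounts to building a partial open book decomposition that begins with a prescribed collar piece, a relative version of \cite[Theorem 1.3]{contact}, and matching it up with the specific neck pieces built into the definition of $\Phi_\xi$ in \cite{TQFT}. Granting that flexibility, the rest is bookkeeping parallel to the associativity argument for the gluing map in \cite{TQFT}.
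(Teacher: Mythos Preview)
The paper does not prove this statement; it simply records that it is \cite[Proposition~6.2]{TQFT}. Your sketch is essentially the argument given there: one builds a single chain of diagrams by successively extending a contact-compatible diagram for $(-M_0,-\g_0)$ through partial open books for $\xi_0$ and then $\xi_1$, arranges that the intermediate diagram is itself contact compatible near $\partial M_1$, and observes that the distinguished generators concatenate. You have correctly identified the one genuine piece of work as the flexibility step --- that the extension across $\xi_0$ can be made to terminate in the prescribed neck subdiagrams near $\partial M_1$ --- and this is exactly the content supplied by the constructions in \cite{TQFT}.

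One small correction: your opening assertion that ``under the hypotheses $M_2\setminus\Int(M_0)$ has no isolated components'' is not justified by the statement as written; nothing in the hypotheses rules out isolated components of any of the three complementary regions. Your argument handles the case $m=0$, and the general case (with the $V^{\otimes m}$ factors appearing in Theorem~\ref{thm:glue}) needs the additional bookkeeping described on page~7 of \cite{TQFT}. This does not affect the core of your argument, but the claim should be dropped or the reduction to $m=0$ made explicit.
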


\begin{defn}
Let $(M',\g')$ be a sutured submanifold of $(M,\g)$, and let $\xi$ be a contact structure on $M \setminus \Int(M')$ with convex boundary, and dividing set $\g$ on $\partial M$ and $\g'$ on $\partial M'$. As in the proof of Lemma~\ref{lem:contadm}, choose a Riemannian metric
such that~$\xi^\perp$ is admissible.
If $\s' \in \spinc(M',\g')$ and $v'$ is a vector field representing $\s'$, then the homology class of the vector field $v = v' \cup \xi^{\perp}$ is obviously independent of the choice of $v'$.
This defines a map
\[
f_{\xi} \colon \spinc(M',\g') \to \spinc(M,\g),
\]
by setting
$f_{\xi}(\s')$ to be the homology class of $v$.
\end{defn}

Given $\s_1$, $\s_2 \in \spinc(M,\g)$, their difference $\s_1 -\s_2$ is an element of $H^2(M,\partial M)$, which we identify
with $H_1(M)$ using Poincar\'e duality.

\begin{lem} \label{lem:affine}
If $\s_1'$, $\s_2' \in \spinc(M',\g')$, then
\[
f_{\xi}(\s_1') - f_{\xi}(\s_2') = e_*(\s_1'-\s_2'),
\]
where $e_* \colon H_1(M') \to H_1(M)$ is the map induced by the embedding $e \colon M' \hookrightarrow M$.
\end{lem}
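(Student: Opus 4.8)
The plan is to reduce the statement to the known affine behaviour of relative $\spinc$ structures under restriction, by exhibiting both sides as the difference of two representative vector fields. Fix vector fields $v_1'$ and $v_2'$ on $M'$ representing $\s_1'$ and $\s_2'$ respectively, which we may assume agree outside a ball $B \subset \Int(M')$ (since $\s_1'$ and $\s_2'$ differ by an element of $H^2(M',\partial M') \cong H_1(M')$, and by the usual Pontrjagin–Thom/obstruction-theory description one can choose $v_1',v_2'$ so that the set on which they differ is a disjoint union of small circles representing a $1$-cycle $\gamma'$ whose homology class is Poincar\'e dual to $\s_1'-\s_2'$). Then $v_i = v_i' \cup \xi^\perp$ on $M = M' \cup Z$, where $Z = M \setminus \Int(M')$, and these two vector fields agree on $Z$ entirely and agree on $M'$ outside $N(\gamma')$.

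The key step is then simply to identify the homology class of the difference. By the definition of the difference of $\spinc$ structures as an obstruction class, $f_\xi(\s_1') - f_\xi(\s_2') \in H^2(M,\partial M)$ is Poincar\'e dual to the $1$-cycle on which $v_1$ and $v_2$ fail to be homotopic, namely $\gamma'$, now viewed as a cycle in $M$ via the inclusion $e\colon M' \hookrightarrow M$. Since $\gamma'$ as a cycle in $M'$ is Poincar\'e dual (in $M'$) to $\s_1'-\s_2'$, its image $e_\#(\gamma')$ in $M$ represents $e_*(\s_1'-\s_2') \in H_1(M)$, where we use the Poincar\'e duality identifications $H^2(M',\partial M') \cong H_1(M')$ and $H^2(M,\partial M) \cong H_1(M)$ as in the paragraph preceding the lemma. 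This gives exactly $f_\xi(\s_1') - f_\xi(\s_2') = e_*(\s_1' - \s_2')$.

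One should be slightly careful about two points. First, the vector fields entering the definition of $\spinc(M,\g)$ are required to agree with the fixed $v_0$ near $\partial M$ (in the sense of \cite{sutured}); since the modification of $v_i'$ to $v_i$ on $Z$ is via $\xi^\perp$, whose positive co-orientation matches $v_0$ precisely because the dividing set of $\xi|\partial M$ is $\g$, the glued fields $v_i$ are legitimate representatives, and the homotopy realizing the difference class can be taken supported in $N(\gamma') \subset \Int(M')$, away from the boundary. Second, one must check that the difference cycle really can be pushed off $Z$: this is automatic because $v_1$ and $v_2$ are literally equal on $Z$ by construction, so their difference cycle lies in $\Int(M')$.

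I expect the only real technical point — hardly an obstacle — is making precise the naturality of Poincar\'e duality for the inclusion $e$, i.e. that the square relating $H^2(M',\partial M') \to H^2(M,\partial M)$ (the coboundary-free part of the relative Mayer–Vietoris map, or rather the map coming from excision/inclusion) with $e_* \colon H_1(M') \to H_1(M)$ commutes. This is a standard fact once one fixes orientations consistently; with that in hand the lemma follows immediately from the identification of the difference of $\spinc$ structures with the homology class of the locus where two representative vector fields disagree.
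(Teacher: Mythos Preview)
Your argument is correct and follows essentially the same route as the paper: represent $\s_i'$ by vector fields $v_i'$, extend by $\xi^\perp$ to get $v_i$ on $M$, and observe that the difference class is carried by a 1-cycle lying entirely in $M'$ since $v_1 = v_2$ on $Z$. The paper phrases this via the Pontrjagin--Thom construction directly (trivialize $TM$, view $v_i$ as maps $M \to S^2$, and take $[v_1^{-1}(p) - v_2^{-1}(p)]$ for a common regular value $p$), which makes the identification with $e_*(\s_1' - \s_2')$ immediate and avoids any separate appeal to naturality of Poincar\'e duality.

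One small slip: you write that $v_1', v_2'$ may be assumed to ``agree outside a ball $B \subset \Int(M')$'', but that would force $\s_1' = \s_2'$. Your parenthetical immediately gives the correct statement (they agree outside a tubular neighbourhood of a 1-cycle $\gamma'$), so this is a writing error rather than a mathematical one; just delete the ball clause.
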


\begin{proof}
Let $v_i'$ be a vector field representing $\s_i'$ for $i \in \{1,2\}$. After fixing a trivialization of $TM$, we can view $v_1 = v_1' \cup \xi^{\perp}$ and
 $v_2 = v_2' \cup \xi^{\perp}$ as maps from $M$ to $S^2$. If $p$ is a common regular value of $v_1$ and $v_2$, then
\[
f_{\xi}(\s_1') - f_{\xi}(\s_2') = [v_1^{-1}(p) - v_2^{-1}(p)] \in H_1(M').
\]
But $v_1$ and $v_2$ agree outside $M'$; moreover,
\[
v_1^{-1}(p) - v_2^{-1}(p) = (v_1')^{-1}(p) - (v_2')^{-1}(p).
\]
The right-hand side, thought of as a 1-cycle in $M$, represents $e_*(\s_1' -\s_2')$, which proves the lemma.
\end{proof}

\begin{prop} \label{prop:spincgluing}
Let $(M',\g')$ be a sutured submanifold of $(M,\g)$, and let $\xi$ be a contact structure on $M \setminus \Int(M')$ with convex boundary, and dividing set $\g$ on $\partial M$ and $\g'$ on $\partial M'$. Pick a $\spinc$ structure $\s' \in \spinc(M',\g')$, and choose an element $x' \in \SFH(-M',-\g',\s')$. Then
\[
\Phi_{\xi}(x') \in \SFH(-M,-\g,f_{\xi}(\s')).
\]
\end{prop}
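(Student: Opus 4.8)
The plan is to reduce the statement to the definition of $\Phi_\xi$ via the special balanced diagram $(-\S,\bolda,\boldb)$ that is contact compatible with $\z$ near $\partial M'$, together with the description of the $\spinc$-grading of a generator in terms of a vector field, as reviewed above Remark~\ref{rem:EH} and in the proof of Proposition~\ref{prop:EHspinc}. Concretely, recall that $\Phi_\xi$ is induced by the chain map $\phi_\xi(\y) = (\y,\x'')$, where $(-\S',\bolda',\boldb')$ is a diagram for $(-M',-\g')$, the extension $(-\S,\bolda,\boldb)$ to $(-M,-\g)$ is built using a (relative) partial open book coming from a contact handle decomposition of $Z = M \setminus \Int(M')$, and $\x''$ is the distinguished generator attached to that handle decomposition. (For the purposes of the $\spinc$ statement we may assume $m = 0$, since the isolated-component case only tensors with copies of $V$ carrying a fixed $\spinc$ structure, and the argument is identical on each factor.) So it suffices to show: if $\y \in \T_{\a'} \cap \T_{\b'}$ represents $\s' \in \spinc(M',\g')$, then the generator $(\y,\x'') \in \T_\a \cap \T_\b$ represents $f_\xi(\s') \in \spinc(M,\g)$.

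The key step is to compare vector fields. By subsection~3.4 of \cite{decateg}, the $\spinc$ structure $\s(\y)$ on $(M',\g')$ is represented by a vector field $v'$ obtained from the standard field $v_0'$ on the handle decomposition of $M'$ underlying $(-\S',\bolda',\boldb')$, modified along a one-chain through $\y$. Similarly $\s((\y,\x''))$ on $(M,\g)$ is represented by a vector field $v$ obtained from the analogous $v_0$ on the handle decomposition of $M = M' \cup Z$, modified along a one-chain through $(\y,\x'')$; since $\x''$ sits inside the handles of $Z$ and the one-chain can be taken to be a disjoint union of a chain in $M'$ through $\y$ and chains in $Z$ through the $x_i''$, we get $v|_{M'} = v'$ up to homotopy rel $\partial M'$. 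It remains to identify $v|_{Z}$, up to homology, with $\xi^\perp$. For this I would run exactly the argument in the proof of Proposition~\ref{prop:EHspinc}: the distinguished point $\x''$ is characterized (via the condition on domains $D$ with $x_i'' \in \partial D$) by lying on $R_-(\nu)$ for one-handles and $R_+(\nu)$ for two-handles of $Z$, which is precisely where $v_0$ would be opposite to $\xi^\perp$; the modification along the one-chain flips $v_0$ to $v$ there, so $v$ and $\xi^\perp$ are nowhere opposite over the two-skeleton of $Z$. Hence $v|_Z$ and $\xi^\perp$ are homologous on $Z$ rel $\partial Z$, and therefore $v$ is homologous to $v' \cup \xi^\perp$ on $M$, i.e. $\s((\y,\x'')) = f_\xi(\s(\y))$ by the definition of $f_\xi$.

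Finally, since $\phi_\xi$ restricts to a map $CF(-\S',\bolda',\boldb',\s') \to CF(-\S,\bolda,\boldb,f_\xi(\s'))$ on $\spinc$-summands (because it sends each generator of grading $\s'$ to a generator of grading $f_\xi(\s')$, by the previous paragraph), the induced map on homology sends $SFH(-M',-\g',\s')$ into $SFH(-M,-\g,f_\xi(\s'))$, which is the assertion. The main obstacle I expect is the careful bookkeeping in the comparison of the two vector fields near the handles of $Z$ — in particular making precise that the one-chain used to build $v$ can be chosen product-like across the interface $\partial M'$ so that $v|_{M'}$ genuinely recovers $v'$, and that the ``distinguished generator'' condition on $\x''$ translates into the $R_\pm(\nu)$ statement uniformly over all contact handles; but this is essentially a repackaging of the computation already carried out in the proof of Proposition~\ref{prop:EHspinc}, applied relative to $M'$ rather than relative to $R_-(\g) \times I$.
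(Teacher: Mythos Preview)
Your approach is sound in spirit but takes a genuinely different route from the paper, and the delicate step you flag at the end is exactly where the two diverge. You try to identify the vector field $v$ representing $\s((\y,\x''))$ \emph{absolutely} on each piece $M'$ and $Z$, arguing $v|_{M'} \simeq v'$ and $v|_Z \simeq \xi^\perp$. The difficulty is that $\partial M'$ is not a natural level set of the Morse function underlying $(-\S,\bolda,\boldb)$, so there is no a priori reason the global $v$ satisfies the correct boundary condition along $\partial M'$ to be compared with $v'$ or $\xi^\perp$ separately; making this precise requires building the Morse function and one-chain compatibly with the splitting $M = M' \cup Z$, which is more than a repackaging of Proposition~\ref{prop:EHspinc}.

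The paper avoids this entirely by an affine trick. It fixes a contact structure $\xi'$ on $M'$, takes the partial open book diagram for $(M',\g',\xi')$ with distinguished generator $\y_0$, and uses Proposition~\ref{prop:EHspinc} just once, globally on $M$, to identify $\s(\y_0,\x'') = \s_{\xi' \cup \xi} = f_\xi(\s_{\xi'}) = f_\xi(\s(\y_0))$. Then for arbitrary $\y$ it compares \emph{differences}: $\s(\y,\x'') - \s(\y_0,\x'')$ is represented by a one-cycle built from arcs in $\S'$ connecting $\y$ to $\y_0$, hence lies in $M'$ and equals $e_*(\s(\y) - \s(\y_0))$, which by Lemma~\ref{lem:affine} is $f_\xi(\s(\y)) - f_\xi(\s(\y_0))$. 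Differences of $\spinc$ structures are cohomology classes with no boundary condition to worry about, so the splitting issue disappears. Your direct argument can likely be completed, but the paper's reference-point-plus-Lemma~\ref{lem:affine} strategy is what makes the proof short.
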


\begin{proof}
Fix an arbitrary contact structure $\xi'$ on $(M',\g')$ such that $\partial M'$ is a convex surface with dividing set $\g'$.
Choose a partial open book decomposition defining $(M',\g',\xi')$, and let $(-\S',\bolda',\boldb')$ be the corresponding balanced diagram of $(-M',-\g')$. Extend $(-\S',\bolda',\boldb')$ to a diagram $(-\S,\bolda,\boldb)$ defining $(-M,-\g)$, as in the definition of the map $\Phi_{\xi}$. More precisely, we have $\bolda = \bolda' \cup \bolda''$ and $\boldb = \boldb' \cup \boldb''$, and there is a distinguished intersection point $\x'' \in \T_{\a''} \cap \T_{\b''}$ such that
$\phi_{\xi}(\y) = (\y,\x'')$ for every $\y \in \T_{\a'} \cap \T_{\b'}$. Let $\y_0 \in \T_{\a'} \cap \T_{\b'}$ be the
distinguished intersection point representing the contact class $\EH(M',\g',\xi')$.

Again, let $e \colon M' \hookrightarrow M$ be the embedding. Then it follows from Lemma~\ref{lem:affine} and the above description of $\phi_{\xi}$ that for any $\y \in \T_{\a'} \cap \T_{\b'}$ we have
\[
\begin{split}
f_{\xi}(\s(\y)) - f_{\xi}(\s(\y_0)) &=  e_*(\s(\y) - \s(\y_0)) = \\
\s(\y,\x'') - \s(\y_0,\x'') &= \s(\phi_{\xi}(\y)) - \s(\phi_{\xi}(\y_0)) \in H_1(M).
\end{split}
\]
Note that $\phi_{\xi}(\y_0) = (\y_0,\x'')$ is the distinguished intersection point representing $\EH(M,\g,\xi)$, see Theorem~\ref{thm:glue}. So the proof of Proposition~\ref{prop:EHspinc} implies that $\s(\y_0) = \s_{\xi'}$ and $\s(\phi_{\xi}(\y_0)) = \s_{\xi \cup \xi'}$. Of course, $f_{\xi}(\s_{\xi'}) = \s_{\xi' \cup \xi}$. We conclude that
\[
f_{\xi}(\s(\y)) = \s(\phi_{\xi}(\y))
\]
for every $\y \in \T_{\a'} \cap \T_{\b'}$. Since $\T_{\a'} \cap \T_{\b'}$ freely generates $CF(-\S,\bolda',\boldb')$, this implies the claim of the proposition.
\end{proof}

\begin{rem}
It is not true in general that for every $x' \in \SFH(-M',-\g')$ there exists a contact structure $\xi'$ on $(M',\g')$ such that $x' = \EH(M',\g',\xi')$.
\end{rem}

\begin{cor} \label{cor:gluingsum}
For any $\s' \in \spinc(M',\g')$, let
\[
\Phi_{\xi,\s'} = \Phi_{\xi}|_{\SFH(-M',-\g',\s')}.
\] Then
\[
\Phi_{\xi,\s'} \colon \SFH(-M',-\g',\s') \to \SFH(-M,-\g,f_{\xi}(\s')).
\]
\end{cor}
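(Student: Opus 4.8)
The plan is to deduce both assertions directly from Proposition \ref{prop:spincgluing} together with the $\spinc$-grading of sutured Floer homology. Recall from \cite{sutured} that for any balanced sutured manifold $(N,\nu)$ there is an internal direct sum decomposition $SFH(N,\nu) = \bigoplus_{\s \in \spinc(N,\nu)} SFH(N,\nu,\s)$, in which all but finitely many summands vanish because $SFH(N,\nu)$ is finitely generated. Applying this to $(-M',-\g')$, the restriction $\Phi_{\xi,\s'} = \Phi_{\xi}|SFH(-M',-\g',\s')$ is a well-defined group homomorphism for each $\s' \in \spinc(M',\g')$, and Proposition \ref{prop:spincgluing} says precisely that its image is contained in the summand $SFH(-M,-\g,f_{\xi}(\s'))$. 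This is exactly the first claim of the corollary; moreover $\Phi_{\xi,\s'} = 0$ for all but finitely many $\s'$, so the direct sum in the statement is really a finite sum.

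For the second claim, I would write an arbitrary $x' \in SFH(-M',-\g')$ uniquely as a finite sum $x' = \sum_{\s'} x'_{\s'}$ with $x'_{\s'} \in SFH(-M',-\g',\s')$. Since $\Phi_{\xi}$ is linear, $\Phi_{\xi}(x') = \sum_{\s'} \Phi_{\xi}(x'_{\s'}) = \sum_{\s'} \Phi_{\xi,\s'}(x'_{\s'})$, which is by definition the value at $x'$ of $\bigoplus_{\s' \in \spinc(M',\g')} \Phi_{\xi,\s'}$. Hence $\Phi_{\xi} = \bigoplus_{\s'} \Phi_{\xi,\s'}$. Note that one does not even need the target decomposition here: the formula is meaningful purely from the decomposition of the source, and it is compatible with the target grading by the first part (although if $f_{\xi}$ fails to be injective, several source summands may land in the same summand of $SFH(-M,-\g)$, which is harmless).

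The only point requiring a word of care is the overall sign: $\Phi_{\xi}$, and therefore each $\Phi_{\xi,\s'}$, is defined only up to a global $\pm 1$. One fixes once and for all a representative of $\Phi_{\xi}$ and then \emph{defines} $\Phi_{\xi,\s'}$ to be its literal restriction to $SFH(-M',-\g',\s')$, so that the displayed equality holds on the nose; flipping the sign of $\Phi_{\xi}$ flips the sign of every $\Phi_{\xi,\s'}$ simultaneously and leaves the identity intact. I expect no real obstacle in this proof: the corollary is a formal consequence of Proposition \ref{prop:spincgluing} and the $\spinc$-decomposition of $SFH$.
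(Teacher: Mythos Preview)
Your proposal is correct and takes essentially the same approach as the paper: the corollary is stated there without proof, as an immediate consequence of Proposition~\ref{prop:spincgluing} together with the $\spinc$ direct-sum decomposition of $SFH$, which is precisely what you spell out. Your additional remarks on finiteness, the possible non-injectivity of $f_{\xi}$, and the overall sign are accurate and only make explicit what the paper leaves implicit.
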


\begin{rem}
In the remark on page 11 of~\cite{TQFT}, a construction is outlined for a map from $\SFH(-M',-\g')$ to $\SFH(-M,-\g)$
that depends on a $\spinc$ structure $\s' \in \spinc(M',\g')$. Specifically, one would use a contact structure $\xi'$
on $(M',\g')$ that represents $\s'$ to construct the balanced diagram for $(-M',-\g')$ via a partial open book decomposition.
This would replace the diagrams for $(-M',-\g')$ that are contact compatible near the boundary.
It is natural to conjecture that the restriction of this map to $\SFH(-M',-\g',\s')$ is precisely $\Phi_{\xi,\s'}$, which might
simplify concrete computations of $\Phi_{\xi,\s'}$.
\end{rem}

\section{Construction of the cobordism map $F_{\W}$}

We can now describe the construction of the map $F_{\W}$ for an arbitrary balanced cobordism $\W = (W,Z,[\xi])$. It is a composition of the gluing map $\Phi_{-\xi}$ induced by the contact structure $-\xi$ and the cobordism map induced by a special cobordism.

\begin{figure}[tb]
\includegraphics{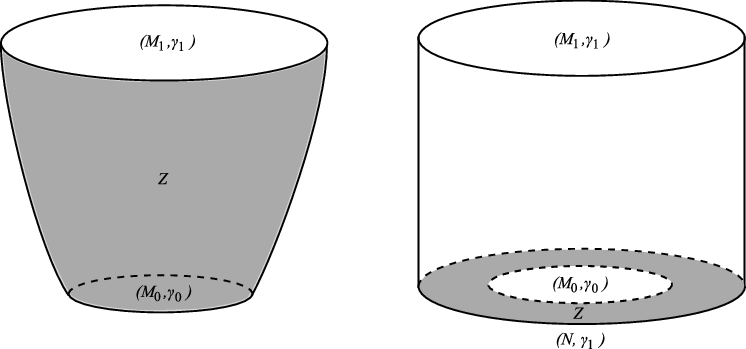}
\caption{A balanced cobordism $\W$ having no isolated components on the left, and the corresponding special cobordism $\W_1$ on
the right.} \label{fig:7}
\end{figure}

\begin{defn} \label{defn:phi}
Let $\W = (W,Z,[\xi])$ be a balanced cobordism from~$(M_0,\g_0)$ to $(M_1,\g_1)$. Following~\cite{TQFT}, we say that a component $Z_0$ of $Z$ is \emph{isolated} if $Z_0 \cap M_1 = \emptyset$. First, we define $F_{\W}$ if $Z$ has no isolated components, see Figure~\ref{fig:7}.

Observing the orientation conventions in Remark~\ref{rem:cob}, note that $(-M_0,-\g_0)$ is a sutured submanifold of $(-N,-\g_1) = (-M_0 \cup Z,-\g_1)$.
Furthermore, $-\xi$ is a positive contact structure on~$Z$ with dividing set~$-\g_0$ on~$\partial M_0$ and~$-\g_1$ on~$\partial M_1$.
Thus Theorem~\ref{thm:glue} provides us with a map
\[
\Phi_{-\xi} \colon \SFH(M_0,\g_0) \to \SFH(N,\g_1).
\]

The sutured manifolds $(N, \g_1)$ and $(M_1,\g_1)$ have the same boundary and same sutures, and $\partial W = -N \cup M_1$. We can think of this as a special cobordism from $(N,\g_1)$ to $(M_1,\g_1)$,  call it $\W_1$. For each such cobordism we constructed a map
\[
F_{\W_1} \colon \SFH(N,\g_1) \to \SFH(M_1,\g_1)
\]
in Section~\ref{sec:special}. Finally, we set
\[
F_{\W} = F_{\W_1} \circ \Phi_{-\xi}.
\]

In the general case, for each isolated component $Z_0$ of $Z$, choose a small standard contact ball
$B_0 \subset \Int(Z_0)$ with convex boundary and connected dividing set $\delta_0$. Let~$B$ be the union of all such balls, and $\delta$ the dividing set on $\partial B$. Then we replace the cobordism $\W$ with $\W' = (W,Z',[\xi'])$, where $Z' = Z \setminus \Int(B)$ and $\xi' = \xi|_{Z'}$. The morphism $\W'$ has source $(M_0,\g_0)$ and range $(M_1',\g_1') = (M_1,\g_1) \sqcup (B,\delta)$. Since $\SFH(M_1',\g_1') \cong \SFH(M_1,\g_1)$, and now $Z'$ has no isolated components, we can define $F_{\W} = F_{\W'}$.
It follows from Proposition~\ref{prop:B} that this is independent of the choice of~$B$.
\end{defn}

The map $F_{\W}$ has a refinement along relative $\spinc$ structure on $\W$. We describe this next.

\begin{defn}
Let $\W = (W,Z,[\xi])$ be a balanced cobordism from $(M_0,\g_0)$ to~$(M_1,\g_1)$. Choose a relative $\spinc$ structure
$\s \in \spinc(\W)$, and for $i \in \{0,1\}$ set
\[
\ft_i = \s|_{M_i} \in \spinc(M_i,\g_i).
\]

First,  suppose that $Z$ has no isolated components. Then there is a natural restriction map from $\spinc(\W)$ to $\spinc(\W_1)$, we denote the image of $\s$ by $\s_1 = \s|_{\W_1}$. If we view~$\ft_0$ as an element of~$\spinc(-M_0,-\g_0)$, then
\[
\s_1|_{(-N,-\g_1)} = f_{-\xi}(\ft_0) \in \spinc(-N,-\g_1).
\]
Hence the special cobordism $\W_1$, endowed with the $\spinc$ structure $\s_1$, induces a map
\[
F_{\W_1,\s_1} \colon \SFH(N,\g_1,f_{-\xi}(\ft_0)) \to \SFH(M_1,\g_1,\ft_1).
\]
By Corollary~\ref{cor:gluingsum}, the sutured submanifold
$(-M_0,-\g_0)$ of $(-N,-\g_1)$, together with the $\spinc$ structure $\ft_0$, give rise to a map
\[
\Phi_{\xi,\ft_0} \colon \SFH(M_0,\g_0,\ft_0) \to \SFH(N,\g_1,f_{-\xi}(\ft_0)).
\]
So we can define
\[
F_{\W,\s} \colon \SFH(M_0,\g_0,\ft_0) \to \SFH(M_1,\g_1,\ft_1)
\]
by the formula $F_{\W,\s} = F_{\W_1,\s_1} \circ \Phi_{-\xi,\ft_0}$.

When $Z$ does have isolated components, then as before, we take the cobordism~$\W'$ from $(M_0,\g_0)$ to $(M_1',\g_1') = (M_1,\g_1) \sqcup (B,\d)$. Then set $\s' = \s|_{\W'}$, and notice that $\ft_1' = \s'|_{(M_1',\g_1')}$ agrees with $\ft_1$ on $(M_1,\g_1)$, and is the vertical
$\spinc$ structure $\ft_B$ on the product $(B,\d)$. Hence
\[
\SFH(M_1',\g_1',\ft_1') = \SFH(M_1,\g_1,\ft_1) \otimes \SFH(B,\d,\ft_B) \cong \SFH(M_1,\g_1,\ft_1) \otimes \Z_2,
\]
and we can set $F_{\W,\s} = F_{\W',\s'}$.
\end{defn}

\begin{prop} \label{prop:B}
Let~$\W = (W,Z,[\xi])$ be a balanced cobordism and~$\s \in \spinc(\W)$.
Then the map~$F_{\W,\s}$ defined above is independent of the choice of balls in the isolated components of~$Z$.
\end{prop}

\begin{proof}
Let~$\W'$ and~$\W''$ be cobordisms obtained from~$\W$ by removing from~$Z$ the standard contact balls~$B'$ and~$B''$
with dividing sets~$\d'$ and~$\d''$, respectively, and adding them to~$(M_1,\g_1)$.
In particular, $\W'$ is a cobordism from~$(M_0,\g_0)$ to
\[
(M_1',\g_1') = (M_1,\g_1) \sqcup (B',\d'),
\]
and $\W''$ is a cobordism from~$(M_0,\g_0)$ to
\[
(M_1'',\g_1'') = (M_1,\g_1) \sqcup (B'',\d'').
\]

By the homogeneity of~$Z$, there is a diffeomorphism $d \colon \W' \to \W''$
that is the identity on~$M_0 \cup M_1 \subset \partial W$ and is isotopic to~$\Id_W$.
Furthermore, we can choose~$d$ such that~$d(\xi') = d(\xi|_{Z'})$
and~$\xi'' = \xi|_{Z''}$ are equivalent. Indeed, choose a tight contact ball~$B \subset Z$
containing both~$B'$ and~$B''$ with convex boundary and connected dividing set~$\d$.
Let $C' = B \setminus \Int(B')$ and $C'' = B \setminus \Int(B'')$.
Choose properly embedded arcs~$a' \subset C'$
and~$a'' \subset C''$ connecting~$\d$ with~$\d'$ and~$\d$ with~$\d''$,
respectively. Then let~$A' \subset C'$
and~$A'' \subset C''$  be properly embedded annuli
parallel to~$a'$ and~$a''$, respectively, in convex position.
The contact structures~$\xi'|_{C'}$ and~$\xi''|_{C''}$ are both tight
as they are restrictions of the tight contact structure~$\xi|_B$,
and they are determined up to isotopy by their dividing sets on~$A'$ and~~$\partial C'$,
and on~$A''$ and~$\partial C''$, respectively.
By convex surface theory, the dividing sets~$\nu'$ and~$\nu''$ on~$A'$ and~$A''$ consist of two parallel
arcs connecting the two boundary components. Hence, there is a diffeomorphism~$d \colon Z \to Z$
that is the identity outside~$B$, maps~$(B',\d')$ to~$(B'',\d'')$, and~$(A',\nu')$
to~$(A'',\nu'')$. The restriction of~$d$ to~$Z'$ gives the desired map.
Indeed, decomposing~$C'$ along~$A'$ gives two standard contact balls, and each has a
unique contact structure up to isotopy fixing the dividing sets on their boundaries.
These are mapped by~$d$ to the contact balls obtained by decomposing~$C''$ along~$A''$.

Then we can apply Theorem~\ref{thm:finalnat}
for cobordisms without isolated components to obtain the commutative diagram
\[\begin{CD}
\SFH(M_0,\g_0,\ft_0) @>F_{\W',\s'}>> \SFH(M_1',\g_1',\ft_1') \\
@VV(d|_{M_0})_*V   @VV(d|_{M_1'})_*V\\
\SFH(M_0,\g_0,\ft_0) @>F_{\W'',\s''}>> \SFH(M_1'',\g_1'',\ft_1''),
\end{CD}\]
where $\s' = \s|_{\W'}$ and $\s'' = \s|_{\W''}$.
By construction, $(d|_{M_0})_*$ is the identity of the group $\SFH(M_0,\g_0,\ft_0)$.
Furthermore, and~$d|_{M_1'}$ is the identity on~$M_1$ and sends~$B'$ to~$B''$.
Hence, after canonically identifying $\SFH(B',\d',\ft_{B'})$ and $\SFH(B'',\d'',\ft_{B''})$ with~$\Z_2$,
\[
(d|_{M_1'})_* \colon \SFH(M_1,\g_1,\ft_1) \otimes \Z_2 \to  \SFH(M_1,\g_1,\ft_1) \otimes \Z_2
\]
is the identity as well. It follows that $F_{\W',\s'} = F_{\W'',\s''}.$
\end{proof}

The above construction motivates the following definition.

\begin{defn}
A cobordism $\W = (W,Z,[\xi])$ from $(M_0,\g_0)$ to $(N,\g_1)$ is called a \emph{boundary cobordism} if $\W$ is balanced, $N$ is parallel to $M_0 \cup (-Z)$, and we are also given a retraction $r \colon W \to M_0 \cup (-Z)$ such that $r|_N$ is an orientation preserving diffeomorphism from $N$ to $M_0 \cup (-Z)$.
\end{defn}

Given a boundary cobordism, we can view $(-M_0,-\g_0)$ as a sutured submanifold of $(-N,-\g_1)$, and $-\xi$ is a contact structure
such that $\partial M_0 \cup \partial N$ is  a convex surface with dividing set $-\g_0 \cup -\g_1$. Hence a boundary cobordism
gives rise to a map
\[
\Phi_{-\xi} \colon \SFH(M_0,\g_0) \to \SFH(N,\g_1).
\]

\begin{defn}
Let $\W = (W,Z,[\xi])$ and $\W' = (W',Z',[\xi'])$ be boundary cobordisms from $(M_0,\g_0)$ to $(N,\g_1)$, together with retractions $r$ and $r'$, respectively. Then we say that $\W$ and $\W'$ are \emph{equivalent} if there is an equivalence $d \colon W \to W'$ in the sense of Definition~\ref{defn:cobeq} that also respects the retractions; i.e., $d \circ r = r' \circ d$. Such a $d$ is called an \emph{equivalence}.

If $\W$ is a boundary cobordism from $(M_0,\g_0)$ to $(N,\g_1)$, and $\W'$ is a boundary cobordism from $(M_0',\g_0')$ to $(N',\g_1')$, then $\W$ and $\W'$ are said to be \emph{diffeomorphic} if there is a diffeomorphism $d \colon W \to W'$ in the sense
of Definition~\ref{defn:cobeq} that also satisfies $d \circ r = r' \circ d$. We call such a $d$ a \emph{diffeomorphism}.
\end{defn}

The naturality of $\Phi_{-\xi}$ implies the following statement.

\begin{prop} \label{prop:natbound}
Suppose that $d$ is a diffeomorphism from the boundary cobordism $\W = (W,Z,[\xi])$ from $(M_0,\g_0)$
to $(N,\g_1)$ to $\W' = (W',Z',[\xi'])$ from $(M_0',\g_0')$ to~$(N',\g_1')$.
Furthermore, let $\ft \in \spinc(M_0,\g_0)$ and $\ft' = d_*(\ft)$. Then there is a commutative diagram
\[\begin{CD}
\SFH(M_0,\g_0,\ft) @>\Phi_{-\xi,\ft}>> \SFH(N,\g_1,f_{-\xi}(\ft))\\
@VV(d|_{M_0})_*V   @VV(d|_N)_*V\\
\SFH(M_0',\g_0',\ft') @>\Phi_{-\xi',\ft'}>> \SFH(N',\g_1',f_{-\xi'}(\ft')),
\end{CD}\]
and there is an analogous diagram for $\Phi_{\xi}$ and $\Phi_{\xi'}$ that does not refer to $\spinc$ structures.
\end{prop}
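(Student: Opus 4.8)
The plan is to reduce the statement to the naturality of the gluing map $\Phi_\xi$ already recorded in Section~\ref{sub:glue}, and then to upgrade that naturality so that it keeps track of the $\spinc$ refinement. First I would unpack the hypothesis: a weak equivalence $d\colon W\to W'$ of boundary cobordisms is an orientation preserving diffeomorphism with $d(Z)=Z'$, $d(\g_0)=\g_0'$, $d(\g_1)=\g_1'$, $d_*(\xi)=\xi'$, and $d\circ r = r'\circ d$. Because of the retractions, $d$ restricts to an orientation preserving diffeomorphism $d|N\colon N\to N'$ carrying $\g_1$ to $\g_1'$, and moreover — again using $d\circ r=r'\circ d$ — the square identifying $N$ with $M_0\cup(-Z)$ and $N'$ with $M_0'\cup(-Z')$ commutes, so $d|N$ is compatible with $d|M_0$ on the $M_0$ part and with $d|Z$ on the $-Z$ part. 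Thus $d$ provides exactly the data — an orientation preserving diffeomorphism of the ambient sutured manifolds taking sutured submanifold to sutured submanifold, dividing sets to dividing sets, and $\xi$ to $\xi'$ — to which the naturality discussion of $\Phi_\xi$ in Section~\ref{sub:glue} applies. That discussion (the second commutative square there, phrased for a diffeomorphism $d\colon M\to\ol M$ with $d(Z)=\ol Z$, $d(\g')=\ol{\g'}$, $d(\g)=\ol\g$, $d_*(\xi)=\ol\xi$) immediately yields the bottom, $\spinc$-free square of the proposition, with vertical maps $(d|M_0)_*$ and $(d|N)_*$ as defined in Corollary~\ref{cor:nat}.

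Next I would promote this to the $\spinc$-graded statement. The map $(d|M_0)_*$ sends $SFH(M_0,\g_0,\ft)$ to $SFH(M_0',\g_0',\ft')$ with $\ft' = (d|M_0)_*(\ft) = d_*(\ft)$, by Corollary~\ref{cor:nat}. Similarly $(d|N)_*$ sends the summand indexed by $f_\xi(\ft)$ to the summand indexed by $(d|N)_*(f_\xi(\ft))$. So the only thing to check is that
$$(d|N)_*\bigl(f_\xi(\ft)\bigr) = f_{\xi'}\bigl(d_*(\ft)\bigr) \in \spinc(N',\g_1').$$
This is a purely vector-field-theoretic identity: if $v'$ is a vector field on $M_0$ representing $\ft$, then $f_\xi(\ft)$ is represented by $v'\cup\xi^\perp$ on $N=M_0\cup(-Z)$, and pushing forward by $d$ gives $d_*(v')\cup d_*(\xi^\perp)$ on $N'$; since $d_*(\xi)=\xi'$ we have $d_*(\xi^\perp) = (\xi')^\perp$ up to homotopy through nowhere-zero vector fields (the perpendicular is determined by the co-oriented plane field and the metric up to contractible choice), and $d_*(v')$ represents $d_*(\ft)$. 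Hence $d_*(v'\cup\xi^\perp)$ represents $f_{\xi'}(d_*(\ft))$, which is the claim. With this compatibility in hand, restricting the $\spinc$-free commutative square to the $\ft$-summand of $SFH(M_0,\g_0)$ and using Corollary~\ref{cor:gluingsum} (which says $\Phi_{\xi}$ splits as $\bigoplus_{\ft}\Phi_{\xi,\ft}$ with $\Phi_{\xi,\ft}$ landing in the $f_\xi(\ft)$-summand, and likewise for $\xi'$) produces exactly the stated diagram.

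The one genuine technical point — really the only obstacle — is the naturality square for $\Phi_\xi$ itself at the level of diagrams: one must know that the construction of $\Phi_\xi$ via a balanced diagram that is contact compatible with $\z$ near $\partial M_0$, extended by $\xi$ over $Z$, transforms correctly under $d$, i.e.\ that $d$ carries such a diagram for $(-N,-\g_1)$ to such a diagram for $(-N',-\g_1')$ restricting to one for $(-M_0',-\g_0')$, and that the distinguished intersection point $\x''$ is carried to the corresponding distinguished point. But this is precisely the content of the naturality of $\Phi_\xi$ stated in Section~\ref{sub:glue}, which in turn rests on Theorem~\ref{thm:nat}; we are entitled to invoke it, so the proof is short. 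I would close by remarking that the $\spinc$-free diagram is the special case obtained by summing over $\ft\in\spinc(M_0,\g_0)$, consistent with Corollary~\ref{cor:gluingsum}.
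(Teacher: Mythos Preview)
Your proposal is correct and follows the same approach as the paper, which states no separate proof but simply remarks that ``the naturality of $\Phi_{\xi}$ implies the following statement.'' You have spelled out the details the paper omits---in particular the verification that $(d|N)_*\bigl(f_\xi(\ft)\bigr) = f_{\xi'}\bigl(d_*(\ft)\bigr)$ via the vector-field description of $f_\xi$, and the reduction to Corollary~\ref{cor:gluingsum}---which is exactly what is needed to pass from the $\spinc$-free naturality square of Section~\ref{sub:glue} to the $\spinc$-graded one.
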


\begin{lem} \label{lem:decomp}
Suppose that $\W = (W,Z,[\xi])$ is a balanced cobordism from $(M_0,\g_0)$ to $(M_1,\g_1)$ such that $Z$ has no isolated components.
Then $\W$ can be written as a composition $\W_1 \circ \W_0$, where $\W_0$ is a boundary cobordism from $(M_0,\g_0)$ to some
sutured manifold $(N,\g_1)$, and $\W_1$ is a special cobordism from $(N,\g_1)$ to $(M_1,\g_1)$.

This decomposition is unique in the following sense. Suppose that $d$ is a diffeomorphism from~$\W$ to another cobordism~$\W'$
from $(M_0',\g_0')$ to $(M_1',\g_1')$, and we have a splitting $\W' = \W_1' \circ \W_0'$ along $(N',\g_1')$.
Then there is a diffeomorphism $d'$ such that $d'(N,\g_1) = (N',\g_1')$ and $d(x) = d'(x)$ for every $x \in M_0 \cup M_1$. Furthermore,
the map~$d'|_{W_0}$ is a diffeomorphism from the boundary cobordism $\W_0$ to $\W_0'$, and $d'|_{W_1}$ is a diffeomorphism from the
special cobordism~$\W_1$ to~$\W_1'$. Finally, $d_*(\s) = d'_*(\s)$ for every $\s \in \spinc(\W)$.
\end{lem}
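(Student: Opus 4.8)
The plan is to construct the splitting $\W = \W_1 \circ \W_0$ by hand from a collar of $Z$, and then prove uniqueness by a standard collar‐uniqueness (isotopy extension) argument. For the existence half, I would first pick a collar neighborhood $c \colon Z \times [0,1] \hookrightarrow W$ of $Z = Z \times \{0\}$ inside $W$, chosen so that it restricts to a collar of $\partial M_i$ in $M_i$ near the corners for $i = 0,1$; this is possible because $Z$ has no isolated components, so every component of $Z$ meets $M_1$, and the corners $\partial Z$ sit in $\partial M_0 \cup \partial M_1$ in the expected way. Set $N = M_0 \cup (-Z)$ (the ``pushed‐in'' copy), let $W_0 = c(Z \times [0,1]) \cup (\text{collar of } M_0)$ be the part of $W$ on the $M_0$‑side, and $W_1 = \overline{W \setminus W_0}$. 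Concretely $W_0$ is diffeomorphic to a collar of its horizontal boundary, so $\W_0 = (W_0, Z, \xi)$ is a boundary cobordism from $(M_0,\g_0)$ to $(N,\g_1)$, with retraction $r$ the obvious collapse of the $[0,1]$‑factor; and $\W_1 = (W_1, \partial M_1 \times I, \xi_1)$ is special, since on $W_1$ the vertical part is the trivial cobordism $\partial M_1 \times I$ carrying the $I$‑invariant contact structure with dividing set $\g_1$ (here one uses that $\g_0 = \g_1$ as dividing sets, which holds because $N$ is parallel to $M_0 \cup (-Z)$ and the dividing set of $\partial N$ is $\g_1$). Gluing $W_0$ and $W_1$ back along $N$ manifestly recovers $W$, and the contact structures glue correctly because $\xi$ on $Z \subset W_0$ and the $I$‑invariant $\xi_1$ on $W_1$ agree near the common face $\partial M_1$; so $\W = \W_1 \circ \W_0$ in the sense of Definition \ref{defn:comp}.

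For the uniqueness half, suppose $d \colon W \to W'$ is a weak equivalence and $\W' = \W_1' \circ \W_0'$ is a splitting along $(N',\g_1')$. The key point is that $d(N)$ and $N'$ are both properly embedded hypersurfaces in $W'$ that separate it into a ``boundary cobordism'' piece and a ``special cobordism'' piece, both parallel to the appropriate pieces of the horizontal boundary; hence $d(N)$ and $N'$ cobound a product region in $W'$. I would make this precise using the collars: $d(N)$ lies in the interior of a collar of $d(W_0)$'s horizontal boundary, and $N'$ lies in the interior of a collar of $W_0'$'s horizontal boundary, and both collars can be arranged to be collars of the same submanifold $M_0' \cup (-Z')$. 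An application of the isotopy extension theorem, rel $M_0' \cup M_1'$ and rel the vertical part $Z'$ (so that $\xi'$ is preserved — one isotopes within the space of convex surfaces, which is connected here since both are parallel copies with the same dividing set), produces an ambient isotopy $\Phi_s$ of $W'$ with $\Phi_0 = \mathrm{id}$, $\Phi_1(d(N)) = N'$, fixing $M_0' \cup M_1'$ pointwise and preserving $Z'$ and $\xi'$. Then $d' = \Phi_1 \circ d$ is the required weak equivalence: $d'(N) = N'$, $d'|_{M_0 \cup M_1} = d|_{M_0 \cup M_1}$, and $d'$ restricts to weak equivalences $W_0 \to W_0'$ and $W_1 \to W_1'$ of boundary and special cobordisms respectively. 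Finally $d_*(\s) = d'_*(\s)$ on $\spinc(\W)$ because $d$ and $d'$ differ by the isotopy $\Phi_s$, and isotopic diffeomorphisms induce the same map on the affine space $\spinc(W',Z')$ (they are homotopic rel $Z'$, so they act identically on relative isotopy classes of almost‑complex structures, cf. Definition \ref{defn:spinc}).

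**Main obstacle.** The delicate step is the isotopy‑extension argument in the uniqueness part, specifically arranging that the ambient isotopy carrying $d(N)$ to $N'$ can be taken to fix the vertical part $Z'$ pointwise and to preserve the contact structure $\xi'$, rather than merely to be a topological ambient isotopy. This requires knowing that the two parallel copies $d(N)$ and $N'$ of $M_0' \cup (-Z')$ can be connected through a family of such parallel copies that all meet $Z'$ in the prescribed way with dividing set $\g_1'$; essentially one needs a relative version of the uniqueness of collars compatible with the convex‑surface/corner structure. I expect this to go through by a careful but routine combination of the uniqueness of collar neighborhoods (rel boundary) and the fact that the relevant contact structure near the vertical boundary is $I$‑invariant and hence determined by its dividing set, but it is the point where the argument needs the most care rather than a one‑line citation.
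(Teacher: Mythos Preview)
Your uniqueness argument is essentially the paper's: collar uniqueness plus isotopy extension, with the contact--geometric step (preserving $\xi'$) being the delicate point. The paper resolves your ``main obstacle'' by citing the uniqueness of the $I$--invariant contact collar of a convex surface, which is a standard fact from convex surface theory; so that half is fine.

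The existence half has a genuine gap in where you place the collar. You take a collar of $Z$ inside the $4$--manifold $W$, declare $\W_0 = (W_0, Z, \xi)$ with vertical part all of $Z$, and give $\W_1$ a separate vertical part $\partial M_1 \times I$. But then the composition $\W_1 \circ \W_0$ has vertical part $Z \cup_{\partial M_1} (\partial M_1 \times I)$, which is not $Z$; so you have only produced something equivalent to $\W$, not $\W$ itself. Relatedly, in your construction the ``$\partial M_1 \times I$'' you assign to $\W_1$ is not a piece of $(Z,\xi)$ at all but a piece of $M_1$ (or of the interior of $W$) with no contact structure given, so the assertion that it carries the $I$--invariant structure with dividing set $\g_1$ is unjustified. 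The parenthetical claim that ``$\g_0 = \g_1$ as dividing sets'' is simply false in general and is not what is needed.

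The fix, which is what the paper does, is to take the collar \emph{inside $Z$} rather than inside $W$: choose a contact collar $Z_1 = \partial M_1 \times I \subset Z$ of $\partial M_1$ on which $\xi$ is $I$--invariant with dividing set $\g_1 \times \{t\}$, set $Z_0 = \overline{Z \setminus Z_1}$, and then take $N \subset W$ to be a properly embedded push--in of $M_0 \cup (-Z_0)$ with $\partial N = \partial M_1 \times \{1\} \subset Z$. Cutting $W$ along $N$ yields $\W_0 = (W_0, Z_0, \xi|_{Z_0})$ and $\W_1 = (W_1, Z_1, \xi|_{Z_1})$ with $Z_0 \cup Z_1 = Z$ literally, so $\W_1 \circ \W_0 = \W$ on the nose. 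This also streamlines uniqueness: it reduces directly to uniqueness of the contact collar $Z_1 \subset Z$, which is exactly the input you flagged as the main obstacle.
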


\begin{proof}
First, choose a collar neighborhood $Z_1 = \partial M_1 \times I$ of $\partial M_1$ in $Z$ such that $\partial M_1 = \partial M_1 \times \{0\}$, and for every $t \in I$ the surface $\partial M_1 \times \{t\}$ is convex with dividing set $\g_1 \times \{t\}$ in $\xi_1 = \xi|_{Z_1}$. Set $Z_0$ to be the closure of $Z \setminus Z_1$. Then choose a properly embedded 3-manifold $N \subset W$ such that $\partial N = \partial M_0 \times \{1\}$, and $N$ is parallel
to~$M_0 \cup (-Z_0)$. Cutting $\W$ along $N$ gives the required decomposition.

The uniqueness follows from the uniqueness of the ``product'' collar neighborhood $(Z_1,\xi_1)$, see \cite[Theorem~2.5.23]{contact}. So one can isotope $d$ relative to $M_0 \cup M_1$ thorough diffeomorphisms of cobordisms until we obtain a diffeomorphism~$d'$ such that~$d'(N,\g_1) = (N',\g_1')$.
Then a further isotopy in $\Int(W')$ ensures that
$d' \circ r = r' \circ d'$, where $r$ and $r'$ are the retractions for the boundary cobordisms $\W_0$ and $\W_0'$, respectively.
Hence $d'|_{W_0}$ is an equivalence of boundary cobordisms, and~$d'|_{W_1}$ is an equivalence of special cobordisms.
The last statement follows from the fact that we got~$d'$ from~$d$ by isotoping it through diffeomorphisms of cobordisms.
\end{proof}

Let $\W = (W,Z,[\xi])$ be a balanced cobordism such that $Z$ has no isolated components. An alternative way of thinking about the map $F_{\W}$ is
to write $\W = \W_1 \circ \W_0$ as in Lemma~\ref{lem:decomp}; i.e., $\W_0 = (W_0,Z_0,[\xi_0])$ is a boundary cobordism from $(M_0,\g_0)$ to $(N,\g_1)$,
and~$\W_1$ is a special cobordism from $(N,\g_1)$ to $(M_1,\g_1)$. Then set
\[
F_{\W} = F_{\W_1} \circ F_{\W_0},
\]
where $F_{\W_1}$ is the map defined in Section~\ref{sec:special}, and $F_{\W_0} = \Phi_{-\xi_0}$.

\begin{prop} \label{prop:extends}
Let $\W$ be a cobordism from $(M_0,\g_0)$ to $(M_1,\g_1)$. If $\W$ is a special cobordism, then the
map~$F_{\W}$ defined here agrees with the special cobordism map~$F_{\W}$ defined in
Section~\ref{sec:special}. If $\W = (W,Z,[\xi])$ is a boundary cobordism,
then we have $F_{\W} = \Phi_{-\xi}$.
\end{prop}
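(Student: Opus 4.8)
The plan is to verify both halves of Proposition~\ref{prop:extends} by unwinding the definitions from Definition~\ref{defn:phi} and checking that the constructions degenerate to the already-known ones in each special case. For the second half, suppose $\W = (W,Z,\xi)$ is a boundary cobordism from $(M_0,\g_0)$ to $(M_1,\g_1)$; then $M_1$ is parallel to $M_0 \cup (-Z)$, so $M_1$ already plays the role of $N$ in Definition~\ref{defn:phi}. First I would observe that $Z$ has no isolated components: if $Z_0$ were isolated, then $Z_0 \cap M_1 = \emptyset$, but the retraction $r\colon W \to M_0 \cup (-Z)$ restricting to a diffeomorphism $N \to M_0 \cup (-Z)$ forces every component of $Z$ to meet $N = M_1$, a contradiction. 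Hence in the construction of $F_{\W}$ we take $N = M_1$, and the special cobordism $\W_1$ from $(N,\g_1) = (M_1,\g_1)$ to $(M_1,\g_1)$ is the trivial cobordism. By Proposition~\ref{prop:specialid}, $F_{\W_1} = \mathrm{Id}$, so $F_{\W} = F_{\W_1} \circ \Phi_{\xi} = \Phi_{\xi}$, as claimed. One subtlety to address is that the $N$ chosen in Definition~\ref{defn:phi} may not be literally equal to $M_1$ but only parallel to $M_0 \cup (-Z)$; here I would appeal to Lemma~\ref{lem:decomp} (or directly to the uniqueness of collar neighborhoods, \cite[Theorem 2.5.23]{contact}) to see that any two such choices yield weakly equivalent special cobordisms, and then to the naturality of $F_{\W_1}$ (Theorem~\ref{thm:welldef}) to conclude $F_{\W}$ is independent of the choice.

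For the first half, suppose $\W = (W,Z,\xi)$ is a special cobordism from $(M_0,\g_0)$ to $(M_1,\g_1)$. Then by definition $\partial M_0 = \partial M_1$, $Z = \partial M_0 \times I$ is the trivial cobordism, and $\xi$ is the $I$-invariant contact structure; in particular $\g_0 = \g_1$. I would first note that $Z$ has no isolated components, since $Z \cap M_1 = \partial M_1 \times \{1\} \neq \emptyset$. So in Definition~\ref{defn:phi} we form $N = M_0 \cup Z = M_0 \cup (\partial M_0 \times I)$, which is canonically diffeomorphic to $M_0$ via the collapse of the collar, and $\Phi_{\xi}\colon SFH(M_0,\g_0) \to SFH(N,\g_1)$ is the gluing map induced by attaching an $I$-invariant contact structure on $\partial M_0 \times I$. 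By Theorem~\ref{thm:id}, this $\Phi_{\xi}$ is the identity map (under the canonical identification $SFH(N,\g_1) \cong SFH(M_0,\g_0)$, using $\g_0 = \g_1$). Then $\W_1 = \W$, viewed as a special cobordism from $(N,\g_1)$ to $(M_1,\g_1)$, and $F_{\W_1}$ is precisely the special cobordism map from section~\ref{sec:special}. Thus $F_{\W} = F_{\W_1} \circ \Phi_{\xi} = F_{\W_1} \circ \mathrm{Id} = F_{\W}$, the section~\ref{sec:special} map, as needed. The refined $\spinc$ version follows identically, using that $\Phi_{\xi,\ft_0}$ is the identity by the $\spinc$-refined version of Theorem~\ref{thm:id} together with Corollary~\ref{cor:gluingsum}, and that $f_{\xi}$ is the identity on $\spinc(M_0,\g_0)$ for an $I$-invariant $\xi$.

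The main obstacle I anticipate is bookkeeping around the canonical identifications rather than any substantive difficulty: one must be careful that the diffeomorphism $N \cong M_0$ collapsing the collar $Z = \partial M_0 \times I$ is compatible with the chosen boundary conditions (the almost-complex structure $J_0$ on $TW|Z$, the convexity data), so that the identification $SFH(N,\g_1) \cong SFH(M_0,\g_0)$ used implicitly is exactly the one under which Theorem~\ref{thm:id} asserts $\Phi_{\xi} = \mathrm{Id}$. This amounts to checking that the collar structure on $Z_1$ used in the decomposition $\W = \W_1 \circ \W_0$ of Lemma~\ref{lem:decomp} can be taken to be the given product structure when $\W$ is already special, which is immediate, and that Theorem~\ref{thm:id} applies verbatim because attaching $(\partial M_0 \times I, \z)$ along $\partial M_0 \times \{0\}$ is exactly the operation whose induced map is claimed to be the identity. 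Once these identifications are pinned down, both statements reduce to the cited results (Theorem~\ref{thm:id}, Proposition~\ref{prop:specialid}) with no further computation.
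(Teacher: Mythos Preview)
Your proposal is correct and follows essentially the same approach as the paper: in each case one of the two pieces in the decomposition $F_{\W} = F_{\W_1}\circ\Phi_{\xi}$ reduces to the identity, using Theorem~\ref{thm:id} when $\W$ is special and Proposition~\ref{prop:specialid} when $\W$ is a boundary cobordism. You are somewhat more careful than the paper about the bookkeeping (checking that $Z$ has no isolated components and that the choice of $N$ is immaterial), but the core argument is identical.
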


\begin{proof}
If $\W$ is a special cobordism, then~$\W_0 = (W_0,Z_0,[\xi_0])$ is the trivial cobordism from $(M_0,\g_0)$ to itself,
so by Theorem~\ref{thm:id}, the map $\Phi_{-\xi_0}$ is the identity of~$\SFH(M_0,\g_0)$, and $F_{\W} = F_{\W_1}$. Furthermore, $\W = \W_1$.

On the other hand, if $\W$ is a boundary cobordism, then $\W_1$ is the trivial cobordism from $(M_1,\g_1)$ to itself, so
$\W = \W_0$. By Proposition~\ref{prop:specialid}, the map $F_{\W_1}$ is the identity, hence
$F_{\W} = \Phi_{-\xi_0}$.
\end{proof}

Consequently, the maps $F_{\W}$ generalize both special cobordism maps and gluing maps.

\begin{defn}
Suppose that $\W = (W,Z,[\xi])$ is a balanced cobordism such that~$Z$
has no isolated components. Write $\W$ as $\W_1 \circ \W_0$, where $\W_0$ is a boundary cobordism and
$\W_1$ is a special cobordism. We say that the $\spinc$ structures
$\s$, $\s' \in \spinc(\W)$ are \emph{equivalent}, in short $\s \sim \s'$, if $\s|_{\W_i} = \s'|_{\W_i}$ for $i \in \{0,1\}$.
We denote the set of equivalence classes by $\spinc(\W)/_{\sim}$.

If $\W$ does have isolated components, then we let $\spinc(\W)/_{\sim} = \spinc(\W')/_{\sim}$, where
$\W'$ is the cobordism from $(M_0,\g_0)$ to $(M_1,\g_1) \sqcup (B,\d)$ introduced in Definition~\ref{defn:phi}.

By construction, if $\s \sim \s'$, then $F_{\W,\s} = F_{\W,\s'}$. For $s \in \spinc(\W)/_{\sim}$, let
\[
F_{\W,s} = F_{\W,\s},
\]
where $\s$ is an arbitrary representative of $s$.
\end{defn}

\begin{rem}
Suppose that $\W_0$ is a boundary cobordism from $(M_0,\g_0)$ to $(N,\g_1)$, and $\W_1$ is a special cobordism from
$(N,\g_1)$ to $(M_1,\g_1)$. Let $\W = \W_1 \circ \W_0$. Using a relative Mayer-Vietoris sequence,
we see that $\spinc(\W)/_{\sim}$ corresponds to the set of
$\d H^1(N,\partial N)$ orbits in $\spinc(\W)$.
Hence $\spinc(\W) = \spinc(\W)/_{\sim}$ if $H_2(N) = 0$.

Also note that $\spinc(\W_0) \cong \spinc(M_0,\g_0)$.
For $\ft_0 \in \spinc(M_0,\g_0)$, let $\s_0$ be the corresponding element of $\spinc(\W_0)$. Then $\s_0|_N = f_{-\xi}(\ft_0)$,
and in general the map $f_{-\xi}$ is neither injective, nor surjective, cf.~Lemma~\ref{lem:affine}.
Furthermore, $\s$, $\s' \in \spinc(\W)$ are equivalent if and only if $\s|_{M_0} = \s'|_{M_0}$ and $\s|_{\W_1} = \s'|_{\W_1}$.
\end{rem}

\begin{prop} \label{prop:spinc}
Given a balanced cobordism $\W = (W,Z,[\xi])$, we have
\[
F_{\W} = \bigoplus_{s \in \spinc(\W)/_{\sim}} F_{\W,s}.
\]
\end{prop}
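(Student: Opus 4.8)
The plan is to reduce the statement to the analogous decompositions that have already been established for the two constituent maps, namely the gluing map $\Phi_{\xi}$ (Corollary \ref{cor:gluingsum}) and the special cobordism map (Proposition \ref{prop:specialsum}, together with its $\spinc$-refined form), and then to track how the set $\spinc(\W)/_{\sim}$ controls the bookkeeping. First I would dispose of the case when $Z$ has isolated components: by Definition \ref{defn:phi} and the definition of $\spinc(\W)/_{\sim}$ in that case, we have $F_{\W} = F_{\W'}$ and $\spinc(\W)/_{\sim} = \spinc(\W')/_{\sim}$, where $\W'$ has no isolated components, so it suffices to treat the non-isolated case. Thus assume $Z$ has no isolated components and write $\W = \W_1 \circ \W_0$ as in Lemma \ref{lem:decomp}, with $\W_0 = (W_0,Z_0,\xi_0)$ a boundary cobordism from $(M_0,\g_0)$ to $(N,\g_1)$ and $\W_1$ a special cobordism from $(N,\g_1)$ to $(M_1,\g_1)$. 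By construction $F_{\W} = F_{\W_1} \circ \Phi_{\xi_0}$, and for each $\s \in \spinc(\W)$ we have $F_{\W,\s} = F_{\W_1,\s_1} \circ \Phi_{\xi_0,\ft_0}$, where $\ft_0 = \s|M_0$ and $\s_1 = \s|\W_1$.

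Next I would expand each factor using the known decompositions. By Corollary \ref{cor:gluingsum},
$$\Phi_{\xi_0} = \bigoplus_{\ft_0 \in \spinc(M_0,\g_0)} \Phi_{\xi_0,\ft_0},$$
since $\spinc(\W_0) \cong \spinc(M_0,\g_0)$ and $\Phi_{\xi_0,\ft_0}$ lands in $SFH(N,\g_1,f_{\xi_0}(\ft_0))$. By the $\spinc$-refined version of Proposition \ref{prop:specialsum} (that is, the decomposition $F_{\W_1} = \sum_{\s_1 \in \spinc(\W_1)} F_{\W_1,\s_1}$, which holds since $\W_1$ is special), and using that each $F_{\W_1,\s_1}$ is supported on $SFH(N,\g_1,\s_1|N)$, composing the two direct-sum decompositions gives
$$F_{\W} = F_{\W_1} \circ \Phi_{\xi_0} = \sum_{\ft_0}\ \sum_{\{\s_1 \,:\, \s_1|N = f_{\xi_0}(\ft_0)\}} F_{\W_1,\s_1} \circ \Phi_{\xi_0,\ft_0}.$$
Here the cross terms vanish: $\Phi_{\xi_0,\ft_0}$ has image in $SFH(N,\g_1,f_{\xi_0}(\ft_0))$, so only those $\s_1$ with $\s_1|N = f_{\xi_0}(\ft_0)$ contribute. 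Finally, the pair $(\ft_0,\s_1)$ with $\s_1|N = f_{\xi_0}(\ft_0)$ is precisely the data of an element of $\spinc(\W)/_{\sim}$: by the remark following the definition of $\spinc(\W)/_{\sim}$, two relative $\spinc$ structures are equivalent iff they agree on $M_0$ and on $\W_1$, and $\s|\W_0$ is determined by $\ft_0 = \s|M_0$ since $\spinc(\W_0) \cong \spinc(M_0,\g_0)$; moreover every such pair is realized because the relative Mayer-Vietoris sequence surjects $\spinc(\W)$ onto the set of compatible pairs. Rewriting the double sum as a sum over $s \in \spinc(\W)/_{\sim}$ and recalling $F_{\W,s} = F_{\W_1,\s_1}\circ \Phi_{\xi_0,\ft_0}$ for any representative, we obtain $F_{\W} = \bigoplus_{s} F_{\W,s}$, where the sum is finite because only finitely many $\s_1$ give $F_{\W_1,\s_1} \neq 0$ and only finitely many $\ft_0$ give $\Phi_{\xi_0,\ft_0} \neq 0$ on any fixed finitely generated group.

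I expect the main subtlety to be the identification of the index set: one must check carefully that equivalence classes in $\spinc(\W)/_{\sim}$ correspond bijectively to the compatible pairs $(\ft_0,\s_1)$ appearing in the double sum — in particular that the restriction map $\spinc(\W) \to \{(\ft_0,\s_1) : \s_1|N = f_{\xi_0}(\ft_0)\}$ is surjective, which is where the relative Mayer-Vietoris argument of the preceding remark is needed — and that the decomposition $F_{\W} = F_{\W_1}\circ\Phi_{\xi_0}$ interacts with the two separate direct-sum decompositions exactly by matching along $SFH(N,\g_1,-)$. Everything else is formal manipulation of finite direct sums, using that each summand $F_{\W,s}$ is supported in the appropriate $\spinc$ summand of source and target and that $F_{\W,\s}$ depends only on the class of $\s$ in $\spinc(\W)/_{\sim}$, which is already noted in the definition.
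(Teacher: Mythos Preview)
Your proposal is correct and follows exactly the approach of the paper, which simply states that the result follows from Proposition \ref{prop:specialsum} and Corollary \ref{cor:gluingsum}; you have carefully unpacked what this one-line citation means, including the reduction to the non-isolated case, the matching of cross terms along $SFH(N,\g_1,-)$, and the identification of $\spinc(\W)/_{\sim}$ with compatible pairs $(\ft_0,\s_1)$ via Mayer--Vietoris. The surjectivity check you flag as the main subtlety is indeed the only point requiring thought, and your Mayer--Vietoris argument for it is the intended one.
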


\begin{proof}
This follows from Proposition~\ref{prop:specialsum} and Corollary~\ref{cor:gluingsum}.
\end{proof}

\section{Properties of the cobordism map $F_{\W}$}

\subsection{Naturality and functoriality}
We start by proving a naturality result for the cobordism map $F_{\W}$.

\begin{thm} \label{thm:finalnat}
Suppose that $\W = (W,Z,[\xi])$ is a balanced cobordism from the sutured manifold $(M_0,\g_0)$ to $(M_1,\g_1)$, and $\W' = (W',Z',[\xi'])$ is a balanced cobordism from $(M_0',\g_0')$ to $(M_1',\g_1')$. Pick an $\s \in \spinc(\W)$, and let $\ft_i = \s|_{M_i}$ for $i \in \{0,1\}$.
If $d$ is a diffeomorphism from
$\W$ to $\W'$, then we have a commutative diagram
\[\begin{CD}
\SFH(M_0,\g_0,\ft_0) @>F_{\W,\s}>> \SFH(M_1,\g_1,\ft_1)\\
@VV(d|_{M_0})_*V   @VV(d|_{M_1})_*V\\
\SFH(M_0',\g_0',\ft_0') @>F_{\W',\s'}>> \SFH(M_1',\g_1',\ft_1'),
\end{CD}\]
where $\s' = d_*(\s)$ and $\ft_i' = \s'|_{M_i'}$ for $i \in \{0,1\}$.
An analogous statement holds for~$F_{\W}$.
\end{thm}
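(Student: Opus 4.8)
The plan is to reduce the claim to the naturality statements that have already been established for the two building blocks of $F_{\W}$: the gluing map $\Phi_{\xi}$ (Proposition \ref{prop:natbound}, which is the naturality of $\Phi_{\xi}$ from \cite{TQFT}) and the special cobordism map $F_{\W_1}$ (Theorem \ref{thm:welldef}). First I would dispose of the isolated-component case by the standard device used throughout section 8: choose standard contact balls $B_0 \subset \Int(Z_0)$ in each isolated component, and, since $d$ is a diffeomorphism, we may choose the balls in $\W'$ to be the images $d(B_0)$; then $d$ restricts to a weak equivalence between $\W'$ and the corresponding cobordism for $\W'$ with the balls removed, and because $SFH(M_1',\g_1') \cong SFH(M_1,\g_1) \otimes \Z$ compatibly, the diagram for $\W$ follows from the diagram for the cobordism with no isolated components. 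So from now on assume $Z$ has no isolated components.

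Next I would invoke Lemma \ref{lem:decomp} to write $\W = \W_1 \circ \W_0$, where $\W_0 = (W_0,Z_0,\xi_0)$ is a boundary cobordism from $(M_0,\g_0)$ to some $(N,\g_1)$ and $\W_1$ is a special cobordism from $(N,\g_1)$ to $(M_1,\g_1)$. Pick any splitting $\W' = \W_1' \circ \W_0'$ along $(N',\g_1')$. The uniqueness clause of Lemma \ref{lem:decomp} produces a weak equivalence $d'$, isotopic to $d$ rel $M_0 \cup M_1$ through weak equivalences, such that $d'(N,\g_1) = (N',\g_1')$, such that $d'|W_0$ is a weak equivalence of boundary cobordisms $\W_0 \to \W_0'$, such that $d'|W_1$ is a weak equivalence of special cobordisms $\W_1 \to \W_1'$, and such that $d'_*(\s) = d_*(\s) = \s'$. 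Since $d'$ and $d$ agree on $M_0$ and $M_1$, it suffices to prove the diagram with $d$ replaced by $d'$. Let $\s_0 = \s|\W_0$, $\s_1 = \s|\W_1$, so $\ft_0 = \s_0|M_0$ and $f_{\xi_0}(\ft_0) = \s_1|N$; write $\s_0' = d'_*(\s_0)$, $\s_1' = d'_*(\s_1)$. By definition $F_{\W,\s} = F_{\W_1,\s_1} \circ \Phi_{\xi_0,\ft_0}$ and likewise for $\W'$.

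Now I would stack two commuting squares. The bottom square is the one provided by Proposition \ref{prop:natbound} applied to the weak equivalence $d'|W_0$ of boundary cobordisms: it relates $\Phi_{\xi_0,\ft_0}$ and $\Phi_{\xi_0',\ft_0'}$ via the vertical maps $(d'|M_0)_*$ and $(d'|N)_*$, where $\ft_0' = (d'|M_0)_*(\ft_0)$, and it tells us $(d'|N)_*(f_{\xi_0}(\ft_0)) = f_{\xi_0'}(\ft_0')$. The top square is the one from the weak-equivalence clause of Theorem \ref{thm:welldef} applied to $d'|W_1 \colon \W_1 \to \W_1'$: it relates $F_{\W_1,\s_1}$ and $F_{\W_1',\s_1'}$ via $(d'|N)_*$ and $(d'|M_1)_*$. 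These two squares share the middle vertical map $(d'|N)_* \colon SFH(N,\g_1,f_{\xi_0}(\ft_0)) \to SFH(N',\g_1',f_{\xi_0'}(\ft_0'))$, so pasting them gives commutativity of the outer rectangle, which is exactly the asserted diagram for $F_{\W,\s}$. Summing over $s \in \spinc(\W)/_{\sim}$ using Proposition \ref{prop:spinc} yields the statement for $F_{\W}$. The only genuine subtlety — and the step I would be most careful about — is the bookkeeping in the isotopy from $d$ to $d'$: one must check that $F_{\W,\s}$ depends only on the weak-equivalence class of $\W$ (so that replacing $d$ by the isotopic $d'$ is harmless) and that the $\spinc$-labels track correctly under $d'$, i.e. that $\s_i' = \s'|\W_i'$; both follow from Lemma \ref{lem:decomp} together with the fact that $F_{\W,\s} = F_{\W,\s'}$ whenever $\s \sim \s'$, but this is where the argument could go wrong if one is cavalier about which diffeomorphism is acting.
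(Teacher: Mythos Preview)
Your proposal is correct and follows essentially the same approach as the paper: reduce to the naturality of the gluing map plus the naturality of the special cobordism map (Theorem \ref{thm:welldef}), stack the two squares, and handle isolated components by pushing the standard contact balls through $d$.

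The only difference is one of packaging. The paper works directly from Definition \ref{defn:phi}, where $N = M_0 \cup (-Z)$ is literally a subset of $\partial W$ and $\W_1$ is $W$ itself viewed as a special cobordism from $(N,\g_1)$ to $(M_1,\g_1)$; hence $d$ already restricts to $N$ and to a weak equivalence $\W_1 \to \W_1'$ without any modification. You instead route through Lemma \ref{lem:decomp}, cutting $W$ along a parallel copy of $N$ and then invoking the uniqueness clause to isotope $d$ to a $d'$ respecting the splitting. This is valid but introduces the isotopy step you flag as the ``genuine subtlety,'' which the paper's more direct formulation simply avoids. Your caution about that step is slightly misplaced: you do not need $F_{\W,\s}$ to be a weak-equivalence invariant a priori; all you need is that $d'|M_i = d|M_i$ and $d'_*(\s) = d_*(\s)$, both of which are part of the conclusion of Lemma \ref{lem:decomp}.
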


\begin{proof}
This follows from the corresponding naturality result for special cobordisms, Theorem~\ref{thm:welldef}, and the naturality
of the gluing map $\Phi_{-\xi}$. More precisely, first assume that $Z$ and $Z'$ have no isolated components.
Set $N = M_0 \cup (-Z)$ and $N' = M_0' \cup (-Z')$. Then $(M_0,\g_0)$ is a sutured submanifold of $(N,\g_1)$, and
$(M_0',\g_0')$ is a sutured submanifold of $(N',\g_1')$. Furthermore, $d|_N$ is an orientation preserving diffeomorphism
from $N$ to $N'$ such that $d(Z) = Z'$, $d(\g_0) = \g_0'$, $d(\g_1) = \g_1'$, and~$d_*(\xi) = \xi'$. Hence the naturality of
the gluing map gives the commutative diagram
\[\begin{CD}
\SFH(M_0,\g_0,\ft_0) @>\Phi_{-\xi,\ft_0}>> \SFH(N,\g_1,f_{-\xi}(\ft_0))\\
@VV(d|_{M_0})_*V   @VV(d|_N)_*V\\
\SFH(M_0',\g_0',\ft_0') @>\Phi_{-\xi',\ft_0'}>> \SFH(N',\g_1',f_{-\xi'}(\ft_0')).
\end{CD}\]
On the other hand, $d$ gives rise to a diffeomorphism between the special cobordism~$\W_1$ from $(N,\g_1)$ to~$(M_1,\g_1)$
and the special cobordism~$\W_1'$ from $(N',\g_1')$ to~$(M_1',\g_1')$. So Theorem~\ref{thm:welldef} gives the commutative diagram
\[\begin{CD}
\SFH(N,\g_1,f_{-\xi}(\ft_0)) @>F_{\W_1,\s_1}>> \SFH(M_1,\g_1,\ft_1)\\
@VV(d|_N)_*V   @VV(d|_{M_1})_*V\\
\SFH(N',\g_1',f_{-\xi'}(\ft_0')) @>F_{\W_1',\s_1'}>> \SFH(M_1',\g_1',\ft_1').
\end{CD}\]
Putting the above two commutative diagrams together gives the required diagram.
The case when $Z$ and $Z'$ have isolated components follows from this by deleting standard contact balls $(B,\d)$ from $Z$,
and then deleting the corresponding balls $B' = d(B)$ from $Z'$.
\end{proof}

\begin{cor}
If the balanced cobordisms $\W$ and $\W'$ from $(M_0,\g_0)$ to $(M_1,\g_1)$ are equivalent, then $F_{\W} = F_{\W'}$.
Furthermore, if $d$ is an equivalence and $\s \in \spinc(\W)$, then $F_{\W,\s} = F_{\W', d_*(\s)}$.
\end{cor}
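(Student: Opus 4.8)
The plan is to deduce the corollary directly from the naturality theorem, Theorem \ref{thm:finalnat}. The first thing I would note is that a strong equivalence $d \colon W \to W'$ between balanced cobordisms is in particular a weak equivalence in the sense of Definition \ref{defn:cobeq}: it satisfies $d(Z) = Z'$ and $d_*(\xi) = \xi'$, and since it fixes $M_0 \cup M_1$ pointwise it also fixes $\partial M_0$ and $\partial M_1$, so $d(\g_0) = \g_0$ and $d(\g_1) = \g_1$. Moreover, as $\W$ and $\W'$ are both cobordisms from $(M_0,\g_0)$ to $(M_1,\g_1)$, we have $(M_i',\g_i') = (M_i,\g_i)$ and $d|M_i = \mathrm{Id}_{M_i}$ for $i = 0,1$.

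Next I would feed this $d$ into Theorem \ref{thm:finalnat}. Because $d$ restricts to the identity on $M_0$ and on $M_1$, the vertical maps $(d|M_0)_*$ and $(d|M_1)_*$ in the commutative square of that theorem are the identity maps, and the classes $\ft_i' = \s'|M_i = d_*(\s)|M_i$ coincide with $\ft_i = \s|M_i$, so the source and target groups on the two rows are literally the same. Hence the square degenerates to the equality $F_{\W,\s} = F_{\W',d_*(\s)}$ of maps $SFH(M_0,\g_0,\ft_0) \to SFH(M_1,\g_1,\ft_1)$, which is the second assertion. Applying instead the undecorated version of Theorem \ref{thm:finalnat} in the same way yields $F_{\W} = F_{\W'}$, the first assertion. (Alternatively, one can obtain $F_\W = F_{\W'}$ by summing $F_{\W,s} = F_{\W',d_*(s)}$ over the bijection $d_* \colon \spinc(\W)/_{\sim} \to \spinc(\W')/_{\sim}$ and invoking Proposition \ref{prop:spinc}; here $d_*$ descends to a bijection of $\spinc/_{\sim}$ because, by Lemma \ref{lem:decomp}, $d$ can be taken to respect the decomposition $\W = \W_1 \circ \W_0$, so it carries the relation $\sim$ to $\sim$.)

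I do not expect a genuine obstacle here: the content is entirely in Theorem \ref{thm:finalnat}. The only point that requires a word is the identification $\ft_i' = \ft_i$, i.e. that pushforward by the diffeomorphism $d$ commutes with restriction of relative $\spinc$ structures to $M_i$; this is immediate once one recalls that elements of $\spinc(\W)$ are homology classes of pairs $(J,P)$ (equivalently, of nowhere-zero vector fields on the $2$-skeleton), on which $d$ acts by direct pushforward, and that $d$ sends $M_i$ to $M_i$ by the identity. This is what makes the vertical arrows of Theorem \ref{thm:finalnat} honest identities and the indexing sets of the $\spinc$-decomposition match up, so the corollary follows with no further computation.
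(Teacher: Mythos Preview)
Your argument is correct and is essentially the paper's own proof: invoke Theorem \ref{thm:finalnat} and observe that for a strong equivalence $d$ the restrictions $d|M_i$ are the identity, so the vertical maps $(d|M_i)_*$ collapse to identities and the commutative square becomes the desired equality. Your additional remarks (that a strong equivalence is in particular a weak equivalence, and the alternative via Proposition \ref{prop:spinc}) are fine but not needed.
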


\begin{proof}
This follows from Theorem~\ref{thm:finalnat} by observing that for a strong equivalence $d$ and for $i \in \{0,1\}$
the map $d|_{M_i}$ is the identity of $M_i$,
hence $(d|_{M_i})_*$ is the identity of $\SFH(M_i,\g_i)$.
\end{proof}

So $F_{\W}$ can be defined on strong equivalence classes of cobordisms; i.e., on morphisms of the category of $\BSut$. We now show that $F_{\W}$ is in fact functorial.

\begin{thm} \label{thm:funct}
Let $\W_1$ be a balanced cobordism from $(M_0,\g_0)$ to $(M_1,\g_1)$, and let $\W_2$ be a balanced cobordism from $(M_1,\g_1)$ to $(M_2,\g_2)$.
We write $\W$ for the composition $\W_2 \circ \W_1$. Then
\[
F_{\W_2} \circ F_{\W_1} = F_{\W}.
\]
Furthermore, if $s_i \in \spinc(\W_i)/_{\sim}$ for $i \in \{1,2\}$, then
\[
F_{\W_2,s_2} \circ F_{\W_1,s_1} = \sum_{\{s \in \spinc(\W)/_{\sim}\colon s|_{\W_1} = s_1, \, s|_{\W_2} = s_2\}} F_{\W,s}.
\]
\end{thm}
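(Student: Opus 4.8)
The plan is to reduce the statement to the two functoriality results already established for the building blocks of $F_{\W}$: the composition law for special cobordisms (Theorem~\ref{thm:specialcomp}) and the composition law for gluing maps (Proposition~\ref{prop:comp}), together with the $\spinc$-decomposition statements (Proposition~\ref{prop:spinc}, Corollary~\ref{cor:gluingsum}, Proposition~\ref{prop:specialsum}). First I would dispose of isolated components exactly as in Definition~\ref{defn:phi} and the proof of Theorem~\ref{thm:finalnat}: delete standard contact balls from the isolated components of $Z_1$ and of $Z_2$, note that $SFH$ is unchanged up to the canonical $\Z^2$ factors, and so assume from the outset that neither $Z_1$ nor $Z_2$ has isolated components; one must check that the isolated components of $Z = Z_1 \cup Z_2$ are precisely those of $Z_1$ together with those of $Z_2$ that do not touch $M_2$, so this bookkeeping matches up.

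Next I would choose compatible decompositions. By Lemma~\ref{lem:decomp} write $\W_1 = \W_1^{(1)} \circ \W_0^{(1)}$ with $\W_0^{(1)}$ a boundary cobordism from $(M_0,\g_0)$ to some $(N_1,\g_1)$ and $\W_1^{(1)}$ a special cobordism from $(N_1,\g_1)$ to $(M_1,\g_1)$, and similarly $\W_2 = \W_1^{(2)} \circ \W_0^{(2)}$ along $(N_2,\g_2)$. The key geometric point is that the composite $\W = \W_2 \circ \W_1$ can itself be split by Lemma~\ref{lem:decomp} along a manifold $N$ parallel to $M_0 \cup (-Z)$, and that this splitting is compatible with the two given ones in the following sense: the boundary part $\W_0$ of $\W$ is weakly equivalent (rel $M_0$) to the composition of $\W_0^{(1)}$ with the boundary cobordism underlying $\W_1^{(1)} \circ \W_0^{(2)}$ viewed appropriately, while the special part $\W_1$ of $\W$ absorbs $\W_1^{(1)}$, $\W_0^{(2)}$, and $\W_1^{(2)}$ into a single special cobordism from $N$ to $M_2$. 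Concretely, using the uniqueness clause of Lemma~\ref{lem:decomp} I would argue that $\W_0 \simeq \W_0^{(1)}$ composed with a boundary cobordism and, after sliding the collar of $\partial M_2$, that the gluing-map part of $F_{\W}$ is $\Phi_{\xi}$ where $\xi$ is the glued contact structure on $Z = Z_1 \cup Z_2$, which by Proposition~\ref{prop:comp} equals $\Phi_{\xi_2} \circ \Phi_{\xi_1}$, interleaved with a special cobordism handled by Theorem~\ref{thm:specialcomp}. Unwinding $F_{\W} = F_{\W_1^{\mathrm{sp}}} \circ \Phi_{\xi}$ and $F_{\W_i} = F_{\W_1^{(i)}} \circ \Phi_{\xi_i}$ and commuting the middle special-cobordism map past the gluing map $\Phi_{\xi_2}$ then yields $F_{\W} = F_{\W_2} \circ F_{\W_1}$; the commuting step here is precisely the content of the naturality/functoriality results already invoked, applied to the pieces.

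For the $\spinc$-refined statement I would run the same argument keeping track of relative $\spinc$ structures throughout. One has restriction maps $\spinc(\W) \to \spinc(\W_1) \times \spinc(\W_2)$ and $\spinc(\W_i) \to \spinc(\W_i^{\mathrm{bd}}) \times \spinc(\W_i^{\mathrm{sp}})$, and the relative Mayer–Vietoris sequences identify the fibers with the relevant $\d H^1$ orbits, exactly as in the remark following Proposition~\ref{prop:spinc} and as in Theorem~\ref{thm:assoc}. Starting from $F_{\W_2,s_2} \circ F_{\W_1,s_1} = F_{\W_1^{(2)},\cdot} \circ \Phi_{\xi_2,\cdot} \circ F_{\W_1^{(1)},\cdot} \circ \Phi_{\xi_1,\cdot}$, I would apply Corollary~\ref{cor:gluingsum} to expand $\Phi_{\xi_2,\cdot}$ as a sum over $\spinc$ structures of the intermediate piece, apply Theorem~\ref{thm:specialcomp} to combine the two special pieces into a sum over $\spinc(\W_1^{\mathrm{sp}})$ of special cobordism maps, and then re-collect the resulting sum over pairs of compatible $\spinc$ structures into a sum over $\spinc(\W)/_{\sim}$ using the Mayer–Vietoris identification. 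Finally summing over $s_1,s_2$ and using Proposition~\ref{prop:spinc} recovers the unrefined identity, so the two statements are consistent.

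The main obstacle I anticipate is the geometric compatibility of the three applications of Lemma~\ref{lem:decomp}: one must show that the collar of $\partial M_2$ inside $Z_2$ and the parallel copy $N_1$ of $M_0 \cup (-Z_1)$ inside $\W_1$ can be chosen simultaneously so that, after cutting, $\W = \W_2 \circ \W_1$ literally decomposes as (boundary cobordism)$\,\circ\,$(special cobordism) with the special part equal to a composition of handle attachments that meets $N_1$ in its interior — i.e. that the two-handles of $\W_1^{(1)}$ and the handles of $\W_0^{(2)}$, $\W_1^{(2)}$ can all be pushed off the vertical part $Z$. This is exactly the kind of isotopy-through-weak-equivalences argument in the uniqueness part of Lemma~\ref{lem:decomp}, relying on the product neighborhood theorem \cite[Theorem~2.5.23]{contact}, but making it precise — especially tracking that the glued contact structure $\xi_1 \cup \xi_2$ on $Z$ is the one governing the gluing-map factor of $F_{\W}$, so that Proposition~\ref{prop:comp} applies verbatim — will take the bulk of the work. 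Everything downstream of that is a formal manipulation of the already-proved composition laws.
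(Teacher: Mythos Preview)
Your overall architecture matches the paper's: reduce away isolated components, split each $\W_i$ into a boundary cobordism followed by a special cobordism via Lemma~\ref{lem:decomp}, and then reassemble using Theorem~\ref{thm:specialcomp} for the special pieces and Proposition~\ref{prop:comp} for the boundary pieces. The paper does exactly this, writing $\W_i = \W_{i1}\circ\W_{i0}$ and then re-decomposing the middle block $\W_{20}\circ\W_{11}$ (special followed by boundary) as $\V_1\circ\V_0$ (boundary followed by special), so that $\W = (\W_{21}\circ\V_1)\circ(\V_0\circ\W_{10})$ is the canonical boundary-then-special splitting of $\W$.

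Where your proposal goes wrong is in the sentence ``the commuting step here is precisely the content of the naturality/functoriality results already invoked.'' It is not. The identity $F_{\V_1}\circ F_{\V_0} = F_{\W_{20}}\circ F_{\W_{11}}$ --- i.e.\ that the gluing map $\Phi_{\xi_2}$ commutes past the handle maps of the special cobordism $\W_{11}$ --- is not a consequence of Theorem~\ref{thm:specialcomp}, Proposition~\ref{prop:comp}, or Theorem~\ref{thm:finalnat}. The paper proves it as a separate Proposition inside the proof, by a direct Heegaard-diagram argument: one extends a triple diagram $(-\S_0',\bolda_0',\boldb_0',\boldd_0')$ subordinate to a bouquet for the framed link first to a contact-compatible diagram for $(-M',-\g')$ and then to $(-M,-\g)$, and checks by hand that the holomorphic triangles computing $F_{\L}$ split as small triangles near the distinguished point $\x''$ plus triangles entirely inside $\S'$. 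The one- and three-handle cases are handled similarly, paralleling the stabilization invariance of $\Phi_\xi$ from \cite{TQFT}. This is the analytic heart of the theorem, and it does not reduce to anything previously stated.

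By contrast, the issue you flag as the ``main obstacle'' --- the geometric compatibility of the three applications of Lemma~\ref{lem:decomp} --- is essentially automatic from the uniqueness clause of that lemma and is dispatched in the paper in a couple of lines. So you have correctly identified the skeleton of the argument but misjudged where the weight lies: the product-neighborhood bookkeeping is easy, and the commutation of $\Phi_\xi$ with $k$-handle maps is the step that actually requires new work.
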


\begin{proof}

\begin{figure}[tb]
\includegraphics{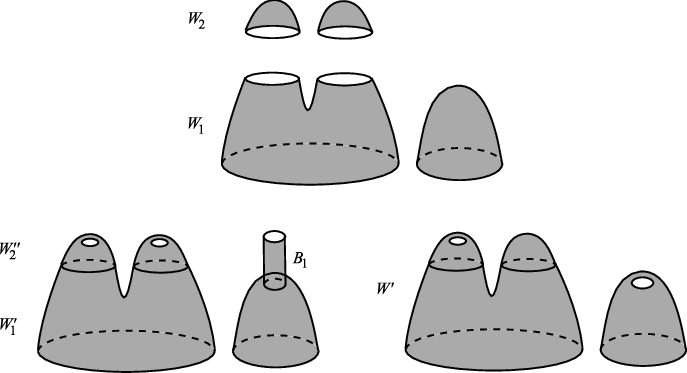}
\caption{The cobordism $\W_1$ has one isolated component, while $\W_2$ has two. In this example, $\W_2'' \circ \W_1'$ and $\W'$ are different.} \label{fig:8}
\end{figure}

We show how to reduce to the case when neither $\W_1$, nor $\W_2$ has isolated components. Recall that in the general case, for
$i \in \{1,2\}$, the map $F_{\W_i}$ was defined to be $F_{\W_i'}$, where $\W_i'$ is a cobordism from $(M_{i-1},\g_{i-1})$ to
\[
(M_i',\g_i') = (M_i,\g_i) \sqcup (B_i,\d_i).
\]
The cobordisms $\W_1'$ and $\W_2'$ do not compose if $B_1 \neq \emptyset$, see Figure \ref{fig:8}. To get around this problem, let $\mathcal{B}_1$ be the trivial cobordism from $(B_1,\d_1)$ to itself, and take $\W_2''$ to be the disjoint union of $\W_2'$ and $\mathcal{B}_1$. Then, assuming the result for no isolated components, we have
\[
F_{\W_2''} \circ F_{\W_1'} = F_{\W_2'' \circ \W_1'}.
\]
Now $F_{\W_2''} = F_{\W_2'}$ by Proposition \ref{prop:specialid},
so the left-hand side is just $F_{\W_2} \circ F_{\W_1}$.

To define the map $F_{\W}$, we use a cobordism $\W'$ with no isolated components. Notice that $\W_2'' \circ \W_1'$ almost agrees with $\W' = (W,Z',\xi')$, except that some standard contact balls might be removed from $(Z',\xi')$ and added to $(M_2',\g_2')$. Such
a situation is depicted on Figure \ref{fig:8}.
However, the following lemma will ensure that
\[
F_{\W_2'' \circ \W_1'} = F_{\W'} = F_{\W}
\]
still holds, and hence we still have
$F_{\W_2} \circ F_{\W_1} = F_{\W}$ in the presence of isolated components.

\begin{lem}
Suppose that $\W = (W,Z,[\xi])$ is a balanced cobordism from $(M_0,\g_0)$ to~$(M_1,\g_1)$. Let $B \subset \Int(Z)$ be a standard contact ball in
$(Z,\xi)$ with convex boundary and connected dividing set $\d$. Furthermore, let $\V = (W,Z_0,\xi_0)$ be the cobordism from
$(M_0,\g_0)$ to $(M_1,\g_1) \sqcup (B,\d)$, where $Z_0 = Z \setminus \Int(B)$ and $\xi_0 = \xi|_{Z_0}$. Then $F_{\W} = F_{\W_0}$,
after we identify $\SFH(M_1,\g_1)$ with $\SFH((M_1,\g_1) \sqcup (B,\d))$.
\end{lem}

\begin{proof}

\begin{figure}[tb]
\includegraphics{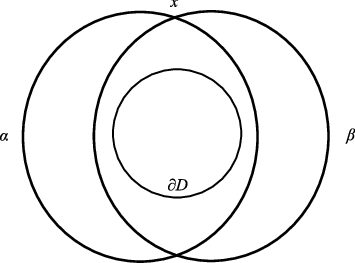}
\caption{The Heegaard diagram after removing a standard contact ball.} \label{fig:9}
\end{figure}

First, assume that $Z$ has no isolated components, then the same holds for~$Z_0$.
By definition, $F_{\W} = F_{\W_1} \circ \Phi_{-\xi}$ and $F_{\V} = F_{\V_1} \circ \Phi_{-\xi_0}$,
where $\W_1$ is a special cobordism from some sutured manifold $(N,\g_1)$ to $(M_1,\g_1)$.
As explained on page~7 of~\cite{TQFT}, the map
\[
\Phi_{-\xi_0} \colon \SFH(M_0,\g_0) \to \SFH(N,\g_1) \otimes V
\]
agrees with $\Phi_{-\xi}$ composed with a map
\[
\SFH(N,\g_1) \to \SFH(N,\g_1) \otimes V,
\]
given by connected summing with the suture manifold $S^3(1)$.
More precisely, take the same balanced diagram that gives $\Phi_{-\xi}$, remove an open ball~$D$
from the Heegaard surface, and add a circle~$\a$ and a circle~$\b$ that are small Hamiltonian
translates of each other, and are parallel to~$\partial D$, see Figure~\ref{fig:9}. We call this diagram~$\H_0$.
If~$x \in \a \cap \b$ is the intersection point with the smaller grading, and if $\phi_{-\xi}(\y) = (\y,\x'')$, then
$\phi_{-\xi'}(\y) = (\y,\x'',x)$. Furthermore, $F_{\V_1}$ agrees with $F_{\W_1}$ composed with the
3-handle map corresponding to the connected sum sphere~$S$,
represented by the periodic domain bounded by~$\a$ and~$\b$, see Definition~\ref{defn:3handle}.
So, on the chain level, the map~$F_{\V_1}$ is given by $f_{\H_0,S}(\y,\x'',x) = (\y,\x'')$.
Hence
\[
F_{\V} \colon \SFH(M_0,\g_0) \to \SFH(M_1,\g_1) \otimes \SFH(B,\d) \cong \SFH(M_1,\g_1)
\]
agrees with $F_{\W}$.

Now consider the general case. If $B$ lies in an isolated component of $Z$, then the claim is obvious.
So suppose that $B$ lies in a non-isolated component of $Z$.
Then one can apply the lemma to $\W'$ and $\W_0'$ to obtain that
\[
F_{\W} = F_{\W'} = F_{\W_0'} = F_{\W_0},
\]
which concludes the proof of the lemma.
\end{proof}

From now on, we can suppose that neither $\W_1$, nor $\W_2$ has isolated components. For $i \in \{1,2\}$, we have a unique decomposition
\[
\W_i = \W_{i1} \circ \W_{i0},
\]
where $\W_{i0}$ is a boundary cobordism and $\W_{i1}$ is a special cobordism. By definition, $F_{\W_i} = F_{\W_{i1}} \circ F_{\W_{i0}}$. So
\[
F_{\W_2} \circ F_{\W_1}  = F_{\W_{21}} \circ F_{\W_{20}} \circ F_{\W_{11}} \circ F_{\W_{10}}.
\]
We can uniquely write the cobordism $\W_{20} \circ \W_{11}$ in the form $\V_1 \circ \V_0$, where $\V_0$ is a boundary cobordism
and $\V_1$ is a special cobordism. Then
\[
\W = \W_{21} \circ \V_1 \circ \V_0 \circ \W_{10},
\]
and $\W_{21} \circ \V_1$ is a special cobordism, whereas $\V_0 \circ \W_{10}$ is a boundary cobordism. So, by the uniqueness part of Lemma \ref{lem:decomp},
\[
F_{\W} = F_{\W_{21} \circ \V_1} \circ F_{\V_0 \circ \W_{10}}.
\]
Using Theorem \ref{thm:specialcomp}, we get
\[
F_{\W_{21} \circ \V_1} = F_{\W_{21}} \circ F_{\V_1},
\]
and Proposition \ref{prop:comp} implies that
\[
F_{\V_0 \circ \W_{10}} = F_{\V_0} \circ F_{\W_{10}}.
\]
So we are done as soon as we show that
\[
F_{\V_1} \circ F_{\V_0} = F_{\W_{20}} \circ F_{\W_{11}}.
\]
This follows from the next proposition.

\begin{prop}
Suppose that $\W = \W_1 \circ \W_0$, where $\W_1$ is a boundary cobordism and $\W_0$ is a special cobordism. Let $\V_1 \circ \V_0$ be the unique decomposition of $\W$ into a boundary cobordism $\V_0$ and a special cobordism $\V_1$. Then
\[
F_{\W_1} \circ F_{\W_0} = F_{\V_1} \circ F_{\V_0}.
\]
\end{prop}

\begin{proof}
Suppose that $(M',\g')$ is a sutured submanifold of $(M,\g)$, and assume $\xi$ is a contact structure on $M \setminus \Int(M')$
that has no isolated components. The result follows once we show that the gluing map $\Phi_{-\xi}$ commutes with 1-, 2-, and 3-handle maps,
corresponding to handle attachments along~$(-M',-\g')$. I.e., we can assume that $\W_0$ and $\V_1$ are both $k$-handle cobordisms for $k \in \{1,2,3\}$.

First, we consider the case $k =2$. Let $\L' \subset (-M',-\g')$ be the framed link along which we attach the 2-handles,
and we write $\L$ when we consider this link in $(-M,-\g)$.
Then construct a triple diagram $(-\S_0',\bolda_0',\boldb_0',\boldd_0')$ subordinate to some bouquet for $\L'$. As explained after
Theorem~\ref{thm:glue}, we can extend $(-\S_0',\bolda_0',\boldb_0')$ to a diagram $(-\S',\bolda',\boldb')$ of $(-M',-\g')$ that is contact compatible near $\partial M'$, which we then further extend to a diagram $(-\S,\bolda,\boldb)$ defining $(-M,-\g)$
in a way compatible with $-\xi$. Next, we extend $\boldd_0'$ to a set of curves $\boldd$ by taking small Hamiltonian translates
of the curves in $\boldb \setminus \boldb_0'$, and denote by $\boldd'$ the translates of the curves in~$\boldb' \setminus \boldb_0'$,
together with $\boldd_0'$. Then the two-handle map $F_{\L'}$ is defined using the triple diagram $\cT' = (-\S',\bolda',\boldb',\boldd')$,
while $F_{\L}$ can be defined using $\cT = (-\S,\bolda,\boldb,\boldd)$.

\begin{figure}[tb]
\includegraphics{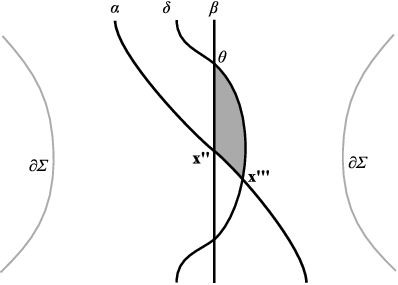}
\caption{An illustration of the proof that the gluing and the 2-handle maps commute.} \label{fig:10}
\end{figure}

Take intersection points $\x \in \T_{\a} \cap \T_{\b}$ and $\y \in \T_{\a'} \cap \T_{\b'}$.
Furthermore, let $\t \in \T_{\b} \cap \T_{\d}$ and $\t' \in \T_{\b'} \cap \T_{\d'}$
be the distinguished intersection points. Then we have $\phi_{\xi}(\y) = (\y,\x'')$ and
$E_{\cT'}(\y) = F_{\cT'}(\y \otimes \t')$, while $E_{\cT}(\x) = F_{\cT}(\x \otimes \t)$.
Denote by~$\x''' \in \T_{\a \setminus \a'} \cap \T_{\d \setminus \d'}$
the point lying closest to $\x''$, which is also $F_{\cT \setminus \cT'}(\x''\otimes (\t \setminus \t'))$,
see Figure~\ref{fig:10}.
So what we need to check is
\[
\left(F_{\cT'}(\y \otimes \t'),\x''' \right) = F_{\cT}\left(\left(\y,\x''\right)\otimes\t \right).
\]
To see this, notice that the only domains in~$\cT$ that
contain both~$\t$ and~$\x''$ as incoming corners consist of the small triangles next to~$\x''$,
plus a domain in $(-\S',\bolda',\boldb')$, as all the curves $\bolda \setminus \bolda'$, $\boldb \setminus \boldb'$, and $\boldd \setminus \boldd'$
are used up in the small triangles. These small triangles have corners $\x''$, $\t \setminus \t'$, and~$\x'''$, and appear in the count
for~$F_{\cT}((\y,\x'') \otimes \t)$. The above mentioned domains in $(-\S',\bolda',\boldb')$
are exactly the ones counted in~$F_{\cT'}(\y \otimes \t')$. The argument is analogous to the proof
of the invariance of the map~$\Phi_{-\xi}$ under handleslides in $(-\S_0',\bolda_0',\boldb_0')$, found in~\cite{TQFT}.

The cases $k=1$ and $k=3$ are analogous to the invariance of $\Phi_{-\xi}$ under stabilization and destabilization
in~$(-\S_0',\bolda_0',\boldb_0')$. For example, if we attach a 1-handle to~$(-M',-\g')$ along a framed pair of points~$\bP$,
then we start out with an arbitrary diagram
$(-\S_0',\bolda_0',\boldb_0')$ that defines $(-M',-\g')$ and is adapted to some bouquet~$B(\bP)$ for~$\bP$,
and extend it to~$\H' = (-\S',\bolda',\boldb')$ that is contact compatible near~$\partial M'$, and make it adapted to~$\bP$.
Then we use the diagram
\[
\H'_{\bP} = (-(\S')^0 \cup A, \bolda' \cup \{\a\},\boldb' \cup \{\b\})
\]
to define the 1-handle map by $f_{\H',\bP}(\y) = \y \times \{\theta\}$, where $\theta \in \a \cap \b$ is the intersection point with
the larger relative grading. Note that we glue~$A$ to the subsurface~$-\S_0'$ of~$-\S'$. Then the result follows from
\[
(\y \times \{\theta\},\x'') = (\y,\x'') \times \{\theta\}.
\]
The case $k = 3$ is very similar, we start with a diagram for~$(-M',-\g')$ that represents the attaching sphere of the 3-handle,
as in Definition~\ref{def:adapted-to-sphere}, then ``turn around'' the argument for 1-handles. This proves the proposition.
\end{proof}

The second part, using the $\spinc$ structures, follows from the first part and Proposition \ref{prop:spinc}.
This concludes the proof of Theorem \ref{thm:funct}.
\end{proof}

\begin{prop} \label{prop:id}
If $\W = (W,Z,[\xi])$ is the trivial cobordism from $(M,\g)$ to $(M,\g)$, then $F_{\W}$ is the identity of $\SFH(M,\g)$.
Furthermore, the restriction map from $\spinc(\W)$ to $\spinc(M,\g)$ is an isomorphism, and for every $\s \in \spinc(\W)$,
the map~$F_{\W,\s}$ is the identity of $\SFH(M,\g,\s|_M)$.
\end{prop}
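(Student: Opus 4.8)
The plan is to reduce the statement to the two cases already handled in the special-cobordism section and in the gluing section. Concretely, I would first invoke the (unique) decomposition of Lemma~\ref{lem:decomp}: since the trivial cobordism $\W = (M\times I,\partial M\times I,\xi)$ obviously has no isolated components, we may write $\W = \W_1\circ\W_0$, where $\W_0$ is a boundary cobordism from $(M,\g)$ to some $(N,\g)$ and $\W_1$ is a special cobordism from $(N,\g)$ to $(M,\g)$. The point is to identify these two pieces explicitly. Choosing the product collar $Z_1 = \partial M\times I$ to be all of $Z$ (which is allowed, since $Z$ is already $I$-invariant with the product dividing set), we see that $Z_0$ is empty, so $N = M_0\cup(-Z_0)$ is just a parallel copy of $M$; thus $\W_0$ is the trivial boundary cobordism and $\W_1$ is the trivial special cobordism from $(M,\g)$ to itself.

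Next I would apply the already-established identities for these two building blocks. By Proposition~\ref{prop:extends}, $F_{\W_0} = \Phi_{\xi_0}$, and since $\W_0$ is the trivial boundary cobordism, $\xi_0$ is the $I$-invariant contact structure on $\partial M\times I$; hence Theorem~\ref{thm:id} gives $\Phi_{\xi_0} = \mathrm{Id}$ (up to sign). Similarly, $\W_1$ is the trivial special cobordism from $(M,\g)$ to $(M,\g)$, so Proposition~\ref{prop:specialid} gives $F_{\W_1} = \mathrm{Id}$. Since $F_{\W} = F_{\W_1}\circ F_{\W_0}$ by the definition recalled just after Lemma~\ref{lem:decomp}, we conclude $F_{\W} = \mathrm{Id}$.

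For the statement about relative $\spinc$ structures, I would argue along the same lines. First, $\spinc(\W)\cong\spinc(M,\g)$ because $W = M\times I$ deformation retracts onto $M$ rel $Z$, so the restriction map $H^2(W,Z)\to H^2(M,\partial M)$ is an isomorphism and $J(\xi)$ is used to fix the boundary condition; concretely this is the same observation made in the proof of Proposition~\ref{prop:specialid}. For a fixed $\s\in\spinc(\W)$, write $\ft = \s|M$. Under the decomposition above, $\s$ restricts to the canonical $\spinc$ structure on the trivial boundary cobordism $\W_0$ with $\s|N = f_{\xi}(\ft)$, which under the parallelism $N\cong M$ is just $\ft$ again (the contact structure is $I$-invariant, so $\xi^\perp$ is the constant vertical field and $f_\xi$ is the identity under this identification). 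Then $\Phi_{\xi,\ft} = \mathrm{Id}$ on $SFH(M,\g,\ft)$, again by Theorem~\ref{thm:id} together with Corollary~\ref{cor:gluingsum}, and $F_{\W_1,\s_1} = \mathrm{Id}$ on $SFH(M,\g,\ft)$ by the second part of Proposition~\ref{prop:specialid}. Composing gives $F_{\W,\s} = \mathrm{Id}$ on $SFH(M,\g,\s|M)$.

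The only slightly delicate point is checking that the trivial cobordism really does decompose as the composition of the \emph{trivial} boundary cobordism and the \emph{trivial} special cobordism, rather than merely as some boundary cobordism composed with some special cobordism; i.e., that one may take the product collar $Z_1$ to exhaust $Z$. This follows from the uniqueness clause in Lemma~\ref{lem:decomp} (any other choice is weakly equivalent, and $F$ is invariant under weak equivalence by Theorem~\ref{thm:finalnat}), so once this bookkeeping is in place the rest is immediate from the cited results. I expect this identification step to be the main, though still routine, obstacle; everything else is a direct appeal to Proposition~\ref{prop:specialid}, Theorem~\ref{thm:id}, and Proposition~\ref{prop:extends}.
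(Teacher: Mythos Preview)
Your proposal is correct and follows essentially the same route as the paper. The paper's proof is a one-liner citing Proposition~\ref{prop:specialid} and Proposition~\ref{prop:extends}; the first says the special-cobordism map for the trivial cobordism is the identity, and the second says that for a special cobordism the general $F_{\W}$ coincides with the special-cobordism map. Your argument simply unpacks the proof of Proposition~\ref{prop:extends} (decompose $\W=\W_1\circ\W_0$, observe $\W_0$ is trivial so $\Phi_{\xi_0}=\mathrm{Id}$ by Theorem~\ref{thm:id}) rather than citing it directly, so the content is the same.
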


\begin{proof}
This follows from Propositions~\ref{prop:specialid} and~\ref{prop:extends}.
\end{proof}

\begin{rem}
If $(M',\g')$ is obtained from $(M,\g)$ using a convex decomposition, then we can view $(-M',-\g')$
as a sutured submanifold of $(-M,-\g)$, and there is a natural contact structure $\z$ on $M \setminus \Int(M')$,
with convex boundary and dividing set~$-\g \cup -\g'$, see page~3 of~\cite{TQFT}.
Hence we can view this as a boundary cobordism~$\W$ from~$(M',\g')$ to $(M,\g)$.
For nice sutured manifold decompositions, $F_{\W} = \Phi_{\z}$ is an embedding by \cite{decomposition}.
\end{rem}

\subsection{Duality and blow-up}
Let$(\S,\bolda,\boldb)$ be a diagram of the balanced sutured manifold~$(M,\g)$.
Recall that there is a canonical affine isomorphism between $\spinc(M,\g)$ and~$\spinc(-M,-\g)$.
It follows from \cite[Proposition~2.12]{decateg} that there is a natural bilinear pairing
\[
CF(\S,\bolda,\boldb,\s) \otimes CF(-\S,\bolda,\boldb,\s) \to \Z_2.
\]
On the generators $\x$, $\y \in \T_{\a} \cap \T_{\b}$ with
$\s(\x) = \s(\y) = \s$, it is
given by~$\langle\,\x,\y\,\rangle = 1$ if~$\x = \y$, and~$\langle\,\x,\y\,\rangle = 0$ otherwise. Just as in~\cite[Lemma~5.1]{OSz10},
for every element $a \in CF(\S,\bolda,\boldb,\s)$ and~$b \in CF(-\S,\bolda,\boldb,\s)$, we have
\[
\langle\, a, \partial_{-\S} b\,\rangle = \langle\, \partial_{\S}\, a,b \,\rangle.
\]
So there is an induced pairing $\langle\,\, ,\,\rangle$ on $\SFH(M,\g,\s) \otimes \SFH(-M,-\g,\s)$.
Then $F_{\W}$ satisfies the following duality result.

\begin{thm} \label{thm:dual}
Let $\W$ be a special cobordism from $(M_0,\g_0)$ to $(M_1,\g_1)$.
In Remark~\ref{rem:upside}, we defined the cobordism~$\ol{\W}$ from $(-M_1,-\g_1)$ to $(-M_0,-\g_0)$,
obtained by ``turning~$\W$ upside down.'' Then the map $F_\W$ is dual to~$F_{\ol{\W}}$.
More precisely, for every $\s \in \spinc(\W)$ with~$\s|_{M_i} = \ft_i$ for~$i \in \{1,2\}$,
and every $x \in \SFH(M_0,\g_0,\ft_0)$ and $y \in \SFH(-M_1,-\g_1,\ft_1)$, we have
\[
\langle\, F_{\W,\s}(x), y \,\rangle_1 =
\langle\, x, F_{\ol{\W},\s}(y) \,\rangle_0.
\]
Here $\langle \, , \, \rangle_i$ is the pairing on~$\SFH(M_i,\g_i,\ft_i) \otimes \SFH(-M_i,-\g_i,\ft_i)$ for~$i \in \{0,1\}$.
\end{thm}

\begin{proof}
This is completely analogous to \cite[Theorem 3.5]{OSz10}.
\end{proof}

\begin{qn} \label{qn:dual}
Does Theorem \ref{thm:dual} hold for arbitrary balanced cobordisms?
\end{qn}

We will get back to this question shortly. Next, we generalize the hat version of the blow-up formula \cite[Theorem 3.7]{OSz10} to our situation.

\begin{thm}
Let $\W_0$ be the trivial cobordism from the balanced sutured manifold~$(M,\g)$ to itself. Consider the
blowup $\W = \W_0 \# \overline{\C P}^2$, where we take an internal connected sum, and write~$E$ for the exceptional
divisor. Furthermore, fix a
$\spinc$ structure $\s \in \spinc(\W)$, and let $\ft = \s|_{M \times \{0\}}$.
Then $\s|_{M \times \{1\}}$ is also $\ft$, and the map
\[
F_{\W,\s} \colon \SFH(M,\g,\ft) \to \SFH(M,\g,\ft)
\]
is the identity if $\langle\,c_1(\s),E \,\rangle = \pm 1$,
and is zero otherwise.
 \end{thm}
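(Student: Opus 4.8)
The plan is to reduce the blow-up computation to the corresponding statement in Heegaard--Floer theory, since the blow-up $\W = \W_0 \# \overline{\C P}^2$ is obtained by an internal connected sum in the interior of $W_0 = M \times I$, far from the vertical boundary $Z = \partial M \times I$. First I would observe that $\W$ is a special cobordism: the vertical part is unchanged, so $Z = \partial M \times I$ and $\xi$ is still the $I$-invariant contact structure. Hence the boundary piece $\W_0$ in the canonical decomposition $\W = \W_1 \circ \W_0$ is trivial, $\Phi_{\xi}$ is the identity by Theorem~\ref{thm:id}, and $F_{\W,\s}$ equals the special cobordism map, which we may compute from a handle decomposition of $W$ via Definition~\ref{defn:map}. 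So it suffices to work entirely within the framework of section~\ref{sec:special}.

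Next I would choose a convenient handle presentation. Blowing up amounts to: take the connected sum of $M \times I$ with $\overline{\C P}^2$, which can be realized by removing a ball from $\Int(M \times \{1/2\})$ and gluing in $\overline{\C P}^2 \setminus B^4$; the latter is obtained by attaching a single two-handle along an unknot in a small ball $B^3 \subset \Int(M)$ with framing $-1$, and then capping off with the remaining pieces. Concretely, $\W$ is presented by a one-handle followed by a $(-1)$-framed two-handle along a knot $\mathbb K$ in $M \# (S^1 \times S^2)$ whose class generates the $S^1\times S^2$ summand (together with the inverse three-handle), or equivalently just a two-handle attached along a local unknot with framing $-1$ realizing the exceptional sphere $E$. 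In either presentation the relevant local model is a genus-one (or genus-two for the one/three-handle cancellation) piece of the sutured triple diagram $(\S,\bolda,\boldb,\boldd)$, glued near $\partial\S$, exactly as in the local diagrams of \cite{OSz10}; holomorphic discs and triangles in this local picture avoid $\partial\S$, so the neck-stretching and finiteness arguments go through verbatim.

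The core of the argument is then a local triangle count in $\sym^d$ of the local diagram, identical to the one in the proof of \cite[Theorem~3.7]{OSz10}: for the local model of the $\overline{\C P}^2$ summand one has two intersection points in the relevant torus, and the count of holomorphic triangles representing the two-handle map in a fixed relative $\spinc$ structure $\s$ gives $\pm 1$ precisely when $\langle c_1(\s), E\rangle = \pm 1$ and $0$ otherwise, by the same explicit computation of the first Chern class evaluated on the periodic domain dual to $E$ (the identity $\langle c_1(\s),E\rangle = 2n(\s)-1$ or its analogue), combined with the observation that adjacent $\spinc$ structures on the local summand contribute $0$. Composing with the stabilization and destabilization maps, which are isomorphisms, and using that $F_{\W_0}$ is the identity on $SFH(M,\g,\ft)$, yields that $F_{\W,\s}$ is the identity when $\langle c_1(\s),E\rangle = \pm 1$ and zero otherwise. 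Finally, the statement that $\s|(M\times\{1\}) = \ft$ follows because $E$ is a closed surface in the interior, so the restriction of $\s$ to either end of $W_0$ is the same relative $\spinc$ structure, independent of the blow-up; this is just the fact that $H^2(W,Z) \to H^2(M,\partial M)$ factors through restriction to $W_0$. The main obstacle is making precise that the interior connected-sum point can be placed disjointly from all the $\a$, $\b$, $\d$ curves and from $\partial\S$ so that the local triangle count genuinely decouples from the rest of the diagram; once that is set up — exactly as the stabilization arguments in section~\ref{sec:special} do — everything reduces to the closed-manifold computation of Ozsv\'ath--Szab\'o.
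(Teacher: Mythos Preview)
Your proposal is correct and follows essentially the same approach as the paper: the paper's proof is literally the single sentence ``This follows from the same local computation as \cite[Theorem~3.7]{OSz10},'' and your write-up is a fleshed-out version of exactly that reduction (special cobordism, local $(-1)$-framed two-handle, decoupling of the triangle count near $\partial\S$). Your handle-decomposition discussion is a bit more elaborate than necessary---the cleanest presentation is the one you mention second, a single two-handle along a local $(-1)$-framed unknot---but the substance is the same.
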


\begin{proof}
This follows from the same local computation as \cite[Theorem 3.7]{OSz10}.
\end{proof}

\subsection{Sutured Floer homology as a TQFT}
An axiomatic description of topological quantum field theories, in short TQFTs, was first given by Atiyah~\cite{Atiyah}.
As explained by Blanchet~\cite{Blanchet}, an $(n+1)$-dimensional TQFT over a field $\F$ is a symmetric monoidal
functor from the cobordism category of $n$-manifolds to the category of finite dimensional vector spaces over $\F$.

\begin{thm} \label{thm:TQFT}
The functor $\SFH \colon \BSut \to \Vect$ is a $(3+1)$-dimensional TQFT over $\Z_2$ in the sense of Atiyah~\cite{Atiyah} and Blanchet~\cite{Blanchet}.
\end{thm}

\begin{proof}
The axioms of Atiyah~\cite{Atiyah} and Blanchet~\cite{Blanchet} are equivalent. Since Blanchet takes the cobordism
point of view, and we are working with cobordisms, we are going to check the axioms in~\cite{Blanchet}.

In our case, we have a finitely generated $\Z_2$ vector space $\SFH(M,\g)$ assigned to every balanced sutured manifold $(M,\g)$, and
a linear map
\[
F_{\W} \colon \SFH(M_0,\g_0) \to \SFH(M_1,\g_1)
\]
corresponding to every balanced cobordism $\W$ from
$(M_0,\g_0)$ to $(M_1,\g_1)$.

Axiom (1) is naturality. We showed in~\cite[Theorem~1.9]{naturality} that every orientation preserving diffeomorphism
$\phi \colon (M_0,\g_0) \to (M_1,\g_1)$ induces an isomorphism
\[
\phi_* \colon \SFH(M_0,\g_0) \to \SFH(M_1,\g_1),
\]
cf.~Section~\ref{sec:naturality}.
Furthermore, this assignment is functorial; i.e., $(\psi \circ \phi)_* = \psi_* \circ \phi_*$.
The naturality of the cobordism maps $F_{\W}$ was stated in Theorem \ref{thm:finalnat}.

Axiom (2) is functoriality; i.e.,
\[
F_{\W_2 \circ \W_1} = F_{\W_2} \circ F_{\W_1}.
\]
This was shown in Theorem~\ref{thm:funct}.

Axiom (3), normalization, states that if $\W$ is the trivial cobordism from $(M,\g)$ to $(M,\g)$, then
\[
F_{\W} \colon \SFH(M,\g) \to \SFH(M,\g)
\]
is the identity. This is the content of Proposition~\ref{prop:id}.

Axiom (4) is multiplicativity. It states that there are functorial isomorphisms
\[
\SFH((M_1,\g_1) \sqcup (M_2,\g_2)) \cong \SFH(M_1,\g_1) \otimes \SFH(M_2,\g_2),
\]
which easily follows from the definitions.
Furthermore, we set $\SFH(\emptyset) = \Z_2$, where~$\emptyset$ is the empty balanced sutured manifold.
It is straightforward to check that these isomorphisms fit into the commutative diagrams in~\cite{Blanchet}
that describe associativity and the unit.
Finally, using the above identifications, we also have
\[
F_{\W_1 \sqcup \W_2} = F_{\W_1} \otimes F_{\W_2}.
\]

Axiom (5), symmetry, states that the isomorphism
\[
\SFH((M_1,\g_1) \sqcup (M_2,\g_2)) \cong \SFH((M_2,\g_2) \sqcup (M_1,\g_1))
\]
induced by the obvious diffeomorphism $(x,i) \mapsto (x,1-i)$ for~$i \in \{1,2\}$
corresponds to the isomorphism of vector spaces
\[
\SFH(M_1,\g_1) \otimes \SFH(M_2,\g_2) \cong \SFH(M_2,\g_2) \otimes \SFH(M_1,\g_1)
\]
mapping~$a \otimes b$ to~$b \otimes a$.
This is also straightforward.
\end{proof}

\begin{rem}
Similar axioms hold if we are dealing with balanced sutured manifolds endowed with $\spinc$ structures, and morphisms being
balanced cobordisms endowed with relative $\spinc$ structures. These do not form a proper category, as the composition of $(\W_1,\s_1)$ and $(\W_2,\s_2)$ does not have a well defined relative $\spinc$ structure on it. But if we replace functoriality with the formula in Theorem~\ref{thm:funct}, we do get a type of TQFT.
\end{rem}

Recall that~\cite[Axiom 2]{Atiyah} states that $\SFH(M,\g)$ and $\SFH(-M,-\g)$ are dual vector spaces.
This was proved in~\cite[Proposition~2.14]{decateg}.
Furthermore, there has to be a functorial isomorphism between $\SFH(-M,-\g)$ and $\SFH(M,\g)^*$.
This follows from the set of axioms in~\cite{Blanchet}.
Indeed, if we view the trivial cobordism $\W$ from $(M,\g)$ to $(M,\g)$
as a cobordism from $(M,\g) \sqcup (-M,-\g)$ to $\emptyset$,
then we get a pairing
\[
\langle\,\,,\,\rangle' \colon \SFH(M,\g) \otimes \SFH(-M,-\g) \to \Z_2,
\]
which is non-degenerate by~\cite[Theorem 2.1.1]{TuTQFT}, see also the right hand-side of Figure~\ref{fig:11}.

\begin{conj} \label{conj:pairing}
The pairing $\langle\,\,,\,\rangle'$ agrees with the pairing $\langle\,\,,\,\rangle$ appearing in Theorem~\ref{thm:dual}.
\end{conj}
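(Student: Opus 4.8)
The plan is to recognize the pairing $\langle\,,\,\rangle'$ as the cobordism map of a concrete cobordism and then compute that map on a single balanced Heegaard diagram. By construction, $\langle\,,\,\rangle'$ is exactly $F_{\widehat\W}$, where $\widehat\W$ is the trivial cobordism from $(M,\g)$ to $(M,\g)$ re-read, in the spirit of Remark \ref{rem:upside}, as a balanced cobordism from $(M,\g)\sqcup(-M,-\g)$ to the empty balanced sutured manifold, using $SFH(\emptyset)=\Z$ as in Theorem \ref{thm:TQFT}. So Conjecture \ref{conj:pairing} asserts precisely that $F_{\widehat\W}(x\otimes y)=\langle x,y\rangle$ for $x\in SFH(M,\g)$ and $y\in SFH(-M,-\g)$ lying in the same $\spinc$ structure. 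I would fix an admissible balanced diagram $(\S,\bolda,\boldb)$ for $(M,\g)$; then $(-\S,\bolda,\boldb)$ is an admissible diagram for $(-M,-\g)$ by Remark \ref{rem:EH}, and $(\S\sqcup-\S,\bolda\sqcup\bolda,\boldb\sqcup\boldb)$ is one for $(M,\g)\sqcup(-M,-\g)$. By naturality (Corollary \ref{cor:nat} and Theorem \ref{thm:finalnat}) it suffices to establish the identity on the complexes attached to these diagrams, and on generators $\langle x,y\rangle=1$ if $x=y$ and $0$ otherwise, by definition.

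Next I would unpack $F_{\widehat\W}$ via the construction of $F_\W$ in Section 7. Since the outgoing end of $\widehat\W$ is empty, every component of $Z=\partial M\times I$ is isolated, so one first replaces $\widehat\W$ by the cobordism $\widehat\W'$ from $(M,\g)\sqcup(-M,-\g)$ to $(B,\d)$ obtained by removing a standard contact ball $B$ from $Z$, where $SFH(B,\d)\cong\Z$ and $Z'=Z\setminus\Int(B)$ now has no isolated components. Then $N$, obtained by gluing $Z'$ onto $(M,\g)\sqcup(-M,-\g)$, is the once-punctured double $(M\cup_{\partial M}(-M))(1)$, a balanced sutured manifold with $SFH(N)\cong\widehat{HF}(M\cup_{\partial M}(-M))$; the $I$-invariant contact structure $\xi$ on $Z'$ gives the Honda--Kazez--Mati\'c gluing map $\Phi_\xi\colon SFH((M,\g)\sqcup(-M,-\g))\to SFH(N)$, and what remains of $\widehat\W'$ is a special cobordism $\widehat\W_1$ from $N$ to $(B,\d)$, so that $F_{\widehat\W}=F_{\widehat\W_1}\circ\Phi_\xi$ by Definition \ref{defn:phi}. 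Equivalently, one may invoke Lemma \ref{lem:decomp} and write $\widehat\W'=\widehat\W_1\circ\widehat\W_0$ with $\widehat\W_0$ a boundary cobordism and $F_{\widehat\W_0}=\Phi_\xi$.

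The heart of the argument is then a chain-level computation: using a contact-compatible balanced diagram for $N$ extending $(\S\sqcup-\S,\dots)$ together with a handle presentation of $\widehat\W_1$ by one-, two-, and three-handle maps as in Section~\ref{sec:special}, one traces the composite $F_{\widehat\W_1}\circ\Phi_\xi$ on generators and checks that the only surviving pseudo-holomorphic count is the constant one, which is nonzero precisely when $x=y$; this is the same mechanism that underlies the proof of Theorem \ref{thm:dual} and \cite[Theorem 3.5]{OSz10}. Passing to homology then yields $F_{\widehat\W}=\langle\,,\,\rangle=\langle\,,\,\rangle'$. I expect the main obstacle to be precisely this bookkeeping through the intermediate double: controlling the gluing map $\Phi_\xi$ and the ensuing special-cobordism maps tightly enough to see that the resulting bilinear form is the diagonal one on generators rather than some automorphism of it, and choosing orientation systems so that the overall sign matches --- since $SFH$, hence both pairings, is canonical only up to an overall $\pm1$, the equality in Conjecture \ref{conj:pairing} must be read with that ambiguity, compatibly with the conventions of Theorem \ref{thm:dual}. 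As an alternative organizing principle, one could instead try to verify that $\langle\,,\,\rangle$ and $\langle\,,\,\rangle'$ both realize the ``snake identity'' zigzag with the coevaluation cobordism (the bent cylinder read backwards), using functoriality (Theorem \ref{thm:funct}) and Proposition \ref{prop:id}; this would reduce the conjecture to identifying the coevaluation element but does not obviously bypass the computation above, and an affirmative answer to Question \ref{qn:dual} for the cobordisms in play would likewise settle it.
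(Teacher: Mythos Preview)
The statement you are addressing is a \emph{conjecture} in the paper, not a theorem: the author does not give a proof, and the subsequent corollary is explicitly conditional (``The above conjecture \emph{would} give an affirmative answer to Question~\ref{qn:dual}''). So there is no proof in the paper to compare your proposal against.

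As to the proposal itself: your setup is correct and matches how the paper defines $\langle\,,\,\rangle'$, and the decomposition $F_{\widehat\W}=F_{\widehat\W_1}\circ\Phi_\xi$ through the punctured double $N=(M\cup_{\partial M}(-M))(1)$ is exactly what Definition~\ref{defn:phi} dictates. But the step you label ``the heart of the argument'' is not actually carried out. You assert that tracing the composite on generators leaves only a constant count that is nonzero precisely when $x=y$, yet this requires controlling two nontrivial pieces simultaneously: the gluing map $\Phi_\xi$ for the $I$-invariant structure on $\partial M\times I$ minus a ball (which is built from a specific contact-compatible diagram as in \cite{TQFT}, not from an arbitrary one), and the special-cobordism map $F_{\widehat\W_1}$, which in this case involves a handle decomposition of a $4$-manifold filling the double. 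Neither the holomorphic-polygon counts for $F_{\widehat\W_1}$ nor the interaction of the distinguished intersection point $\x''$ from $\Phi_\xi$ with those counts is obviously diagonal; the analogy with Theorem~\ref{thm:dual} and \cite[Theorem~3.5]{OSz10} is suggestive but those results concern special cobordisms only, whereas here the boundary-cobordism part is essential. You acknowledge this yourself as the ``main obstacle,'' and indeed it is: this is presumably why the author left the statement as a conjecture rather than a theorem. Your alternative via the snake identity is also only an outline, and as you note it does not obviously bypass the same computation.
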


\begin{cor}
The above conjecture would give an affirmative answer to Question~\ref{qn:dual}.
\end{cor}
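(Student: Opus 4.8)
The plan is to prove this formally from the fact, established in Theorem~\ref{thm:TQFT}, that $SFH$ is a $(3+1)$-dimensional TQFT with the duality structure between $SFH(M,\g)$ and $SFH(-M,-\g)$ implemented by the pairing $\langle\,,\,\rangle'$. The core assertion I would extract is that for an \emph{arbitrary} balanced cobordism $\W$ from $(M_0,\g_0)$ to $(M_1,\g_1)$, the map $F_{\W}$ and the map $F_{\ol{\W}}$ induced by $\W$ regarded as a cobordism from $(-M_1,-\g_1)$ to $(-M_0,-\g_0)$ (cf. Remark~\ref{rem:upside}) are adjoint with respect to $\langle\,,\,\rangle'$. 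Granting Conjecture~\ref{conj:pairing}, which identifies $\langle\,,\,\rangle'$ with the pairing $\langle\,,\,\rangle$ of Theorem~\ref{thm:dual}, this adjointness is exactly the statement of Theorem~\ref{thm:dual}, now valid for all balanced cobordisms, which is what Question~\ref{qn:dual} asks.

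Concretely, I would first set up the bending construction. For each balanced sutured manifold $(M,\g)$, reinterpreting the trivial cobordism $M\times I$ with both horizontal faces viewed as incoming gives a ``counit'' cobordism $\epsilon_M$ from $(M,\g)\sqcup(-M,-\g)$ to $\emptyset$; by definition of $\langle\,,\,\rangle'$ together with the multiplicativity axiom (Axiom~(4) of Theorem~\ref{thm:TQFT}), the map $F_{\epsilon_M}$ is precisely $\langle\,,\,\rangle'$ on $SFH(M,\g)\otimes SFH(-M,-\g)$. Since the horizontal boundary of $\W=(W,Z,\xi)$ is $-M_0\sqcup M_1$, the same $4$-manifold, read with both horizontal faces incoming, is a balanced cobordism $\widehat{\W}$ from $(M_0,\g_0)\sqcup(-M_1,-\g_1)$ to $\emptyset$. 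The geometric input is that $\widehat{\W}$ admits, up to strong equivalence, the two decompositions
$$\widehat{\W}\;\simeq\;\epsilon_{M_1}\circ\bigl(\W\sqcup I_{(-M_1,-\g_1)}\bigr)\;\simeq\;\epsilon_{M_0}\circ\bigl(I_{(M_0,\g_0)}\sqcup\ol{\W}\bigr),$$
obtained by absorbing a product collar of $M_1$, respectively of $M_0$, into $\W$ (in the second case using $\ol{\W}\simeq\W$ from Remark~\ref{rem:upside}). These are the ``zig-zag'' identities expressing that bending a morphism along the duality datum produces its transpose.

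Next I would invoke functoriality. Applying Theorem~\ref{thm:funct}, the Künneth isomorphism of Axiom~(4), and Proposition~\ref{prop:id} (so that the trivial cobordisms act as the identity), the first decomposition gives $F_{\widehat{\W}}=\langle\,,\,\rangle'\circ(F_{\W}\otimes\mathrm{id})$ and the second gives $F_{\widehat{\W}}=\langle\,,\,\rangle'\circ(\mathrm{id}\otimes F_{\ol{\W}})$. Since strongly equivalent cobordisms induce the same map up to sign (the corollary following Theorem~\ref{thm:finalnat}), these two expressions for $F_{\widehat{\W}}$ agree, i.e.
$$\langle\,F_{\W}(x),\,y\,\rangle'\;=\;\pm\,\langle\,x,\,F_{\ol{\W}}(y)\,\rangle'$$
for all $x\in SFH(M_0,\g_0)$ and $y\in SFH(-M_1,-\g_1)$; the refinement over individual $\spinc$ structures follows by carrying relative $\spinc$ structures through Theorem~\ref{thm:funct} and Proposition~\ref{prop:spinc}. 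Substituting $\langle\,,\,\rangle'=\langle\,,\,\rangle$ from Conjecture~\ref{conj:pairing} turns this into the conclusion of Theorem~\ref{thm:dual} for arbitrary balanced $\W$.

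The step I expect to be the main obstacle is the geometric verification of the two decompositions of $\widehat{\W}$: one must exhibit the diffeomorphisms absorbing the extra collars and, crucially, check that the vertical parts and the contact structures assemble correctly under the bending, so that the $I$-invariant contact collars coming from $\epsilon_{M_i}$ and the trivial cobordisms glue with $\xi$ to reproduce $(Z,\xi)$ up to contactomorphism (here the contact collar neighbourhood theorem, \cite[Theorem 2.5.23]{contact}, is the relevant tool, just as in Lemma~\ref{lem:decomp}). Once this, together with the sign and relative $\spinc$ bookkeeping, is in place, the rest is purely formal manipulation of the TQFT axioms.
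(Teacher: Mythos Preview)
Your proposal is correct and follows essentially the same TQFT duality argument as the paper: both prove that the adjointness in Theorem~\ref{thm:dual} holds for \emph{arbitrary} balanced cobordisms once $\langle\,,\,\rangle$ is replaced by $\langle\,,\,\rangle'$, and then invoke Conjecture~\ref{conj:pairing} to finish. The only cosmetic difference is in how the zig-zag identity is packaged: you bend $\W$ into a cobordism $\widehat{\W}\colon (M_0,\g_0)\sqcup(-M_1,-\g_1)\to\emptyset$ and decompose it in two ways using the counits $\epsilon_{M_i}$, whereas the paper writes $\W$ itself as a composite $(\I_0''\sqcup\I_1)\circ(\I_0\sqcup\ol{\W}\sqcup\I_1)\circ(\I_0\sqcup\I_1')$ involving $\ol{\W}$ together with both units and counits (the S-shaped diagram in Figure~\ref{fig:11}). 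These are the two standard, equivalent formulations of ``transpose equals adjoint'' in a rigid monoidal category; your version is arguably a bit more direct, while the paper's makes the role of the unit $\I_1'$ (and its computation via the snake relation on the right of Figure~\ref{fig:11}) more explicit.
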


\begin{proof}
\begin{figure}[tbm]
\includegraphics{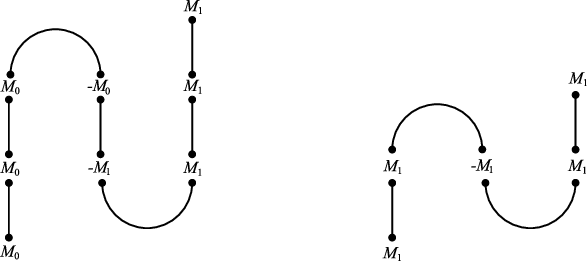}
\caption{On the left, a cobordism is written as a product of its reverse plus other simpler pieces. The maps induced by
these simpler pieces can be related to each other using the decomposition of the trivial cobordism on the right.} \label{fig:11}
\end{figure}

Note that Theorem~\ref{thm:dual}
is true for arbitrary balanced cobordisms if we replace $\langle\,\,,\,\rangle$ with $\langle\,\,,\,\rangle'$. To see this,
note that a cobordism $\W$ from $(M_0,\g_0)$ to $(M_1,\g_1)$ can be decomposed as follows.
Let $\overline{\W}$ be $\W$ viewed as a cobordism from~$(-M_1,-\g_1)$ to~$(-M_0,-\g_0)$,
and for $i \in \{0,1\}$ let $\I_i$ be the trivial cobordism from $(M_i,\g_i)$ to itself.
Furthermore, let $\I_i'$ be $\I_i$ viewed as a cobordisms from $\emptyset$ to $(-M_i,-\g_i) \sqcup (M_i,\g_i)$ and
$\I_i''$ is $\I_i$ viewed as a cobordism from $(M_i,\g_i) \sqcup (-M_i,-\g_i)$ to $\emptyset$. Then
\[
\W = (\I_0'' \sqcup \I_1) \circ (\I_0 \sqcup \overline{\W} \sqcup \I_1) \circ (\I_0 \sqcup \I_1'),
\]
see the left-hand side of Figure~\ref{fig:11}. Note that we can compute $F_{\I_1'}$ using the decomposition
\[
\I_1 = (\I_1 \sqcup \I_1') \circ (\I_1'' \sqcup \I_1),
\]
see the right-hand side of Figure~\ref{fig:11}.
\end{proof}

Using the same trick, one can rearrange the ingoing and outgoing ends of any balanced cobordism $\W$, and relate the induced
map to $F_{\W}$ using $\langle\,\,,\,\rangle'$.

If we were to follow the axioms of Atiyah~\cite{Atiyah}, instead of maps induced by balanced cobordisms, we would need to talk about elements $\EH(\W') \in \SFH(M,\g)$ associated to balanced cobordisms $\W'$ from $\emptyset$ to $(M,\g)$. It was explained in~\cite{Atiyah} how to
translate between the two approaches. If we are given cobordism maps,
to a balanced cobordism~$\W'$ from $\emptyset$ to $(M,\g)$,
we can associate the element~$\EH(\W') = F_{\W'}(1)$, where $1 \in \SFH(\emptyset) \cong \Z_2$.

In the other direction, if $\W$ is a balanced cobordism from $(M_0,\g_0)$ to $(M_1,\g_1)$, then we can also view $\W$ as a cobordism $\W'$ from $\emptyset$ to $(-M_0,-\g_0) \sqcup (M_1,\g_1)$.
Hence
\[
\begin{split}
\EH(\W') &\in \SFH((-M_0,-\g_0) \sqcup (M_1,\g_1)) \cong \\
SFH(M_0,\g_0)^* \otimes \SFH(M_1,\g_1) &\cong \text{Hom}(\SFH(M_0,\g_0),\SFH(M_1,\g_1)).
\end{split}
\]
So we do get a homomorphism $F_{\W}$ from $\SFH(M_0,\g_0)$ to $\SFH(M_1,\g_1)$ induced by the balanced cobordism $\W$.
It is important to note that if we decide to follow Atiyah~\cite{Atiyah},
we need to use the pairing $\langle\,\,,\,\rangle'$ -- and not $\langle\,\,,\,\rangle$ -- to identify $\SFH(-M,-\g)$
with $\SFH(M,\g)^*$.

\begin{rem} \label{rem:gluing}
Another consequence of Conjecture~\ref{conj:pairing} would be a new definition of the maps $F_{\W}$, only using
the $\EH$ class in $\SFH$ and special cobordism maps. Indeed, let $\W = (W,Z,[\xi])$ be a balanced cobordism from $(M_0,\g_0)$ to $(M_1,\g_1)$.
Let~$\g = -\g_0 \cup \g_1$. If we view $\W$ as a cobordism $\W'$ from $\emptyset$ to $(-M_0,-\g_0) \sqcup (M_1,\g_1)$, then, by definition,
\[
\EH(\W') = F_{\W'}(1) = F_{\W_1'}(\EH(Z,\g,\xi)),
\]
where $\W_1'$ is the obvious special cobordism from $(Z,\g)$ to $(-M_0,-\g_0) \sqcup (M_1,\g_1)$.
But we saw above that, using $\langle\,\,,\,\rangle'$,
we can view $\EH(\W')$ as a homomorphism from $\SFH(M_0,\g_0)$ to $\SFH(M_1,\g_1)$, which agrees with $F_{\W}$.
A priori, to compute $\langle\,\,,\,\rangle'$, one already needs to use some gluing map, but Conjecture~\ref{conj:pairing} would
eliminate this problem. In particular, if $\W$ is a boundary cobordism, then we would get a new definition for the gluing map
$F_{\W} = \Phi_{-\xi}$.

However, if one starts out with this definition of~$F_{\W}$, then Axioms~2 and~3 seem to be very difficult to prove without using
the theory of gluing maps~\cite{TQFT}.
\end{rem}

\begin{rem}
A cobordism $\W$ from the empty sutured manifold to itself is a smooth oriented 4-manifold $W$ with contact boundary $(Z,\xi)$.
Here $\xi$ is a positive contact structure when $Z = \partial W$ is given the boundary orientation. Then, for every relative $\spinc$
structure $\s \in \spinc(\W)$, we have a map $F_{\W,\s} \colon \Z_2 \to \Z_2$. This can be computed using just the contact class and the cobordism map in Heegaard Floer homology. More precisely, let $W_0$ be the cobordism from $Y$ to $S^3$ obtained by removing an open ball from the interior of $W$, and set $\s_0 = \s|_W$. Then
\[
F_{\W,\s}(1) = F_{W_0,\s_0}(c(Y,\xi)) \in \HFh(S^3) \cong \Z_2,
\]
where $c(Y,\xi) \in \HFh(Y,\s(\xi))$ is the contact invariant defined by Ozsv\'ath and Sza\-b\'o~\cite{OSz4},
and agrees with $\EH(Y,\xi)$ according to~\cite{EH}.
\end{rem}

Recall that in Section~\ref{sec:links},
we defined a functor $\W \colon \link \to \BSut$, and for a decorated link $(Y,L,P) \in \link$, we have
\[
\SFH(\W(Y,L,P)) \cong \HFLh(Y,L) \otimes V^{\otimes d}.
\]
Then Theorem~\ref{thm:TQFT} and Proposition~\ref{prop:Wfunct} give the following.

\begin{cor}
The functor
\[
\SFH \circ \W \colon \link_0 \to \Vect
\]
is also a TQFT in the sense of Atiyah~\cite{Atiyah}
and Blanchet~\cite{Blanchet}, making link Floer homology functorial.
\end{cor}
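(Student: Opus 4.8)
The plan is to obtain the statement formally by composing the symmetric monoidal functor $SFH \colon \BSut \to \Ab/\pm 1$ of Theorem \ref{thm:TQFT} with the functor $\W \colon \link_0 \to \BSut$ of Proposition \ref{prop:Wfunct}, once we check that $\W$ itself respects all the structure of a TQFT: disjoint unions, the empty object, and the symmetry isomorphisms. Concretely, I would verify that $\W$ of a disjoint union of decorated links (resp.\ decorated link cobordisms) is, directly from Definition \ref{defn:declink} and Definition \ref{defn:W}, the disjoint union of the associated balanced sutured manifolds (resp.\ balanced cobordisms), that $\W$ sends the empty decorated link to the empty balanced sutured manifold, and that $\W$ intertwines the swap diffeomorphisms on the two sides. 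Since the objects of $\link_0$ were defined to have connected ambient manifold, a preliminary bookkeeping step is to pass to the category of finite disjoint unions of such objects and morphisms, on which $\W$ extends strictly monoidally.

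With this in place, I would run through Axioms (1)--(5) of \cite{Blanchet} exactly as in the proof of Theorem \ref{thm:TQFT}, transporting each along $\W$. Naturality (Axiom 1) holds because an orientation preserving diffeomorphism $(Y_0,L_0,P_0) \to (Y_1,L_1,P_1)$ induces an orientation preserving diffeomorphism of $\W(Y_0,L_0,P_0)$ and $\W(Y_1,L_1,P_1)$, to which Corollary \ref{cor:nat} applies, and because $\W$ preserves weak and strong equivalences by Proposition \ref{prop:Wfunct}; functoriality (Axiom 2) is immediate from $\W$ being a functor together with $F_{\W_2\circ\W_1} = F_{\W_2}\circ F_{\W_1}$ (Theorem \ref{thm:funct}); normalization (Axiom 3) follows since $\W$ of a trivial decorated link cobordism $(Y\times I, L\times I, P\times I)$ is the trivial cobordism of $\W(Y,L,P)$, so the induced map is the identity by Proposition \ref{prop:id}; multiplicativity (Axiom 4) follows from the strict monoidality of $\W$, the identity $SFH(\emptyset)=\Z$, and the K\"unneth statement in Axiom 4 for $SFH$, combined with the isomorphism $SFH(\W(Y,L,P)) \cong \widehat{HFL}(Y,L)\otimes V^{\otimes d}$; and symmetry (Axiom 5) follows because $\W$ intertwines the swap maps. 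The $\pm 1$ ambiguity is inherited unchanged from $SFH$.

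The only point that requires any care is the coherence data for the monoidal structure --- the associativity and unit diagrams of \cite{Blanchet} --- but under $\W$ these reduce to the corresponding diagrams for $SFH$, which were already checked in Theorem \ref{thm:TQFT}, so no genuinely new argument is needed; this is the step I would expect to be the most tedious rather than the most difficult. The concluding assertion that link Floer homology is functorial is then simply the observation that on objects $SFH\circ\W$ recovers $\widehat{HFL}(Y,L)$ up to the factor $V^{\otimes d}$, and on decorated link cobordisms it supplies the corresponding homomorphisms.
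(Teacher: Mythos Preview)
Your proposal is correct and follows essentially the same route as the paper, which states the corollary as an immediate consequence of Theorem \ref{thm:TQFT} and Proposition \ref{prop:Wfunct} without further argument. You have been more careful than the paper in spelling out the monoidal structure---in particular, flagging that objects of $\link_0$ have connected ambient $3$-manifold and so one must pass to finite disjoint unions to make sense of Axioms (4) and (5)---which is a valid point the paper glosses over.
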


\subsection{Weinstein cobordisms}
We conclude with defining Weinstein cobordisms, and showing that they preserve the $\EH$ class.
The following definition extends the corresponding notions of Bourgeois et.~al~\cite{BEE} and
Sidel~\cite{Seidel} to sutured manifolds.

\begin{defn}
Suppose that $(M_0,\g_0,\z_0)$ and $(M_1,\g_1,\z_1)$ are contact manifolds.
A \emph{Liouville cobordism from $(M_0,\g_0,\z_0)$ to $(M_1,\g_1,\z_1)$} is a pair $(\W,\theta)$,
where $\W = (W,Z,[\xi])$ is a balanced cobordism from $(M_0,\g_0)$ to $(M_1,\g_1)$,
and $\theta$ is a 1-form on~$W$ satisfying the following properties:
\begin{enumerate}
\item $\omega= d\theta$ is symplectic,
\item the \emph{Liouville vector field} $X$ defined by $\iota_X \omega = \theta$
is transverse to every face of $\partial W$,  enters $W$ through $M_0$, and exits $W$ through $Z \cup M_1$,
\item $\xi = \ker(\theta|_Z)$, $\z_0 = \ker(\theta|_{M_0})$, and $\z_1 = \ker(\theta|_{M_1})$.
\end{enumerate}

The Liouville cobordisms $(\W,\theta)$ and $(\W',\theta')$ are equivalent (diffeomorphic) if there is an
equivalence (diffeomorphism) $d$ between $\W$ and $\W'$ such that $d^*(\theta') = \theta$.
\end{defn}

\begin{ex} \label{ex:sympl}
An instance of a Liouville cobordism is the \emph{symplectization} of a contact manifold $(M,\g,\z)$.
Let $\W = (W,Z,[\xi])$ be the trivial cobordism from~$(M,\g)$ to itself.
Since $\partial M$ is a convex surface, there is a contact vector field $v$ on $M$ which is transverse to $\partial M$,
points out of $M$, and $v \in \z$ exactly along $\g$. Let $\a$ be a contact 1-form
such that $\z = \ker(\a)$. Then $\mathcal{L}_v\a = \mu \a$ for some function $\mu \colon M \to \R$.
By the Cartan formula, this is equivalent to $\iota_v d\a + d\iota_v\a = \mu \a$.
The dividing set $\g$ coincides with the zero set of $\a(v)$ along $\partial M$.
If we multiply $v$ by a sufficiently small
positive scalar, then we can assume that $\mu < 1/2$. Take
\[
\theta = e^t\a - d(e^t \a(v)).
\]
Then
\[
\omega = d\theta = e^t dt \wedge \a + e^t d\a
\]
is symplectic since
$\omega \wedge \omega = 2e^{2t} dt \wedge \a \wedge d\a$ is nowhere zero on $W$.

We claim that
\[
X = (1-\mu)\partial_t + v
\]
is the Liouville vector field for $\theta$. Indeed,
\[
\iota_X \omega = (1-\mu)\iota_{\partial_t} \omega + \iota_{v} \omega.
\]
Since $\iota_{\partial_t} \omega = e^t \a$ and
\[
\begin{split}
\iota_{v} \omega = - e^t \a(v) dt + e^t \iota_v d\a &= \\
- e^t \a(v) dt + e^t(\mu\a - d\iota_v \a) &=  e^t\mu\a - d(e^t\a(v)),
\end{split}
\]
we can conclude that $\iota_X \omega = \theta$. As $1-\mu >0$, the vector field $X$ points into $W$ along $M_0$ and points out of $W$ along $M_1$.
Furthermore, since $v$ points out of $M$ along $\partial M$, we see that $X$ points out of $W$ along $Z = \partial M \times I$.

For $i \in \{0,1\}$, the contact structure
\[
\ker(\theta|_{M_i}) = \ker(\a - d(\a(v)))
\]
is isotopic to $\z$ through contact structures. To see this, note that, for every $0 \le \varepsilon \le 1$,
the same argument as above shows that
\[
\theta^{\varepsilon} = e^t\a - d(e^t \a(\varepsilon v))
\]
induces a contact
structure $\z^{\varepsilon} = \ker(\theta^{\varepsilon}|_{M_i})$ on $M_i$. For $\varepsilon = 0$, we have
$\z^0 = \ker(\a) = \z$, while $\z^1 = \ker(\theta|_{M_i})$.

Finally, we show that $\ker(\theta|_Z) = \xi$.
Since $\mathcal{L}_{\partial_t}d = d \mathcal{L}_{\partial_t}$,
we have $\mathcal{L}_{\partial_t}\theta = \theta$. Hence $\partial_t|_Z$
is a contact vector field on $(Z,\theta|_Z)$. So, for every $0 \le t \le 1$,
the surface~$\partial M \times \{t\}$ is convex. The dividing set on this surface
is given by the equation $\theta(\partial_t) = 0$. However,
\[
\theta(\partial_t) = - d(e^t \a(v))(\partial_t) =
-e^t \a(v),
\]
and the function $\a(v)|_{\partial M}$ vanishes exactly along $\g$.
\end{ex}

\begin{defn}
Let $(\W,\theta)$ from $(M_0,\g_0,\z_0)$ to $(M_1,\g_1,\z_1)$ be a Liouville cobordism,  and let $X$ be the corresponding
Liouville vector field. A function $H \colon W \to \R$ is a \emph{Lyapunov function for $X$} if $H$ is a smooth Morse function,
and there exists a constant $\d \in \R_+$ and a Riemannian metric on $W$ such that $dH(X) \ge \d |X|^2$.
\end{defn}

\begin{ex}
If $(\W,\theta)$ is a symplectization, then $H(x,t) = t$ is a Lyapunov function for $X = (1-\mu) \partial_t + v$.
\end{ex}

\begin{defn}
We say that the Liouville cobordism $(\W,\theta)$ from~$(M_0,\g_0,\z_0)$ to~$(M_1,\g_1,\z_1)$ is \emph{Weinstein}
if there exists a Lyapunov function $H$ for the Liouville vector field $X$ such that
\begin{enumerate}
\item a collar neighborhood $M_0 \times I$ of~$M_0 = M_0 \times \{0\}$ is a symplectization of~$(M_0,\g_0,\z_0)$,
as in Example~\ref{ex:sympl},
\item $H(x,t) = t$ for $(x,t) \in M_0 \times I$,
\item there is a collar neighborhood $\partial M_0 \times [0,2]$ of $\partial M_0$ in $Z$ that extends $\partial M_0 \times I$,
where $H(x,t) = t$,
\item $H \equiv 2$ on $(Z \cup M_1) \setminus (\partial M_0 \times [0,2])$,
\item $H$ has no critical points on $\partial W$.
\end{enumerate}
\end{defn}

\begin{rem} \label{rem:handle}
Since $X$ points out of $W$ along $Z \cup M_1$, the negative gradient flow lines of $X$ can only exit $W$ along $M_0$.
As explained in~\cite{BEE}, all critical points of the Lyapunov function $H$ have Morse index at most two, and the stable manifolds intersected
with regular levels $H^{-1}(c)$ are isotropic for the induced contact structure $\ker(\theta|_{H^{-1}(c)})$.
Using the work of Weinstein~\cite{Weinstein}, we see that one can build $W$,
viewed as a special cobordism from $M_0$ to $Z \cup M_1$, from the symplectization of $(M_0,\g_0,\z_0)$
by attaching Weinstein 1- and 2-handles. A Weinstein 1-handle attachment changes the
boundary by removing two standard contact balls, and gluing a standard contact~$S^2 \times I$,
or equivalently, by taking a connected sum with the standard contact~$S^2 \times S^1$.
Each Weinstein 2-handle is attached along some Legendrian knot $K$ with framing $tb(K) - 1$.
\end{rem}

\begin{thm} \label{thm:Wein}
Let $(\W, \theta)$ be a Weinstein cobordism from the contact manifold $(M_0,\g_0,\z_0)$ to $(M_1,\g_1,\z_1)$.
Let $\s \in \spinc(\W)$ be the $\spinc$ structure associated with~$\omega= d\theta$.
As explained in Remark~\ref{rem:upside}, we can view~$\W$ as a balanced cobordism~$\ol{\W}$ from $(-M_1,-\g_1)$ to $(-M_0,-\g_0)$.
Then
\[
F_{\ol{\W},\s}(\EH(M_1,\g_1,\z_1)) =  \EH(M_0,\g_0,\z_0).
\]
\end{thm}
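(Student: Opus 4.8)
The plan is to reduce the statement to two building blocks: the behavior of the $EH$ class under the gluing map $\Phi_\xi$ (Theorem \ref{thm:glue}), and its behavior under the Weinstein one- and two-handle attachments that make up the special part of $\W$. First I would use Remark \ref{rem:handle} to decompose $\W$, viewed as a special cobordism from $M_0$ to $Z\cup M_1$, as the symplectization of $(M_0,\g_0,\z_0)$ followed by a sequence of Weinstein one-handle and two-handle attachments. Turning this upside down in the sense of Remark \ref{rem:upside}, $\ol\W$ from $(-M_1,-\g_1)$ to $(-M_0,-\g_0)$ factors through the decomposition $\ol\W = \ol\W_0 \circ \ol\W_1$, where $\ol\W_1$ collapses all the handle attachments and $\ol\W_0$ is the boundary cobordism coming from the symplectization collar; by Theorem \ref{thm:funct} it then suffices to track the $EH$ class separately through each piece.

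Next I would handle the boundary-cobordism piece. By Proposition \ref{prop:extends}, the map induced by the boundary cobordism $\ol\W_0$ is the gluing map $\Phi_{\xi_0}$ for the $I$-invariant contact structure obtained from the symplectization collar of $(M_0,\g_0,\z_0)$, as computed in Example \ref{ex:sympl}. Now $EH$ of an $I$-invariant contact structure on $\partial M\times I$ glued onto $(M,\g)$ is governed by Theorem \ref{thm:id} together with the multiplicativity of $EH$ under the gluing map (the $EH(M',\g',\xi')$ clause of Theorem \ref{thm:glue}): gluing an $I$-invariant contact collar sends $EH(M_0,\g_0,\z_0)$ to the $EH$ class of the union, which is again $EH(M_0,\g_0,\z_0)$ up to the product structure, since the collar contributes nothing new. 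So $\Phi_{\xi_0}$ sends the contact class to the contact class, and the whole problem reduces to the one- and two-handle part.

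The heart of the argument is therefore showing $F_{\ol\W_1}(EH(M_1,\g_1,\z_1)) = EH(M_1',\g_1',\z_1')$, where $(M_1',\g_1',\z_1')$ is the contact manifold obtained from $(M_0,\g_0,\z_0)$ by attaching the Weinstein handles (equivalently, the convex boundary of $W$ minus $M_0$). For this I would argue handle by handle. A Weinstein one-handle attachment changes the contact boundary by connected sum with the standard tight $(S^1\times S^2,\xi_{std})$; the corresponding one-handle cobordism map $G_\U$ in sutured Floer homology is given on the chain level by $\x\mapsto \x\times\{\theta\}$ (Definition \ref{defn:onehandle}), and I would check that, in the partial open book / balanced diagram picture of the contact class described above Remark \ref{rem:EH} and in Proposition \ref{prop:EHspinc}, the distinguished generator representing $EH$ of the enlarged contact manifold is exactly the image of the distinguished generator of $EH(M_1,\g_1,\z_1)$ under $\x\mapsto\x\times\{\theta\}$ — this is essentially the statement that $EH(S^1\times S^2,\xi_{std})$ is the top generator, combined with the behavior of the contact class under the connected sum gluing map. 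For a Weinstein two-handle attached along a Legendrian knot $K$ with framing $tb(K)-1$, one can extend the partial open book of $(M_1,\g_1,\z_1)$ so that $K$ lies on a page and the two-handle adds a stabilizing arc/curve; the two-handle map $F_{\mathbb K}$ is defined via a subordinate triple diagram $(\S,\bolda,\boldb,\boldd)$ with $\t$ the top generator of the auxiliary factor, and I would verify that $F_{\mathbb K}(\x\otimes\t)$, computed by counting holomorphic triangles, again hits the distinguished intersection point of the enlarged contact structure. The clean way to package this is to show that, after turning around, attaching a Weinstein handle corresponds at the level of partial open books exactly to a Giroux stabilization, so that the distinguished $EH$-generator is preserved by construction; the main obstacle will be making this correspondence precise — matching the Weinstein handle data (Legendrian attaching sphere, contact framing) with the combinatorics of the subordinate triple diagram and checking that the relevant holomorphic triangle count equals $\pm 1$ — rather than any conceptual difficulty. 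Finally I would identify the $\spinc$ structure: by Proposition \ref{prop:EHspinc} the contact class lives in the summand determined by $\s_{\z_i}$, and by Lemma \ref{lem:affine} and the relation between $\omega=d\theta$ and the plane field $\xi^\perp$ along the handles, the relative $\spinc$ structure $\s$ associated with $d\theta$ restricts correctly on both ends, so the statement holds with this $\s$.
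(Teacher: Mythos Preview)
Your overall strategy --- decompose, then argue handle by handle --- is the right one and matches the paper, but the execution has two genuine errors that would make the argument fail as written.

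First, your decomposition of $\ol\W$ is not the one used to define $F_{\ol\W}$. By Definition~\ref{defn:phi} and Lemma~\ref{lem:decomp}, for the cobordism $\ol\W$ from $(-M_1,-\g_1)$ to $(-M_0,-\g_0)$ one must glue the \emph{entire} $(Z,\xi)$ onto $M_1$ first, obtaining $N = M_1 \cup Z$, and then apply the special cobordism map from $(-N,-\g_0)$ to $(-M_0,-\g_0)$. Thus the boundary piece is $\Phi_\xi\colon SFH(-M_1,-\g_1)\to SFH(-N,-\g_0)$ with $\Phi_\xi(EH(M_1,\g_1,\z_1)) = EH(N,\g_0,\z_1\cup\xi)$ by Theorem~\ref{thm:glue}. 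Your ``boundary cobordism from the symplectization collar of $M_0$'' is on the wrong end and carries only an $I$-invariant structure, so invoking Theorem~\ref{thm:id} there does not account for the actual contact structure $\xi$ on $Z$.

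Second, and more concretely, your one-handle step is backwards in both direction and grading. A Weinstein one-handle in $\W$ (from $(M_0,\g_0,\z_0)$ to $(M_0\#(S^1\times S^2),\g_0,\z_1)$), when viewed in $\ol\W$, becomes a \emph{three}-handle cobordism from $-(M_0\#(S^1\times S^2))$ to $-M_0$. The relevant map is $E_\V$ of Definition~\ref{defn:3handle}, which sends $\x\times\{y\}\mapsto\x$ for $y$ the intersection point of \emph{smaller} relative grading (and kills $\x\times\{\theta\}$). Moreover, the contact class $EH((S^1\times S^2)(1),\xi_{std})$ is represented by that lower-grading point $y$, not by the top generator $\theta$; your assertion ``$EH(S^1\times S^2,\xi_{std})$ is the top generator'' is false. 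So the computation that actually works is $E_\V(\x\times\{y\}) = \x$, matching $EH(M_0\#(S^1\times S^2),\z_1)\mapsto EH(M_0,\z_0)$. Using $G_\U$ and $\theta$ as you propose would send $EH$ to zero. The two-handle step is closer to correct in spirit, but you should again reverse the roles: one uses the triple $(-\S,\bolda,\boldd,\boldb)$ for $\ol\W$ rather than $(\S,\bolda,\boldb,\boldd)$, as in the duality argument of \cite[Theorem~3.5]{OSz10}.
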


\begin{rem}
Recall that, for $i \in \{0,1\}$,
\[
\EH(M_i,\g_i,\z_i) \in \SFH \left(-M_i,-\g_i,\s_{\z_i}\right).
\]
Furthermore, $\s|_{M_i} = \s_{\z_i}$, and
\[
F_{\ol{\W},\s} \colon \SFH \left(-M_1,-\g_1,\s_{\z_1} \right) \to \SFH \left(-M_0,-\g_0,\s_{\z_0} \right).
\]
\end{rem}

\begin{proof}
Consider the cobordism $\ol{\W} = (W,Z,[-\xi])$ from $(-M_1,-\g_1)$ to $(-M_0,-\g_0)$.
First, suppose that $Z$ has no isolated components, and set $N = M_1 \cup Z$.
Then $F_{\ol{\W}} = F_{\ol{\W}_1} \circ \Phi_{\xi}$, where $\ol{\W}_1$ is a special cobordism from $(-N,-\g_0)$ to $(-M_0,-\g_0)$.
By Theorem~\ref{thm:glue}, we have
\[
\Phi_{\xi}(\EH(M_1,\g_1,\z_1)) = \EH(N,\g_0,\z_1 \cup \xi).
\]

By turning it upside down, we can view $\ol{\W}_1$ as a special cobordism $\W_1$ from $(M_0,\g_0)$ to~$(N,\g_0)$.
As explained in Remark~\ref{rem:handle}, we can build $\W_1$ from the symplectization of $(M_0,\g_0,\z_0)$ by first attaching
Weinstein 1-handles, then attaching Weinstein 2-handles along a Legendrian link $L$ with framing $tb(L)-1$.
Hence, we are done if we prove the result when $\W$ is a Weinstein 1- or 2-handle cobordism.

Suppose that $\W$ is a Weinstein 1-handle cobordism from $(M_0,\g_0,\z_0)$ to
\[
(M_0 \# (S^1 \times S^2),\g_0,\z_1).
\]
The contact structure $\z_1$ agrees with $\z_0$ minus a standard contact ball on $M_0 \setminus B^3$, and is the unique tight contact structure minus  a standard contact ball $\xi_{std}$ on~$(S^1 \times S^2) \setminus B^3$. Fix a partial open book decomposition $(S_0,h_0 \colon P_0 \to S_0)$
for~$(M_0,\g_0,\z_0)$, and let $(\S_0,\bolda_0,\boldb_0)$ be the associated balanced diagram. Furthermore, let $(S,h \colon P \to S)$ be the partial
open book decomposition of
\[
((S^1 \times S^2)(1), \xi_{std})
\]
described in~\cite[Example 1]{Ozbagci}. It agrees with the partial open book of
Example~\ref{ex:partial}, except that
the map $h$ is the identity of $P$. We are going to denote by $(\S,\a,\b)$ the corresponding balanced diagram
of $(S^1 \times S^2)(1)$. Then $\S = T^2 \setminus B^2$, the curves~$\a$ and~$\b$ intersect in exactly two points,
and~$\a$ is a small Hamiltonian translate of~$\b$. Let $y \in \a \cap \b$ be the intersection point with the smaller relative grading.
If we take the boundary connected sum of~$S_0$ and~$S$ along $\partial S_0 \setminus P_0$ and $\partial S \setminus P$,
then we get a partial open book decomposition for $(M_0 \# (S^1 \times S^2),\g_0,\z_1)$, which induces the balanced
diagram $(\S_0 \natural \S, \bolda_0 \cup \{\a\},\boldb_0 \cup \{\b\})$. Let $\x \in \T_{\a_0} \cap \T_{\b_0}$ be the
distinguished generator representing $\EH(M_0,\g_0,\z_0)$, and note that~$y$ represents the class $\EH((S^1 \times S^2)(1),\xi_{std})$.
Then $\x \times \{y\}$ represents $\EH(M_0 \# (S^1 \times S^2), \g_0,\z_1)$.
The curves $\a$ and $\b$ bound a periodic domain which represents a sphere $\{p\} \times S^2$ inside $S^1 \times S^2$.
The cobordism $\ol{\W}$ corresponds to a 3-handle attached along $\{p\} \times S^2$,
so by Definition~\ref{defn:3handle}, the map $F_{\ol{\W}}$ takes $\x \times \{y\}$ to $\x$, which proves the claim for Weinstein 1-handles.

Now assume that $\W$ is a Weinstein 2-handle cobordism corresponding to a Legendrian knot $K$ in $(M_0,\g_0,\z_0)$.
Then the proofs of~\cite[Proposition~4.4]{contact} and~\cite[Theorem~3.5]{OSz10} imply that the $\EH$ class is preserved
by $F_{\ol{\W}}$. More concretely, in the proof of~\cite[Proposition~4.4]{contact},
Honda, Kazez, and Mati\'c construct a partial open book decomposition $(S,h\colon P \to S)$ for $(M_0,\g_0,\z_0)$
which contains the Legendrian knot $K$ inside $P$. This gives rise to a triple diagram $(\S,\bolda,\boldb,\boldd)$
which is subordinate to some bouquet for~$K$. As described by Ozsv\'ath and Szab\'o in the proof of~\cite[Theorem~3.5]{OSz10},
if~$(\S,\bolda,\boldb,\boldd)$ corresponds to the 2-handle cobordism~$\W$,
then~$(-\S,\bolda,\boldd,\boldb)$ corresponds to $\ol{\W}$.
We end up with the configuration depicted in Figure~\ref{fig:10},
where there is a distinguished triangle mapping the generator representing $\EH(M_1,\g_1,\z_1)$
to the one representing $\EH(M_0,\g_0,\z_0)$.

If~$Z$ does have isolated components, then $F_{\ol{\W}} = F_{\ol{\W}'}$, where $\ol{\W}'$ is the cobordism from $(-M_1,-\g_1)$
to $(-M_0,-\g_0) \sqcup (B,\d)$ given by Definition~\ref{defn:phi}.
Note that $B \subset Z$, and $\ker(\theta|_B) = \xi|_B$ is a union of standard contact balls.
Let~$\W'$ be~$\ol{\W}'$ viewed as a cobordism from $(M_0,\g_0) \sqcup (-B,-\d)$ to $(M_1,\g_1)$.
Then $(\W',\theta)$ is not a Liouville cobordism, since the Liouville vector field $X$ points out of $W$
along $B$. To fix this, attach a Weinstein 1-handle to each component of $B$ along one of its feet. Then $\theta$ plus
the Liouville 1-forms on the 1-handles give a 1-form $\theta'$ such that the new Liouville vector field~$X'$
points in along the free feet of the 1-handles. The contact structure~$\xi$ on~$Z$ is left unchanged, since $I \times S^2$
has a unique tight contact structure. The cobordism $(\W',\theta')$ is also Weinstein, since a Lyapunov
function~$H$ on~$\W$ extends to the 1-handles with a unique index one critical point in each.
Let~$\xi_0 = \ker(\theta'|_{-B})$, then $(-B,\xi_0)$ is also a union of standard contact balls.
If we apply the previous part to the Weinstein cobordism $(\W',\theta')$, then we get that
\[
F_{\ol{\W}',\s'}(\EH(M_1,\g_1,\z_1)) =  \EH(M_0,\g_0,\z_0) \otimes \EH(-B,-\d,\xi_0),
\]
where $\s' = \s|_{\W'}$. As $\EH(-B,-\d,\xi_0) = 1 \in \SFH(B,\d)$, the right-hand side
maps to $\EH(M_0,\g_0,\z_0)$ under the isomorphism
\[
\SFH(-M_0,-\g_0) \otimes \SFH(B,\d) \to \SFH(-M_0,-\g_0),
\]
which concludes the proof.
\end{proof}

\begin{cor}
Let $(W,\theta)$ from $(M_0,\g_0,\z_0)$ to $(M_1,\g_1,\z_1)$ be a Weinstein cobordism. If $\EH(M_0,\g_0,\z_0) \neq 0$, then
$\EH(M_1,\g_1,\z_1) \neq 0$.
\end{cor}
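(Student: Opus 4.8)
The plan is to deduce this immediately from Theorem \ref{thm:Wein}. By that theorem, if $(W,\theta)$ is a Weinstein cobordism from $(M_0,\g_0,\z_0)$ to $(M_1,\g_1,\z_1)$ and $\s \in \spinc(\W)$ is the $\spinc$ structure associated with $\omega = d\theta$, then the map
$$F_{\ol{\W},\s} \colon SFH(-M_1,-\g_1,\s(\z_1)) \to SFH(-M_0,-\g_0,\s(\z_0))$$
satisfies $F_{\ol{\W},\s}(EH(M_1,\g_1,\z_1)) = EH(M_0,\g_0,\z_0)$.

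First I would recall that $F_{\ol{\W},\s}$ is a group homomorphism, well-defined up to an overall sign; in particular it sends $0$ to $0$, and the sign ambiguity is irrelevant for the present statement since we only care about vanishing. Then I would argue by contradiction: if $EH(M_1,\g_1,\z_1) = 0$, then applying the homomorphism $F_{\ol{\W},\s}$ gives $F_{\ol{\W},\s}(EH(M_1,\g_1,\z_1)) = 0$, whereas Theorem \ref{thm:Wein} identifies this element with $EH(M_0,\g_0,\z_0)$, which is nonzero by hypothesis. This contradiction forces $EH(M_1,\g_1,\z_1) \neq 0$, completing the proof. There is no real obstacle here; the entire content is packaged in Theorem \ref{thm:Wein}, and the corollary is just the observation that a (sign-ambiguous) homomorphism cannot carry $0$ to a nonzero element.
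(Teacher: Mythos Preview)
Your proposal is correct and takes essentially the same approach as the paper, which in fact leaves the corollary unproved as an immediate consequence of Theorem~\ref{thm:Wein}. Your explicit observation that the homomorphism $F_{\ol{\W},\s}$ sends $0$ to $0$ is exactly the intended one-line argument.
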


\bibliographystyle{amsplain}
\bibliography{topology}
\end{document}